\title{Defining Real Numbers as Oracles}
\date{May 17, 2023}
\begin{document}\maketitle
\begin{abstract}
A real number is a rule that, when provided with a rational interval, answers Yes or No depending on if the real number ought to be considered to be in the given interval. Since the goal is to define the real numbers, this can only motivate the definition of which rules should be considered a real number. The rule must satisfy five properties and any rule that does so we call an oracle. Three of the properties ensure that we do not have multiple oracles representing the same real number. The other two properties ensure that the oracle does narrow down to a single real number. The most important property is the Separating property which ensures that if we divide a Yes interval into two parts, then one part is a Yes interval while the other is a No interval; the exception is if the division point, which is a rational number, happens to be the desired real number in which case both intervals are Yes intervals. We explore various examples and algorithms in using oracles in addition to establishing that the oracles do, in fact, form the field of real numbers. The concept of a Family of Overlapping, Notionally Shrinking Intervals is defined and found to be an essential tool in working with oracle arithmetic. Mediant approximations, which are related to continued fraction representations, naturally arise from an oracle perspective. We also compare and contrast with other common definitions of real numbers, such as Cauchy sequences and Dedekind cuts, in which the conclusion is that the oracle perspective is somewhat of a master map to the other definitions. We do an explicit example to contrast oracle arithmetic with decimal arithmetic and continued fraction arithmetic. 
\end{abstract}

\tableofcontents

\section{The Oracle of \texorpdfstring{$r$}{r}}\label{sec:ora}

Our goal is to give a new definition of real numbers. This definition strikes a balance between having a solid definition of a real number and being useful in computing. Most definitions of real numbers, such as decimal expansions and Cauchy sequences, seem to require explicit computations to even define the number. This is problematic given the infinite nature of real numbers. Other definitions, such as the Dedekind cut, do not require computations to state, but they seem to offer no useful guidance in computing the number. 

What we want to provide is an object that can be defined without computation, but provides the underlying mechanism for computing the real number to any precision one wants. In some sense, we are producing the framework for producing approximations to real numbers given the particulars of a given real number.  This is analogous to, in functional programming,  having a generic sorting algorithm that can be used in a variety of situations by supplying a comparison function to make a pairwise sorting decision while the framework handles the actual details of the global sorting.

Since this is a mechanism that provides answers to our questions, it seems reasonable to call it an oracle. We assert that the best way to understand a real number is that it is an oracle that reveals itself by answering, under repeated questioning, whether it is in various rational intervals. 

Informally, the Oracle of $r$ is a rule which, given two rational numbers, will return Yes if $r$ ought to be in the inclusive rational interval defined by the two rational numbers and returns No otherwise. If the answer was a Yes, then we say the interval is an $r$-Yes interval; if the answer is a No, then we say the interval is an $r$-No interval.  If we are just speaking of one oracle, we may simply say a Yes-interval or No-interval. 

There is a very short teaser version of this paper \cite{taylor23teaser} as well as a short overview paper \cite{taylor23over}. They serve as a more gentle introduction into these ideas. 

\subsection{Interval Notation}

We will define a \textbf{rational interval} $a:b$ as a binary rule that, for a given rational $q$, yields $1$ (Yes) if $q$ is between $a$ and $b$, inclusive, and yields $0$ (No) if not. We say that $q$ is in $a:b$ if the interval rule yielded an affirmative answer.  We identify $b:a$ as the same rule. The interval $a:b$ contains the interval $c:d$ if being in the interval $c:d$ implies being in the interval $a:b$. We will write $a\lt b$ if we want to indicate the ordering relation between the rationals and that they are strictly not equal. We may also require knowing the ordering, but still allow equality. In that case, we use the notation $a \lte b$ to indicate $a \leq b$.  We will also write $a:b < c:d$ to indicate the situation in which $a$ and $b$ are less than $c$ and $d$ which implies that all rationals $q$ in $a:b$ are less than all rationals $r$ in $c:d$.

Please take note that the intervals not only have rational endpoints but they consist entirely of rational points as we do not have real numbers. Throughout, unless we say otherwise, intervals always include their endpoints. If we had the real numbers, these would be closed intervals with rational endpoints.  The symbol $:$ was chosen to represent the two endpoints with something in between. Being dotted is to be a reminder that it is just the discrete rational interval under discussion. 

We define the term \textbf{singleton} to be intervals of the form $a:a$, namely the only number in the interval is the rational $a$. We also define the term \textbf{rational neighborhood} or a \textbf{neighborly interval} to be intervals which are not singletons, that is, $a \lt b$ is its form. When we write $a:b$, we do allow for $a=b$.

A rational $q$ is \textbf{strictly contained} in $a\lt b$ if $a < q < b$. An interval $c:d$ is \textbf{strictly contained} in $a\lt b$ if both $c$ and $d$ are strictly contained in $a \lt b$. We say that $c \lte d$ is \textbf{nested in} $a\lte b$ if $a \leq c \leq d \leq b$; $a \lte b$ then is also said to \textbf{contain} $c \lte d$. 

The length of an interval $|a \lte b |$ is $b-a$. 

Assume for this paragraph that we have $a \lt b$, $c \lt d$, and $a \leq c$. We say that $a\lt b$ and $c \lt d$ are \textbf{strictly overlapping} if $c \leq b$ and $b < d$. Two intervals are \textbf{overlapping} if they are either strictly overlapping or one is nested inside the other. An interval is nested inside itself. The two intervals are \textbf{disjoint} if $b < c$. If the two intervals are disjoint, then we define the \textbf{separation} $S$ to be $c-b$; if they are not disjoint, then the separation is 0.  The \textbf{distance} $D$ between the two intervals is the maximum of the difference between the endpoints. The length of an interval is the same as the distance of the interval to itself. Note that the non-negative difference between two points, one from each interval, will never be less than the separation and never greater than the distance. That is, if $q$ is in $a\lt b$ and $p$ is in $c \lt d$, then $S \leq |p-q| \leq D$. We have that  $S = 0$ exactly when the two intervals are overlapping. They are strictly overlapping when, in addition to $S=0$,  the distance between the two intervals is larger than both the lengths. If the distance is smaller than the longer length, then the shorter one is strictly contained in the other. If the distance is the same as the lengths, then they are the same interval. These statements are easy to establish by cases as is the fact that the distance of the intervals is the same as the separation plus the lengths for disjoint intervals. 

Note that we do have transitivity of interval inequality, that is, if $a:b < c:d$ and $c:d < e:f$, then $a:b < e:f$. This follows immediately from the transitivity of inequalities on rationals.

\subsection{Defining the Oracle}

The Oracle of $r$ is a rule $R$ defined on all rational intervals, including singletons, that returns values of 1 and 0, and satisfies: 
\begin{enumerate}
    \item Consistency. If $c:d$ contains $a:b$ and $R(a:b) = 1$, then $R(c:d) = 1$.
    \item Existence. $R(a:b) = 1$ for some rational interval $a:b$.
    \item Separating. If $R(a:b)=1$, then for a given $c \neq a, b$ in $a:b$, either $R(c:c) = 1$ or $R(a:c) \neq R(c:b)$. 
    \item Rooted. There is at most one $c$ such that $R(c:c) =1$.
    \item Closed. If $c$ is contained in all $R$-Yes intervals, then $R(c:c) = 1$.
\end{enumerate}

Note that in the above $a$, $b$, $c$, and $d$ are all rationals; we do not apply this to irrationals since we are defining them here. We generally obey the convention that $r$ represents naming the oracle as a number and $R$ is the rule. There is not a distinction, in fact, between $r$ and $R$, but the notation helps make the connection to how we customarily think of real numbers versus the oracle version.\footnote{As it becomes increasingly clear that the oracles do fit a model of the real numbers, we will move away from this distinction and also start to use Greek letters, such as $\alpha$, to denote oracles.}

Consistency tells us that if $c:d$ contains $a:b$ and $R(c:d) = 0$, then $R(a:b) = 0$. For if $R(a:b)=1$, then consistency implies $R(c:d)=1$ which contradicts our starting assumption. We will refer to this as part of the Consistency property. 

For further elaboration on what each of these properties means:

\begin{enumerate}

    \item Consistency says that Yes propagates upwards to larger intervals while No propagates downwards to smaller intervals. The Oracle never contradicts itself. 
    
    In our examples, this is usually trivially true by definition of the rule, corresponding to this not being of much practical use. In some sense, this property is here to ensure that there is a single oracle for a given real number.  

    \item The existence requirement says that there is definitely an interval which the Oracle confirms being in. It is required to avoid the trivial $R(a:b) = 0$ for all $a:b$ which would satisfy all the other conditions, including the separating property due to it only applying to Yes-intervals. 
    
    The existing interval also gives us a place to start in our approximation schemes that we discuss later. 
    
    It is usually easy to verify in any practical example as any Yes-interval will do and we can usually find a very bad approximate interval that is easy to establish.  

    \item Separation is needed to ensure that the Yes will continue to propagate downwards so that we can make progress on narrowing in on $r$. This is crucial to the use of the Oracle in approximating what we take to be a single number $r$. The possibility of $R(c:c) = 1$ occurs exactly when the real number $r$ corresponds to the rational number $c$. 
    
    Proving that a given oracle is, in fact, a rational can be impossible, mirroring difficulties with other real number approaches. 

    Separation could alternatively be phrased as insisting that if $R(a:b) = 1$, then at least one of $R(a:c)$ or $R(c:b)$ is 1 and if they are both 1, then $R(c:c) = 1$. We chose our formulation to emphasize that being able to separate the intervals is a key feature we desire to implement and that it fails only if the overlapping endpoint is the number of interest. 

    We will call this Interval Separation when we need to distinguish from other separation properties, as discussed later. 

    We also want to note that while we did exclude $c$ from being the endpoints, this is not because that would yield a problem, but rather because it is a vacuous statement. Indeed, if $c=a$, then either $R(a:a)=1$ or $R(a:a)=0$. If it is the former, then we have satisfied the singleton part of the separation property. If $R(a:a)=0$, then since $R(a:b)=1$ by assumption, we have $R(a:a) \neq R(a:b)$ which is the other part of the property. We therefore concentrate our discussion solely on $c$ strictly between $a$ and $b$.
    
    \item Being Rooted ensures that there is just one single number under discussion. Without this, we could have the rule, for example, $R(a:b) = 1$ for all $a:b$ that contain a subinterval of a fixed given interval $c:d$. This is compatible with all the properties except for being rooted. In particular, the separation property is fine since given an interval $a:b$ overlapping  $c:d$, and any number $e$ that is in both $a:b$ and in $c:d$, then $R(e:e) = 1$ and hence $R(a:e)=R(e:b) = 1$ is perfectly allowed even though this applies for multiple rational numbers $e$.
    
    In the arithmetic operations, being rooted is proven by taking two rational singletons and then exhibiting an interval of sufficiently small length that the two can be distinguished. 

    \item An important benefit of the Closed property is that the arithmetic of rationals is unchanged. As we shall see, oracle arithmetic involves interval arithmetic. With the presence of the singletons, we can do arithmetic with them as we do with fractions. 
    
    It also helps ensure the uniqueness of an oracle. For example, we could define a $0^+$ oracle by being all the intervals of the form $0\lt a$ and another oracle $0^-$ as intervals of the form $a \lt 0$. These two proto-oracles satisfy the other properties. Both could be said to represent $0$, but they are distinct. They obviously fail to satisfy the Closed property. If we add in $0:0$ to them to satisfy the Closed Property, then they fail Consistency since we now need to include all intervals that include $0:0$. This is how we achieve uniqueness. 
    
    Most representations of real numbers have this issue with rationals. Decimals have the issue of repeating 9s. Dedekind cuts have the question of whether to include the rational or not in its own cut. Continued fraction representations are unique on irrational numbers, but they have two representatives for rationals. 
    
    In our constructions, we often need to add this in explicitly. Establishing that a rational real is rational can be difficult, possibly impossible. This seems to be one of the essential difficulties with real numbers, to varying degrees.

    We will sometimes use the fact that $R(a:a)=0$ implies the existence of a Yes interval $b:c$ that does not contain $a$. To see this, if $b:c$ did not exist, then $a$ would be in every Yes interval and thus $R(a:a)=1$ by the Closed property. 

\end{enumerate}

This definition does not avoid all of the common downsides of the other definitions of real numbers, but it does reduce them to a context that reflects how real numbers get used in practice. It illustrates those downsides as fundamental to the nature of real numbers while avoiding inessential downsides that other approaches have. See Section \ref{sec:others} for a discussion of other approaches.  

A \textbf{singleton oracle }is an oracle whose Yes intervals includes a singleton. These will eventually be identified with the rationals. A \textbf{neighborly oracle} is an oracle whose Yes intervals do not include a singleton. These are the irrationals.

\section{Basic Properties of an Oracle}

We establish some basic facts of oracles which we will find useful throughout. 

We start with the extremely helpful properties involving intersections and unions. We then discuss the uniqueness of oracles, with the related notions of determining ordering and compatibility. In particular, we define what we mean by $r = s$, $r<s$, $r>s$, and $r?s$ for oracles $r$ and $s$. 

The relation $r?s$ means that they have not been established to be equal, but they are compatible with that hypothesis. Typically, we will want to specify an interval of compatibility, say $a:b$, which is the smallest known interval tested that they both reported Yes on. To denote that, we can use $a:r?s:b$ 

\subsection{Intersections and Unions}

Let $a \leq b \leq c \leq d$ with all of them being rationals. The intersection of $a\lte c$ and $b\lte d$ is $b\lte c$ while the union is $a\lte d$. The two intervals $a\lte b$ and $c\lte d$ are disjoint if $b < c$. 

One can either take those as statements from the usual set definitions or take these as the definitions of those terms. It is immediate from the Consistency property that Yes-intervals are closed under unions and that No-intervals are closed under intersection. We will prove that Yes (No) intervals are closed under pairwise intersections (unions). 

We start with establishing that the intersection of two $R$-Yes intervals is an $R$-Yes interval.

\begin{proposition}\label{pr:inter}
Let $R$ be an oracle and $a \leq b \leq c \leq d$. If $R(a:c) = 1 = R(b:d)$, then $R(b:c) = 1$.
\end{proposition}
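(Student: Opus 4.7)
The plan is to use Consistency to lift the hypotheses to a single Yes interval on $a:d$, and then to apply the Separating property at the point $b$. First, since $a:d$ contains $a:c$ and $R(a:c)=1$, Consistency gives $R(a:d)=1$. The two degenerate situations $a=b$ and $c=d$ dispose of themselves immediately: they make $b:c$ coincide with $a:c$ or with $b:d$ respectively, each of which is a Yes interval by hypothesis.

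In the remaining situation $a<b$ and $c<d$, and so $a<b\leq c<d$; in particular $b$ is strictly interior to $a:d$. Applying Separating to $R(a:d)=1$ at $b$ yields two alternatives. In the first, $R(b:b)=1$; then $b:c$ contains the singleton $b:b$, so Consistency closes the argument with $R(b:c)=1$. In the second, $R(a:b)\neq R(b:d)$, and since $R(b:d)=1$ this forces $R(a:b)=0$.

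Now with $R(a:b)=0$ we cannot also have $b=c$, for then $a:b$ and $a:c$ would be the same interval with different $R$-values, contradicting $R(a:c)=1$. Therefore $b<c$, and Separating applies to $R(a:c)=1$ at the strictly interior point $b$. This again gives either $R(b:b)=1$, reducing to the previous subcase, or $R(a:b)\neq R(b:c)$, which combined with $R(a:b)=0$ forces $R(b:c)=1$. In every branch the conclusion $R(b:c)=1$ is obtained.

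The main obstacle I anticipate is simply bookkeeping for the degenerate subcases where some of $a,b,c,d$ coincide, since the Separating property is officially stated only for split points strictly between the endpoints, so I must verify at each invocation that $b$ is genuinely interior to the interval being split. Once those edges are peeled off, the whole proof reduces to a single idea: splitting $a:d$ at $b$ and using $R(b:d)=1$ either produces the singleton $b:b$ outright or pins down $R(a:b)=0$, after which splitting $a:c$ at $b$ finishes the job.
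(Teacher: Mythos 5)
Your proof is correct and is essentially the paper's argument reflected: the paper lifts to $R(a:d)=1$ by Consistency and then applies Separation twice at the split point $c$ (first to $a:d$ to deduce $R(c:d)=0$, then to $b:d$ to force $R(b:c)=1$), whereas you separate twice at $b$ (first to $a:d$ to deduce $R(a:b)=0$, then to $a:c$ to force $R(b:c)=1$). The extra care you take with the degenerate cases $a=b$, $b=c$, $c=d$ is sound and slightly more explicit than the paper, which relies on its earlier remark that Separation is vacuous at endpoints.
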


\begin{proof}
  
  This follows from the Separation and Consistency properties. From Consistency, we have $R(a:d) = 1$ since it contains an $R$-Yes interval (two in fact, but we just need one). Then apply separation to $a:c$ and $c:d$. If $R(c:c) = 1$, then $R(b:c) = 1$ since it contains $c:c$ and we are done. Otherwise, Separation tells us $R(a:c) \neq R(c:d)$. By assumption, we know $R(a:c) = 1$ so that implies $R(c:d) = 0$. 
  
  We can then consider what follows from $R(b:d) = 1$.  From Separation based on $c$, we have $R(b:c) \neq R(c:d)$ since we are specifically in the case that $R(c:c) \neq 1$. As we do have $R(c:d) = 0$, we must have $R(b:c) = 1$.
  
  We have established that Yes-intervals are closed under pairwise intersection. 
\end{proof}

Let us now establish that two overlapping No-intervals are closed under union. 

\begin{proposition}\label{pr:union}
Let $R$ be an oracle and $a \leq b \leq c \leq d$.  If $R(a:c) = 0$ and $R(b:d) = 0$, then $R(a:d) = 0$. 
\end{proposition}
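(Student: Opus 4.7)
The plan is to mirror the proof of Proposition \ref{pr:inter} by contradiction, swapping the roles of Yes and No, and using the Separating property at the shared overlap point $c$.

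First I would dispose of the degenerate cases. If $a=c$, then since $a\leq b\leq c$, we force $a=b=c$, so $a:d=b:d$ and the hypothesis $R(b:d)=0$ gives the conclusion immediately. Likewise, if $c=d$, then $a:d=a:c$ and $R(a:c)=0$ is the conclusion. So I may henceforth assume $a\leq b < c$ or $b\leq c < d$, and more usefully, that $c$ lies strictly between the endpoints $a$ and $d$ of the target interval $a:d$ (the only case that needs Separation). If $a=c<d$ or $a<c=d$, a similar collapse reduces to a hypothesis.

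With $a < c < d$ assumed, I would suppose for contradiction that $R(a:d)=1$ and apply the Separating property to the interval $a:d$ at the point $c$. There are two cases. In the first, $R(c:c)=1$: then $c:c$ is nested in $a:c$, so Consistency yields $R(a:c)=1$, contradicting the hypothesis $R(a:c)=0$. In the second, $R(c:c)\neq 1$: then Separation gives $R(a:c)\neq R(c:d)$, and since $R(a:c)=0$ by hypothesis, we must have $R(c:d)=1$. But $c:d$ is nested in $b:d$ (because $b\leq c$), so Consistency yields $R(b:d)=1$, contradicting the hypothesis $R(b:d)=0$. Both cases being untenable, the contradiction hypothesis $R(a:d)=1$ fails, and $R(a:d)=0$ as desired.

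I do not expect any real obstacle here; the proof is structurally the dual of Proposition \ref{pr:inter}. The one thing to be careful about is the direction of Consistency: I must use the form that No propagates downward to smaller nested intervals (the contrapositive spelled out after the definition), applied once to pass from $R(b:d)=0$ down to the nested subinterval $c:d$ implicitly, and in the forward direction once to pass $R(c:c)=1$ up to $R(a:c)=1$. The edge cases where $c$ coincides with $a$ or $d$ are just bookkeeping and should be dispatched in a single opening sentence.
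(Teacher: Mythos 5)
Your proof is correct and takes essentially the same approach as the paper. The only cosmetic difference is where you place the separation point: you split $a:d$ at $c$ and derive $R(c:d)=1$ to contradict $R(b:d)=0$, while the paper splits at $b$ and derives $R(a:b)=1$ to contradict $R(a:c)=0$. These are symmetric variants of the identical argument (contradiction via one application of Separation and one of Consistency), and your handling of the $R(c:c)=1$ branch and of the degenerate collapses is if anything slightly more explicit than the paper's.
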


\begin{proof}
    Let's assume that $R(a:d) = 1$ and find a contradiction. With that assumption, $R(a:b) = 1$ from Separation applied to $a:b$ and $b:d$ with the fact that $b:d$ was a No-interval. But this contradicts $R(a:c)= 0$ since it contains $a:b$ and therefore Consistency would demand $a:c$ must be a Yes-interval. We can conclude that $R(a:d) =0$.
\end{proof}

Also, two disjoint intervals cannot both be yes. Notice that $b$ and $c$ are separated by a strict inequality in the following proposition. 

\begin{proposition} \label{pr:disjoint}
Let $R$ be an oracle. Let $a:b$ and $c:d$ be such that no rational $q$ is in both $a:b$ and $c:d$. Then we cannot have both $R(a:b) = 1$ and $R(c:d) = 1$. 
\end{proposition}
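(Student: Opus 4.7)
The plan is to assume both $R(a:b) = 1$ and $R(c:d) = 1$ and manufacture two distinct rationals $e_1 \neq e_2$ with $R(e_i:e_i) = 1$, contradicting the Rooted property.

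First I would normalize the endpoints. Using $a:b = b:a$ and the symmetry of the hypothesis in the two intervals, assume $a \leq b$, $c \leq d$, and $a \leq c$. The assumption that no rational lies in both intervals then forces $b < c$ strictly, since $b = c$ would place that common endpoint in both intervals, and $b > c$ would put $c$ in $a:b$ (as $a \leq c \leq b$) as well as in $c:d$.

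The key move is to exploit the rational gap $(b,c)$, which is nonempty (in fact infinite). I would pick two distinct rationals $e_1 < e_2$ strictly between $b$ and $c$. For each $i$, Consistency gives $R(a:e_i) = 1$ (the interval contains the Yes interval $a:b$), $R(e_i:d) = 1$ (it contains the Yes interval $c:d$), and $R(a:d) = 1$. Applying the Separating property to the Yes interval $a:d$ at the interior point $e_i$, we get either $R(e_i:e_i) = 1$ or $R(a:e_i) \neq R(e_i:d)$. The latter is ruled out by the computation just made, so $R(e_i:e_i) = 1$ for both $i = 1, 2$.

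Two distinct rational singletons evaluating to $1$ contradicts Rooted, completing the argument. The only subtlety I anticipate is the initial normalization giving $b < c$ strictly; once that is in hand, the proof is a direct application of Consistency, Separation, and Rooted, and does not need the Closed or Existence properties.
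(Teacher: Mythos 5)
Your proof is correct, and it is a genuinely different argument from the paper's. The paper first invokes Rooted to conclude that at least one of $R(b:b)$ or $R(c:c)$ is $0$ (say $R(b:b)=0$), then separates $a:d$ at the endpoint $b$ to get $R(b:d)=0$ and hence $R(c:d)=0$ by Consistency — a direct contrapositive that requires a small case split on which endpoint supplies the No singleton. You instead run a pure contradiction: assuming both intervals are Yes, Consistency and Separating force every rational in the gap $(b,c)$ to be a Yes singleton, and then density of the rationals hands you two of them, contradicting Rooted. Your version avoids the endpoint case split and uses Rooted only at the final step as the contradiction, at the modest cost of having to pick two gap points rather than one. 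Both use exactly Consistency, Separating, and Rooted; neither needs Closed or Existence. Your normalization argument for $b<c$ is also correct and a bit more explicit than the paper's "other orderings all have a common rational" remark.

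One small observation: a single gap point $e$ already gives $R(e:e)=1$, but that alone does not contradict anything without invoking (essentially) the very proposition you are proving, so the second gap point is genuinely needed — you were right to take two.
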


\begin{proof}
By relabelling, we can assume $a:b:c:d$, that is, we assume $b$ and $c$ are the closest endpoints of the two intervals. This can be done as they are disjoint as the other possibilities, such as $a:d:b:c$, $a:c:b:d$ or $d:a:b:c$ all involve having a rational in common between the intervals.  

By being Rooted, we must have at least one of $R(b:b) = 0$ or $R(c:c) = 0$ holding true, probably both. Let's assume $R(b:b) = 0$ without loss of generality.
 
Either $R(a:b) = 0$ or $R(a:b)=1$. If it is the first case, then we have established what we want to claim. So let us assume we are in the second case. 

Since $R(a:b) = 1$ and $a:b$ is contained in $a:d$, then, by Consistency, we have $R(a:d) = 1$. Separation applies for the intervals $a:b$ and $b:d$. Thanks to $R(b:b) = 0$, we have $1 = R(a:b) \neq R(b:d)$ and thus $R(b:d) = 0$. As $c:d$ is contained in $b:d$ and No intervals propagate downwards by Consistency, we must have $R(c:d)=0$  as we were to establish in this case. 
\end{proof}

\begin{corollary}\label{cor:pair-inter}
    Given an oracle and two Yes intervals for that oracle, their intersection is non-empty and is a Yes interval. 
\end{corollary}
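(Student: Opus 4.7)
The plan is to combine Proposition \ref{pr:disjoint} (to get non-emptiness) with Proposition \ref{pr:inter} (to get that the intersection is Yes), after reducing to a canonical ordering of the endpoints.

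First I would name the two Yes intervals $a:b$ and $c:d$ with $R(a:b)=R(c:d)=1$, and without loss of generality assume $a \leq c$. To get non-emptiness, I would apply the contrapositive of Proposition \ref{pr:disjoint}: since both intervals are Yes, they cannot be disjoint, so (given the definition of disjoint as $b < c$ under the $a \leq c$ convention) we must have $c \leq b$. In particular $c$ itself lies in both intervals, so the rational intersection is non-empty.

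Next I would split into the two remaining sub-cases determined by where $d$ sits relative to $b$. If $d \leq b$, then $c:d$ is nested in $a:b$, the intersection is $c:d$, and it is already a Yes interval by hypothesis. If $b \leq d$, then we have the configuration $a \leq c \leq b \leq d$, which is exactly the hypothesis of Proposition \ref{pr:inter}, so the intersection $c:b$ is a Yes interval.

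I do not expect any real obstacle here: everything is bookkeeping once the correct ordering of endpoints is fixed, and the two previous propositions do all the substantive work. The only minor nuisance is being careful about the WLOG step and checking that the definition of disjoint from the earlier interval-notation subsection gives exactly what is needed to conclude $c \leq b$ rather than some weaker statement.
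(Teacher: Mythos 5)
Your proof is correct and follows essentially the same route as the paper's: invoke Proposition \ref{pr:disjoint} to rule out disjointness and hence obtain a common rational, then split on the ordering of endpoints and apply Proposition \ref{pr:inter} in the strictly-overlapping case while observing the nested case is trivial. Your WLOG normalization $a \leq c$ is just a slightly tidier bookkeeping of the three configurations the paper lists explicitly.
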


\begin{proof}
    By the above proposition, if two intervals are Yes, then there must be a rational $q$ contained in both intervals. Let $a:b$ and $c:d$ be the two intervals. For two intervals to have a common intersection, they must, up to relabelling,  either satisfy $a:c:b:d$ or $c:a:b:d$ or $a:c:d:b$. For the latter two, the intersection is one of the given Yes intervals and thus is a Yes interval. For the first case, that of overlapping intervals, it corresponds to the setup of Proposition \ref{pr:inter}. Thus, that proposition applies and we have that $c:b$ is a Yes interval. It can be a singleton. 
\end{proof}

\begin{corollary}\label{cor:finite-inter}
    A finite collection of Yes intervals will have a non-empty intersection which is a Yes interval. 
\end{corollary}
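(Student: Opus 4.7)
The plan is to proceed by induction on the size $n$ of the finite collection of Yes intervals, using Corollary \ref{cor:pair-inter} as the engine for the inductive step.

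For the base case $n=1$ the claim is trivial, and $n=2$ is exactly Corollary \ref{cor:pair-inter}. For the inductive step, suppose the statement holds for any collection of $n$ Yes intervals, and consider $n+1$ Yes intervals $I_1, \ldots, I_{n+1}$. By the induction hypothesis, the intersection $J = I_1 \cap \cdots \cap I_n$ is non-empty and is itself a Yes interval. Since the pairwise intersection of two closed rational intervals (when non-empty) is again a closed rational interval, $J$ is a legitimate rational interval of the form $a \lte b$ to which our oracle vocabulary applies. Applying Corollary \ref{cor:pair-inter} to the two Yes intervals $J$ and $I_{n+1}$ then yields that $J \cap I_{n+1} = I_1 \cap \cdots \cap I_{n+1}$ is non-empty and a Yes interval, completing the induction.

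The only potential obstacle is making sure that at each stage the running intersection really is an \emph{interval} in our sense (with rational endpoints), so that the hypotheses of Corollary \ref{cor:pair-inter} apply. This is immediate from the case analysis already carried out in the proof of Corollary \ref{cor:pair-inter}: whichever of the configurations $a:c:b:d$, $c:a:b:d$, or $a:c:d:b$ obtains, the resulting intersection has rational endpoints drawn from $\{a,b,c,d\}$ and is thus again a rational interval. Hence no new work beyond invoking the previous corollary is required, and the induction carries through cleanly.
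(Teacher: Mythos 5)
Your induction on $n$ using Corollary \ref{cor:pair-inter} for the inductive step is essentially the same argument the paper gives, which iteratively forms the running intersection $B_i = B_{i-1}\cap A_i$ and invokes the pairwise corollary at each step. The extra remark confirming the running intersection stays a rational interval is a reasonable bit of diligence but not strictly needed, since Corollary \ref{cor:pair-inter} already asserts the intersection is a Yes interval (hence a rational interval).
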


\begin{proof}
The previous Corollary establishes it for pairs. We can thus reduce any finite collection. Indeed, given a collection $\{A_i\}_{i=1}^n$ of Yes intervals, we can define $B_1 = A_1$, $B_i = B_{i-1} \cap A_i$ for $i > 1$. If $B_{i-1}$ is a Yes interval, then $B_i$ is a Yes interval by the Corollary. $B_n$ will then be the intersection of all of the $A_i$ and is a Yes interval.  
\end{proof}

For infinite collections of intervals, the intersection can be empty. It can also fail to be an inclusive rational interval. For examples and related discussions, see Section \ref{sec:ni}. 

We can thus see that one method of demonstrating that $R(a:b)=0$ for some given interval $a:b$ is to produce an interval $c:d$ disjoint from $a:b$ such that $R(c:d)=1$. On the other hand, if all neighborly $R$-Yes intervals intersect $a:b$, but none of them are strictly contained in $a:b$, then either $a$ or $b$ is contained in all such intervals, but not both by being Rooted. Let's say $a$ is. Then $R(a:a)=1$ by the Closed property and thus $R(a:b)=1$ as well. This situation is where the usual computational difficulties arise and we often have to resort to the notion of compatibility as described in the next section. 

We will also find the following couple of propositions to be useful. 

\begin{proposition}\label{pr:subinter}
Let $R$ be a neighborly oracle. Then given an $R$-Yes interval $a\lt b$, there exists an interval $c:d$ strictly contained in $a:b$ which is also an $R$-Yes interval. 
\end{proposition}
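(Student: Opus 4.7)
The plan is to exploit the neighborly hypothesis to conjure up Yes intervals missing each of the endpoints $a$ and $b$, then intersect everything together using the tools already established. Specifically, since $R$ is neighborly, $R(a:a)=0$ and $R(b:b)=0$. By the remark at the end of the Closed property discussion, $R(a:a)=0$ gives us some $R$-Yes interval $I_1$ that does not contain $a$, and $R(b:b)=0$ gives us some $R$-Yes interval $I_2$ that does not contain $b$.

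Next I would apply the finite intersection corollary (Corollary \ref{cor:finite-inter}) to the three Yes intervals $a:b$, $I_1$, and $I_2$. This guarantees that their joint intersection $K = (a:b) \cap I_1 \cap I_2$ is non-empty and is itself an $R$-Yes interval. Write $K = c:d$ with $c \leq d$.

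The key observation is that $K$ misses both endpoints of $a:b$: since $K \subseteq I_1$ and $a \notin I_1$, we have $a \notin K$, so $a < c$; symmetrically, since $K \subseteq I_2$ and $b \notin I_2$, we have $b \notin K$, so $d < b$. Combined with $K \subseteq a:b$, we get $a < c \leq d < b$, which is exactly the definition of $c:d$ being strictly contained in $a\lt b$. This furnishes the desired $R$-Yes subinterval.

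I don't expect any step to pose a real obstacle: the finite intersection corollary does essentially all the work, and the neighborly hypothesis is used only to produce two ``witnesses'' $I_1, I_2$ avoiding the respective endpoints. The one spot worth being careful about is ensuring that the intersection avoids the endpoints rather than merely being a subinterval of $a:b$; this is immediate from the set-theoretic fact that $K \subseteq I_j$ and $a \notin I_1$, $b \notin I_2$, but it is the conceptual crux and deserves to be stated explicitly.
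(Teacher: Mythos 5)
Your proof is correct and takes essentially the same approach as the paper: both use the neighborly hypothesis together with the Closed property to produce Yes intervals missing $a$ and $b$, respectively, and then intersect Yes intervals to obtain a Yes subinterval avoiding both endpoints. The only cosmetic difference is that you include $a:b$ in the finite intersection from the start (via Corollary \ref{cor:finite-inter}), which makes containment in $a:b$ automatic, whereas the paper intersects only the two witness intervals and then separately argues that the result must meet $a:b$ and hence sit strictly inside it; both routes are equally valid.
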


\begin{proof}
Since $R$ is a neighborly oracle but also satisfies the closed property, there exist $R$-Yes intervals $e:f$ and $g:h$ such that $a$ is not contained in $e:f$ and $b$ is not contained in $g:h$. 

Since $e:f$ and $g:h$ are Yes-intervals, their intersection is non-empty and also a Yes interval. Let's call that interval $c\lte d$.  Notice that the interval does not contain $a$ and does not contain $b$. It is an $R$-Yes interval and thus must have a non-empty intersection with $a:b$. Let $q$ be a rational in $a\lte b$ and $c \lte d$. If $c\leq a$, then we would have $c \leq a \leq q \leq d$ which would imply $a$ was in $c \lte d$. Since that is false, we must have $a < c$. Similarly, $d < b$. Thus, $c:d$ must be strictly contained in the interval, as was to be shown. 
\end{proof}

\begin{proposition}\label{pr:multi}
Given an oracle $R$ and an $R$-Yes interval $a\lt b$, any finite rational partition of $a:b$ will yield exactly one $R$-Yes interval if none of the partition numbers form a Yes singleton. 
\end{proposition}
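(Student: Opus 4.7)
My plan is to proceed by induction on the number $n$ of subintervals in the partition. Write the partition as $a = x_0 < x_1 < \cdots < x_n = b$ with the hypothesis $R(x_i:x_i) = 0$ for $1 \leq i \leq n-1$ (the endpoints $x_0, x_n$ are irrelevant to the hypothesis since they are not interior partition points, but any Yes singleton at an endpoint is allowed and wouldn't affect the argument).

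The base case $n=1$ is trivial: the partition is just $a:b$ itself, which is the one Yes interval. For the inductive step, apply the Separating property to $a:b$ at the point $x_1$. Because $R(x_1:x_1) = 0$ by assumption, Separation forces $R(a:x_1) \neq R(x_1:b)$, so exactly one of the two is Yes. If $R(a:x_1) = 1$, then $R(x_1:b) = 0$, and by Consistency every subinterval $x_i:x_{i+1}$ for $i \geq 1$ (being nested in $x_1:b$) is No; thus $a:x_1$ is the unique Yes interval in the partition. If instead $R(x_1:b) = 1$, then $R(a:x_1) = 0$, so by Consistency $a:x_1$ is No, and we apply the induction hypothesis to the partition $x_1 < x_2 < \cdots < x_n$ of the Yes interval $x_1:b$, whose interior partition points still satisfy the No-singleton condition. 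This yields exactly one Yes subinterval, and combined with $a:x_1$ being No we are done.

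For robustness, it is worth separately verifying uniqueness without leaning entirely on the induction: if two partition pieces $x_{i-1}:x_i$ and $x_{j-1}:x_j$ with $i < j$ were both Yes, then either $j > i$ strictly (so they are disjoint rational intervals, contradicting Proposition \ref{pr:disjoint}) or $j = i+1$ (so they share only the endpoint $x_i$); in the latter case, Proposition \ref{pr:inter} would give $R(x_i:x_i) = 1$, contradicting our hypothesis. This observation could replace the Consistency-based uniqueness in the inductive step if one prefers a more symmetric argument.

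The main obstacle is cosmetic rather than substantive: making sure the Separation invocation is clean (it requires $x_1$ to be strictly between $a$ and $b$, which holds since the partition is nontrivial when $n \geq 2$), and being explicit that the hypothesis "no partition number forms a Yes singleton" is inherited by the smaller partition in the recursive call. Both points are routine, so the real content is a single application of Separation per level of recursion.
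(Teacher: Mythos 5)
Your induction is the paper's own iterative walk through the partition, just packaged recursively: at each stage you apply Separation at the leftmost interior point and either stop (left piece Yes, all others No by Consistency) or pass the Yes status to the right tail and recurse, whereas the paper loops through $c_1, \ldots, c_n$ and closes the loop with the union-of-No-intervals fact (Proposition~\ref{pr:union}) — a step your induction hypothesis absorbs cleanly. Your alternative uniqueness argument via Propositions~\ref{pr:disjoint} and~\ref{pr:inter} is a nice redundancy the paper does not spell out, and your observation that the hypothesis need only constrain \emph{interior} partition points (while the paper conservatively also assumes $R(a{:}a)=0=R(b{:}b)$) is correct and even slightly sharpens the statement.
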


This is repeated application of the Separation property. 

\begin{proof}
Let $a < c_1 < c_2 < c_3 < \cdots < c_n < b$ be a given rational partition of the interval $a:b$ where all of the $c_i$ are rationals and $R(c_i:c_i) = 0$ along with $R(a:a) = 0 = R(b:b)$. We are given $R(a:b) = 1$. By Separation, $R(a:c_1) \neq R(c_1:b)$. If $R(a:c_1)=1$, then we are done as $R(c_1:b)=0$ and also all sub-intervals are 0 as well since No propagates downwards. 

Let us then assume $R(c_1:b)=1$. We can then repeat. Assume $R(c_i:b)=1$. Then $R(c_i:c_{i+1})=1$ or $R(c_{i+1}:b)=1$. If the former, we are done, having shown all the intervals previously to be 0 as well as all the intervals later to be 0.  If the latter continues to hold as we iterate, then we continue until $i+1 = n$. At that point, we have shown all prior intervals to be 0. Thus, $R(a:c_n)=0$ as $a:c_{n}$ is a union of overlapping No intervals. By separation, we must have $R(c_n:b) = 1$. 

We have shown what was desired. 
\end{proof}

\begin{corollary}
    For a neighborly oracle $R$ and an $R$-Yes interval $a\lt b$, any finite rational partition of $a:b$ will yield exactly one $R$-Yes interval. 
\end{corollary}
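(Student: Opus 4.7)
The plan is essentially to observe that the hypothesis of Proposition \ref{pr:multi} is automatically satisfied when the oracle is neighborly, so the corollary reduces to a direct application of that proposition.

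More concretely, I would unfold the definition of a neighborly oracle: by definition, its Yes intervals contain no singletons, which means $R(c:c) = 0$ for every rational $c$. In particular, for any finite rational partition $a < c_1 < c_2 < \cdots < c_n < b$, we have $R(a:a) = 0$, $R(b:b) = 0$, and $R(c_i:c_i) = 0$ for each $i$. Thus none of the partition numbers (nor the endpoints) form a Yes singleton, which is exactly the side condition required by Proposition \ref{pr:multi}.

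Applying Proposition \ref{pr:multi} then yields exactly one $R$-Yes interval among the partition pieces, which is the claim. There is no real obstacle here; the only thing to double-check is that the definition of \textbf{neighborly oracle} genuinely rules out every Yes singleton (not merely the endpoints $a$ and $b$), which it does since it asserts no Yes interval at all is a singleton.
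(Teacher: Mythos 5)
Your proposal is correct and takes the same approach as the paper: observe that a neighborly oracle has no Yes singletons, so the side condition of Proposition \ref{pr:multi} holds vacuously, and apply that proposition. The paper's own proof is a one-liner to the same effect; you have merely spelled out the details.
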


\begin{proof}
    There are no Yes singletons so the proposition automatically applies. 
\end{proof}

For a singleton $R$, there is not much to say other than that any interval containing the singleton will be a Yes interval. So partitions would lead to either one Yes interval if the singleton is not on the boundary of the internal partitions or lead to two Yes intervals if it is on the boundary between two of them. 

\begin{proposition}\label{pr:no-is-disjoint}
    If an interval $a:b$ is a No interval for an oracle $R$, then there exists an $R$-Yes interval disjoint from $a:b$.
\end{proposition}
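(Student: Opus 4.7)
The plan is to combine the contrapositive of the Closed property with the intersection corollary to produce a Yes interval that must avoid both endpoints of $a:b$, and then use Consistency to rule out every position for this interval except disjointness.

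First, I would propagate the No downwards. Since $R(a:b) = 0$ and the singletons $a:a$ and $b:b$ are contained in $a:b$, the Consistency property (in its No-propagates-downward form noted just after the definition) gives $R(a:a) = 0$ and $R(b:b) = 0$. Applying the contrapositive of the Closed property to each of these — which is exactly the remark made at the end of the discussion of the Closed property — yields an $R$-Yes interval $e:f$ that does not contain $a$ and an $R$-Yes interval $g:h$ that does not contain $b$. (If $a = b$, one of these is already disjoint from the singleton $a:a$ and we are done; the generic case is $a < b$.)

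Next I would intersect. By Corollary \ref{cor:pair-inter}, the intersection of $e:f$ and $g:h$ is a non-empty $R$-Yes interval $c:d$, and $c:d$ contains neither $a$ nor $b$ because neither of its parents does. The final step is a position analysis of $c:d$ relative to $a:b$. There are only three possibilities: $c:d$ is disjoint from $a:b$, $c:d$ is nested inside $a:b$, or $c:d$ overlaps $a:b$ without being nested. The nested case forces $a < c \le d < b$ (since $c:d$ avoids both endpoints), which makes $c:d$ a sub-interval of $a:b$; by Consistency, $R(a:b)$ would then equal $1$, contradicting the hypothesis. The overlapping-but-not-nested case forces one endpoint of $c:d$ to lie outside $a:b$ while the other lies inside, and then the overlap condition forces $c:d$ to contain $a$ or $b$, again a contradiction. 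Only disjointness survives, which is what we wanted.

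The main obstacle I anticipate is the bookkeeping in the last step: one must be careful that "contains neither $a$ nor $b$" combined with "non-empty intersection with $a:b$" really collapses to the nested sub-interval case, since otherwise one endpoint of $c:d$ would have to straddle an endpoint of $a:b$. Once that case analysis is laid out cleanly (most efficiently via the separation/distance framework from the Interval Notation subsection), the proof is short, with Corollary \ref{cor:pair-inter} and the Closed-property remark doing the real work.
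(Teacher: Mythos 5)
Your proof is correct and takes essentially the same approach as the paper's: both propagate the No down to the singletons $a:a$ and $b:b$, invoke the Closed property (in contrapositive form) to obtain Yes intervals avoiding each endpoint, intersect via Corollary \ref{cor:pair-inter} to get a single Yes interval avoiding both, and then use Consistency to rule out that interval sitting inside $a:b$. The only cosmetic difference is that you spell out the final position trichotomy more explicitly than the paper does.
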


\begin{proof}
    Since $R(a:b) = 0$, we have $R(a:a) = 0$ and $R(b:b) = 0$. By Existence, there is a Yes interval and by being closed, there are Yes intervals $c:d$ and $e:f$ that do not contain $a$ and $b$, respectively. Let $g:h$ be the intersection of $c:d$ and $e:f$. This is a Yes interval. It does not contain $a$ or $b$ as neither of those are common to both $c:d$ and $e:f$. Thus, $g:h$ is either strictly contained in $a:b$ or disjoint from it. If it was contained in $a:b$, then $a:b$ would be a Yes interval by Consistency. But $a:b$ is not. Thus, $g:h$ is a Yes interval disjoint from $a:b$.
\end{proof}

\begin{corollary}\label{cor:exclude-singleton}
    If $a : b$ is a Yes interval and $a:a$ is a No singleton, then there exists a number $c$ in $a:b$, not equal to $a$ or $b$, such that $c : b$ is a Yes interval.
\end{corollary}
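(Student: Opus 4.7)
The plan is to produce a Yes interval disjoint from $a$ and then intersect it with $a:b$. Since $R(a:a) = 0$, the discussion following the Closed property (or equivalently Proposition \ref{pr:no-is-disjoint} applied to the No singleton $a:a$) gives us an $R$-Yes interval $e:f$ that does not contain $a$. The two Yes intervals $e:f$ and $a:b$ cannot be disjoint by Proposition \ref{pr:disjoint}, which together with $a \notin e:f$ forces $e > a$ (otherwise $f < a$, making the intervals disjoint from $a:b$).

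Next, I would apply Corollary \ref{cor:pair-inter} to obtain that the intersection of $e:f$ with $a:b$ is a Yes interval, say $g:h$, with $a < g \le h \le b$. At this point there are two cases. If $g < b$, I take $c = g$: then $c$ is strictly between $a$ and $b$, and $c:h \subseteq c:b$ together with Consistency gives $R(c:b) = 1$, as desired.

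The main obstacle is the degenerate case $g = b$, where the intersection collapses to the singleton $b:b$. In that case the Yes intersection forces $R(b:b) = 1$. Since $a < b$ are rationals, I can pick any rational $c$ strictly between them (e.g. $c = (a+b)/2$); then $c:b$ contains the Yes singleton $b:b$, and Consistency again gives $R(c:b) = 1$. In both cases the desired $c$ exists, completing the proof. The whole argument is really a bookkeeping exercise built on Proposition \ref{pr:disjoint}, Corollary \ref{cor:pair-inter}, and the Closed property; the only subtlety is remembering to handle the singleton-at-$b$ degeneracy.
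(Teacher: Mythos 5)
Your proof is correct and takes essentially the same route as the paper's: invoke Proposition \ref{pr:no-is-disjoint} to get a Yes interval missing $a$, intersect it with $a:b$ via Corollary \ref{cor:pair-inter}, take $c$ to be the left endpoint of the intersection, and handle the degenerate case where that intersection is the singleton $b:b$ by replacing $c$ with $(a+b)/2$. The only cosmetic difference is that you make the $e > a$ step explicit via Proposition \ref{pr:disjoint}, which the paper leaves implicit.
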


\begin{proof}
    By Proposition \ref{pr:no-is-disjoint}, there exists a Yes interval $e : f$ disjoint from $a:a$. By Corollary \ref{cor:pair-inter}, we have the intersection of $a\lte b$ with $e\lte f$, say $c:d$, is a Yes interval. Since $a$ is not in $e\lte f$, we have that $a$ is not in $c:d$ and therefore not equal to $c$.  If $c$ is taken to be the number in $c:d$ closest to $a$, then we have $a:c:d:b$ and since $c:b$ contains $c:d$, a Yes interval, $c:b$ must be a Yes interval.  If $c$ happens to be $b$, then $b:b$ is a Yes singleton. We redefine $c$ to be any rational between $a$ and $b$, say the average of the two. Since that interval will contain $b:b$, it is a Yes interval by Consistency and we have produced the required Yes interval. 
\end{proof}

\subsection{Equality and Ordering}

Two oracles are equal if they agree on all inclusive rational intervals. To prove inequality, it is therefore sufficient to find a disagreement. The basic tool is to find two disjoint intervals that we can guarantee different results on. 

If we have two different ``real numbers'', do we get different oracles for them? Since we are defining real numbers, this gets a little tricky, but we can imagine that what we mean by knowing the two numbers are different is that they are separated by a rational number. Let's assume $a < r < p < s < b$ where $r$ and $s$ are our two distinct real numbers,  $p$ is a known rational that separates them, and $a$ and $b$ are two rationals we know sandwich the two reals. Then $a:p$ is a Yes-interval for the Oracle of $r$ while $p:b$ is a Yes-interval for the Oracle of $s$, and each of those intervals are No-intervals for the other oracle. 

For example, if we we want to distinguish the square roots of 2 and 3 from one another,\footnote{We will discuss square roots later; just assume the usual ordering properties for now.} then $a = 1$, $p = \tfrac{3}{2}$, and $b = 2$ would suffice as $1 < \sqrt{2} < \tfrac{3}{2} < \sqrt{3} < 2$. Let $R$ be the Oracle of $\sqrt{2}$ and $S$ be the Oracle of $\sqrt{3}$, then we should have $R(1:\tfrac{3}{2}) = 1$, $R(\tfrac{3}{2}:2) = 0$,  $S(\tfrac{3}{2}:2) = 1$, $S(1, \tfrac{3}{2}) = 0$. Thus, these oracles are different. In particular, from this presentation, it would be reasonable to say $R < S$ and this is in line with our definition below.

If the oracles $R$ and $S$ cannot find an interval on which they disagree, but we also cannot prove that they agree on all intervals, then we can talk about compatibility and the resolution of that compatibility.

Let $R$ and $S$ be two oracles. We adopt the following relational definitions:

\begin{enumerate}
    \item $R=S$. $R$ and $S$ are equal if and only if they agree on all intervals. 
    \item $R < S$. $R$ is less than $S$ if and only if there exists $a:b < c:d$ such that $R(a:b) =1 = S(c:d)$. As $a:b$ and $c:d$ are disjoint, necessarily $R(c:d) = 0 = S(a:b)$. 
    \item $R > S$. $R$ is greater than $S$ if and only if there exists $a:b > c:d$ such that $R(a:b) =1 = S(c:d)$. As $a:b$ and $c:d$ are disjoint, necessarily $R(c:d) = 0 = S(a:b)$. This is equivalent to the statement that $R$ is greater than $S$ if and only if $S < R$. 
    \item $a:R?S:b$. $R$ and $S$ are $a:b$ compatible if $R(a:b)=S(a:b) = 1$.
    \item $R ? S$. $R$ and $S$ are compatible if and only if $R(a:b) = S(a:b)$ for all tested intervals $a:b$. 
\end{enumerate}

The first four items are statements of mathematical fact. The fifth item reflects the state of human knowledge, perhaps limited by mathematical unknowns. It is useful to have a way of expressing this indeterminate state. 

The interval $a:b$ is the \textbf{resolution of the compatibility of $R$ and $S$} if it is the shortest known interval which is both an $R$-Yes and an $S$-Yes interval. This is necessarily the intersection of all known mutually Yes intervals for $R$ and $S$.  If $R(a:b) = 1$, $S(c:d) = 1$, and $ a \leq c \leq b \leq d$ then the resolution of the compatibility of $R$ and $S$ is contained in $c:b$. In the constructive texts, they formulate the $\varepsilon$-Trichotomy proposition which states that two real numbers $\alpha$ and $\beta$ satisfy either $\alpha < \beta$, $\beta < \alpha$, or $|\alpha - \beta| < \varepsilon$ for all $\varepsilon > 0$. Their assertion is that this is what can be proven constructively, i.e., with enough resources, one can establish this to be true for two given real numbers and an $\varepsilon$. See \cite{bridger} page 57. Even that result seems ambitious. 

We now state and prove a couple of basic statements required for the definition above to be correct. 

\begin{proposition}[Well-Defined]\label{pr:wd}
Both equality and inequalities are well-defined and exclusive to one another.  
\end{proposition}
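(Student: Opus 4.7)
The plan is to split the statement into two halves: well-definedness and mutual exclusivity. For well-definedness, equality is defined as pointwise agreement on all rational intervals, which is automatically reflexive, symmetric, and transitive; and the parenthetical assertions attached to the definitions of $R<S$ and $R>S$---namely that disjoint witnesses force the oracle on the other interval to return $0$---are immediate from Proposition \ref{pr:disjoint}. So the well-definedness half is essentially unpacking the definitions and invoking the disjointness result.

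For exclusivity, there are three pairs to rule out: $R=S$ with $R<S$, $R=S$ with $R>S$, and $R<S$ with $R>S$. The first two are symmetric because $R>S$ is literally $S<R$, so I would handle them in a single paragraph: given disjoint witnesses $a:b<c:d$ with $R(a:b)=1=S(c:d)$, Proposition \ref{pr:disjoint} gives $S(a:b)=0$, while $R=S$ would force $S(a:b)=R(a:b)=1$, a contradiction.

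The main work is ruling out simultaneous $R<S$ and $R>S$. I would name the witnesses explicitly: $R<S$ supplies $a_1:b_1<a_2:b_2$ with $R(a_1:b_1)=1=S(a_2:b_2)$, and $R>S$ supplies $c_2:d_2<c_1:d_1$ with $S(c_2:d_2)=1=R(c_1:d_1)$. Since $R$ now has the two Yes intervals $a_1:b_1$ and $c_1:d_1$, Corollary \ref{cor:pair-inter} produces a common rational $x$ in both. Likewise $S$'s two Yes intervals share a common rational $y$. Then
\[
x \leq b_1 < a_2 \leq y \quad\text{and}\quad y \leq d_2 < c_1 \leq x,
\]
so $x<y<x$, the desired contradiction.

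I expect the only mildly delicate step to be this last inequality chase: one must label the endpoints of the four witnesses so that the ``left'' and ``right'' sides of the two inequalities get paired correctly with the two common rationals coming from Corollary \ref{cor:pair-inter}. Everything else is a direct appeal to definitions, Proposition \ref{pr:disjoint}, or the corollary on pairwise intersection of Yes intervals.
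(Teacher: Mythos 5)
Your proof is correct and takes essentially the same approach as the paper: equality is exclusive with $<$ and $>$ because disjoint witness intervals force a disagreement, and $<$ and $>$ exclude each other by extracting a common rational from each oracle's pair of witness Yes intervals (you via Corollary \ref{cor:pair-inter}, the paper via Proposition \ref{pr:disjoint}, which supplies the same conclusion) and chasing inequalities to a contradiction. The only cosmetic difference is that you state the well-definedness half slightly more explicitly; the substance is identical.
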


\begin{proof}
Equality is defined comprehensively across all intervals, leaving no room for differences. 

Let us prove that if $R < S$ then we cannot have $R=S$ or have $R > S$. For both of these, we use Proposition \ref{pr:disjoint} which states that disjoint intervals cannot both be Yes-intervals. 

Let us assume that $R<S$ with the implied Yes intervals $a:b < c:d$ where $R(a:b)=1 =S(c:d)$.

To show that they are not equal, we need to demonstrate that the rules give a different result on an interval. By the disjointness of these intervals, we know that $R(c:d) = 0 = S(a:b)$. $R$ and $S$ therefore differ on both $c:d$ and $a:b$ which establishes that they are not equal as oracles.  

Now we look to prove that $R>S$ cannot also be true. If $R > S$, then we have intervals $e:f > g:h$ such that $R(e:f) = 1 = S(g:h)$. We have the existence of $p$ in common to both $a:b$ and $e:f$ as well as $q$ in common to both $c:d$ and $g:h$. Since $e:f > g:h$, we have $p > q$. Since $a:b < c:d$, we have $p < q$. This is a contradiction and thus $R>S$ cannot hold true if $R<S$ holds trues.
\end{proof}

\begin{proposition}
    Given oracles $\alpha$ and $\beta$,  $\alpha$-Yes interval $a\lte b$, and $\beta$-Yes interval $c\lte d$, we have $\alpha ? \beta$ based on the union of the two intervals.  
\end{proposition}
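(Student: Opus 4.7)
The plan is to identify the ``union of the two intervals'' with the smallest rational interval containing both, namely $U := \min(a,c) : \max(b,d)$, and then apply Consistency twice, once for $\alpha$ and once for $\beta$, to conclude that both agree with a Yes on $U$.

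First, I would observe that regardless of whether $a:b$ and $c:d$ overlap or are disjoint, the interval $U$ contains each of them in the sense of ``nested in'' defined in the Interval Notation subsection: every rational in $a:b$ lies between $\min(a,c)$ and $\max(b,d)$, and similarly for $c:d$. This unifies the disjoint and overlapping cases without any further case analysis, since we never need $U$ itself to equal some pre-existing intersection point; we just need the nesting relation.

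Next, since $\alpha(a:b) = 1$ and $a:b$ is nested in $U$, the Consistency property gives $\alpha(U) = 1$. The identical argument applied to $\beta$ and $c:d$ yields $\beta(U) = 1$. By the definition of $a:R?S:b$ compatibility, having both $\alpha(U)=1$ and $\beta(U)=1$ is exactly the statement
\[
   \min(a,c) : \alpha \, ? \, \beta : \max(b,d),
\]
which is the desired compatibility on the union.

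I do not expect any real obstacle: the proposition is essentially an immediate corollary of Consistency, with the only mild subtlety being that ``union'' must be interpreted as the smallest enclosing rational interval rather than as a set-theoretic union (which might not be an interval when $a:b$ and $c:d$ are disjoint). Once that convention is fixed, the rest is a two-line invocation of Consistency.
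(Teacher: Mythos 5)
Your proof is correct and follows essentially the same route as the paper: both define the enclosing interval $e\lte f$ with $e=\min(a,c)$, $f=\max(b,d)$, observe that it contains each given Yes interval, and invoke Consistency twice to conclude $e:\alpha?\beta:f$. Your remark that the ``union'' should be read as the smallest enclosing rational interval matches the paper's convention in the Intersections and Unions subsection.
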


\begin{proof}
Let $e= \min(a,c)$ and $f = \max(b,d)$. Then, we have $e:a:b:f$ and $e:c:d:f$. Since $e:f$ contains both $a:b$ and $c:d$, it is a Yes interval for both $\alpha$ and $\beta$. We therefore have $e:\alpha?\beta:f$.
\end{proof}

\begin{proposition}[Known Trichotomy]
One of the following three relational properties must hold for any given pair of oracles $\alpha$ and $\beta$ based on $\alpha$-Yes interval $a \lte b$ and $\beta$-Yes interval $c \lte d$: $\alpha < \beta$,  $\alpha > \beta$, or $\alpha ? \beta$. 
\end{proposition}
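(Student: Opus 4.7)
The plan is a direct case analysis on the relative position of the rational intervals $a\lte b$ and $c\lte d$. Applying trichotomy of the rational ordering to the endpoints yields three mutually exclusive possibilities: $b < c$, $d < a$, or neither. The negation of both disjointness conditions forces $c \leq b$ and $a \leq d$, which is precisely the overlap condition from the intersections and unions subsection (after, if needed, relabelling to ensure $a \leq c$).

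In the first case, $a:b < c:d$ holds directly, and since we are given $\alpha(a:b)=1=\beta(c:d)$, these very intervals witness $\alpha < \beta$ by definition. The second case is symmetric and yields $\alpha > \beta$. In the third case, the two intervals overlap, so I would invoke the immediately preceding proposition applied to $a\lte b$ and $c\lte d$: the union $e:f$ with $e = \min(a,c)$ and $f = \max(b,d)$ contains both intervals, and hence by Consistency is a Yes interval for each of $\alpha$ and $\beta$. This establishes $e:\alpha?\beta:f$, which gives the compatibility relation $\alpha?\beta$.

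The main obstacle is essentially nil: the result is an immediate consequence of the definitions together with the previous proposition. The only point that requires care is making sure the case analysis is genuinely exhaustive, which reduces to rational trichotomy applied to $b$ versus $c$ and $d$ versus $a$. I would briefly remark that the three conclusions are not mutually exclusive in a strong sense since $\alpha?\beta$ merely records that no disagreement has yet been exhibited; the proposition asserts that at least one of the three conclusions can always be drawn from the two given Yes intervals, and Proposition \ref{pr:wd} takes care of separating the genuine inequalities from equality.
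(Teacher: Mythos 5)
Your handling of the two disjoint cases is correct and matches the paper. The gap is in the overlapping case. You conclude $\alpha?\beta$ by forming the union $e:f$, which is always a Yes interval for both oracles by Consistency. But that makes the overlap branch vacuous: for any two Yes intervals whatsoever, their union trivially witnesses $e:\alpha?\beta:f$, so the statement would never say anything beyond the disjoint case. More to the point, you are conflating the interval-level relation $e:\alpha?\beta:f$ (on a specific interval) with the global relation $\alpha?\beta$, which the paper defines as ``no disagreement on any tested interval.'' If one actually tests the intersection $e\lte f$ of $a:b$ and $c:d$ and finds it to be a No interval for (say) $\alpha$ while Yes for $\beta$, then a disagreement has been found and $\alpha?\beta$ does not hold, yet your argument still reports it.

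The paper's proof takes the intersection $e\lte f$, not the union. If $e:f$ is Yes for both, that gives the tightest compatibility interval and $\alpha?\beta$. If $e:f$ is No for one of the oracles, the paper applies Proposition~\ref{pr:multi} to the partition $a:e:f:b$ (since the endpoints are then No singletons) to locate an $\alpha$-Yes piece strictly to one side, and uses Corollary~\ref{cor:exclude-singleton} to shave off the shared endpoint and obtain genuinely disjoint Yes intervals, yielding an inequality. If $e:f$ is No for both, the intervals must be strictly overlapping and one reads off the inequality directly from $a:c$ and $b:d$ (or $c:a$ and $d:b$). That refinement step is the substantive content of the proposition, and it is what your proposal omits.
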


\begin{proof}
     If $b < c$, then $a:b < c:d$ and $\alpha < \beta$. If $d < a$, then $c:d < a:b$ and $\beta < \alpha$. 
     
     We can therefore assume $b \geq c$ and $d \geq a$ in what follows and, in particular, there is a non-empty intersection $e\lte f$ of $a:b$ and $c:d$.  If $e \lte f$ is a Yes interval for both, then $e:\alpha ? \beta:f$. 
     
     If not, then at least one of them has it be a No interval. Let's say $e:f$ is a No interval for $\alpha$ but it is a Yes interval for $\beta$. Because it is a No interval for $\alpha$, both $e:e$ and $f:f$ are No singletons for $\alpha$. We have that $a:e:f:b$ is a partition and by Proposition \ref{pr:multi}, either $a:e$ or $f:b$ is a $\alpha$-Yes interval. If $a:e$ is Yes, then $\alpha < \beta$. If $f:b$ is Yes, then $\beta < \alpha$. Both of those follow from using Corollary \ref{cor:exclude-singleton}. 
     
     Finally, if $e:f$ is a No interval for both, then the two intervals must be strictly overlapping (one does not contain the other) and we have two cases, depending on the ordering of $c$ and $a$. The first is $a:c:b:d$ which would imply $a:c$ is $\alpha$-Yes and $b:d$ is $\beta$-Yes implying $\alpha < \beta$. Or we have $c:a:d:b$ with $c:a$ being $\beta$-Yes and $d:b$ being $\alpha$ Yes which implies $\beta < \alpha$. 
     
\end{proof}

From the perspective of a single comparison of given Yes-intervals, the only way equality could be established is if those intervals were equal singletons. To establish equality, particularly for neighborly oracles, one must have an argument that somehow applies to an infinite number of intervals. We have some statements to that effect in Section \ref{sec:eq}. 

\begin{proposition}[Transitive Law]\label{pr:transitive}
Let $R$, $S$, and $T$, be oracles that satisfy $R<S$ and $S < T$. Then $R < T$.
\end{proposition}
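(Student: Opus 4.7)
The plan is to unpack the two strict inequalities into four intervals, then use the fact that $S$ provides a bridge between them: because two Yes intervals of the same oracle cannot be disjoint (Proposition \ref{pr:disjoint}), the two $S$-Yes intervals we obtain must share a rational, and that rational lets us chain the ordering relations on the endpoints.

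First I would write out the hypotheses via the definition of $<$: there exist rational intervals $a:b < c:d$ with $R(a:b) = 1 = S(c:d)$ coming from $R < S$, and rational intervals $e:f < g:h$ with $S(e:f) = 1 = T(g:h)$ coming from $S < T$. The goal is to produce an inequality of the form $a:b < g:h$; in the standardized ordering $a\leq b$, $g\leq h$, this reduces to showing $b < g$.

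Next I would invoke Proposition \ref{pr:disjoint} on the $S$-Yes intervals $c:d$ and $e:f$: they cannot be disjoint, so there is a rational $x$ with $c \leq x \leq d$ and $e \leq x \leq f$. Combining with $b < c$ (from $a:b < c:d$) and $f < g$ (from $e:f < g:h$) gives $b < c \leq x \leq f < g$, and in particular $b < g$. Hence $a:b < g:h$, and together with $R(a:b) = 1 = T(g:h)$ this is exactly the definition of $R < T$.

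There is no real obstacle here; the only subtle point is recognizing that transitivity of $<$ on intervals (noted at the end of the Interval Notation subsection) is not directly applicable, because $c:d$ and $e:f$ need not satisfy $c:d < e:f$ — they are both $S$-Yes intervals and therefore overlap. Proposition \ref{pr:disjoint} is what does the actual work of linking the $R$-witness to the $T$-witness through $S$.
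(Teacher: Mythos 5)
Your proposal is correct and follows essentially the same route as the paper's proof: both invoke Proposition \ref{pr:disjoint} to conclude that the two $S$-Yes intervals $c:d$ and $e:f$ must meet, and then chain the ordering through that common ground to get $a:b < g:h$. The only cosmetic difference is that you work with a single common rational $x$ and chain $b < c \leq x \leq f < g$ explicitly, while the paper takes the whole intersection interval $m:n$ and appeals to transitivity of interval inequality; the substance is identical.
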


\begin{proof}
By the assumptions, we have $a:b < c:d$ where $R(a:b) = 1 = S(c:d)$ and $R(c:d) = 0 = S(a:b)$. We also have $e:f < g:h$ where $S(e:f) = 1 = T(g:h)$ and $S(g:h) = 0 = T(e:f)$. We need to show $a:b < g:h$.

Let $m:n$ be the intersection of $c:d$ with $e:f$. This exists since disjoint intervals cannot both be Yes intervals for the same oracle (Proposition \ref{pr:disjoint}). Note that $S(m:n) = 1$ since the intersection of Yes-intervals is again a Yes-interval (Proposition \ref{pr:inter}). Since $m:n$ is contained in $c:d$, it satisfies $a:b < m:n$. Similarly, $m:n$ is contained in $e:f$ which gives us $m:n < g:h$. Since the inequality of intervals is transitive, we have $a:b < g:h$ as was to be shown. 
\end{proof}

We can also add in definitions of the customary real intervals. While we could rely on the customary definitions based on the ordering relations, we will cast it in the language of oracles. In what follows, we assume $\alpha < \beta$ are oracles,  $a\lte b$ represents an almost generic $\alpha$-Yes interval, $c\lte d$ represents an almost generic $\beta$-Yes interval, and that the not-quite generic part are the requirements that $b < c$ and none of the endpoints are roots of $\alpha$ or $\beta$. Then: 

\begin{enumerate}
\item The oracle $\gamma$ is in the closed interval $[\alpha, \beta]$ if $a \lte  d$ is a $\gamma$-Yes interval for all such $\alpha$ and $\beta$-Yes intervals. 
\item The oracle $\gamma$ is in the open interval $(\alpha, \beta)$ if $b\lte c$ is a $\gamma$-Yes interval for some such $\alpha$ and $\beta$-Yes intervals. 
\item The oracle $\gamma$ is in $[\alpha, \beta)$ if $a\lte c$ is a $\gamma$-Yes interval for all such $\alpha$-Yes intervals and for some such $\beta$-Yes interval.
\item The oracle $\gamma$ is in $(\alpha, \beta]$ if $b\lte d$ is a $\gamma$-Yes interval for all such $\beta$-Yes intervals and for some such $\alpha$-Yes interval.
\item The oracle $\gamma$ is in the open interval $(\alpha, \infty)$ if $b$ is the lower bound for a $\gamma$-Yes interval for some $\alpha$-Yes interval. 
\item The oracle $\gamma$ is in the open interval $(-\infty, \alpha)$ if $a$ is the upper bound for a $\gamma$-Yes interval for some $\alpha$-Yes interval. 
\item The oracle $\gamma$ is in the half-closed interval $[\alpha, \infty)$ if for all $\alpha$-Yes intervals, $a$ is the lower bound for a $\gamma$-Yes interval. 
\item The oracle $\gamma$ is in the open interval $(-\infty, \alpha]$ if for all $\alpha$-Yes intervals, $b$ is the upper bound for a $\gamma$-Yes interval. 
\end{enumerate}

\subsubsection{Equality}\label{sec:eq}

\begin{proposition}\label{pr:reflexive}
The equality relation is reflexive, symmetric, and transitive. 
\end{proposition}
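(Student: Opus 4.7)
The plan is to unpack the definition of equality and observe that each of the three properties reduces to the corresponding property of ordinary equality of values in $\{0,1\}$ on each rational interval. Since two oracles $R$ and $S$ are equal precisely when $R(a:b) = S(a:b)$ for every inclusive rational interval $a:b$, the statement is really about the relation ``agrees pointwise on all rational intervals,'' which inherits reflexivity, symmetry, and transitivity from equality of the values.

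First I would note reflexivity: for any oracle $R$ and any rational interval $a:b$, trivially $R(a:b) = R(a:b)$, so $R = R$. Next, for symmetry, assume $R = S$. Then for every rational interval $a:b$ we have $R(a:b) = S(a:b)$, and swapping the two sides of this equality yields $S(a:b) = R(a:b)$ for every such interval, so $S = R$. Finally, for transitivity, assume $R = S$ and $S = T$. Fix an arbitrary rational interval $a:b$. Then $R(a:b) = S(a:b)$ by the first hypothesis and $S(a:b) = T(a:b)$ by the second; chaining these gives $R(a:b) = T(a:b)$. Since $a:b$ was arbitrary, $R = T$.

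There is no real obstacle here: none of the five oracle axioms are invoked, because equality has been defined extensionally as pointwise agreement of the two rules. The slight subtlety worth remarking on in the write-up is the contrast with the inequalities $<$ and $>$, where Proposition~\ref{pr:disjoint} and the Separating property did have to be used to ensure mutual exclusivity; by contrast, the equivalence-relation properties of $=$ are immediate from the definition and require no appeal to the axioms of an oracle.
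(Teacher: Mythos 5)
Your proof is correct and follows the same approach as the paper: unpack the extensional definition of oracle equality as pointwise agreement on all rational intervals, and observe that reflexivity, symmetry, and transitivity are inherited directly from the corresponding properties of equality on $\{0,1\}$. The paper states this in the same spirit, treating the three one-line pointwise identities as the entire proof.
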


\begin{proof}
This is immediate from the properties of equality of natural numbers, particularly the numbers 0 and 1. Indeed, the statements of these are the proofs:
\begin{itemize}
    \item Reflexive: $R(a:b)=R(a:b)$ for all intervals $a:b$ and rules $R$.
    \item Symmetric: For all intervals $a:b$ and rules $R$, $S$, $R(a:b)=S(a:b)$ if and only if $S(a:b) = R(a:b)$ 
    \item Transitive: If $R(a:b)=S(a:b)$ and $S(a:b) = T(a:b)$ then $R(a:b)=T(a:b)$. This holds for all intervals $a:b$ and rules $R$, $S$, $T$.
\end{itemize}
\end{proof}

If we have $a:R?S:b$, then they also agree (Yes) on all intervals that contain $a:b$ as well as all intervals disjoint from $a:b$ (No). 

Thus, two oracles could be equal, unequal, or compatible. The latter is for rules that we cannot find an interval of disagreement, but we have not been able to establish equality. 

We next prove that two oracles whose Yes intervals always overlap are, in fact, the same oracle.

\begin{proposition}\label{pr:overlap}
Let $R$ and $S$ be two oracles such that whenever $R(a:b)=1$ and $S(c:d)=1$, we have the existence of $e:f$ such that $e:f$ is contained in $a:b$ and $c:d$.  Then $R =S$.
\end{proposition}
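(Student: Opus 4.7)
The plan is to prove $R = S$ by contradiction, showing that any disagreement on an interval would produce an $R$-Yes interval and an $S$-Yes interval with no common subinterval, contradicting the hypothesis. Since the oracle values are binary, showing they cannot disagree on any interval will give equality.

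First I would suppose, for contradiction, that there exists some rational interval $a:b$ with $R(a:b) \neq S(a:b)$. By symmetry in the hypothesis (it does not favor $R$ or $S$), we may assume without loss of generality that $R(a:b) = 1$ and $S(a:b) = 0$. The goal is then to produce two intervals that violate the overlap condition.

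Next I would invoke Proposition \ref{pr:no-is-disjoint}, applied to the $S$-No interval $a:b$, to obtain an $S$-Yes interval $g:h$ that is disjoint from $a:b$. Now the hypothesis applies to the pair $R(a:b) = 1$ and $S(g:h) = 1$, yielding an interval $e:f$ contained in both $a:b$ and in $g:h$. Since every rational interval (even a singleton) contains at least one rational $q$, this $q$ lies in both $a:b$ and $g:h$, directly contradicting the disjointness of those intervals. The contradiction shows no such disagreement exists, so $R$ and $S$ agree on every rational interval, which is the definition of equality.

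I do not expect a serious obstacle here; the argument is essentially a one-step application of Proposition \ref{pr:no-is-disjoint} combined with the hypothesis. The only subtle point worth being explicit about is justifying the symmetry step: the hypothesis is stated with $R$ on the left and $S$ on the right, but since ``$e:f$ contained in $a:b$ and $c:d$'' is symmetric in the two intervals, the roles of $R$ and $S$ are interchangeable, so the case $S(a:b) = 1$, $R(a:b) = 0$ is handled by the same reasoning.
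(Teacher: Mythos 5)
Your proof is correct and takes a genuinely different (and shorter) route than the paper's. The paper argues directly: it first disposes of the rooted case using the Closed and Rooted properties, then for neighborly oracles takes arbitrary Yes intervals $a:b$ (for $R$) and $c:d$ (for $S$), defines $e:f$ to be their intersection, invokes Proposition \ref{pr:multi} to see that exactly one of $a:e$, $e:f$, $f:b$ is $R$-Yes (and analogously for $S$), and then uses Proposition \ref{pr:subinter} to rule out the two outer pieces by producing strictly smaller $R$-Yes intervals that would fail to meet $c:d$; once $e:f$ is shown to be Yes for both, Consistency promotes $a:b$ and $c:d$ to Yes for both. Your argument shifts the entire burden onto Proposition \ref{pr:no-is-disjoint}: assuming a disagreement $R(a:b)=1$, $S(a:b)=0$, that proposition hands you an $S$-Yes interval disjoint from $a:b$, and the hypothesis then forces a common subinterval of two disjoint intervals, which is immediately impossible since every rational interval is nonempty. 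This avoids the rooted/neighborly case split entirely (the Existence and Closed properties are already packaged inside Proposition \ref{pr:no-is-disjoint}), and the WLOG you note is justified because the hypothesis is symmetric in $R$ and $S$. The paper's approach is more constructive — it establishes as a byproduct that the intersection of an $R$-Yes interval with an $S$-Yes interval is Yes for both — while yours is a tighter pure-contradiction argument; both are sound.
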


Note we do not assume $e:f$ is a Yes-interval for either. It would be trivial by consistency if we had this. 

\begin{proof}
To start, let us look at the case of $R$ being Yes on a singleton, say $R(a:a) = 1$. Then for every $S$-Yes interval $c:d$, we have $e:f$ contained in $a:a$ and $c:d$. But that means $a=e=f$ and $a$ is contained in $c:d$. Since $S$ is Closed, we must have $S(a:a)=1$. By being Rooted and Consistent, $S$ and $R$ must agree on all intervals and are equal. 

We can now proceed with the assumption that both are neighborly oracles which means every rational in a Yes interval will separate the intervals. 

Let us take $R(a \lte b) = 1$ and $S(c \lte d) = 1$. Define $e$ and $f$ such that $e \lte f$ is the intersection of $a:b$ and $c:d$; this is non-empty by the assumed condition of the statement. We will prove from this that $R(e:f)=1 = S(e:f)$

By $e:f$ being contained in both intervals, we have $a,c \lte e \lte f \lte b,d$.

By not being singletons, Proposition \ref{pr:multi} tells us that exactly one of $a:e$, $e:f$, or $f:b$ is an $R$-Yes interval. Similarly, for $S$, exactly one of $c:e$, $e:f$, and $f:d$ is a $S$-Yes interval. 

Let's assume $R(a \lte e) = 1$. By assumption of this being a neighborly oracle, we we can apply Proposition \ref{pr:subinter} to obtain a sub-interval of $a \lte e$, say $m:n$, such that $R(m:n) = 1$. This is strictly less than $e \lte f$ which means it will not intersect $c \lte d$ as $e \lte f$ was the mutual intersection of $a:b$ and $c:d$. But the hypothesis is that $R$-Yes intervals should intersect the $S$-Yes interval $c \lte d$. Thus, $R(a \lte e) = 1$ is contradictory and we conclude $R(a \lte e) = 0$. Similarly, $R(f:b) = 0$, $S(c:e) = 0$, and $S(f:d) = 0$. This leaves $R(e:f) = 1 = S(e:f)$.

 Now that we have $e:f$ is a Yes-interval for both, by Consistency we conclude $R(c:d) = 1 = S(a:b)$. As $c:d$ and $a:b$ were arbitrary Yes intervals for the two oracles, we have established equality across all Yes intervals and thus the two oracles are equal. 

\end{proof}

The next statement is used in establishing the distributive rule for oracles. 

\begin{corollary}
    If $R$ and $S$ are two oracles such that whenever $R(a:b) = 1$, there exists an interval $c:d$ contained in $a:b$ such that $S(c:d) = 1$. Then $R=S$.
\end{corollary}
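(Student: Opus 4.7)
The plan is to reduce this corollary to Proposition \ref{pr:overlap}. That proposition concludes $R=S$ from the hypothesis that every $R$-Yes interval and every $S$-Yes interval have a common sub-interval (not necessarily Yes for either oracle). So the task is to show the weaker-looking hypothesis of the corollary already forces this intersection property.

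First, I would take arbitrary Yes intervals $R(a:b)=1$ and $S(c:d)=1$ and aim to produce an interval $e:f$ contained in both $a:b$ and $c:d$. The corollary's hypothesis applied to $R(a:b)=1$ gives an interval $c':d'$ contained in $a:b$ with $S(c':d')=1$. Now I have two $S$-Yes intervals, namely $c':d'$ and $c:d$. By Corollary \ref{cor:pair-inter}, any two Yes intervals of a single oracle have a non-empty intersection which is itself a Yes interval; call that intersection $e:f$. Since $e:f$ is contained in $c':d'$, and $c':d'$ is contained in $a:b$, we have $e:f$ contained in $a:b$. Since $e:f$ is also contained in $c:d$ by construction, $e:f$ is the required common sub-interval.

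This verifies the hypothesis of Proposition \ref{pr:overlap} for the pair $(R,S)$, so that proposition immediately yields $R=S$. Note that the argument only uses the one-sided hypothesis of the corollary; no symmetric assumption (an $R$-Yes sub-interval inside every $S$-Yes interval) is needed, because Corollary \ref{cor:pair-inter} handles the intersection through $S$ alone once we have landed an $S$-Yes interval inside $a:b$.

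The only real subtlety, and the thing to be careful about, is making sure the intersection $e:f$ really is contained in both original intervals. This is where the transitivity of nesting (together with the explicit placement of $c':d'$ inside $a:b$) does all the work, so no genuine obstacle arises beyond citing the two earlier results in the correct order.
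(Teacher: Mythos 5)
Your proof is correct and follows essentially the same route as the paper's: apply the hypothesis to land an $S$-Yes interval inside $a:b$, note that it must meet the arbitrary $S$-Yes interval $c:d$ because two Yes intervals of the same oracle always intersect, and observe that this intersection lies inside both $a:b$ and $c:d$, which is exactly the hypothesis of Proposition \ref{pr:overlap}. The only cosmetic difference is that you cite Corollary \ref{cor:pair-inter} (which gives a bit more, namely that the intersection is itself a Yes interval), whereas the paper only needs the non-emptiness of the intersection; either suffices.
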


\begin{proof}
Let $a:b$ be any $R$-Yes interval and $e:f$ be any $S$-Yes interval. To use the proposition above, we need to establish that $e:f$ and $a:b$ intersect. By assumption, there exists an $S$-Yes interval $c:d$ contained in $a:b$. Since both $c:d$ and $e:f$ are $S$-Yes intervals, they have a non-empty intersection. This intersection must be contained in $a:b$ since $c:d$ is. Therefore, $e:f$ and $a:b$ have a non-empty intersection. As these were arbitrary, the proposition above allows us to conclude $R=S$.
\end{proof}

\subsection{Bisection Approximation}

This section concerns being able to narrow in on the value of an oracle. 

\begin{proposition}\label{pr:short}
For any rule $R$ that satisfies the Existence and Separation properties, we can produce a Yes interval shorter than any given positive rational number. In particular, this holds for oracles. 
\end{proposition}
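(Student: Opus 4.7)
The plan is to start with any Yes interval guaranteed by Existence and then repeatedly bisect it using the Separation property, each bisection producing a Yes interval of at most half the previous length (or a singleton, which is even better). Iterating enough times brings us below any prescribed positive rational bound.

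More concretely, I would first invoke Existence to obtain some Yes interval $a_0 : b_0$, with length $L_0 = b_0 - a_0$. Given a target $\varepsilon > 0$, I choose $n$ large enough that $L_0 / 2^n < \varepsilon$; this is possible because $\varepsilon$ is a positive rational and the sequence $L_0 / 2^n$ eventually lies below any such rational (a standard Archimedean argument on rationals, which uses only properties of $\mathbb{Q}$ and not the oracle structure).

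Next I would describe the bisection step: given a Yes interval $a_k : b_k$ with $a_k < b_k$, let $c_k = (a_k + b_k)/2$, which is a rational strictly between $a_k$ and $b_k$. By Separation applied at $c_k$, either $R(c_k : c_k) = 1$, in which case we have a Yes singleton and are already done (its length is $0 < \varepsilon$), or else $R(a_k : c_k) \ne R(c_k : b_k)$, so exactly one of the two halves is a Yes interval of length $L_0 / 2^{k+1}$. Choose that half to be $a_{k+1} : b_{k+1}$ and continue. If the singleton case is never triggered, after $n$ steps we have a Yes interval of length $L_0 / 2^n < \varepsilon$, as required.

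There is no real obstacle here; the only thing to be careful about is that Separation is being applied to a Yes interval at each stage (which is precisely what the previous step guarantees), and that the bisection midpoint is genuinely strictly between the endpoints so the separation case $c \ne a, b$ is satisfied. Since we are halving in the rationals, this is automatic as long as $a_k \ne b_k$, and if $a_k = b_k$ at any point we already have a length-zero Yes interval and are done. Consistency and the other oracle properties are not needed; only Existence and Separation appear in the argument, matching the hypothesis of the proposition.
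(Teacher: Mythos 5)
Your bisection argument is exactly the approach the paper takes: invoke Existence for a starting Yes interval, repeatedly apply Separation at the midpoint to either produce a Yes singleton (done) or halve the length, and iterate until the length drops below $\varepsilon$. The only cosmetic difference is that the paper picks $n > \log_2(L/\varepsilon)$ while you phrase the same bound via the Archimedean property of $\mathbb{Q}$; the substance is identical.
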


\begin{proof}
The existence property of $R$ gives us a starting interval, say $a:b$. Let $L = |b-a|$ be its length. Then we take the average of the endpoints: $c = \frac{a+b}{2}$. Since $R$ is Separating, we can use it to determine whether $R(a:c) = 1$ or $R(c:b) = 1$ or $R(c:c) = 1$. If the singleton is a Yes, then we are done as the singleton has length 0 and is therefore shorter than any positive number. For an oracle, that would mean $r$ is the Oracle of $c$. Otherwise, we have that $R(a:c) \neq R(c:b)$ implying that one is Yes and the other is No. We can now repeat with the new Yes interval; the length of that chosen interval is $\frac{L}{2}$. 

If we do this $n$ times, then the length will be $\frac{L}{2^n}$. 

Given an $\varepsilon >0 $, we have that $n > \log_2 (\frac{L}{\varepsilon})$ will be sufficient to ensure that the final length is less than $\varepsilon$. 
    \end{proof}

This is helpful in establishing the arithmetic properties. In Section \ref{sec:mediant}, we will discuss the mediant approximation which is generally a pretty pleasant computational method to employ with a nice relationship to continued fractions. 

Note that the bisection method will not usually produce the singleton if $r$ is a singleton. The mediant approximation does. 

\subsection{Two Point Separation}

The Consistency, Existence, and Closed Properties are basically background properties, often flowing immediately from the definition of the oracle. Both Consistency and Closed are often just added in. Their reason for being in the properties is to ensure the uniqueness of the oracle via maximality. Consistency is relatively trivial and mostly uninteresting, but the Closed property is, in many ways, at the heart of the promise and difficulties of real numbers, in whatever way they are defined. 

The two properties that are more negotiable are the Separation and Rooted properties. They are very much a pair that could be replaced with something else. I desired to highlight the Separation property because of the bisection and mediant approximation schemes. It seems that a good way of approximating is to pick a next number and then replace one of the endpoints with it. The way I wrote the Separation property, however, does not guarantee by itself that the procedure will work. It also has to be the case that if it gets stuck on a rational that that is the end of the process and the oracle is the oracle of that rational number. For that, we need the rooted property. 

One equivalent version, which we will talk about later, is requiring Yes-intervals to always intersect and that, given a Yes-interval, we can always find a Yes subinterval of it whose length is less than any given length. This is what the family of overlapping, notionally shrinking intervals covers. See Section \ref{sec:ni}.

Another equivalent version is to demand the ability to separate any two given points. Namely, the \textbf{Two Point Separation Property} is defined as, for a rule $R$: Given an $R$-Yes interval $a:b$ and two rationals $c,d$ in $a:b$, we can find an $R$-Yes interval $e:f$ contained in $a:b$ such that at least one of the two rationals is not in $e:f$. Furthermore, we also require the \textbf{Disjointness Property}: Any two disjoint intervals cannot both be Yes intervals. 

We need to assume disjointness as No does not propagate upwards from the background properties, unlike Yes. For example, consider the rule which is Yes if an interval $a:b$ satisfies either $a^2:2:b^2$ or $a^2:3:b^2$. That is, they ought to contain either $\sqrt{2}$ or $\sqrt{3}$. Consistency and Existence are immediately satisfied. Closed follows since there is no rational number common to all such intervals. The Two Point Separation Property is satisfied since given any rational number, we can find a Yes interval that does not contain it. Yet we want this to fail as we want it to represent a unique answer. This example is ruled out only by the Disjointness property, by considering the intervals $1.4:1.5$ and $1.7:1.8$

In our definition of oracles, it is the Interval Separation Property which rules this out, particularly applying it to the interval $1.4:1.8$ and the separation point $1.5$ which is not a Yes singleton, but both $1.4:1.5$ and $1.5:1.8$ are Yes. 

\begin{proposition}
    Let $R$ be a rule that satisfies the Consistency, Existence, and Closed Properties. Then requiring the properties of Separating (Interval Separation) and Rooted is equivalent to requiring the properties of Two Point Separation and Disjointness. 
\end{proposition}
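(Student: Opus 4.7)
The plan is to prove each direction separately. The forward direction (Interval Separation + Rooted to Two Point Separation + Disjointness) is largely already in the paper. Disjointness is exactly Proposition \ref{pr:disjoint}. For Two Point Separation, given a Yes interval $a:b$ and two distinct rationals $c, d \in a:b$, I would apply the bisection construction of Proposition \ref{pr:short} to $a:b$ itself, iterating until the resulting Yes sub-interval has length less than $|c-d|$; such a sub-interval cannot contain both $c$ and $d$. If bisection terminates at a Yes singleton partway through, we are equally done since a singleton has length zero and cannot contain two distinct rationals.

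For the reverse direction, Rooted is immediate: two distinct Yes singletons are disjoint intervals, contradicting Disjointness. The substantive work is recovering Interval Separation. Fix $R(a:b) = 1$ and $c$ strictly between $a$ and $b$, and assume $R(c:c) = 0$ (otherwise Separation holds). The easy subcase is $R(a:c) = R(c:b) = 1$: the observation recorded after the Closed property gives a Yes interval $p:q$ with $c \notin p:q$, and whichever side of $c$ it lies on, it becomes disjoint from one of the Yes intervals $a:c$ or $c:b$, contradicting Disjointness.

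The hard subcase is $R(a:c) = R(c:b) = 0$, which I would rule out by showing $c$ lies in every Yes interval and then invoking the Closed property to force $R(c:c) = 1$, a contradiction. First I would establish a localized shrinking lemma under TPS: applying TPS iteratively to a Yes interval using two rationals close to its midpoint shrinks its length by a factor close to $1/2$ each step, so arbitrarily short Yes sub-intervals of any given Yes interval exist. Next, any Yes sub-interval $e:f$ of $a:b$ must satisfy $e < c < f$, since otherwise Consistency would force $R(a:c) = 1$ or $R(c:b) = 1$. Finally, for any Yes interval $P = p:q$, Disjointness forces $P$ to share a point with $a:b$; if $c \notin P$, then by symmetry we may assume $c < p \leq b$, and the shrinking lemma provides a Yes sub-interval of $a:b$ of length less than $p - c$ which must still contain $c$ strictly in its interior, hence lies entirely to the left of $p$ and is disjoint from $P$, contradicting Disjointness.

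The main obstacle is this last step: showing $c$ belongs to \emph{every} Yes interval, not merely every Yes sub-interval of $a:b$. The localized shrinking lemma only produces short Yes intervals inside $a:b$, so any Yes interval extending beyond $a:b$ or apparently missing $c$ from the outside needs to be handled by pulling a suitably short Yes sub-interval of $a:b$ close to $c$ and invoking Disjointness to force the outside interval to bracket $c$ as well. This interplay between the shrinking lemma and Disjointness is where the real force of the equivalence resides.
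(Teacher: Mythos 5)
Your proof is correct and takes a genuinely different route, particularly in the reverse direction. For the forward direction, the paper applies Interval Separation once at the midpoint $m = \frac{c+d}{2}$: either $m:m$ is a Yes singleton missing both $c$ and $d$, or the asymmetry $R(a:m) \neq R(m:b)$ produces a Yes half missing one of them. Your iterated-bisection alternative via Proposition \ref{pr:short} reaches the same conclusion but is less economical. In the reverse direction the approaches diverge more significantly. The paper builds explicit sequences $a_i, b_i \to c$, feeds the pairs $(a_i,c)$ and $(b_i,c)$ into TPS, and then case-splits on whether any of the resulting Yes intervals excludes $c$. You instead case-split on the common value of $R(a:c)$ and $R(c:b)$: the ``both Yes'' case collapses immediately via the Closed-property observation plus Disjointness, while the ``both No'' case is where your localized shrinking lemma --- a TPS analog of Proposition \ref{pr:short}, establishing that TPS alone manufactures arbitrarily short Yes sub-intervals --- does the work. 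That lemma is an independently useful fact, and your final Disjointness argument explicitly handles Yes intervals extending beyond $a:b$, a step the paper leaves somewhat implicit (it only states the No-interval argument for $m:n \subseteq a:b$ before concluding that every Yes interval contains $c$). Both proofs hinge on the same three ingredients: TPS supplies short Yes intervals, Disjointness forces intersection, and Closed converts ``$c$ in every Yes interval'' into $R(c:c)=1$.
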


\begin{proof}
    Let us assume we have the Separating and Rooted properties holding. And let $a:b$ and $c < d$ be given as in the Two Point Separation Property. By the Separating property, we know that either $m =\frac{c+d}{2}$ is in all the intervals or that $R(a:m) \neq R(m:b)$. If it is the latter, then we know that either $c$ or $d$ is in a No-interval and we are done. If it is the former, than the rooted property tells us that both $c$ and $d$ are not Yes singletons and their own singleton intervals are the necessary No intervals required. We also know that any interval not containing $m$ is a No interval. 
    
    Proposition \ref{pr:disjoint} establishes the Disjointness Property. 

    For the other direction, the Rooted property follows immediately from the Disjointness Property since $c \neq d$ implies $c:c$ and $d:d$ are disjoint. 
    
    To establish the Separation property, we need to consider a sequence of numbers approaching the $c$ of that property and argue that either $c$ is not in one of those intervals or it is a Yes singleton. Let $a_0=a$, $a_{i+1} = \frac{a_i + c}{2}$, $b_0 = b$, and $b_{i+1} = \frac{b_i + c}{2}$. Then using the Two Point Separation Property on $a_i$, $c$, we have the existence of a Yes interval, say $e_i:f_i$, that does not contain at least one of them. Similarly, we also have intervals $g_i:h_i$ which does not contain at least one of $b_i$ or $c$. 
    
    If for some $i$,  we have that either $e_i:f_i$ or $g_i:h_i$ does not contain $c$, then we are done as that interval, let us call it $I$,  will be in $a:b$, but since $I$ does not contain $c$, it must be contained either in $a:c$ or $c:b$. By consistency, we can expand the Yes one to be fully one of those. The other one must be No since the intersection with $I$ is empty. 
    
    The other case is that $e_i:f_i$ and $g_i:h_i$ contains $c$ for every $i$. Then we claim that every Yes interval contains $c$. For let $m:n$ be an interval in $a:b$ that did not contain $c$. We have two cases, but, without loss of generality, we can assume that $m:n$ is contained in $a:c$.\footnote{The other case is that $m:n$ is contained in $b:c$. The argument changes by replacing $a$ with $b$, $e$ with $g$, and $f$ with $h$.} Let us assume that $n$ is closer to $c$ than $m$. Then define $p =\frac{n+c}{2}$. By the construction of the $a_i$, there exists $j$ such that $a_j$ is closer to $c$ than $p$ is. We then have $e_j:f_j$ contains $c$ but not $a_j$. This means that $e_j:f_j$ cannot intersect $m:n$. Since $e_j:f_j$ is a Yes interval, that means $m:n$ is a No interval. As a rough sketch of the relations of these numbers, we have $a:m:n:p:a_j:e_j:c:f_j$ where $a_j$ is definitively not in the interval $e_j:f_j$. The same argument, after relabeling, establishes that there can be no interval in $b:c$ not containing $c$ which can be a Yes interval. 

    Thus, $c$ is contained in every Yes-interval and, by the Closed property, $c:c$ is a Yes interval. 

    Between the two cases, we have established the Interval Separation Property. 
\end{proof}

The Two Point Separation Property generalizes in a straightforward way to other kinds of spaces. The Interval Separation property, on the other hand, feels inherently one dimensional. One might wonder why we focus on the Interval Separation. The reason is that it is more direct, applying directly to the intervals created by the questioner separating an interval. Two Point Separation, on the other hand, just asserts the existence of some interval that does not include both given numbers. It is weaker in its control and more of an abstract wish-making. In short, it is not constructive while the Interval Separation is more in that direction. It suffers, of course, from making an assertion across all Yes intervals which is not entirely constructive.

\subsection{No New Oracles from the Reals}

In our setup, we are trying to fill in the gaps between the rationals. While oracles can be used to compactify a space (see \cite{taylor23metric} ),  here we were using them to complete the rationals. The question arises, do we get new oracles using real number intervals? 

The answer ought to be, and is, no. Let $\bar{R}$ be a meta-real oracle that affirms Yes or No on closed intervals of real numbers, following the same rules as the real oracle definition we have given. The properties are the same except now we are using more general intervals. We need to show that every meta-real oracle is rooted and, therefore, representing a real oracle. Of crucial importance is that the existence property implies the existence of a closed (finite) interval $[\alpha, \beta]$ of real numbers which is Yes for the given meta-real oracle. 

We construct the root of a meta-oracle as follows. Define the real oracle rule $R$ as yielding a Yes exactly for rational inclusive intervals $a\lte b$ that satisfy: 1) $a$ being a lower endpoint of a Yes-interval for a lower endpoint real of an $\bar{R}$-Yes interval, e.g., a lower endpoint of the $\alpha$-Yes interval in the real interval above,  and 2) $b$ is an upper endpoint of a Yes-interval for an upper endpoint real of an $\bar{R}$-Yes interval, e.g., an upper endpoint of the $\beta$-Yes interval in the real interval above. We will establish two things: $R$ is a real oracle and $\bar{R}$ is the meta-real oracle version of $R$. That is, $R$ and $\bar{R}$ are essentially the same. If we have a rational $p$, we will denote $\bar{p}$ as the rational-based oracle version of that rational, i.e, the oracle that says yes to any rational interval that contains $p$. 

Note that $R(a:b) = 1$, with associated $[\alpha, \beta]$-$\bar{R}$ Yes interval, implies $\bar{R}([\bar{a}, \bar{b}]) = 1$ by Consistency since $[\bar{a}, \bar{b}]$ contains $[\alpha, \beta]$ as $\bar{a} \leq \alpha \leq \beta \leq \bar{b}$.

We first establish that $R$ is an oracle. It is a rule that for every rational interval does assign a Yes or No. 
\begin{itemize}
    \item Consistency. If $c\lte a \lte b \lte d$ and $a \lte b$ is an $R$-Yes interval, then there exists a pair $[\alpha, \beta]$ such that $a$ is a lower bound of an $\alpha$-Yes interval, $b$ is an upper bound of a $\beta$-Yes interval. Thus $c$ is a lower bound of an $\alpha$-Yes interval and $d$ is an upper bound of a $\beta$-Yes interval by consistency with regards to the $\alpha$ and $\beta$ oracles.
    \item Existence. The existence of a finite interval $[\alpha, \beta]$ and their existence of Yes intervals leads to our ability to make a rational inclusive interval as well. 
    \item Separation. Let $R(a\lte b) = 1$ and let $c \neq a, b$ in $a:b$ be given. Since $R(a:b)= 1$, we have $\bar{R}([\bar{a}, \bar{b}]) = 1$ and  thus, by separation, either $\bar{R}([\bar{c}, \bar{c}]) = 1$ or $\bar{R}([\bar{a}, \bar{c}]) \neq \bar{R}([\bar{c}, \bar{b}])$. In the first option, $c$ is an upper and lower bound of the oracle $\bar{c}$ and thus $R(c:c) =1$. For the second, the Yes interval has the rationals being their own upper or lower bound and hence the Yes carries to the unbarred version. For the no version, if it were Yes in the $R$ version, then the oracle version would contain an $\bar{R}$-Yes interval implying that it would be $\bar{R}$-Yes which it specifically was not. Thus, separating of $\bar{R}$ translates to $R$ having the separating property. 
    \item Rooted. Let $p$ be such that $R(p:p)=1$. Then there exists $[\alpha, \beta]$ such that $p$ is a lower bound for an $\alpha$-Yes interval and $p$ is an upper bound for a $\beta$-Yes interval. But since $\alpha \leq \beta$, we have $\bar{p} \leq \alpha \leq \beta \leq \bar{p}$ implying that they are all equal and that $[\bar{p}, \bar{p}]$ is an $\bar{R}$-Yes interval. Since $\bar{R}$ is a rooted oracle, there can be at most one such oracle and thus there can be no other rational $q$ such that $R(q:q) = 1$.
    \item Closed. If $q$ was present in all $R$-Yes intervals, then it is present in all $\bar{R}$-Yes intervals and hence the oracle version of it is a Yes and thus $q$ itself is both a lower bound and an upper bound of a Yes interval. Hence $q:q$ is an $R$-Yes interval.
\end{itemize}

The other part to establish is that $R$ is the root of $\bar{R}$. To establish this, we need to have that $R$ is in every $\bar{R}$-Yes interval. So let $[\alpha, \beta]$ be an $\bar{R}$-Yes interval. We need to find an $R$-Yes interval that is included in that interval. We can apply the bisection algorithm to either construct an $\bar{R}$-Yes interval that is contained in there or either $\alpha$ or $\beta$ is a root. If it is the latter, then we are done since $R$ will be that root as well given that the upper and lower bounds of the singleton interval will lead to the upper and lower bounds of $R$-Yes intervals being the root's Yes-intervals. Otherwise, we have a strictly smaller interval, say $[\gamma, \kappa]$ and we can then find a lower bound of $\gamma$-Yes and an upper bound of $\kappa$-Yes which is strictly above $\alpha$ and below $\beta$, respectively. Thus, that interval is in the $[\alpha, \beta]$ interval and so $R$ is as well. Since the interval was an arbitrary Yes interval, we have established our claim.

\section{Examples}

It is always good to have examples. In particular, how do we obtain various oracles in common situations? 

We shall start with how the rational numbers appear. We then define oracles for $n$-th roots,  numbers with more general approximation schemes, and least upper bounds of sets. We also investigate a couple of examples of indeterminate rules. For each of them, we will define the rule and then establish the properties by the definition. 

\subsection{Rational Oracles}\label{sec:rat-ora}

Given a rational $q$, we define the Oracle of $q$ as the rule $R(a:b) = 1$ if and only if $q$ is contained in $a:b$. This includes the singleton $q:q$.  We may call these rational oracles, rooted oracles, or singleton oracles. The number $q$ is the root of the oracle. 

We can verify the properties of the rational Oracle of $q$ as follows: 

\begin{enumerate}
    \item Consistency. If $R(a:b)=1$, then $q$ is contained in $a:b$. If $c:d$ contains $a:b$, then $q$ is contained in $c:d$. Thus, $R(c:d)=1$.
    \item Existence. $q$ is contained in $q:q$ so $R(q:q)=1$.
    \item Separating. If $R(a:b) =1$, then $q$ is contained in $a : b$. Let $c$ be strictly in $a:b$. We have three possibilities: 
    \begin{enumerate}
    \item $c=q$. Then $R(c:c) = R(q:q) = 1$.
    \item $c \neq q$, $a : q : c : b$. Then $a:c$ contains $q$ and $c:b$ does not. So $R(a:c)=1 \neq 0 =R(c:b)$.
    \item $c \neq q$, $a : c : q : b$. Then $c:b$ contains $q$ and $a:c$ does not. So $R(c:b)=1 \neq 0 =R(a:c)$.
    \end{enumerate}    
    \item Rooted. For $c \neq q$, $c:c$ does not contain $q$ and therefore $R(c:c)=0$.
    \item Closed. Assume $c$ is contained in all $R$-Yes intervals. Then, in particular, $c$ is in $q:q$ and thus $c=q$ and $R(c:c)=1$. That feels a little too reliant on the inclusion of the singleton. To make this a little more robust, assume $c \neq q$, say, $c < q$. Then  $a=c-1 < c < d=\dfrac{c+q}{2} < q < b=q+1$. Since $q$ is in $b:d$ but not in $a:d$, we have $R(a:d)=0$ and $R(b:d)=1$. Hence there is an $R$-Yes interval that does not include $c$.
\end{enumerate}

We will see with the arithmetic operations that these oracles are the natural representatives of the rational numbers, obeying the arithmetic that we would want them to obey.  

We also claim that if we have an oracle with rule $R$ such that there is a rational $q$ with $R(q:q)=1$, then it is the Oracle of $q$, whose rule we shall call $Q$. If the oracle is different, then there is an interval $a:b$ on which they disagree. Since $R(q:q) =1$, all the  $Q$-Yes intervals are also $R$-Yes intervals by Consistency applied to $R$. Therefore, we need to prove that for a given  $Q$-No interval $a:b$, we must also have it be an $R$-No interval. Because it is $Q$-No, it does not contain $q$. It is therefore disjoint from $q:q$. But by the disjoint property, Proposition \ref{pr:disjoint}, we have $R(a:b)=0$. Thus, the two oracles agree on all intervals and we have uniqueness. 

The property of being closed also prevents having an oracle which agrees with all the $Q$-Yes intervals except $q:q$. Closed forces $R(q:q)=1$ if all $R$-Yes intervals contain $q$.

Now that we have defined the rationals as oracles and defined inequality of oracles, we can prove that Yes intervals do contain their oracles.

\begin{proposition}\label{pr:yes-trap}
    If $a\lte b$ is a Yes interval for an oracle $r$, then $a \leq r \leq b$ as oracles.
\end{proposition}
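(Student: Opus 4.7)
The plan is to prove $a \leq r$ and $r \leq b$ separately, each by a case split on whether the relevant endpoint is a Yes singleton for $r$. I will describe the argument for $a \leq r$; the other side is entirely symmetric.

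If $r(a:a) = 1$, then by the uniqueness result for rational oracles stated in Section \ref{sec:rat-ora} (any oracle affirming $R(q:q)=1$ must be the Oracle of $q$), we have that $r$ equals the rational oracle of $a$, so $r = a$ as oracles and $a \leq r$ holds trivially. If instead $r(a:a) = 0$, I would invoke Corollary \ref{cor:exclude-singleton} with the $r$-Yes interval $a:b$ and the $r$-No singleton $a:a$ to extract a rational $c$ with $a < c < b$ such that $c:b$ is an $r$-Yes interval. For the rational oracle of $a$, the singleton $a:a$ is a Yes interval, and since $a < c$ the intervals $a:a$ and $c:b$ are disjoint with $a:a < c:b$. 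The definition of $<$ on oracles then yields $a < r$, hence $a \leq r$.

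The argument for $r \leq b$ mirrors this: split on $r(b:b)$, with the Yes branch giving $r = b$, and on the No branch apply Corollary \ref{cor:exclude-singleton} to the interval written as $b:a$ (valid because interval notation is order-independent, $b:a$ and $a:b$ denoting the same interval) to obtain a rational $c$ with $a < c < b$ such that $a:c$ is an $r$-Yes interval. Then $a:c < b:b$ witnesses $r < b$.

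The main thing to be careful about is the mirrored application of Corollary \ref{cor:exclude-singleton}: it is stated for a No singleton at the left endpoint, but interval-notation symmetry delivers the corresponding statement at the right endpoint immediately. Otherwise the proof is essentially bookkeeping on top of the existing machinery, with no substantive new computation needed; all the real work is absorbed by Corollary \ref{cor:exclude-singleton} and by the uniqueness characterization of rooted oracles.
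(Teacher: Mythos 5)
Your proof is correct and uses essentially the same machinery as the paper's, with Corollary \ref{cor:exclude-singleton} doing the heavy lifting in both. The only difference is presentational: you argue $a \leq r$ and $r \leq b$ as two independent one-sided inequalities (each with a two-way case split on the relevant endpoint singleton), whereas the paper does a single three-way case split and, in the case where both endpoint singletons are No, applies the corollary twice in succession to build a single Yes interval $c:d$ strictly nested in $a:b$.
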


\begin{proof}
    The statement to prove is equivalent to asserting that one of the three following statements holds true: $a:a$ is an $r$-Yes singleton,  $b:b$ is an $r$-Yes singleton, or $a:a < c:d < b:b$ for some $r$-Yes interval $c:d$.  If either of the first two is true, we are done. So we need to show that if $a:a$ and $b:b$ are both $r$-No singletons, then we have such an interval $c:d$. We use Corollary \ref{cor:exclude-singleton} twice, first to find a $c$ such that $a < c < b$ with $c:b$ being a Yes interval, and then again to find a $d$ such that $c \leq d < b$ with $c:d$ being a Yes interval. By construction, we have that $a < c \leq d < b$ and so $a:a < c:d < b:b$. This establishes that $a < r < b$ if $r \neq a, b$.  
\end{proof}

\subsection{Roots}\label{sec:roots}

For the positive $n$-th root of a positive rational number $q$, the Oracle rule would be $R(a\lte b) = 1$ if and only if $b> 0$ and $q$ is contained in $\max(a,0)^n:b^n$. If $a\geq 0$, then this is the statement $a^n:q:b^n$. From this definition, if we have $a:a$, then $R(a:a) = 1$ if and only if $a>0$ and $a^n = q$.

We will use the monotonicity of $x^n$ for positive $x$ which follows from the basic inequality fact of $ 0 < a < b$ implying $0 < a^n < b^n$.\footnote{This, in turn follows from the fact that $0<a<b$ and $0<c<d$ implies $0<ac<ad$, $0<ad<bd$ and, by transitivity, $0<ac<bd$. Then we setup an induction using the result that $0 < a<b$ and $0 < a^{n-1} < b^{n-1}$ implies $0 < a^n < b^n$. The core inequality fact underlying this is if $0<c$, then $a<b$ implies $ac < bc$. That is, multiplying by a positive number preserves the direction. This is equivalent to asserting that $0 < c (b-a)$ which follows from the product of positive numbers being positive.}

We can verify the properties of the Oracle of $\sqrt[n]{q}$ as follows: 

\begin{enumerate}
    \item Consistency. Because of the monotonicity of $x^n$ for positive $x$, consistency holds. Namely, assume $a\lte b$ is contained in $c \lte d$ and $R(a:b)=1$. If $c>0$, then $c^n \leq a^n$ by monotonicity,  $a^n \leq q \leq b^n$ by definition of $a:b$ being a Yes interval, and $b^n \leq d^n$ again by monotonicity. By transitivity, we have $c^n \leq q \leq d^n$ which is what we need for $R(c:d) = 1$.  If $c<0$ then we need to show $0 \leq q \leq d^n$. Since $0 \leq q$, and $q \leq  b^n$, and $b^n \leq d^n$, this holds. 
    \item Existence. Let $M = \max(q, 1)$. Then we claim $0 < q \leq M^n$. If $q \geq 1$, then $q^{n-1} \geq 1^{n-1} = 1$ and $M^n = q^n \geq q \geq 1 > 0$. If $ q < 1$, then $M=1$ and $0 < q < M^n = 1$. Either way, $R(0:M) = 1$. 
    \item Separation. Let $a: c: b$ be given such that $R(a \lte b)=1$. We need to show that either $R(c:c) = 1$ or $R(a:c) \neq R(c:b)$. We proceed by cases:
    \begin{enumerate}
        \item $c \leq 0$. Then $a<0$ and $q$ is contained in $0:b^n$. Thus, $R(c:b) = 1$ and $R(a:c) = 0$. 
        \item $c>0$, $c^n <q$. Then $c^n:q:b^n$ and $R(c:b)=1$. Since we have $\max(a, 0)^n :c^n:q$, we have $R(a:c)=0$. 
        \item $c>0$, $c^n = q$, then since $c^n : q : c^n$, we have $R(c:c) =1$.
        \item $c>0$, $c^n > q$. Then $a^n:q:c^n$ and $R(a:c) = 1$. We also have that $q:c^n:b^n$ implying $R(c:b)=0$.
    \end{enumerate}
     \item Rooted. This relies on the equation $x^n = q$ having at most one positive solution. This follows from monotonicity. 
    \item Closed. Consider a rational number $p$. We want to show that either $p>0$ with $p^n = q$ in which case $R(p:p)=1$ or, if not, then there exists an interval $a:b$ such that $R(a:b)=1$ but $p$ is not in $a:b$. Let $M = \max(q, 1)$; the 1 is needed since if $q<1$, then $q^n < q < 1$ and we want to make sure we have $M^n > q$.
    
    If $p < 0$, then the interval $0:M^n$ contains $q$ and does not include $p$ so $0:M$ is an $R$-Yes interval excluding $p$. Let us therefore assume $p \geq 0$ and that $p^n \neq q$. In the case that $p^n > q$, there is an $s$ such that $s<p$ and $s^n > q$ (well-known fact, but see Appendix \ref{app:A} Lemma \ref{app:greater}). Therefore, $R(0:s) = 1$ and $p$ is not in $0:s$. For the case of $p^n < q$, we have the existence of an $s$ such that $s > p$ and $s^n < q$ (Appendix \ref{app:A} Lemma \ref{app:lesser}). We thus have $R(s:M)=1$ and $p < s$ is not in the interval $s:M$.
    
\end{enumerate}

Is this oracle the $n$-th root of $q$?

While we have not done the arithmetic of oracles yet, the short version is that taking an oracle to the $n$-th power means the new oracle consists of intervals that are the result of applying the $n$-th power to the Yes-intervals of the original oracle. Since $q$ is in $a^n:b^n$ for every $0<a:b$ $\sqrt[n]{q}$-Yes interval, we have that $(\sqrt[n]{q})^n$ does equal the oracle $q$.\footnote{For $n$-th root Yes \-intervals of the form $a:0:b$, the $n$-th powering of that interval will still include $q$ even if $-a>b$. See Section \ref{containment}, item \ref{natpow}.} 

Finally, we can establish the ordering of square roots, namely that if $0 \leq p<q$, then $\sqrt[n]{p} < \sqrt[n]{q}$. We do this by narrowing the intervals sufficiently, using Proposition \ref{pr:short} so that the $n$-th power of the intervals are still disjoint. We then argue inequality based on the gap and translate it back down to oracle intervals via monotonicity. 

\begin{proposition}
    If $0 \leq p <q$, then $\sqrt[n]{p} < \sqrt[n]{q}$.
\end{proposition}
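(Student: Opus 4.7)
The plan is to exhibit an $R$-Yes interval $a:b$ and an $S$-Yes interval $c:d$ with $b < c$ (where $R$ and $S$ denote the Oracles of $\sqrt[n]{p}$ and $\sqrt[n]{q}$, respectively); by the definition of oracle inequality this immediately yields $a:b < c:d$ and hence $R < S$.

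First I would fix a rational $m$ strictly between $p$ and $q$, say $m = (p+q)/2$, along with a common upper bound $M = \max(q,1)$ so that $M \geq 1$ and $M^n \geq q > p$. The Existence property gives $R(0:M) = 1 = S(0:M)$, and running bisection from these starting intervals keeps every subsequent Yes interval nested inside $0:M$, so all endpoints that appear are rationals in $[0,M]$. The key analytic input is the factorization
\[
\beta^n - \alpha^n = (\beta - \alpha)\bigl(\beta^{n-1} + \beta^{n-2}\alpha + \cdots + \alpha^{n-1}\bigr),
\]
which for $0 \leq \alpha \leq \beta \leq M$ yields $\beta^n - \alpha^n \leq n M^{n-1}(\beta - \alpha)$.

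Setting $\varepsilon = \tfrac{1}{2}\min(m - p,\, q - m) > 0$, Proposition \ref{pr:short} will produce an $R$-Yes interval $a:b$ and an $S$-Yes interval $c:d$, both inside $0:M$ and each of length less than $\varepsilon/(n M^{n-1})$. This forces $b^n - a^n < \varepsilon$ and $d^n - c^n < \varepsilon$. From the root oracle definition, $a^n \leq p$ and $c^n \leq q \leq d^n$, so
\[
b^n \leq p + \varepsilon < m < q - \varepsilon \leq c^n.
\]
Monotonicity of $x^n$ on nonnegative rationals then gives $b < c$, and hence $a:b < c:d$.

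The main obstacle is translating a short rational interval into a short interval of $n$-th powers: bisection controls $b - a$, not $b^n - a^n$. The uniform bound $M$ on all endpoints, combined with the telescoping factorization, is what converts the linear shrinkage from bisection into polynomial shrinkage with a controlled constant. The edge case $p = 0$ needs only a small check: bisection starting from $0:1$ produces Yes intervals of the form $0:2^{-k}$ for $\sqrt[n]{0}$, so $a^n = 0 = p$ automatically and $b^n = 2^{-nk}$ can be made as small as desired, which is enough to drive $b^n$ below $m$.
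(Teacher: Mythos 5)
Your proof is correct and follows the same strategy as the paper: use the bisection algorithm (Proposition \ref{pr:short}) to produce arbitrarily short Yes intervals inside a common bounded window, convert the linear shrinkage into shrinkage of $n$-th powers via the telescoping factorization $b^n - a^n = (b-a)\sum_{i=0}^{n-1} b^i a^{n-1-i}$, and conclude that the two intervals separate. Your coefficient $nM^{n-1}$ is in fact the correct one, since that sum has $n$ terms each at most $M^{n-1}$; the paper's $(n-1)M^{n-1}$ is a minor slip, and your choice $M=\max(q,1)$ is also a little cleaner than the paper's $\max(1,q^n)$.
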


\begin{proof}
    Let $L = q-p > 0$. Also let $M = \max(1, q^n)$.  Since we have established that $n$-th roots are oracles, we can find intervals $a\lt b$ and $c \lt d$ such that $a^n:p:b^n$, $b-a < \frac{L}{3(n-1)M^{n-1}}$, $c^n:q:d^n$, $d-c < \frac{L}{3(n-1)M^{n-1}}$,  and $d, b < M$.  Then $b^n-a^n = (b-a) \sum_{i=0}^{n-1} b^i a^{n-1-i} < (b-a)(n-1) b^{n-1} < \frac{L}{3(n-1)M^{n-1}} (n-1) M^{n-1} = \frac{L}{3}$ using the fact that $b<M$ implies $b^{n-1} < M^{n-1}$. Similarly, $d^n-c^n < \frac{L}{3}$. This implies that $b^n < a^n + \frac{L}{3}  < p + \frac{L}{3}  = q - \frac{2 L }{3} < q -  \frac{L}{3} < d^n - \frac{L}{3} < c^n$. Since $b^n < c^n$, we have $b < c$ thanks to the monotonicity of $x^n$ for positive $x$. We therefore have $a:b < c:d$ and therefore their respective oracles satisfy this as well, namely $\sqrt[n]{p} < \sqrt[n]{q}$.
\end{proof}

\subsection{Intermediate Value Theorem}\label{sec:ivt}

A well-known process that is a canonical example for defining an oracle based on the separation property, is that of finding solutions to equations using the process of the Intermediate Value Theorem. 

In this section, we will assume we have a function $f$ defined on the rationals to the rationals. The paper \cite{taylor23funora} has a different approach to functions and also generalizes the Intermediate Value Theorem in that context. 

Let us say that we are trying to solve $f(\alpha) = y$, for rational $y$, on the interval $a:b$ and we have $f(a):y:f(b)$. Then we define a rule $R$ for intervals in $a:b$ such that $R(c:d) = 1$ exactly when $f(c):y:f(d)$ holds true. If we have $c:d$ partially outside $a:b$, then the oracle pronounces Yes or No based on the pronouncement on the intersection of $c:d$ with $a:b$. It gives a No if $c:d$ does not intersect $a:b$ at all. This choice will yield consistency. If $f$ is strictly monotonic for all rationals, then the definition simplifies to $R(c:d)=1$ exactly when $f(c):y:f(d)$ without having to worry about intersecting an interval $a:b$. We would still need to establish the existence of one such interval. 

For a random function $f$, this is not going to define an oracle. It will define a process that can certainly define an oracle. In particular, we can take the midpoint of the interval, make a choice of interval, take that midpoint, pick a new subinterval, etc. This is a sequence of narrowing, overlapping intervals and hence will define an oracle as explained in Section \ref{sec:ni}. But without more control on the function, different choices will potentially lead to different outcomes. 

To make it independent of choices, we can look at the case where $f$ is monotonic on $a:b$ and rationally continuous.\footnote{A function $f$ is rationally continuous if given $N >0$ and a rational $q$, we can find $M$ such that if $|q-r|<\frac{1}{M}$ for a given rational $r$, then $|f(q)-f(r)| < \frac{1}{N}$.} This will ensure that any subinterval in $a:b$ will give consistent results. In particular, it will satisfy

\begin{enumerate}
    \item Consistency. If we have the rational sandwiching $a:c:g:h:d:b$ and we have that $g:h$ is a Yes interval, then by monotonicity, we have $f(c):f(g):y:f(h):f(d)$ and so $c:d$ is a Yes interval. For intervals $c:d$ that are partially outside, $a:b$, the definition of following the pronouncements of the intersection is what yields consistency.
    \item Existence. $a:b$ is a Yes interval.
    \item Separating. Let $c:d$ be a Yes interval in $a:b$ and let $q$ be a rational satisfying $c:q:d$. We want to establish that $q$ separates the interval. Since $c:d$ is a Yes interval, we have $f(c):y:f(d)$. We compute $f(q)$ which by monotonicity satisfies $f(c):f(q):f(d)$ . If $f(q) = y$, then $f(q):y:f(q)$ and $q:q$ is a Yes singleton. Let us assume that is not the case. If $f(c):f(q):y:f(d)$, then $q:d$ is a Yes interval and, thanks to monotonicity, $y$ cannot be in $f(c):f(q)$ so $c:q$ is a No interval. If $f(c):y:f(q):f(d)$, then $c:q$ is a Yes interval and $q:d$ is a No interval by monotonicity. Thus, Separation holds. 
    \item Rooted. We need strictly monotonic here. Assume $q:q$ and $r:r$ are both Yes singletons. Then $f(r):y:f(r)$ and $f(q):y:f(q)$ holds true. This implies $f(q)=f(r)=y$. Strict monotonicity then implies that $q=r$. If we did not have the strictness, then the example of $f(x)=y$ for all $x$ is an example which satisfies all the oracle properties except for being rooted. 
    \item Closed. Let's assume that $q$ is such that $q$ is in every Yes-interval. We need to show that $q:q$ is a Yes interval which can be accomplished by showing that $L = |f(q) - y| = 0$ as this would imply $f(q):y:f(q)$ satisfying our definition of the Yes intervals. 
    
    To establish this, we use rational continuity. If $L \neq 0$, then we can find $N$ such that $\frac{1}{N} < \frac{L}{3}$. By continuity, we have an $M$ such that $|q-r|<\frac{1}{M}$ implies $|f(q)-f(r)| < \frac{1}{N} < \frac{L}{3}$. Since $q$ is contained in all Yes intervals and by the bisection Proposition \ref{pr:short}, we have a Yes interval $c:q:d$ contained in $q-\frac{1}{M}:q+\frac{1}{M}$, so that $f(c):y:f(d)$ holds true implying that $|f(c) - y| \leq |f(c)-f(d)|$. We then have $L = |f(q)-y| = |f(q) - f(c) + f(c) - y| \leq |f(q)-f(c)| + |f(c)-y| \leq |f(q)-f(c)| + |f(c) - f(d)| \leq |f(q) - f(c) | + |f(c) - f(q)| + |f(q) - f(d)| < \frac{3}{N} < L$. Since this is a contradiction, we must have $L=0$ implying $f(q) = y$ and thus $f(q):y:f(q)$.
\end{enumerate}

So under those conditions we have a well-defined oracle. Under more general conditions, we can describe a method which will produce an oracle, possibly using the notion of a fonsi from the next section, though it might not be unique.  

If we had a process to extend $f$ continuously to the oracles, then the Closed argument above could be adapted to demonstrate that $f(\alpha)=y$. One approach can be found in the function oracle  paper \cite{taylor23funora}.

We would then be on solid ground to define $\sqrt[n]{q}$ as the solution to $x^n = q$ with starting interval $1:q$ if we wanted to do so. Since $a^n : q: b^n$ is what the function process above yields, it is trivial to see that this agrees with the previous definition. This process can be used to define solutions to a variety of rational equations. 

More ambitiously, we can define $\pi$ as the zero of $\sin(x)$ on the interval $[3,4]$. We can define $e$ as the solution to  $\ln(x)=1$ on the interval $[2,3]$. These require some words about rational approximations to these transcendental functions, but our rational approximations to them can be made sufficiently fine to ensure we can get the information we need to make the choices, at least for a normal practical level of concern.  To get detailed approximations, we can use the bisection method or can we use the mediant method as detailed in Section \ref{sec:mediant} and even produce the continued fraction representation for these numbers.
 
Most of the real number constructions would be compatible with the sequence approach in working with general continuous functions, but they have less of a motivation to have the monotonicity conditions for uniqueness of the process. The one exception is the Dedekind cut approach which fits most easily with the monotonicity. We can define the cut as $\{x| (x<a) \vee (a \leq x \leq b \wedge f(x) < y) \}$. But this would not give any particular guidance in narrowing down to the solution nor would it be particularly helpful in the general situation. Oracles handle both points of view equally well. 

\subsection{Family of Overlapping, Notionally Shrinking Intervals} \label{sec:ni}

A common way of defining a real number is through some specification of intervals whose length is shrinking to 0. This is often in the guise of some core estimate along with some error bounds. We will establish that this is an oracle.

We define a \textbf{fonsi} to be a family of overlapping, notionally shrinking intervals meaning it is a set of rational intervals which are all pairwise intersecting and such that if we are given a positive rational length $q$, there exists at least one interval in the family whose length is less than $q$.\footnote{We use the term ``notionally shrinking'' to indicate that we want to think of it as shrinking intervals, but there need not be any sequential aspect to this that would qualify as shrinking. Rather, we just have the ability to find an interval whose length is at least as small as any non-zero length we care to specify.} Singletons are allowed in the family and they have length 0.\footnote{There can be at most one singleton as the singletons need to intersect, but we need not presuppose the uniqueness.} A set consisting of a single singleton does qualify for being a fonsi.

Given a fonsi, we define an oracle to be the rule that an interval is a Yes-interval if the interval contains an intersection of a finite number of elements of the fonsi or contains the intersection of all of the elements of the fonsi if that exists. This last one is how we include singletons for singleton oracles. 

One method of defining real numbers among constructivists is to equate a fonsi with a real number though they do not use that term. See \cite{bridger} and \cite{bridges} for details. The fonsi framework is appealing to the constructivist framework as it more aligns with their methods of proof and construction. It also aligns with measurements as we briefly discuss below. 

An easy category is that of nested intervals whose lengths are going to 0. We can also have a sequence of numbers with error bounds such that successive numbers are contained within the previously indicated error bounded intervals.  It is very common to have error bounds in applications and this is how we can easily establish them as oracles. 

One failed example is the set of intervals $\frac{1}{n}:1$. It is an overlapping set which creeps up to zero, but it fails to be shrinking. Another failed example is something like $\frac{1}{n}:\frac{1}{m}$, which does have intervals shrinking to zero, but it fails the overlapping property. A fonsi forces an overlap across the real number that it is representing. We could, for instance, look at $\frac{-1}{m}:\frac{1}{n}$ for all positive integers $m$ and $n$. These overlap and we can find as small an interval as we like. The fonsi $-\frac{p}{q} : \frac{r}{s}$ for all positive integers $p, q, r, s$ is nearly the oracle of 0. If we allow $p$ and $r$ to be 0, then the fonsi becomes the set of Yes intervals for the Oracle of 0. 

Before we establish that we can define an oracle from a fonsi, we need to establish that the pairwise intersection scales to intersections of arbitrary finite collections of intervals in a fonsi. 

\begin{proposition}
 Let $\mathcal{I}$ be a fonsi and $\{I_i\}_{i=1}^n$ be a finite collection of intervals in $\mathcal{I}$. Then the intersection $\bigcap_{i=1}^n I_i$ is non-empty and an interval.
\end{proposition}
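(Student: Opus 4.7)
The plan is to avoid induction and instead prove the statement directly by identifying the intersection with an explicit interval determined by the endpoints. Write each $I_i$ as $a_i : b_i$ with $a_i \leq b_i$, let $A = \max_i a_i$, and let $B = \min_i b_i$. I claim that $\bigcap_{i=1}^n I_i = A : B$ as a set of rationals, and that $A \leq B$, which together give the proposition.

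First I would handle the set-theoretic identity. A rational $q$ lies in every $I_i$ if and only if $a_i \leq q \leq b_i$ for each $i$, which is equivalent to $A \leq q \leq B$. So the intersection is precisely the set of rationals in $A : B$, and it is a rational interval in the sense defined earlier (including the singleton case when $A = B$, and the empty case would correspond to $A > B$, which I rule out next). This step is essentially bookkeeping and requires only the definition of rational interval already established.

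The only substantive point is showing $A \leq B$, which is the one-dimensional Helly property. Choose indices $j$ and $k$ with $a_j = A$ and $b_k = B$. The fonsi assumption guarantees that $I_j$ and $I_k$ intersect pairwise, so there is a rational $q$ with $a_j \leq q \leq b_j$ and $a_k \leq q \leq b_k$. In particular $A = a_j \leq q \leq b_k = B$, which gives $A \leq B$. Equivalently, if we had $A > B$, then $I_j$ and $I_k$ would be disjoint (every element of $I_j$ is at least $A > B$ and at least every element of $I_k$), contradicting the pairwise overlap property built into the definition of a fonsi.

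I do not expect any real obstacle here; the whole statement is the standard observation that pairwise intersection upgrades to joint intersection for intervals on a totally ordered set. The proof uses only the definition of a fonsi (pairwise intersection) and the definition of the rational interval $a : b$ — no appeal to shrinking length, oracle properties, or any of the earlier propositions is needed.
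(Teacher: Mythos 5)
Your proof is correct and takes essentially the same route as the paper: both identify the intersection as the interval from the maximum lower endpoint to the minimum upper endpoint and use a pairwise intersection from the fonsi definition to show the former does not exceed the latter. Your version is marginally leaner --- you invoke pairwise overlap only once, for the pair $(I_j, I_k)$ realizing $A$ and $B$, whereas the paper checks $I_n$ against each $I_i$ --- but this is a small streamlining rather than a different argument.
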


\begin{proof} 
    Consider the intersection $\bigcap_{i=1}^n I_i$ where each $I_i$ is an interval in $\mathcal{I}$ and we have ordered them such that if $I_i = a_i : b_i$ then the lower bounds $a_i$ are ordered from least to greatest. That is,  $a_i \leq a_{i+1}$ for all $1 \leq i \leq n$. By the definition of a fonsi, $I_n$ intersects $I_i$ for each of the $i$ and for that to happen, we must have that $a_n \leq b_i$ for every $i$. That is, since the lower bound of $I_n$ is greater than or equal to the lower bound of $I_i$ and the intervals intersect, the lower bound of $I_n$ must be less than or equal to the upper bound of $I_i$. Therefore, $a_n$ is contained in $I_i$ for every $i$ and the intersection is non-empty. In fact, $a_n : \min(\{b_i\}_{i=1}^n)$ is the interval. Note that this can be a singleton. 
\end{proof}

\begin{corollary}
Let $A$ and $B$ be two finite intersections of intervals in the fonsi $\mathcal{I}$. Then the intersection of $A$ and $B$ is non-empty and an interval. 
\end{corollary}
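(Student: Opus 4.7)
The plan is to observe that this corollary is essentially a restatement of the previous proposition once one unpacks the definitions. A ``finite intersection of intervals in $\mathcal{I}$'' is built from a finite sub-collection of $\mathcal{I}$, and intersections of such objects are themselves finite intersections of intervals in $\mathcal{I}$, at which point the prior proposition directly applies.

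Concretely, I would begin by unpacking the hypothesis: write $A = \bigcap_{i=1}^m I_i$ and $B = \bigcap_{j=1}^n J_j$ where each $I_i$ and each $J_j$ lies in $\mathcal{I}$. Then by associativity and commutativity of set intersection, $A \cap B = I_1 \cap \cdots \cap I_m \cap J_1 \cap \cdots \cap J_n$, which is a finite intersection of $m+n$ intervals drawn from $\mathcal{I}$. The previous proposition then guarantees at a single stroke that this intersection is non-empty and is itself an interval (explicitly, after reordering so that the lower bounds increase, the interval has the largest lower bound as its left endpoint and the smallest upper bound as its right endpoint, possibly a singleton).

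The main ``obstacle'' here is really only a matter of careful bookkeeping rather than any substantive mathematical difficulty: one must simply remark that the class of finite intersections of intervals in $\mathcal{I}$ is closed under pairwise intersection, since a finite union of two finite index sets is again a finite index set. No new appeal to overlap or shrinking is needed beyond what was already invoked in the previous proposition. Consequently I expect the proof to be essentially one line of prose plus the citation of the prior proposition, with no case analysis and no appeal to the oracle properties directly.
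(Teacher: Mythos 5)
Your proposal is correct and matches the paper's own proof: both simply observe that $A \cap B$ is itself a finite intersection of intervals drawn from $\mathcal{I}$ (by concatenating the two finite index sets) and then invoke the preceding proposition. No further comment is needed.
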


\begin{proof}
    If $A$ is the result of the intersection of $I_i$ for $1 \leq i \leq n$ and $B$ is the result of the intersection of $I_i$ for $n+1 \leq i \leq m$ where $I_i$ are intervals in $\mathcal{I}$, then the intersection $\bigcap_{i=1}^m I_i$ is the same as the intersection of $A$ and $B$. By the proposition, the intersection of any finite collection of intervals of $\mathcal{I}$ is non-empty and, therefore, $A \cap B$ is non-empty and an interval. 
\end{proof}

Define a \textbf{community of intervals} in a fonsi to be  a collection of intervals such that their total intersection is non-empty. That is, a collection of intervals $\mathcal{A}$ contained in a fonsi are in community if $\bigcap_{\{I \in \mathcal{A}\}} I$ is non-empty. 

We need a little helper statement that describes a bit of the structure of a community.

\begin{lemma}
    Let $A$ be the intersection of a community of intervals $\mathcal{A}$ in a fonsi $\mathcal{I}$. Then given $a, b \in A$, we have that $a : b \subseteq A$.
\end{lemma}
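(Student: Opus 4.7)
The plan is to unfold the definition of $A$ directly and use the convexity of each interval in the community. Concretely, $A = \bigcap_{I \in \mathcal{A}} I$, so saying $a,b \in A$ means that $a$ and $b$ lie in every interval $I$ belonging to $\mathcal{A}$. To prove $a:b \subseteq A$, it suffices to show that for each fixed $I \in \mathcal{A}$ we have $a:b \subseteq I$, and then intersect over $I$.

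So first I would fix an arbitrary $I \in \mathcal{A}$ and write it as $I = c:d$ with $c \leq d$. Since $a \in I$ and $b \in I$, the definition of the rational interval gives $c \leq a \leq d$ and $c \leq b \leq d$. Without loss of generality I may assume $a \leq b$ (if not, relabel, since $a:b$ and $b:a$ denote the same interval by the conventions established earlier). Then for any rational $q$ with $a \leq q \leq b$, transitivity of $\leq$ on the rationals yields $c \leq a \leq q \leq b \leq d$, so $q \in I$. This is the step where every element of $a:b$ gets trapped inside $I$.

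Next I would intersect over $\mathcal{A}$: since the inclusion $a:b \subseteq I$ holds for every $I \in \mathcal{A}$, we have $a:b \subseteq \bigcap_{I \in \mathcal{A}} I = A$, which is exactly the desired conclusion. The only step that could really be called an obstacle is keeping straight the notational distinction between the rational interval $a:b$ (a set of rationals) and the abstract set $A$, but once both are described as subsets of the rationals the argument is just transitivity of $\leq$ applied pointwise. Note that the hypothesis that $\mathcal{A}$ forms a community (so that $A$ is non-empty) is not actually needed for the inclusion itself; it is only used to guarantee that we can pick $a$ and $b$ in the first place, which is built into the statement.
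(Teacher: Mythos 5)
Your proof is correct and matches the paper's argument essentially line for line: fix an arbitrary interval in $\mathcal{A}$, observe that $a$ and $b$ both lie in it, invoke transitivity of $\leq$ to trap every rational between $a$ and $b$ inside that interval, and then intersect over $\mathcal{A}$. The remark that the community hypothesis is only used to ensure $A$ is non-empty is a nice observation but not a departure from the paper's approach.
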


\begin{proof}
    Let $c\lte d \in \mathcal{A}$ be given. We have that $c:d$ is an interval and that $a$ and $b$ are in $c:d$. Without loss of generality, we may assume $a < b$.  We then have $c \lte a \lte b \lte d$. Any rational number in $a:b$ is therefore also in $c:d$. Since $c:d$ was an arbitrary element in $\mathcal{A}$, we have that $a:b$ is contained in all elements of $\mathcal{A}$.
\end{proof}

\begin{proposition}\label{pr:fonsi-inf-inter}
Let $\mathcal{A}$ be a community of intervals of the fonsi $\mathcal{I}$. Then the intersection $A = \bigcap_{\{ I \in \mathcal{A}\}} I $ intersects with any element in the fonsi. 
\end{proposition}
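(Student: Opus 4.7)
The plan is to argue by contradiction: suppose $A \cap J = \emptyset$ where $J = c\lte d$ is our given element of the fonsi. The preceding lemma tells us $A$ is convex in the rational sense, meaning any two points of $A$ bound a closed rational interval entirely contained in $A$. This convexity forces $A$ to lie entirely on one side of $J$. Indeed, if $A$ contained some $a_1 < c$ and some $a_2 > d$, then by the lemma $a_1 : a_2 \subseteq A$, which would include both $c$ and $d$, giving $c \in A \cap J$ and finishing the proof. So the remaining case splits into either $A$ lying strictly above $d$ or $A$ lying strictly below $c$.

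In the first subcase, pick any $a \in A$, so $a > d$, and consider an arbitrary $I = p \lte q \in \mathcal{A}$. Since $I$ and $J$ are both members of the fonsi $\mathcal{I}$, they overlap, which forces $p \leq d$ (and $c \leq q$). Since $a \in I$, we have $q \geq a > d$, so in particular $d \leq q$. Combining, $p \leq d \leq q$, so $d \in I$. As $I \in \mathcal{A}$ was arbitrary, $d \in A$, contradicting the assumption that every element of $A$ strictly exceeds $d$. The subcase $A \subseteq (-\infty, c)$ is symmetric: from $c \leq q$ and $p \leq a < c$ one deduces $c \in I$ for every $I \in \mathcal{A}$, so $c \in A$, again a contradiction.

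The main obstacle I anticipate is staying honest about what ``convex'' means in $\mathbb{Q}$. One cannot appeal to real-number compactness or to $\sup$/$\inf$ arguments, since the intersection $A$ may well correspond to a region whose boundary in $\mathbb{R}$ is irrational. The proof above avoids that pitfall by never naming the boundary of $A$; it only ever uses the two rational endpoints $c$ and $d$ of $J$ together with the overlap condition. Thus the entire argument reduces to two facts already in hand: pairwise overlap of any two members of the fonsi, and convexity of $A$ from the lemma.
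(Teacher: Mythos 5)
Your proof is correct, and it takes a genuinely different route from the paper's. The paper handles the singleton case $c=d$ separately, then for $c<d$ either observes that $c$ or $d$ lies in $A$, or else produces two specific intervals $u\lte U$ and $v\lte V$ in $\mathcal{A}$ avoiding $c$ and $d$, shows their intersection $e\lte f$ is squeezed strictly inside $c\lte d$, and uses $A\subseteq e\lte f \subseteq c\lte d$ together with $A$ being non-empty. Notably, the paper never invokes the preceding convexity lemma. Your argument, by contrast, is a clean proof by contradiction that leans directly on the lemma: convexity of $A$ forces $A$ onto one side of $J$, and then the pairwise-overlap property of the fonsi combined with a single witness $a\in A$ shows the nearest endpoint of $J$ lies in every $I\in\mathcal{A}$, hence in $A$, giving the contradiction. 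Your version also handles singletons $c=d$ uniformly without special-casing. The trade-off: the paper's constructive route yields the slightly stronger information that when neither endpoint is in $A$, the entire intersection $A$ sits strictly inside $c\lte d$; your route is shorter, puts the lemma to work, and avoids constructing auxiliary intervals, making the role of pairwise overlap and convexity more transparent.
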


Note that the intersection need not be a closed interval as the example of $\mathcal{A} = \{ -a_i : a_i \}$ demonstrates where $a_i$ is some sequence of upper bounds to the square root of $2$. This could then be made into a fonsi of 0 by by adding $0:0$:  $\mathcal{A} \cup \{0:0\}$. The intersection across $\mathcal{A}$ is $[-\sqrt{2}, \sqrt{2}]$, but since we are dealing only with rational intervals, this will not have the endpoints. This is a little pathological in the sense that such intersections are not the interest of the fonsi, but it can certainly be possible. 

Unfortunately, the intersection of a community of intervals need not intersect other intersections of a community of intervals. Consider the Yes intervals of the $\sqrt{2}$ Oracle. This is a fonsi. Consider the collection of Yes intervals of the form $1:b$ where $b$ is an upper endpoint of a Yes interval. Also consider the collection of intervals of the form $a:2$ where $a$ is a lower endpoint of a Yes interval. The intersection for the first collection is all rationals in the interval $[1, \sqrt{2})$ and the second collection is all rationals in the interval $(\sqrt{2}, 2]$. These clearly have no intersection. But with any particular element in the fonsi, that element has to surround the $\sqrt{2}$ and is a rational interval, hence it will intersect both of these intersections. The main issue with a community of intervals is that, while they overlap, they need not shrink. 

\begin{proof}
Let $c:d$ be an interval in $\mathcal{I}$. We need to establish that $c:d \cap A$ is non-empty. 

If $c=d$, then all intervals in $\mathcal{I}$ must intersect $c:c$ which is just $c$ and so $c$ is in all elements of $\mathcal{A}$ and is therefore in $A$.

Therefore, let's take $c < d$ without loss of generality. If $c$ or $d$ is in $A$, then we are done. If not, then there exists an interval $u\lte U$ in $\mathcal{A}$ which does not contain $c$ and an interval $v \lte V$ in $\mathcal{A}$ which does not contain $d$. Since $c:d$ is in the fonsi, $u:U$ and $v:V$ both intersect it. This implies $c : V : d$ and $c:u:d$.\footnote{ If $V > d$, then $v>d$ since $v:V$ was chosen so that $d$ is not in the interval. But since $c<d$, this implies $v:V$ and $c:d$ do not intersect, but these are both elements of the fonsi. So $V <d$. If $V < c$, then $v:V$ would again not intersect $c:d$. So $c:V:d$. Similarly, $c:u:d$. } Take $e:f$ to be the intersection of $u:U$ and $v:V$ which exists as they are elements of the fonsi. Then we know $c < u \leq e$ and $f \leq V < d$. That is, the intersection is contained in $c:d$. But $A$ is contained in any intersection of elements of $\mathcal{A}$ and so $A$ is contained in $e:f$ which is contained in $c:d$. This implies that there is a non-empty intersection with $c:d$ as long as the assumption of  $A$ being non-empty holds. 

\end{proof}

\begin{proposition}\label{pr:fon-unique-inter}
    For a fonsi $\mathcal I$, the intersection of all elements $\bigcap_{I \in \mathcal{I}} I$ is either empty or consists of a single rational number.
\end{proposition}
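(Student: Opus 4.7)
The plan is to prove the contrapositive-style statement: if the intersection $A = \bigcap_{I \in \mathcal{I}} I$ contains at least one element, then it contains exactly one, and that element is rational. Rationality is essentially free, since every interval in the fonsi consists entirely of rational points, and therefore so does the intersection. The substantive claim is uniqueness.

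For uniqueness, I would argue by contradiction. Suppose $A$ contains two distinct rationals $a$ and $b$ with $a < b$, and set $L = b - a > 0$. By definition of the intersection, every interval $I \in \mathcal{I}$ contains both $a$ and $b$, and so every interval in the fonsi has length at least $L$. But the defining ``notionally shrinking'' property of a fonsi guarantees the existence of an interval $I^\ast \in \mathcal{I}$ with $|I^\ast| < L$. This interval cannot contain both $a$ and $b$, contradicting the assumption that both lie in every element of the fonsi. Hence $A$ has at most one element.

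The argument is short and direct; the only place that requires any care is checking that the shrinking property is being applied to the correct positive rational length, namely $L = b - a$, which is indeed positive since $a$ and $b$ were taken to be distinct rationals. No appeal to the earlier intersection lemmas or to Proposition \ref{pr:fonsi-inf-inter} is needed, since we are not trying to show that $A$ is itself realized by a community intersection — we simply use the definition of $A$ as a set-theoretic intersection. So the main (and essentially only) obstacle is recognizing that the ``shrinking'' axiom is exactly strong enough to rule out multiple common points, which is why the conclusion immediately degenerates to at most a single rational.
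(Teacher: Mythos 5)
Your proof is correct and takes essentially the same route as the paper: both assume two distinct rationals in the intersection, invoke the notionally shrinking property to produce an interval shorter than their separation, and derive a contradiction. The only cosmetic difference is that the paper uses a length bound of $L/2$ where you use $L$; either works.
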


\begin{proof}
    We need to rule out the possibility of at least two rational numbers in the intersection. So let $p$ and $q$ be rationals in the intersection. Then define $L = |p - q|$. By the notionally shrinking property, let $a\lte b$ be an interval in the fonsi such that $b-a < \frac{L}{2}$. If $L \neq 0$, then we can do this as this is a fonsi. But then $p$ and $q$ cannot both be in the interval $a:b$ as their separation is greater than the length of the interval. Thus, $L=0$ and they are the same rational number. 
\end{proof}

This suggests that we can categorize fonsis in a similar fashion as we did to oracles. We call a fonsi a \textbf{rooted fonsi} if the intersection of all of its elements consists of a single rational number and we call that number its root. If it is not rooted, we call it a \textbf{neighborly fonsi}. 

\begin{proposition}\label{pr:fon-oracle-exists}
Given a fonsi $\mathcal{I}$, there exists a unique oracle $r$ such that all of the elements of the fonsi are Yes intervals of $r$.
\end{proposition}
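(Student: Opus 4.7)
The plan is to take the rule $R$ described in the paragraph preceding the statement — $R(a:b) = 1$ exactly when $a:b$ contains a finite intersection of fonsi elements, or contains the total intersection when it exists — and verify the five oracle properties, then invoke Proposition \ref{pr:overlap} for uniqueness.

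Consistency and Existence require essentially no work: a superinterval of a Yes interval still contains the witnessing finite intersection, and every single fonsi element is itself a Yes interval (as a trivial one-term intersection), with the fonsi being non-empty by the shrinking requirement. Rooted and Closed both rest on Proposition \ref{pr:fon-unique-inter}. For Rooted, suppose $R(c:c) = R(d:d) = 1$ with $c \neq d$; any finite intersection of fonsi elements contained in a singleton must equal that singleton (since finite intersections are non-empty), so combining witnesses for $c$ and $d$ would produce a non-empty finite intersection contained in $\{c\} \cap \{d\} = \emptyset$, while the alternative in which the total intersection witnesses one of them is ruled out by Proposition \ref{pr:fon-unique-inter} since that intersection is unique. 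Closed follows immediately: every fonsi element is Yes, so a point $c$ lying in every Yes interval lies in the total intersection, which must then equal $\{c\}$ and give $R(c:c)=1$.

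The key work is Separating. Let $R(a:b) = 1$ with witnessing finite intersection $J \subseteq a:b$, and let $c$ lie strictly between $a$ and $b$. I split on whether some fonsi element avoids $c$. If every fonsi element contains $c$, then $c$ is in the total intersection and Proposition \ref{pr:fon-unique-inter} forces that intersection to equal $\{c\}$, giving $R(c:c) = 1$. Otherwise, pick a fonsi element $I$ with $c \notin I$; the combined finite intersection $I \cap J$ is non-empty, is an interval, lies in $a:b$, and avoids $c$, so it sits on one side of $c$ — say within $a:c$ — witnessing $R(a:c) = 1$. To rule out $R(c:b) = 1$ as well, any witnessing $K \subseteq c:b$ would yield a non-empty finite intersection $I \cap J \cap K$ forced into $a:c \cap c:b = \{c\}$, implying $c \in I$ contrary to the choice of $I$.

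For uniqueness, if $r'$ is another oracle making every fonsi element Yes, then Proposition \ref{pr:overlap} applies: any $r$-Yes interval $A$ and $r'$-Yes interval $B$ must overlap, since if they were disjoint with gap $L > 0$, the shrinking property would supply a fonsi element of length less than $L$ that nevertheless must intersect both $A$ and $B$ — a contradiction — and any rational in $A \cap B$ then furnishes a singleton contained in both. The main obstacle is the exclusive-or clause inside Separating: it cannot be derived from the definition of $R$ alone but requires invoking the global pairwise-intersection structure of the fonsi through the non-emptiness of combined finite intersections, which is what couples the otherwise local containment rule to the separation behavior the oracle demands.
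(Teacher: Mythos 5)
Your proof is correct, and it takes a somewhat different route from the paper's. The paper splits into two cases — a rooted fonsi, where the oracle is identified directly as the rational Oracle of the root, and a neighborly fonsi, where $R$ is defined via \emph{community} intersections (possibly infinite collections with non-empty total intersection) and the absence of Yes-singletons is established separately before verifying the five properties. You instead keep the definition stated in the paragraph that precedes the proposition — Yes exactly when the interval contains a finite intersection or the total intersection when non-empty — and verify all five properties uniformly, letting Proposition~\ref{pr:fon-unique-inter} quietly absorb the rooted case inside the Rooted, Closed, and Separating checks. Interestingly, this means you are actually proving the proposition for the rule as the paper states it, whereas the paper's own proof silently switches to the community-intersection definition; the two definitions do produce the same oracle (by uniqueness), but your version avoids having to justify that community intersections behave well enough (they need not be rational intervals, as the paper's $[-\sqrt{2},\sqrt{2}]$-style example shows), working only with finite intersections which are guaranteed to be non-empty intervals. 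One point to tighten: in the Separating step, the witness $K$ for $R(c:b)=1$ could be the total intersection rather than a finite one, in which case $I\cap J\cap K$ is not literally a ``finite intersection''; the argument still closes because the total intersection sits inside $I$ directly, but this branch is worth spelling out. Similarly, in your uniqueness paragraph it would help to say explicitly that the small fonsi element intersects $A$ and $B$ because it is a Yes interval of both $r$ and $r'$ and Yes intervals of a single oracle must pairwise intersect (Corollary~\ref{cor:pair-inter}); as written, the ``must intersect both'' is asserted without citing the mechanism.
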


\begin{proof}
If the fonsi is a rooted oracle with root $p$, then the oracle is the Oracle of $p$, namely, it is the oracle such that the Yes intervals are precisely those that contain $p$. Can we have an oracle whose Yes intervals include the elements of this rooted fonsi, but has a Yes interval which does not contain $p$? No as we will now show. 

Assume that we have an interval $a:b$ in the oracle which does not contain $p$. Let $a$ be the closest endpoint to $p$. Then let $L = |p-a|$. Take an interval $c:d$ in the fonsi whose length is less than $\frac{L}{2}$. Since $p$ must be in the interval, we have that $c:d$ is contained in $p-\frac{L}{2}: p + \frac{L}{2}$. This implies that $c:d$ does not intersect $a:b$. But we know from Proposition \ref{pr:inter} that all Yes intervals must intersect. Thus, there exists an element of the fonsi which is a No interval for any oracle which has a Yes interval that does not contain $p$. Hence, we have uniqueness for the oracle associated with a rooted fonsi. 

The other case is that of a neighborly fonsi. 

We define the Oracle of the fonsi $\mathcal{I}$ as $R(a :b) = 1$ if and only if $a : b$ contains the intersection of a community of intervals in $\mathcal{I}$. This will include all elements of the fonsi. 

It will be helpful to rule out any singleton being a Yes interval. If $R(c:c)=1$, this would mean that $c:c$ is the intersection of a community of intervals in the fonsi since it is Yes exactly when the interval contains the such an intersection and there are no subintervals of a singleton. But by Proposition \ref{pr:fonsi-inf-inter}, $c:c$ must intersect with every element of the fonsi. Since only $c$ is in the singleton, that means that $c$ is in every element of the fonsi. But then $c$ is the root of the fonsi and we are explicitly dealing with a neighborly fonsi. Thus, there is no Yes singleton.

\begin{enumerate}
    \item Consistency. Let $c:d$ contain $a:b$ and $R(a:b)=1$. Then, by definition, $a:b$ contains the intersection of elements of $\mathcal{I}$. Thus, $c:d$ contains that same intersection and therefore $R(c:d)=1$ by definition. 
    \item Existence. By definition of a fonsi, there exists an interval $a:b$ which has length, say,  less than 1. Thus, $R(a:b)=1$ by definition of the oracle.
    \item Separating. Assume $R(a:b)=1$ and let $c$ be strictly contained in $a:b$. We already established that $c:c$ is not a Yes interval. So we need to show that $R(a:c) \neq R(c:b)$. Because this is a neighborly fonsi, there exists an interval $B$ in $\mathcal{I}$ which does not contain $c$. 
    
    Let $A$ represent an intersection of elements of $\mathcal{I}$ which is contained in $a:b$; $A$ exists by definition of $a:b$ being a Yes-interval.   Consider $A \cap B$. By Proposition \ref{pr:fonsi-inf-inter}, this intersection is non-empty. $A \cap B$ will be contained in $a:b$ since $A$ is.  This intersection does not contain $c$ since $c$ is not in $B$. Thus, $A \cap B$ must either be contained in $a:c$ or $c:b$. Without loss of generality, we can take that to it to be in $a:c$. Then $R(a:c) = 1$ by definition. We need to show that $R(c:b) = 0$. Assume it was 1. Then there exists an intersection $C$ of some intervals in $\mathcal{I}$ which is contained in $c:b$. But by the proposition again, we have that $B \cap C$ is non-empty. Since $B$ intersects $a:c$ and does not contain $c$ while $C$ is contained in $c:b$, this is impossible. This contradiction leads us to conclude that $c:b$ does not contain an intersection of elements in the fonsi and, therefore, $R(c:b) = 0$.
    \item Rooted. There are no Yes singletons. 
    \item Closed. If $c$ is in every Yes interval, then it would be contained in all of the elements of the fonsi. But this would imply the fonsi is rooted and we are in the neighborly case. 
\end{enumerate}

This establishes existence of an oracle for neighborly fonsis and they are neighborly oracles. We now need to establish that any oracle that has all of the elements of the fonsi being Yes intervals must be this oracle. By Corollary \ref{cor:finite-inter}, we know that any Yes interval will intersect any intersection of a finite collection of Yes intervals. Thus, any given Yes interval in this oracle must intersect the elements of the fonsi and also their finite intersections. But this means every Yes interval in this other oracle must intersect the neighborly oracle defined above since every Yes interval in that one contains a finite intersection of fonsi elements. Thus, every Yes interval of both oracles must intersect. By Proposition \ref{pr:overlap}, they must be the same oracle. 
\end{proof}

Define a \textbf{maximal fonsi} as a fonsi such that every rational interval that contains the intersection of a community of intervals in a fonsi are contained in the fonsi. This implies the intersections themselves are in the maximal fonsi if they are a rational interval. If a rational is contained in  all elements of the fonsi, then it is in the maximal fonsi as it is in the common intersection of all the intervals. 

\begin{proposition} Let $r$ be an oracle and define $\mathcal{M}$ to be the set of all Yes-intervals of $r$. Then $\mathcal{M}$ is a maximal fonsi. 
\end{proposition}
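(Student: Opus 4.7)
The plan is to decompose the proof into two parts: (a) verifying that $\mathcal{M}$ is a fonsi and (b) verifying the maximality condition. Both parts feed directly off of the structural propositions already established, so very little new machinery is needed.

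For part (a), I need the two defining conditions of a fonsi: pairwise intersection and the notionally shrinking property. Pairwise intersection is exactly the content of Corollary \ref{cor:pair-inter}, since the elements of $\mathcal{M}$ are by definition the $r$-Yes intervals. The notionally shrinking property is exactly Proposition \ref{pr:short}: given any positive rational $q$, bisection yields an $r$-Yes interval of length less than $q$, and this interval lies in $\mathcal{M}$.

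For part (b), let $\mathcal{A}$ be a community of intervals in $\mathcal{M}$ with intersection $A = \bigcap_{I \in \mathcal{A}} I$, and let $a:b$ be a rational interval with $A \subseteq a:b$. I want to show $R(a:b) = 1$, i.e., $a:b \in \mathcal{M}$. I would argue by contradiction. Suppose $R(a:b) = 0$. Then Proposition \ref{pr:no-is-disjoint} supplies an $r$-Yes interval $e:f$ disjoint from $a:b$, and $e:f \in \mathcal{M}$. Since $\mathcal{M}$ is a fonsi (by part (a)), I may invoke Proposition \ref{pr:fonsi-inf-inter} applied to the community $\mathcal{A}$ and the fonsi element $e:f$: the intersection $A$ meets $e:f$ in at least one rational point. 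But $A \subseteq a:b$, so any common point of $A$ and $e:f$ would lie in $a:b \cap e:f = \emptyset$, a contradiction. Hence $R(a:b) = 1$, so $a:b \in \mathcal{M}$, which establishes maximality.

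The argument is essentially bookkeeping combined with the right two earlier results, so I do not anticipate any real obstacle. The one point worth a little care is that invoking Proposition \ref{pr:fonsi-inf-inter} requires $\mathcal{M}$ to already be known to be a fonsi, so the ordering of the two parts matters. As a sanity check, in the edge case where $r$ is a singleton oracle with root $p$, the common intersection of any community must contain $p$, so $A$ contains $p$; then $p \in a:b$ and Consistency gives $R(a:b) = 1$ directly, in agreement with the general argument. Nothing in the proof needs to branch on the rooted versus neighborly distinction.
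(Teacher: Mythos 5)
Your proof is correct and follows essentially the same route as the paper's: part (a) uses Corollary \ref{cor:pair-inter} for overlap and Proposition \ref{pr:short} for shrinking, and part (b) argues by contradiction using Proposition \ref{pr:no-is-disjoint} to produce a disjoint Yes interval and Proposition \ref{pr:fonsi-inf-inter} to derive the contradiction. Your explicit note that part (a) must precede part (b) so that Proposition \ref{pr:fonsi-inf-inter} is applicable is a small but worthwhile clarification that the paper leaves implicit.
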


\begin{proof}
     Proposition \ref{pr:short} gives us the notionally shrinking requirement. Corollary \ref{cor:pair-inter} tells us that two Yes-intervals must have non-empty intersection which is the overlapping requirement; that corollary further tells us that the intersection is actually in the fonsi. Thus $\mathcal{M}$ is a fonsi. 

     As for the maximality, let $A$ be a non-empty intersection of Yes intervals. Let $a:b$ be an interval that contains $A$. We need to show $a:b$ is a Yes interval. If $a:b$ is No, then by Proposition \ref{pr:no-is-disjoint} there is an interval $c:d$ such that it is Yes with $a:b$ being disjoint from $c:d$. But by Proposition \ref{pr:fonsi-inf-inter}, $c:d$ must intersect $A$. Since $a:b$ contains $A$, we have $c:d$ intersects $a:b$ contradicting the disjointness. 
\end{proof}

We have established that every fonsi $\mathcal{I}$ gives rise to an oracle which is equivalent to the maximal fonsi $\mathcal{M}$ containing the fonsi $\mathcal{I}$. Every oracle can be viewed as a maximal fonsi.

We can import the concepts of equality and inequality from our work with oracles. In particular, if there exists two intervals $I$ in $\mathcal{I}$ and $J$ in $\mathcal{J}$ such that $I < J$, then the oracles formed from these have the same relation as these intervals are Yes intervals. The oracles are equal if no separation exists. 

\begin{proposition}\label{pr:fonsi-inter}
Let $\mathcal{I}$ and $\mathcal{J}$ be two fonsis such that for every rational $q>0$ we have the existence of intervals $I \in \mathcal{I}$ and $J \in \mathcal{J}$ such that $|I| <q$, $|J| < q$, and $I$ and $J$ intersect. Then the oracle $r$ of $\mathcal{I}$ and the oracle $s$ of $\mathcal{J}$ are the same. 
\end{proposition}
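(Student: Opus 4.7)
The plan is to combine the two fonsis into a single fonsi and invoke the uniqueness clause of Proposition \ref{pr:fon-oracle-exists}. I set $\mathcal{K} = \mathcal{I} \cup \mathcal{J}$ and aim to show that $\mathcal{K}$ is itself a fonsi. If this succeeds, Proposition \ref{pr:fon-oracle-exists} produces a unique oracle $k$ whose Yes intervals include every element of $\mathcal{K}$; since every element of $\mathcal{I}$ is then $k$-Yes, the uniqueness applied to $\mathcal{I}$ alone forces $r = k$, and the symmetric argument for $\mathcal{J}$ gives $s = k$, so $r = s$.

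The notionally shrinking property of $\mathcal{K}$ is immediate since $\mathcal{I}$ (or $\mathcal{J}$) already produces arbitrarily small elements. The real work is showing that any two elements of $\mathcal{K}$ intersect. Intra-fonsi pairs come for free, so the only case is a mixed pair $I_0 \in \mathcal{I}$ and $J_0 \in \mathcal{J}$. I would proceed by contradiction: assume $I_0$ and $J_0$ are disjoint, say $I_0 = a_0 : b_0$ lies strictly below $J_0 = c_0 : d_0$ with gap $g = c_0 - b_0 > 0$. Apply the hypothesis with the threshold $g/2$ to obtain $I \in \mathcal{I}$ and $J \in \mathcal{J}$ with $|I|, |J| < g/2$ and $I \cap J \neq \emptyset$. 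Since $I$ intersects $I_0$ (both sit in the fonsi $\mathcal{I}$), choose a rational $p \in I \cap I_0$; then $p \leq b_0$ and every point of $I$ lies within $|I| < g/2$ of $p$, so every point of $I$ lies strictly below $b_0 + g/2$. The symmetric argument on $J$ and $J_0$ shows every point of $J$ lies strictly above $c_0 - g/2 = b_0 + g/2$. These two bounds are incompatible with the existence of a common point of $I$ and $J$.

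The main obstacle is precisely this mixed-pair intersection estimate; once it is in hand, everything else is bookkeeping through Proposition \ref{pr:fon-oracle-exists}. The argument works uniformly whether the fonsis are rooted or neighborly because that proposition already handles both cases, and the metric estimate does not care which. The one small thing to keep an eye on is that the supposed gap $g$ is a strictly positive rational, which is automatic once $I_0$ and $J_0$ are assumed disjoint closed rational intervals, so the hypothesis can legitimately be applied with $q = g/2$.
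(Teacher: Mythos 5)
Your proof is correct, and it takes a genuinely different route from the paper's. The paper works directly with the Yes-intervals of the resulting oracles $r$ and $s$: it supposes an $r$-Yes interval $A$ and an $s$-Yes interval $B$ are separated by $t>0$, invokes the hypothesis with $q=t/3$, and derives a contradiction from a three-term chain $c-b \leq (p-m) + (m-n) \leq 2q < t$ using rationals $m \in I\cap J$, $n\in I\cap A$, $p \in J\cap B$; it then closes with Proposition \ref{pr:overlap}. You instead stay entirely at the level of the raw fonsis: you show the mixed-pair intersection estimate with $q=g/2$ (bounding $I$ strictly below and $J$ strictly above the midpoint $b_0+g/2$), conclude $\mathcal{I}\cup\mathcal{J}$ is itself a fonsi, and close with the uniqueness clause of Proposition \ref{pr:fon-oracle-exists}. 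Your intermediate claim is slightly weaker than the paper's (intersection of fonsi elements, not of arbitrary Yes-intervals), but it suffices because you feed it into a different closing lemma. Notably, the paper later states as a corollary that a fonsi union being a fonsi forces a common oracle, and proves that corollary \emph{from} the present proposition; your argument effectively runs in the reverse order, establishing the union-is-a-fonsi fact first and deducing the proposition from it and Proposition \ref{pr:fon-oracle-exists}, so there is no circularity. Your midpoint estimate is also a touch cleaner: $g/2$ versus $t/3$, and a direct incompatibility rather than a chained inequality.
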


\begin{proof}
To show two oracles are the same, we need to show that their Yes intervals intersect. Let $A=a\lte b$ be a Yes interval of $r$ and $B=c \lte d$ be a Yes interval of $s$. If the intervals overlap, we are done. If not, then there is a separation between the two intervals. To be explicit, let's say $a \leq b < c \leq d$. Let $t = c-b$ be the separation and $q = \frac{t}{3}$. Let $I= e\lt f$ and $J= g \lt h$ be the respective fonsi intervals that intersect and whose length is less than $q$;  they exist by the hypothesis. By the assumptions, we have rational numbers $m \in I \cap J$, $n \in I \cap A$, and $p \in J \cap B$. Thus,  $n \leq b < c \leq p$ implying $c-b \leq p - n  = p-m + m-n \leq q + q = t - q < t = c-b$ where the replacement by the $q$'s comes from the length of an interval being the greatest of all the differences of its members. The inequality is clearly a contradiction and therefore the separation $t$ must be zero and the two intervals must intersect. 
 
As any Yes intervals of $r$ and $s$ must intersect, Proposition \ref{pr:overlap} asserts the oracles are the same.  
\end{proof}

We have some useful corollaries from this. A \textbf{sub-fonsi} $\mathcal{J}$ of a fonsi $\mathcal{I}$ is a fonsi whose elements are also in $\mathcal{I}$, that is, $\mathcal{J} \subseteq \mathcal{I}$ and $\mathcal{J}$ is a fonsi in its own right.  

\begin{corollary}\label{cor:sub-fonsi}
    If a fonsi contains a sub-fonsi which consists of the Yes-intervals of an oracle, then the fonsi is associated with that oracle. 
\end{corollary}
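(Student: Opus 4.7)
The plan is to invoke Proposition~\ref{pr:fonsi-inter} applied to the pair consisting of the ambient fonsi $\mathcal{I}$ and the distinguished sub-fonsi $\mathcal{J}$. Let $s$ denote the oracle whose Yes-intervals are exactly the elements of $\mathcal{J}$. First I would note, using the uniqueness clause of Proposition~\ref{pr:fon-oracle-exists}, that the oracle associated with $\mathcal{J}$ itself must be $s$: the set of all Yes-intervals of $s$ is a fonsi (by the maximal-fonsi proposition just above), and $\mathcal{J}$ coincides with it by hypothesis, so the construction of Proposition~\ref{pr:fon-oracle-exists} returns $s$.

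Next I would verify the intersecting-small-intervals hypothesis of Proposition~\ref{pr:fonsi-inter} for the pair $(\mathcal{I},\mathcal{J})$. Given a rational $q>0$, since $\mathcal{J}$ is itself a fonsi, the notionally shrinking condition supplies some $J\in\mathcal{J}$ with $|J|<q$. Because $\mathcal{J}\subseteq\mathcal{I}$, the very same interval lies in $\mathcal{I}$, so we may take $I=J$; then $|I|<q$ and $I\cap J = J$ is non-empty. Proposition~\ref{pr:fonsi-inter} then asserts that the oracle of $\mathcal{I}$ equals the oracle of $\mathcal{J}$, which is $s$, as required.

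There is essentially no obstacle here: the corollary is a direct repackaging of Proposition~\ref{pr:fonsi-inter} together with the uniqueness part of Proposition~\ref{pr:fon-oracle-exists}. The only point worth a sanity check is the degenerate rooted case where $\mathcal{J}$ is a singleton family $\{p:p\}$; there $|J|=0<q$ still holds, and the same argument correctly identifies the oracle of $\mathcal{I}$ with the rational Oracle of $p$.
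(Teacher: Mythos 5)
Your proof is correct and follows essentially the same route as the paper: you identify the oracle associated with the sub-fonsi via Proposition~\ref{pr:fon-oracle-exists} and then apply Proposition~\ref{pr:fonsi-inter} to the pair $(\mathcal{I},\mathcal{J})$. The paper states the verification of Proposition~\ref{pr:fonsi-inter}'s hypothesis more tersely ("certainly meet the criteria"), whereas you spell out the choice $I=J$ — a harmless expansion of the same argument.
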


\begin{proof}
    The oracle associated with the sub-fonsi has to be the Yes-interval oracle by Proposition \ref{pr:fon-oracle-exists} and by Proposition \ref{pr:overlap}. The fonsi and its sub-fonsi certainly meet the criteria for applying Proposition \ref{pr:fonsi-inter}. Thus, we can conclude that they are associated with the same oracle. 
\end{proof}

\begin{corollary}
    If the set union of two fonsis is a fonsi, then they are associated with the same oracle.
\end{corollary}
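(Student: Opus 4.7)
The plan is to reduce immediately to Proposition \ref{pr:fonsi-inter}, which gives equality of the oracles of two fonsis whenever one can find arbitrarily short mutually intersecting representatives, one from each. Let $\mathcal{I}$ and $\mathcal{J}$ be the two fonsis and assume $\mathcal{I} \cup \mathcal{J}$ is a fonsi.

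First I would fix an arbitrary rational $q > 0$. Using the notionally shrinking property of $\mathcal{I}$ on its own, I pick an $I \in \mathcal{I}$ with $|I| < q$; similarly pick $J \in \mathcal{J}$ with $|J| < q$. Next I use the fact that $\mathcal{I} \cup \mathcal{J}$ is itself a fonsi: since $I$ and $J$ both lie in this union and any two elements of a fonsi pairwise intersect, $I$ and $J$ must intersect. So for every positive rational $q$ I have produced a pair $I \in \mathcal{I}$, $J \in \mathcal{J}$ with $|I|, |J| < q$ and $I \cap J \neq \emptyset$, which is exactly the hypothesis of Proposition \ref{pr:fonsi-inter}. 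Applying that proposition yields that the oracles associated with $\mathcal{I}$ and $\mathcal{J}$ coincide.

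There is essentially no obstacle here: the entire content of the statement sits in the assumption that the union is again a fonsi, which, via the pairwise overlap clause of the fonsi definition, forces every element of $\mathcal{I}$ to meet every element of $\mathcal{J}$. All the real work has already been done in Proposition \ref{pr:fonsi-inter}, and this corollary is just the observation that ``$\mathcal{I} \cup \mathcal{J}$ is a fonsi'' is a clean sufficient condition for its hypothesis.
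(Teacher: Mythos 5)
Your proof is correct and follows the same route as the paper: choose small intervals from each fonsi, use the union-is-a-fonsi hypothesis to get pairwise overlap, and invoke Proposition \ref{pr:fonsi-inter}. Your write-up is simply a more explicit rendering of the paper's argument.
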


\begin{proof}
    Since the union is a fonsi, all the elements of the union must overlap. Since each fonsi is a fonsi, given a $q > 0$, there exists elements in each fonsi whose lengths are less than $q$. Since they overlap, we meet the criteria for applying Proposition \ref{pr:fonsi-inter}. Thus, they are associated with the same oracle. 
\end{proof}

For constructing an oracle, a fonsi can be a very appealing pathway.  Here are a few common examples. In what follows, $i, m, n, M, N$ will all be positive natural numbers unless otherwise specified.

\begin{itemize}
    \item Nested Intervals. If we have a sequence of rational intervals that are nested and the lengths go to zero, then that sequence is a fonsi.
    \item Nesting function. If we have a function $I(p)$ that yields a rational interval of length less than the non-negative rational length $p$ such that $I(p)$ is contained in $I(r)$ for $r>p$, then we call this a nesting function and the function's range is a fonsi. If it is possible to define $I(0)$ in such a way that this maintains the nesting property, then it should be defined. This happens if and only if there is a rational number $q$ contained in each of the intervals; that rational number would be unique. 
    
    \item Sequences with Shrinking Error Bounds. If we have a sequence of rational numbers $q_i$ paired with an upper bound $\varepsilon_i$ such that $\varepsilon_i$ goes to zero and all future numbers $q_m$ in the sequence are contained in $q_i:q_m:q_i+\varepsilon_i$, then the set of intervals $q_i:q_i+\varepsilon_i$ form a fonsi. Note that the $\varepsilon_i$ do not need to be consistently decreasing. The oracle that is formed from this is denoted $\lim_{i \to \infty} q_i $. 
    
    \item Positive Sums with a Shrinking Error Bound. Assume that we have a sequence $a_i\geq 0$ and let $S_n = \sum_{i=0}^n a_i$. Further, assume we know that there exists $p_n \geq 0$ such that $S_n: S_m : S_n + p_n$ for all $m > n$ and that for any given $M$, we can find $N$ such that $p_n < \frac{1}{M}$ for all $n \geq N$, then we have $S_n: S_n + p_n$ form a fonsi. The resulting oracle is denoted by $\sum_{i=0}^\infty a_i$. Note that we did not need to show that $S_m+p_m < S_n + p_n$. We only needed to know that $S_m$ was in the previous intervals. This is very convenient.
    
    \item Absolutely Converging Series. Assume that we have a sequence $a_i$ and let $S_n = \sum_{i=0}^n a_i$. Further, assume that we have absolute convergence which is the claim that $\sum_{i=0}^\infty |a_i|$ exists as a positive sum  with a shrinking error bound. Set $p_n$ to be an error bound for $A_n = \sum_{i=0}^n |a_i|$, i.e., $A_n : A_m : p_n + A_n$ for all $m \geq n$ and, for a given $M$, we can find $N$ such that $p_n \leq \frac{1}{M}$ for $n \geq N$.  This is doable by the assumption of absolute convergence.
    
    We claim that $S_n-p_n: S_m : S_n + p_n$ for all $m > n$.  The claim relies on the triangle inequality in the form of $- \sum |a_i| \leq \sum a_i \leq \sum |a_i| $ for any of the possible finite sums we can form of the $a_i$.

    For $n < m$, define $A_{n,m} = \sum_{i=n+1}^m |a_i|$ and $S_{n,m} = \sum_{i=n+1}^m a_i$. By the triangle inequality, we have $-A_{n,m} : S_{n,m} : A_{n,m}$. But as $A_m = A_{n,m} + A_n $, we have that $A_n:A_m:p_n + A_n$ implies $A_n : A_n+ A_{n,m} : p_n + A_n$ telling us that $0: A_{m,n} : p_n$. Thus, $-p_n : S_{n,m} : p_n$. Adding $S_n$, we have $S_n - p_n : S_n + S_{m,n} : S_n + p_n$  or $S_n - p_n : S_m : S_n + p_n$. Thus, the collection of intervals $S_n-p_n:S_n+p_n$ forms a fonsi. The resulting oracle is denoted by $\sum_{i=0}^\infty a_i$.
\end{itemize}

\subsubsection{Measurements and Finite Fonsis}

Another reason to favor fonsis is that they seem to be an infinite version of our idealized notion of taking measurements. A good measurement of a physical quantity is one in which the idealized actual value is always within the interval of the value plus or minus the error. A heuristic for that is that all measurements should overlap. A goal of measurement is to be able to reduce the error to any level that we please though that is never fully realized, of course. 

It is in this sense that a fonsi is the infinite version of measurements. We can then go the other way and introduce a notion of $\varepsilon$-partial fonsis, namely, a set of intervals such that there is at least one interval which is as small or smaller than the given $\varepsilon$ and that they all intersect. In fact, we can simplify this by taking the common intersection of all the measurements to produce a measurement interval which is less than the single specified $\varepsilon$. 

This is essentially a finite fonsi version of the resolution of the compatibility of two oracles. Here, one oracle is the imagined actual true value that we are trying to measure and the other oracle is the oracle of all the measurement intervals. If we could carry out the infinite processes required, these would be the same. 

We can also view the oracle rule itself as being recast as a questioning of ``Is this measurement interval a valid measurement of the actual value that we are seeking?''

\subsection{Examples}

We now pause and give some explicit computations for computing out the three most widely known irrational numbers: $\pi$, $e$, and $\sqrt{2}$. The hope is to demonstrate how this framework interfaces with getting explicit bounds. The notion of a fonsi is very helpful here. 

\subsubsection{Circles, \texorpdfstring{$\pi$}{pi}, and Sums}

There are, of course, many ways of computing $\pi$, but here we will start with the classic approach of circumscribed and inscribed polygons and then separately proceed into the Bailey-Borwein-Plouffe formula which is entirely rational in the approximations. 

The classic approach from Archimedes is to enclose the unit circle with a circumscribed regular polygon and sandwich that with an inscribed regular polygon. This gives us an upper and a lower bound for both the semi-perimeter and area, both of which equal $\pi$. 

A nice discussion of this can be found at Math Scholar\footnote{\url{https://mathscholar.org/2019/02/simple-proofs-archimedes-calculation-of-pi/}} by David H. Bailey. They start with regular hexagons and determine that the circumscribed semi-perimeter is $A_1 = 2 \sqrt{3}$ and the inscribed semi-perimeter is $B_1 = 3$. They then double the number of sides and compute again. They repeatedly do this to compute the recurrence relations $A_{k+1} = \dfrac{2A_k B_k}{A_k + B_k}$ and $B_{k+1} = \sqrt{A_{k+1}B_k}$. Notice that there is a square root involved in this computation. We have defined the oracles of square roots so that we can proceed, but this now requires oracle arithmetic, which we have not yet done and shall overlook for the moment.  

It is straightforward to see $A_{k+1} < A_k$, $B_{k+1} > B_k$, and that $A_k > B_k$.\footnote{Start with the observation that $A_1 > B_1 > 1$. Assume $A_k > B_k > 1$. Then $2 A_k  > A_k + B_k$. This implies that $M_k = \frac{2A_k}{A_k + B_k}>1$ as is its square root. This leads to both $B_{k+1} =  \sqrt{A_{k+1} B_k} = \sqrt{M_k} B_k > B_k > 1$  and $A_{k+1} = M_k B_k > \sqrt{M_k} B_k = B_{k+1}$.  For $A_{k+1} < A_k$, this follows from $2A_k B_k < A_k^2 + A_kB_k$ which follows from $A_k > B_k$.} The post further shows that $0 < A_k - B_k \leq \dfrac{128}{9*4^k} = p_k $. This is a sequence of nested intervals whose lengths are going to 0. Thus, the set of $A_k:B_k$ forms a fonsi and therefore yields an oracle. 

The post also establishes that it is indeed $\pi$ that is being approximated with these intervals.

This approach is basically fine except for the square root formula. It adds in extra complications in terms of interval arithmetic which would be nice to avoid if possible. 

Many of the approaches to computing $\pi$ do use square roots. But there are some that are able to avoid it. One such example is from the paper by Bailey, Borwein, and Plouffe \cite{BBP}, in which they give the formula 

\[ 
\pi = \sum_{i=0}^\infty \frac{1}{16^i} \bigg( \frac{4}{8i+1} - \frac{2}{8i+4} - \frac{1}{8i+5} - \frac{1}{8i+6} \bigg)
\]

To apply our fonsi results, we need to show that that the sum is absolutely convergent. We can get an overestimate by replacing the parenthetical portion with 1 for estimating the remainder since the parenthetical will always be strictly between 1 and 0. Then the sum becomes a geometric sum leading to $p_n = \frac{1}{15*16^n}$ for the partial sum up to $n$. By absolute convergence as established earlier, this is a fonsi and thus leads to an oracle.  

This formula for $\pi$ is entirely rational and the $p_n$ gives us interval estimates.

\subsubsection{Products, \texorpdfstring{$e$}{e}, and Sums}\label{sec:e}

We define the Oracle of $e$ as follows. 

Let $S_N = \sum_{i=0}^N \frac{1}{i!}$. Notice that since we are adding positive terms as $N$ increases, we have $S_N < S_{N+1}$. 

We also can compute $S_M - S_N$ for $M - N = k > 0$ and see that we have $\sum_{i=N+1}^M \frac{1}{i!} <  \sum_{j=0}^k  \frac{1}{(N+1)!(N+1)^j} $ where we have replaced factors of $N+1 + j$ with $N+1$, making the sum larger due to making the denominators smaller. We factor out the factorial and then compute the remaining sum as a geometric sum, leading to $S_M - S_N < \frac{1}{N!} \frac{1}{N+1} [\frac{N+1}{N} (1 - \frac{1}{(N+1)^{k+1}} )] < \frac{1}{N! N}$.

We define $p_n = \frac{1}{n!n}$. We established that $S_n < S_m < S_n + p_n$ for $m> n$. We can also clearly find $n$ such that $p_n$ is less than a given positive length. So we have a fonsi and an oracle. 

That was the standard sum approach to $e$. We can also look at another standard approach, that being $(1+\frac{1}{n})^n$. The typical path is to look at the limit as $n\to \infty$ and either use some differential / logarithmic calculus arguments or argue that it is an increasing sequence, bounded above, and therefore must have a limit value. Some approaches also compare this to the sum above, e.g., Rudin\cite{rudin}, page 64. 

That approach is fine with oracles, but not illuminating. Our viewpoint is that we want converging intervals. The expression $a_n = (1+\frac{1}{n})^n$ is a lower bound for $e$. We need an upper bound. A convenient upper bound is to add an extra factor: $b_n = (1+\frac{1}{n})^{n+1}$ will be greater than $e$. 

To establish this as an oracle, we again use the notion of a fonsi. We are done if we can show these are nested intervals whose lengths are shrinking to 0. It is immediate that $a_n < b_n$. The method we cite shows complete nesting: $a_n < a_{n+1} < b_{n+1} < b_n$. By extension, we then have $a_n < a_m < b_m < b_n$ for all $m > n$. The method is from Mendelsohn \cite{mend} which uses the Arithmetic-Geometric Mean Inequality on the two collections of numbers: $\{1, (1+ \frac{1}{n})_{,n} \}$ and $\{1, (\frac{n+1}{n})_{,(n+1)}\}$.\footnote{We use the notation $(a)_{,k}$ to indicate a quantity $a$ being included in the collection $k$ times.} We include the details of the argument in Appendix \ref{app:e}.  Another presentation, and the origin of finding the reference, can be found at the Mathematics Stack Exchange.\footnote{ \url{https://math.stackexchange.com/questions/389793/}}

We still need to show that the lengths shrink to 0. To do this, we look at the difference: $b_n - a_n = (1+\tfrac{1}{n})*a_n - a_n = \tfrac{a_n}{n}$. Because $a_n < b_n < b_1=2^2 = 4$, we have a simple bound of $\tfrac{4}{n}$ which can be made as small as we like. Thus, we have a fonsi and an oracle. 

It is useful to point out that our viewpoint leads us to find useful bounds so that we can know how accurate it is and what kind of convergence we can expect as well. For the compound interest formulation, we have an error on the order of $\frac{1}{n}$ while the Taylor-based approximation is on the order of $\frac{1}{n! n}$. It brings to the forefront explicit information that practical applications can use. As we shall explore, arithmetic with oracles is interval arithmetic and being able to have such explicit bounds allows us not only to compute what the resulting interval lengths will be, but also to figure out what value of $n$ to use to get a certain level of final precision. 

We now have two oracles claiming to be the same and we would like to establish their equality. We will be done if we can establish overlaps between the intervals. It is clearer if we lay it out in steps. We will use the labels from above. 
\begin{enumerate}
    \item $a_n < S_n$ for all $n$. This follows from the binomial theorem. See Appendix \ref{app:e}, Lemma \ref{lem:ansn}. If $n < m$, then $a_n < S_n < S_m$.
    \item Given $n$, there exists $m$ such that $S_n < a_m$. This is trickier. See Appendix \ref{app:e}, Lemma \ref{lem:snam}. 
    \item $S_n < b_m$ for all $m$ and $n$. This follows from $S_n < a_p$ for some $p$ and that $a_p < b_m$ for all $m$.
    \item $S_n < S_m < S_n + p_n$ for $m > n$. That was established above. 
    \item Given $a_n < b_n$ and $S_m < S_m + p_m$, they overlap. This follows from two cases. If $a_n \leq S_m$, then since $S_m < b_n$, we have $a_n : S_m : b_n$. The other case is that $S_m < a_n$. In this case, $m < n$ and we have $S_m < a_n < S_n < S_m + p_m$. 
    \item Since the intervals of both fonsis always overlap, we can use Proposition \ref{pr:fonsi-inter} to conclude that they are the same oracles. 
\end{enumerate}

For our final exploration of $e$, we would like to show that it is not a singleton, i.e, not a rational number. Given any rational of the form $\frac{r}{q}$, we claim that $\frac{r}{q}$ is not in $S_q : S_q + p_q$. If we show this, then if we have established that $e$ is a neighborly oracle. Our approach will be to proceed by contradiction, that is, we will assume that the rational is in that interval:  $S_q < \tfrac{r}{q} <  S_q + p_q = S_q + \frac{1}{q!q}$. We subtract off $S_q$ and obtain $0 < \tfrac{r}{q} - S_q < \tfrac{1}{q! q}$. If we multiply by $q! q$, we have $0 < q! r - q!S_q < 1$.  The expression $q! S_q$ is an integer since $q!$ will cancel all the denominators in that sum. But then we have the difference of two integers being strictly between 0 and 1. Since this cannot happen, we cannot have $\tfrac{r}{q}$ in the interval $S_q : S_q + \frac{1}{q! q}$. This establishes that the oracle is not a singleton with denominator $q$. Since $q$ was arbitrary, it cannot be a singleton for any rational. 

It is instructive to compare this to Rudin's proof of $e$ being irrational. It is the same, except Rudin only concludes that $e$ is not rational. What we have naturally drawn out of that same work is that there can be no rational with denominator $q$ in the approximation intervals contained in $S_q$'s interval. That is, we have an explicit marker after which we know the denominators in the interval must be larger than $q$. 

\subsubsection{Explicitly Computing Roots}

Section \ref{sec:roots} shows the existence of $n$-th roots, but it does not give a great way of finding an $n$-th root. 

We will give a common algorithm for computing these roots. It is equivalent to Newton's method, but that need not concern us here. What we produce here is a fonsi which therefore produces an oracle. By construction, they are Yes intervals of the $n$-th root of $q$ and so the fonsi yields the same oracle. 

We want to find the $n$-th root of $q$. This starts by realizing that given any positive $x$, we have $x:\dfrac{q}{x^{n-1}}$ is a $\sqrt[n]{q}$-Yes interval.

Our argument for that starts with the observation that $x^{n-1}*\dfrac{q}{x^{n-1}} = q$. Let $r$ represent the root, which exists as an oracle. For what follows, we will assume the usual rules hold for oracle arithmetic; we will cover them later. We have either $x^n < q$, $x^n = q$, or $x^n > q$ as the three possible cases. If we have equality, then $x$ is the solution and the interval above is a singleton as the root partner becomes itself when $x$ is the root, i.e, $x= \frac{q}{x^{n-1}}$ exactly when $x^n  =q$. 

Let us assume that $x^n < q$. We have that $x:\frac{q}{x^{n-1}}$ is a Yes interval as long as $(\frac{q}{x^{n-1}})^n \geq q$. So let us show that we have a contradiction if is is less than $q$. If it is lesser, then by Lemma \ref{app:lesser}, we have there exists an $s$ such that $\frac{q}{x^{n-1}} < s$ and $s^n < q$. By the same lemma, there is $t$ such that $x <t$ and $t^n < q$. Let $m = \max(s, t)$. Note $m^n < q$. From $x < m$, we have $x^{n-1} <m^{n-1}$. We also have $\frac{q}{x^{n-1}} < m$. Therefore, $q = x^{n-1} \frac{q}{x^{n-1}} < m^n < q$. This is a contradiction and we therefore must have $(\frac{q}{x^{n-1}})^n \geq q$ as we wanted to establish. 

Using Lemma \ref{app:greater}, we can make a similar argument that if $x^n > q$, then we must have $(\frac{q}{x^{n-1}})^n \leq q$. 

Having exhausted our cases, we have that $x:\dfrac{q}{x^{n-1}}$ is a $\sqrt[n]{q}$-Yes interval.

Since $a<b$ implies $\frac{q}{b^{n-1}} < \frac{q}{a^{n-1}}$, we can choose any rational in $x:\frac{q}{x^{n-1}}$ for our next ``$x$'' and interval computation. This will produce an interval that is contained in the previous one. If we use bisection to choose the next $x$, then we can guarantee the length of the following intervals is at least halved at each step. 

Newton's method suggests a different selection, namely, that we compute the weighted average of the guess and its partner, weighting the guess $n-1$ times in that average. Specifically, $\frac{1}{n} \big( (n-1) x + \frac{q}{x^{n-1}} \big)$. This will have a quadratic convergence once it gets close enough to the root.

For the square root, $n=2$, the formula becomes a simple averaging of the guess and its partner, the same as bisecting the interval.  

To begin the iteration, it is useful to first get close by considering a power of 10. For example, to compute the square root of $52400$, we can view that roughly as $5*10^4$ leading to a guess of $200$. The first interval would then be $200: \frac{52400}{200} = 262$ with the next guess being the average $231$ whose complement $\frac{52400}{231}$ is about $22 6$. It could iterate quite quickly from there. Note that one can be rather loose with the bounds as long as one is careful to round away from the partner.  

There are error estimates for this method which can help suggest how many iterations are necessary as well as establish this as a fonsi. We should also note that the interval itself generated by this method is an error estimate. Since the convergence is quadratic, we can roughly get a sense of the number of iterations required once the interval length drops below one-tenth at which point we start doubling the decimal precision at every iteration.

\subsection{Least Upper Bound}

We are given a non-empty set of rationals $E$ with an upper bound $M$, meaning that if $x \in E$ then $x$ has the property $x < M$. Let $U$ be the collection of rational upper bounds of $E$, namely, for rational $y$, $y \in U$ if and only if $y \geq x$ for every $x$ in $E$.  Thus, $M \in U$.

We define the Oracle of $\mathrm{sup} E$ to be the rule such that $R(a\lte b) = 1$ if and only if $a \leq y$ for all $y$ in $U$ and $b \geq x$ for all $x$ in $E$. That is, $a$ is a lower bound for $U$ and $b$ is an upper bound for $E$. This is also known as the least upper bound of $E$ (lub).

\begin{enumerate}
    \item Consistency. Assume $R(a\lte b)=1$ and $c\lte d$ contains $a\lte b$. Then $c \leq a \leq y$ for all $y$ in $U$ and $d \geq b \geq x$ for all $x$ in $E$. Thus, $R(c\lte d) = 1$.
    \item Existence. By assumption, there is an $x \in E$ and an upper bound $M$ in $U$. Thus, $R(x:M) = 1$. 
    \item Separating. Let $R(a\lt b)=1$ and $a < c< b$. We need to show that either $R(c:c)=1$ or $R(a:c) \neq R(c:b)$. 
    
    If $c \geq x$ for all $x$ in $E$, then $R(a:c)=1$ by definition and the fact that $a$ being a lower bound for $U$ still holds. We have two cases.  If $c \leq y$ for all $y$ in $U$, then $R(c:c)=1$ and we are done. If $c > y$ for some $y$ in $U$, then $R(c:b) = 0$ since $b > c$ and therefore neither of them can be less than $y$ for all $y$ in $U$. 
    
    The other case is that $c < x$ for some $x$ in $E$. Since $a < c$, we know that neither $a$ nor $c$ is greater than all $x$ in $E$. Thus, $R(a:c) = 0$. Since $c < x$ for some $x$ in $E$, and any upper bound $y$ has the property that $x < y$, we have $c < y$ for all $y$ in $U$. We therefore have $R(c:b)=1$.
    
    \item Rooted. For $R(c:c)=1$, we would need to have that $c \leq y$ for all $y$ in $U$ and $c \geq x$ for all $x$ in $E$. Assume we had another rational such that $R(d:d) = 1$. Then we also have $d \geq x$ and $d \leq y$ for all $y$ in $E$. Since both $c$ and $d$ are upper bounds of $E$, we have that each are in $U$. So $c \leq d$ and $d \leq c$. This implies that $c = d$. 
    \item Closed. Assume $c$ is in all Yes intervals. Then given any upper bound $y$ in $U$ and element $x$ in $E$, we have $c$ is in $x:y$ since $R(x:y)=1$ by definition. We therefore have that $c \leq y$ and $c \geq x$. Since $x$ and $y$ were arbitrary elements of $E$ and $U$, respectively, we have $c$ satisfies the conditions such that $R(c:c)=1$.
\end{enumerate}

We also need to establish that $\sup E$ satisfies $a \leq \sup E \leq b$ for all $a \in E$ and all $b \in U$. For any given $a \in E$ and $b \in U$, we have that $a:b$ is a $\sup E$ interval by the oracle definition of $\sup E$. Thus, we have that $a \leq \sup E \leq b$ by Proposition \ref{pr:yes-trap}. 

If the non-empty set $E$ is bounded below by $K$, then we can define the set $L$ of lower bounds of $E$, namely, $z \in L$ if $z < x$ for all $x$ in $E$. Then the greatest lower bound (glb), denoted by $\mathrm{inf} E$, is the least upper bound of $L$ which exists since $L$ is bounded above by elements of $E$ which is assumed to be non-empty. 

We have defined the lub and glb in terms of sets of rationals. We can extend this to sets of oracles. Assume we are given a nonempty set $F$ of oracles bounded above, i.e., each oracle in $F$ is less than some given oracle $M$. Let $E$ be the set of lower endpoints of all the Yes intervals of all the oracles in $F$. $E$ is bounded above by an upper endpoint of an $M$-Yes interval. So $\sup E$ exists. Immediately, we have that $\sup E \geq r$ for all $r \in F$ since it is greater than all of the lower endpoints of the Yes intervals of the oracles in $F$.\footnote{If $\sup E < r$ for some $r \in F$, then there would exist $\sup E$-Yes interval $a \lte b$ and $r$-Yes interval $c\lte d$ such that $a:b < c:d$. But this would imply that $b < c$, but all of the upper endpoints of $\sup E$ need to be upper bounds of the rationals in $E$ which $c$ is a part of. This is a contradiction. Thus, there is no such $r$.}

The harder case is to show that $\sup E \leq \alpha$ for all upper bounds $\alpha$ of $F$. Let $\alpha < \sup E$ implying there is an $\alpha$-Yes interval $a\lte b$ and a $\sup E$-Yes interval $c\lte d$ such that $a:b < c:d$. If $\alpha$ was an upper bound of $F$, then $b$ would be an upper bound of all the rationals in $E$. But that would imply that it is an upper bound of $E$. Since $b$ is less than $c$, we would have $c$ is an upper bound. But the oracle $\sup E$ has the property that the lower endpoints of its neighborly Yes-intervals are not upper bounds of $E$. Thus, we have a contradiction and $\sup E$ is indeed the least upper bound of $F$.

We have established

\begin{theorem}\label{th:lub}
Any set of oracles bounded from above has a least upper bound. 
\end{theorem}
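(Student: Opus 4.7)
The plan is to adapt, essentially verbatim, the construction of the rational-set supremum to a family of oracles, using the previously-built rational $\sup$ as a black box. Given a non-empty family $F$ of oracles bounded above by an oracle $M$, I would form the set $E$ of all rationals $a$ that appear as the lower endpoint of some Yes interval $a\lte b$ of some oracle $r\in F$. I first need to see that $E$ is bounded above by a rational: any upper endpoint $K$ of an $M$-Yes interval works, since if some $e\in E$ satisfied $K<e$, then the $r$-Yes interval with lower endpoint $e$ and the $M$-Yes interval with upper endpoint $K$ would be disjoint with the wrong ordering, witnessing $M<r$ and contradicting $r\leq M$. Thus the rational-$\sup$ construction from earlier in the section produces an oracle $s:=\sup E$, and the task reduces to showing $s$ is the least upper bound of $F$ in the oracle ordering.

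For the upper-bound direction, suppose toward contradiction that $s<r$ for some $r\in F$. Then there exist disjoint Yes intervals $a\lte b$ for $s$ and $c\lte d$ for $r$ with $b<c$. Since $c$ is the lower endpoint of an $r$-Yes interval, $c\in E$, while $b$ is the upper endpoint of an $s$-Yes interval and so must satisfy $b\geq e$ for every $e\in E$ by the defining property of $\sup E$. This forces $b\geq c$, contradicting $b<c$.

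For the least direction, let $\alpha$ be any upper bound of $F$ and suppose $\alpha<s$. Then we have disjoint Yes intervals $a\lte b$ for $\alpha$ and $c\lte d$ for $s$ with $b<c$. The key sub-claim is that $b$ is a rational upper bound of $E$: given $e\in E$, arising as the lower endpoint of an $r$-Yes interval $e\lte f$ for some $r\in F$, if we had $b<e$ then the disjoint pair $a\lte b$ and $e\lte f$ would witness $\alpha<r$, contradicting $r\leq\alpha$. Hence $b\in U$, the set of rational upper bounds of $E$. By the defining property of $s=\sup E$, the lower endpoint $c$ of any $s$-Yes interval satisfies $c\leq y$ for every $y\in U$, so $c\leq b$, again contradicting $b<c$.

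The main obstacle I expect is the sub-claim in the least direction: one has to translate the abstract oracle inequality $r\leq\alpha$ back into a concrete rational-endpoint inequality, and this requires choosing the right pair of Yes intervals and invoking the disjoint-Yes-intervals principle (Proposition \ref{pr:disjoint}) in the contrapositive form. Everything else is bookkeeping that mirrors the rational case, with the Known Trichotomy and the oracle-ordering definitions doing most of the translation between the two worlds.
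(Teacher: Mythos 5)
Your construction and both direction-checks mirror the paper's argument exactly: form $E$ as the set of lower endpoints of Yes intervals of oracles in $F$, take $\sup E$ using the rational-set supremum already built, then argue by contradiction via disjoint Yes intervals (Proposition \ref{pr:disjoint}) that $\sup E$ is an upper bound and that any upper bound $\alpha$ of $F$ satisfies $\alpha \geq \sup E$. If anything, your ``least'' direction is stated slightly more cleanly than the paper's, since you appeal directly to the defining inequality $c \leq y$ for all $y \in U$ rather than the paper's intermediate remark about lower endpoints of neighborly Yes-intervals not being upper bounds.
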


An alternate approach is to say that $a:b$ is a Yes interval if $a$ is a lower endpoint of a Yes interval of an oracle in $F$ and $b$ is an upper endpoint of a Yes interval of an oracle which is an upper bound of F. This may be closer in spirit to the oracle notion than the above treatment. We do so in Appendix \ref{app:sup}. 

\subsubsection{The Distance Function}

We can now define the distance between two oracles. We could also do this using oracle arithmetic, but the distance is both useful and instructive to do at this stage. 

Given two oracles $r$ and $s$, we define the distance set, $D_{r,s}$ to be the set of rational values generated by looking at each pairing of $r$-Yes intervals with $s$-Yes intervals, say  $a:b$ and $c:d$, respectively, and computing the interval distance by $d(a:b,c:d) = \mathrm{max}(|a-c|, |a-d|,|b-c|,|b-d|)$. The \textbf{distance between $r$ and $s$, denoted by $d(r,s)$,} is then defined to be the greatest lower bound of $D_{r,s}$, which is bounded below by 0. 

Note that this means that $d(r,s) \leq d(a:b, c:d)$ for any $r$ and $s$ Yes intervals. We also have that the interval distance function has the property that $d(e:f, c:d) \leq d(a:b, c:d)$ for the situation where $a:b$ contains $e:f$ since $e, f$ will be closer than the farther of $a,b$ to $c,d$. For example, if $a < e < f < b < c< d$, then $d(e:f, c:d) = d-e < d-a = d(a:b, c:d)$. 

If $r<s$, then for sufficiently narrow $r$ ($a\lte b)$ and $s$ ($c\lte d$) Yes intervals, we have $a\leq b<c \leq d$ so that $d(a:b,c:d) = d-a$ and $D_{r,s}$ will be bounded below by $c-b$. That is, $(d-a):(c-b)$ is a Yes interval for $d(r,s)$.

The distance function is always non-negative. If $r=s$, $d(r,s)=0$ as we can choose $a<b$ as the same interval for both and compute $d(a:b,a:b)= b-a$. Since these are the same oracle, $a\lte b$ can be chosen such that $b-a$ can be as small as we like via the bisection approximation (Proposition \ref{pr:short}) and hence the greatest lower bound is 0. 

The distance function also satisfies $d(r,t) = d(r,s) + d(s,t)$ whenever $r < s < t$. Indeed, let $a\lte b\lte c\lte d\lte e\lte f$ with $a\lte b$ being an $r$-Yes interval, $c\lte d$ an $s$-Yes interval, and $e\lte f$ a $t$-Yes interval. Then we have: 
\begin{enumerate}
\item $c-b \leq d(r,s) \leq d-a$
\item $e-d \leq d(s,t) \leq f-c$
\item $e-b \leq d(r,t) \leq f-a$
\item Adding the first two, we have $c-b + e-d \leq d(r,s) + d(s,t) \leq d-a + f-c$.
\item Rearranging, we have $e-b + (c-d) \leq d(r,s) + d(s,t) \leq f-a + (d-c)$.
\end{enumerate}
We have not yet covered arithmetic, but narrowing on interval addition is how we define adding oracles. What we have shown here is that the interval $(e-b + (c-d)): (f-a + (d-c)$ is a $[d(r,s) + d(s,t)]$-Yes interval. Since $c-d$ is negative and $d-c$ is positive, the $d(r,t)$-Yes interval $(e-b):(f-a)$ is contained in it. Since we can narrow the intervals arbitrarily,\footnote{Let $f = e + \delta_1$ and $b = a + \delta_2$. Then $f-a = e-b:e-b + \delta_1 + \delta_2$. We can take $\delta_1$ and $\delta_2$ as small as we like by bisection algorithm.} by Proposition \ref{pr:fonsi-inter}, we know their oracles are the same. 

From this, we also get the triangle inequality. For any oracles $r, s, t$, we have $d(r,t) \leq d(r,s) + d(s,t)$.  To argue for this, if $s = r$ or $s=t$, then we have equality. If $r=t$, then we have $0 \leq 2 d(r,s)$ which is true. We can assume, without loss of generality, that $r < t$. We then have three cases: $r< s< t$, $s < r < t$, and $r< t  < s$. If $r < s< t$, then we have equality as above. If $s < r < t$, then $d(s,t) = d(s,r) + d(r,t)$ as above and thus $d(r,t) = d(s,t) - d(s,r) < d(s,t) + d(s,r)$. The same can be said for $r < t < s$. Indeed,  we have $d(s,r) = d(r,t) + d(t,s)$ which leads to $d(r,t) \leq d(s,t) + d(s,r)$.

\subsection{Cauchy Sequences}\label{sec:cauchy}

Closely related to nested intervals are the Cauchy sequences. We will first define Cauchy sequences and then describe the associated fonsi which yields an oracle. The basic idea is an interval is a Yes interval if it contains the tail of the sequence. 

A sequence $a_n$ of oracles is \textbf{Cauchy} if given any natural number $M > 0$, there exists an $N$ and a rational interval $I_N$ whose length is less than or equal to $\frac{1}{M}$ such that for $n \geq N$, the interval $I_N$ is an $a_n$-Yes interval. That is, $I_N$ contains the oracles later in the sequence. 

\begin{theorem}\label{th:cauchy}
Any Cauchy sequence of oracles converges to an oracle. 
\end{theorem}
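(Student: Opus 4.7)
The plan is to build a fonsi directly out of the tail-enclosing intervals that the Cauchy condition supplies, extract an oracle from it via Proposition \ref{pr:fon-oracle-exists}, and then verify that this oracle is the limit in the sense that $d(a_n, r) \to 0$. Concretely, for each positive integer $M$ the Cauchy property yields an index $N(M)$ and a rational interval $I_M$ with $|I_M| \leq \frac{1}{M}$ which is an $a_n$-Yes interval for every $n \geq N(M)$. Let $\mathcal{I} = \{I_M : M \geq 1\}$ be the collection of all such choices (one per scale).

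First I would check that $\mathcal{I}$ is a fonsi. The notionally shrinking condition is immediate: given a positive rational $q$, pick $M$ with $\frac{1}{M} < q$, and then $|I_M| < q$. For pairwise overlapping, given $I_M$ and $I_{M'}$, pick any $n \geq \max(N(M), N(M'))$; both intervals are then $a_n$-Yes intervals, so by Corollary \ref{cor:pair-inter} their intersection is non-empty. So $\mathcal{I}$ is indeed a fonsi, and Proposition \ref{pr:fon-oracle-exists} produces a unique oracle $r$ whose Yes intervals include every $I_M$.

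Next I would show $r$ is the limit. Each $I_M$ is, by construction, a Yes interval for both $r$ and $a_n$ whenever $n \geq N(M)$, so $r$ and $a_n$ are $I_M$-compatible. Unpacking the distance function from the previous subsection (where $d$ was shown to be bounded by the interval-distance of any pair of shared Yes intervals), we get $d(r, a_n) \leq |I_M| \leq \frac{1}{M}$ for every $n \geq N(M)$. Since $M$ is arbitrary, $d(r, a_n)$ is eventually smaller than any prescribed positive rational, which is the natural notion of convergence in this framework.

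The main obstacle is a mild one of interpretation rather than of hard work: the statement does not spell out what ``converges to'' means, so I would open the proof by fixing this to mean $d(a_n, r) \to 0$, and I would also note why the construction does not depend on the particular $I_M$ chosen at each scale. If a second choice $I'_M$ were made, the resulting fonsi would still pairwise-overlap the original one (any two candidate intervals at large enough $n$ share $a_n$ as a common oracle), so Proposition \ref{pr:fonsi-inter} applied to the two fonsis would give the same oracle. The only actual computation is the inequality $d(r, a_n) \leq |I_M|$, and that falls straight out of the fact that both oracles accept the same small interval.
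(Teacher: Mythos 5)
Your proposal is correct and follows the same route as the paper: gather the tail-enclosing intervals $I_N$ into a collection, check that it is a fonsi (overlap because any two are $a_n$-Yes intervals for large $n$; shrinking by construction), and invoke the fonsi-to-oracle machinery. The paper's proof is terser and stops at ``generates an oracle, one in which the tail of the sequence is as close to the oracle as we care to prescribe,'' whereas you additionally spell out the convergence $d(a_n, r) \to 0$ via the bound $d(r,a_n) \le d(I_M, I_M) = |I_M|$ and observe that the construction is independent of the choice of $I_M$ at each scale; both are sound and welcome elaborations rather than a genuinely different argument.
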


\begin{proof}
    Consider the collection of $I_N$. They all intersect one another as they all contain the latter elements of the sequence. Specifically, if we have $I_N$ and $I_P$, take $n \geq \max(N, P)$. Then $I_N$ and $I_P$ must intersect as they are both $a_n$-Yes intervals. We also can find an $I_N$ which is less than any given length as that is the condition for choosing $N$ based on a given $M$. Therefore, the collection of $I_N$ is a fonsi and it generates an oracle, one in which the tail of the sequence is as close to the oracle as we care to prescribe.     
\end{proof}

The proof is short as the bulk of the work is being done by the fonsi which, as we can see here, is notionally close to what the Cauchy sequences are essentially representing. 

The Cauchy condition presented here is equivalent to the condition of the more usual formulation.

\begin{proposition}
    A sequence of oracles is Cauchy if and only if the sequence satisfies for any $n, m \geq N$, $d(a_n,  a_m) < \varepsilon$ where $d$ is the oracle distance function and $\varepsilon > 0$ is an oracle. 
\end{proposition}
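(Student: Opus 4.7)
The plan is to prove the two directions separately. The forward direction (from the paper's formulation to the distance-based one) is essentially a direct computation using the fact that a common Yes interval of width $\tfrac{1}{M}$ automatically places a small element into the distance set $D_{a_n, a_m}$. The backward direction is harder and needs a ``thickening'' construction: from closeness in distance we will build a common rational Yes-interval for the whole tail by padding a narrow $a_N$-Yes interval.

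For the forward direction, suppose the paper's condition holds and we are given an oracle $\varepsilon > 0$. Because $\varepsilon > 0$, there is an $\varepsilon$-Yes interval $p:q$ with $0 < p$, and I can choose a natural $M$ with $\tfrac{1}{M} < p$, so that the rational oracle $\tfrac{1}{M}$ satisfies $\tfrac{1}{M} < \varepsilon$. Apply the paper's Cauchy condition with this $M$ to obtain $N$ and an interval $I_N$ of length at most $\tfrac{1}{M}$ that is $a_n$-Yes for every $n \geq N$. For any $n, m \geq N$, the pair $(I_N, I_N)$ contributes the value $|I_N| \leq \tfrac{1}{M}$ to $D_{a_n, a_m}$, so $d(a_n, a_m) \leq \tfrac{1}{M} < \varepsilon$, which chains to $d(a_n, a_m) < \varepsilon$.

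For the backward direction, assume the distance condition and let a natural $M > 0$ be given. Apply the hypothesis with the rational oracle $\varepsilon = \tfrac{1}{4M}$ to get an $N$ with $d(a_N, a_n) < \tfrac{1}{4M}$ for all $n \geq N$. Using Proposition \ref{pr:short}, pick an $a_N$-Yes interval $a:b$ with $b-a < \tfrac{1}{4M}$, and set $I_N := (a-\tfrac{1}{4M}) : (b+\tfrac{1}{4M})$, whose length is at most $\tfrac{3}{4M} < \tfrac{1}{M}$. To show $I_N$ is $a_n$-Yes for each $n \geq N$, I unpack $d(a_N, a_n) < \tfrac{1}{4M}$ as ``there exist $a_N$-Yes $\alpha:\beta$ and $a_n$-Yes $\gamma:\delta$ with interval distance less than $\tfrac{1}{4M}$'' (because $\tfrac{1}{4M}$ cannot be a lower bound for $D_{a_N, a_n}$). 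Intersecting $\alpha:\beta$ with $a:b$ via Proposition \ref{pr:inter} shrinks $\alpha:\beta$ into $a:b$ without increasing the interval distance to $\gamma:\delta$, so the endpoints of $\gamma:\delta$ lie within $\tfrac{1}{4M}$ of $a:b$, hence inside $I_N$. Consistency then promotes $\gamma:\delta$ to make $I_N$ an $a_n$-Yes interval.

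The main obstacle is the backward direction, specifically the translation between the oracle-level statement ``$d(a_N, a_n) < \tfrac{1}{4M}$'' and the rational-level statement ``some pair of Yes intervals has interval distance less than $\tfrac{1}{4M}$.'' This rests on the GLB characterization of the distance oracle given in Theorem~\ref{th:lub} and the monotonicity property $d(e:f, c:d) \leq d(a:b, c:d)$ when $a:b$ contains $e:f$, both of which the paper has already established. Once that translation is in hand, the thickening argument is routine.
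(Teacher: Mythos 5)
Your proposal is correct. The forward direction matches the paper's proof essentially line for line. The backward direction also follows the paper's overall ``thicken a narrow $a_N$-Yes interval'' plan, but the key step differs: the paper invokes a triangle inequality for interval distances, $d(a:b, e:f) \leq d(a:b, c:d) + d(c:d, c:d) + d(e:f, c:d)$, which it proves in a footnote and which forces it to additionally shrink $c:d$ and $e:f$ to control the middle term, whereas you replace the $a_N$-Yes interval $\alpha:\beta$ produced by the distance bound with its intersection $\alpha:\beta \cap (a:b)$, appeal only to the already-stated monotonicity of the interval distance under shrinking, and then bound the endpoints of $\gamma:\delta$ directly. This avoids the triangle inequality lemma entirely and lets you get away with a looser constant ($\tfrac{1}{4M}$ rather than $\tfrac{1}{9M}$), so your backward direction is a mild simplification of the paper's. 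The translation step you flag as the main obstacle --- that $d(a_N,a_n) < \tfrac{1}{4M}$ (an oracle inequality against a rational singleton) forces an element of $D_{a_N,a_n}$ to drop below $\tfrac{1}{4M}$ --- is indeed the crux, and your justification via ``$\tfrac{1}{4M}$ cannot be a lower bound of $D_{a_N,a_n}$'' is the right reading of the glb definition; the paper uses the same reading implicitly.
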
 

\begin{proof}
    
If a sequence is Cauchy by our definition, then we can easily prove it satisfies the usual formulation.  Let such a sequence be given with $I_N$ having length less than $\frac{1}{M} < \varepsilon$ and for all $n \geq N$, we have $I_N$ is an $a_n$-Yes interval. Then consider $d(a_n, a_m)$ for $n, m \geq N$. Since $I_N$ is a Yes interval for both, we have $d(a_n, a_m) \leq d(I_N, I_N) < \frac{1}{M} < \varepsilon$ as the distance of an interval from itself is just it's length. 

The other direction requires a little more careful choosing of the lengths. We are given a natural number $M>0$. By assumption, we can find $N$ such that that $d(a_n, a_m) < \frac{1}{9M}$ for all $n, m \geq N$. Take an $a_N$-Yes interval whose length is less than $\frac{1}{9M}$; let's say that interval is $a \lt b$. Then define $I_N = a-\frac{1}{3M} \lte b + \frac{1}{3M}$. The length of $I_N$ is less than $\frac{1}{M}$. We are therefore done if we can show that $I_N$ is an $a_n$-Yes interval for all $n \geq N$.

The distance statement implies that we can find an $a_n$-Yes interval $e:f$ and an $a_N$-Yes interval $c:d$ such that $d(e:f, c:d) < \frac{1}{9M}$. Since subinterval distances shrink and we can bisect Yes intervals to get smaller Yes intervals, we can also assume that $e:f$ and $c:d$ have lengths smaller than $\frac{1}{9M}$. 

In order for $e:f$ to be contained in $I_N$, we need $d(a:b, e:f) \leq \frac{1}{3M}$. The distance function satisfies a version of the triangle inequality, namely $d(a:b, e:f) \leq d(a:b, c:d) + d(c:d, c:d) + d(e:f, c:d)$. This holds for all intervals\footnote{This follows from the definitions and the usual triangle inequality. Let $|A-E| = d(a:b, e:f)$, $|B-D| = d(a:b, c:d)$, $|c-d| = d(c:d, c:d)$, and $|C-F| = d(c:d, e:f)$ where the capital letters are the different points in their respective intervals that realize the distances. Then $|A-E| = |A - D + D - C + C - E| \leq |A-D| + |D-C| + |C-E| \leq |B-D| + |c-d| + |C-F|$ where the substitutions are justified by each term being replaced with the maximum difference of the points in the intervals.  We therefore have $d(a:b, e:f) \leq d(a:b, c:d) + d(c:d, c:d) + d(e:f, c:d)$.}. Substituting in our choices, we get $ d(a:b, e:f)  \leq \frac{1}{9M} + \frac{1}{9M} + \frac{1}{9M} =  \frac{3}{9M} = \frac{1}{3M}$ as was to be shown. 

The interval $I_N$ therefore contains the $a_n$-Yes interval $e:f$ and, by Consistency, is an $a_n$-Yes interval itself. 

\end{proof}

\subsection{The Collatz Number}

There is an open mathematical problem, called the Collatz conjecture, which conjectures that a certain process stops for every natural number.\footnote{\url{https://en.wikipedia.org/wiki/Collatz_conjecture}} We will use that process to define a couple of oracles whose ultimate nature can only be settled by someone solving the Collatz conjecture. 

We define one Oracle of Collatz $R$ to be such that $R(-\tfrac{1}{n}:\tfrac{1}{n})$ is 1 if the $k$-Collatz sequence terminates at 1 for all $k \leq n$ and 0 otherwise. For all other intervals $a:b$, $R(a:b) = 1$ if $a:b$ contains such a Collatz Yes interval. If there is an $n$ which does not satisfy the Collatz conjecture, then, taking $N$ to be the first such $n$, we define $R(\tfrac{1}{N}:\tfrac{1}{N}) = 1$. If there is no such $n$, then $R(0:0) = 1$.

This is a rule which is currently known to be $-2^{-68}:2^{-68}$ compatible with the Oracle of 0. But unless it gets proven or falsified, we cannot establish equality or inequality with respect to 0. 

In the ultimate form of a known result for the conjecture, this will be an Oracle of 0 or an Oracle of $\tfrac{1}{N}$ where $N$ is the first number where the conjecture fails to hold. Until that happens, we cannot practically use the Separating or Rooted properties in all possible cases. 

An alternative approach to using this conjecture is to define an oracle that implies the $n$-th digit in its decimal approximations is $1$ if the Collatz conjecture fails on it and $0$ otherwise. We do this in the following way. Let $C(n)$ be an indicator function which is 1 if $n$ does not satisfy the Collatz conjecture and 0 otherwise. Define the following function as follows: $f(1) = 0$,  $f(n) = \sum_{i=1}^n C(i)*10^{-i} $. Define $p_n = \frac{1}{9*10^n}$ which is the closed form of the geometric sum $\sum_{i=n+1}^{\infty} 10^{-i}$ which obviously approaches $0$ as $n \to \infty$. The set of $f(n): f(n) + p_n $ define a fonsi and therefore an oracle. This is assuming, of course, that one can compute out what $C(i)$ is. This is also compatible with the 0 oracle at the present time, but even if we establish that it is not the 0 oracle, we may still not be able to compute $C(i)$ for all $i$. 

This example is an issue for all definitions of real numbers, but it feels as if the oracle approach accommodates this a little better as the incompleteness of our information is naturally incorporated into the framework using the language of compatibility.

\subsection{Coin Tosses}

We want to model an oracle that is probabilistic in nature and explore how that might work. 

The first version is to create a fonsi by a bisection method. We start with an initial interval, say $I_1 = 0:1$. Then, given $I_n = a:b$, we let $c = \tfrac{a+b}{2}$ and then use a random function that yields 1 with probability $p$ and 0 with probability $1-p$. If it is 1, then $I_{n+1} = a:c$. Otherwise, $I_{n+1} = c:b$. These intervals are clearly shrinking and overlapping, hence we can define a family $F_n$ of oracles compatible up to $I_n$ for any given $n$. We will never have a single, final oracle as it can only be defined fully after this infinite subdividing process completes. 

A process closer to defining an oracle directly is to flip a coin when asked a question about an interval when the answer is not already known. We will use $R$ for the rule. Let $I$ be a given starting interval and we set $C=I$ where $C$ stands for current. Let $a:b$ be the interval asked about. If $a:b$ contains $C$ (or is $C$), then $R(a:b) = 1$. If $a:b$ and $C$ are disjoint, then $R(a:b) = 0$. If neither, then $a:b$ and $C$ intersect. Let $c:d$ then be that intersection. If the interval $c:d$ divides $C$ in two, then we use a random process to determine if $c:d$ is a Yes interval or a No interval; the complement interval in $C$ is then the opposite. Then $a:b$ is a Yes or No interval if $c:d$ is Yes or No, respectively. We also redefine $C$ to be $c:d$ or its complement, depending on which one is Yes. It is possible for $c:d$ to divide $C$ into three intervals in the case that $a:b=c:d$ is strictly contained in $C$. We check $c:d$ first; if it is Yes, then the other two are No. If $c:d$ is No, then we do the random process to decide which of the two remaining intervals making up the complement are Yes. Whichever one is Yes, becomes the new $C$ for future Rule consultations.  

This oracle will satisfy consistency, existence, and separating all by definition. Rooted will apply as well. As for closed, $R(c:c)=1$ can only happen if we ask about it specifically and it comes up with a 1 in which case we have finished our process. Until we ask about a singleton and get a Yes, we will have many rationals that are contained in all the Yes intervals asked about. Because the oracle never gets finished if not rooted, we never get to apply the closed property. One could argue that this fails to be an oracle because we cannot establish that it will satisfy the closed property. 

This is an example of a rule which is guaranteed to give an answer for any question we ask even if the answer cannot be known in advance. In fact, the very nature of what is possible is based on what we ask. For example, if we never ask about  $c:c$, then it will not happen. But if we do ask, then it could come up as Yes. 

In a certain sense, our rule here is not ``complete''. An infinite being looking at it would see the finiteness. But from our limited perspective, it works just as much as any ``complete'' rule. In contrast, the Collatz example is one which we might not get an answer to. It is ``incomplete'' on a practical level. 

Comparing both of these random processes, the first seems to give an air of incompleteness which the second lacks. In particular, the first one's design looks like an infinite process that should go on until the completion, something which can never be accomplished. In contrast, the second one requires an interaction. It feels more like an answer given just in time. While it is also incomplete, it does not feel incomplete. 
 
Part of it is that we get to ask the second process directly the question of interest, namely, ``is this interval that we care about a Yes interval?'' In the first process, we need to keep computing the bisected intervals until we can definitively answer the question about any given interval. 

In comparing it the Collatz number example, a striking difference is that the Collatz procedure is one in which different people doing the mathematics correctly will not find different answers. The random processes yields different answers for different questioners, quite radically so for the second random process. 

Both the random processes and the Collatz example explains our focus on the rule rather than the set of Yes-intervals. The rule approach allows for more possibilities than the set version, as these examples demonstrate. There is no way to present the completed set of Yes intervals for these examples. One can argue about whether one can present the completed set of Yes intervals for the square root of 2 or not, but one can clearly not do so for these processes. Whether one wants to include these examples as valid oracles is a question which we leave to the reader to decide. 

\section{Interval Arithmetic}

This section is not novel, but rather a review of applying arithmetic to intervals. We will need this in defining the arithmetic of oracles. This material can be seen, for example, in the videos by NJ Wildberger.\footnote{\url{https://youtu.be/xReU2BJGEw4}} Intervals are of interest as they help propagate error bounds in scientific computations. The oracle approach reflects the usefulness of this thinking. 

The guiding idea of interval arithmetic is to have the arithmetic operation operate on each pair of elements from the two intervals. We then deduce the minimal interval that contains all those results. We could try to define it that way and then prove what we are about to assume. We will, instead, define these operations below and then establish that given any two elements in the operated-on intervals, the result will fall in that resultant interval. Due to the inclusion of the endpoints and our formulas use the endpoints, there is no smaller interval that could be obtained that covers all the pairwise combinations. 

This is not the same as the eventual oracle arithmetic. Here, for example, subtracting $a<b$ from itself results in the interval $a-b:b-a$ while subtracting an oracle from itself will result in $0$, coming from the ability to shrink that $a-b:b-a$ result arbitrarily and noting that $0$ is always contained in it. 

\subsection{Definition of Interval Arithmetic}

Let $a \leq b$ and $c \leq d$, all of them being rational numbers. Then we define:
\begin{enumerate}
    \item Addition. $a\lte b \oplus c\lte d = (a+c)\lte (b+d)$
    \item Negation. $\ominus\ a\lte b = -b\lte -a$
    \item Subtraction. $a\lte b \ominus c\lte d = a\lte b \oplus (-d\lte -c) = a-d\lte b-c$
    \item Multiplication. $a\lte b \otimes c\lte d = \min(ac, ad, bc, bd)\lte  \max(ac,ad,bc,bd)$. For $0<a<b$ and $0<c<d$, this is equivalent to $a\lte b \otimes c\lte d = ac\lte bd$. 
    \item Reciprocity. $1 \oslash (a\lte b) = \frac{1}{b}\lte \frac{1}{a}$ as long as $a\lte b$ does not contain 0. If 0 is contained in $a \lt b$, then the reciprocal is undefined as it actually generates the split interval of $-\infty\lte \frac{1}{a}$ and $\frac{1}{b}\lte \infty$.\footnote{We could define the interval $0 \lt b$ to have a reciprocal as $\frac{1}{b}\lte \infty$. This could be useful if we wanted to introduce one-sided oracles, such as having a $0^+$ oracle which would consist of $0$-Yes intervals of the form $0\lt b$. This would allow us to define the reciprocal of the Yes intervals as $\frac{1}{b}\lte \infty$ and would represent the oracle of $+ \infty$. We will not pursue that here.  }
    \item Division. $(a\lte b) \oslash (c\lte d) = a\lte b \otimes \frac{1}{d}\lte \frac{1}{c}$ where $c\lte d$ does not contain 0. Applying the multiplication rule, we find that we can view it as:   $\min(\frac{a}{c}, \frac{a}{d}, \frac{b}{c}, \frac{b}{d})\lte  \max(\frac{a}{c},\frac{a}{d},\frac{b}{c},\frac{b}{d})$. For $a, b, c, d > 0$, we have $\frac{a}{d} \lte  \frac{b}{c}$
    \item Natural Powers. Let $n$ be a natural number. $(a\lte b)^n = a^n\lte b^n$ if $a$ and $b$ have the same sign, 0 inclusive. If $-a > b$, then $a^n : a^{n-1} b$. Otherwise, we have $b^n \lte a b^{n-1}$. Note this is repeated interval multiplication and not the application of $n$ directly to a number as a power in the interval.\footnote{ As an example, $(-3 \lte 2)^2$ becomes $-6\lte 9$. It is not $0\lte 9$ which is what it would be if we squared the elements. In other words, we are not just self-pairing the elements, but rather we are considering all the products. } 
    \item Negative powers ($a\lte b$ not containing 0). $(a\lte b)^{-n}$ for natural $n$ is defined as $(\frac{1}{b}\lte \frac{1}{a})^n$. They must have the same sign and this is therefore the same as $b^{-n} \lte  a^{-n}$.
\end{enumerate}

\subsection{Containment}\label{containment}

We now want to show that each of the above operations applied to pairings of numbers in the intervals  does lead to a number in the defined interval above. 

Let $a \leq p \leq b$ and $c\leq  q \leq  d$. Then from normal inequality arithmetic, we have: 

\begin{enumerate}
    \item Addition.   $a +c \leq  p + q \leq  b +d$ thus yielding $(a+c):(b+d)$.
    \item Negation.  $-b \leq -p \leq -a$ thus yielding $-b:-a$.
    \item Subtraction.  $a - d \leq p-q \leq b -c$  thus yielding $a-d:b-c$.
    \item Multiplication. Signs can lead to a number of cases to check. We will avoid this by  applying our colon notation for three numbers, namely,  $x:y:z$ means that $y$ is between $x$ and $z$. This is helpful since we do not need to track the inequality directions. 
    
    For $a:p:b$, we can multiply by a number and containment is maintained. So $ca:cp:cb$, $qa: qp: qb$ and $da:dp:db$ all hold true. We also have $c:q:d$ which leads to $ca:qa:da$, $cp:qp:dp$, and $cb:qb:db$. This means that $qp$ is contained within the bounds of $cp$, $qa$, $qb$, and $dp$. Those bounds are contained within $ca$, $cb$, $da$, and $db$. Therefore, $qp$ is contained within $\mathrm{min}(ca, cb, da, db):\mathrm{max}(ca, cd, da, db)$. 
    
    A table form of the inclusion would be 
    
    \begin{tabular}{ccccc}
        $ca$ &$:$& $cp$ &$:$& $cb$ \\
        $\cdots$ & & $\cdots$ & & $\cdots$ \\
        $qa$ &$:$& $qp$ &$:$& $qb$\\
        $\cdots$ & & $\cdots$ & & $\cdots$ \\
         $da$ &$:$& $dp$&$:$& $db$
    \end{tabular}
    
    Let us demonstrate with some examples. 
    
    A fully positive example is $2:p:3$ and $5:q:7$ leading to 
    
     \begin{tabular}{ccccc}
        $10$ &$:$& $5p$ &$:$& $15$ \\
        $\cdots$ & & $\cdots$ & & $\cdots$ \\
        $2q$ &$:$& $qp$ &$:$& $3q$\\
        $\cdots$ & & $\cdots$ & & $\cdots$ \\
         $14$ &$:$& $7p$&$:$& $21$
    \end{tabular}
    
    with the result of $10:21$ being the multiplication of $2:3$ with $5:7$.
    
    For $-2 : p : 7$ and $3: q : 5$, we have
    
     \begin{tabular}{ccccc}
        $-6$ &$:$& $3p$ &$:$& $21$ \\
        $\cdots$ & & $\cdots$ & & $\cdots$ \\
        $-2q$ &$:$& $qp$ &$:$& $7q$\\
        $\cdots$ & & $\cdots$ & & $\cdots$ \\
         $-10$ &$:$& $5p$&$:$& $35$
    \end{tabular}
    
    and thus $-10:35$ is the result of multiplying $-2:7$ with $3:5$. Notice that if we replaced $3$ with, say, $2$ or $4$, the result is unchanged. This is because of the mixed sign interval which leads to the minimum value being created by growing the negative value's absolute value. 

     For $-2 : p : 7$ and $-3: q : 5$, we have
    
     \begin{tabular}{ccccc}
        $6$ &$:$& $-3p$ &$:$& $-21$ \\
        $\cdots$ & & $\cdots$ & & $\cdots$ \\
        $-2q$ &$:$& $qp$ &$:$& $7q$\\
        $\cdots$ & & $\cdots$ & & $\cdots$ \\
         $-10$ &$:$& $5p$&$:$& $35$
    \end{tabular}
    
    and thus $-21:35$ is the result of multiplying $-2:7$ with $-3:-5$.

    For our final example, let's do all negatives: $-2 : p : -7$ and $-3: q : -5$ leading to
    
     \begin{tabular}{ccccc}
        $6$ &$:$& $-3p$ &$:$& $21$ \\
        $\cdots$ & & $\cdots$ & & $\cdots$ \\
        $-2q$ &$:$& $qp$ &$:$& $-7q$\\
        $\cdots$ & & $\cdots$ & & $\cdots$ \\
         $10$ &$:$& $-5p$&$:$& $35$
    \end{tabular}
    
    and thus $6:35$ is the result of multiplying $-2:-7$ with $-3:-5$.
    
    \item Reciprocity. Let $0 < a \leq p \leq b$, then $1/a \geq 1/p  \geq 1/b > 0$. Similarly, $a \leq p \leq b< 0$ has $1/a \geq 1/p \geq 1/b$. What fails is if, say,  $a \leq p < 0 < b$, we would have $1/b > 0 > 1/a \geq 1/p $ and similarly if $p$ was positive, it would flip over $1/b$ but not $1/a$.
    \item Division. This follows from Multiplication and Reciprocity. 
    \item\label{natpow} Natural Powers. We could go to basic principles and play around with inequalities of the elements of the products, breaking into cases and dealing with various sign flippings. Instead, we can consider what happens under iterative multiplication. For squaring, we have that we need to find the maximum and minimum of $a^2, ab, b^2$. If they are the both positive, then $a^2 \leq ab \leq b^2$. If they are both negative, then $a ^2 \geq ab \geq b^2$. If they are different signs, then $ab < 0 < a^2, b^2$ and we need to compare the size of $-a$ and $b$.  In general, if they are of the same sign and $n$ is odd or they are both positive, then $a^n \leq a^{n-1} b \leq \cdots \leq ab^{n-1} \leq b^n$. For $n$ even and both are negative, then $a^n \geq a^{n-1} b \geq \cdots \geq ab^{n-1} \geq b^n$. If they are of different signs, then let $c = \max( |a|, |b|)$ and $d=\min(|a|, |b|)$. Then  $c^n \geq d c^{n-1} \geq d^{i}c^{n-i}$  for $i > 1$.  Since the signs are different, we have the products are negative when the power of $a$ is odd and positive for even powers of $a$. Thus, $c^n$ and $d c^{n-1}$ are the absolute value of the two endpoints. We have three cases: $c=b$ in which case $b^n$ will be the largest product and we have $b^n > 0 > ab^{n-1}$; $c=|a|$ and $n$ is even in which case $a^n > 0 > a^{n-1} b$; $c=|a|$ and $n$ is odd in which case $a^{n-1} b > 0 > a^n$.  

    As an example of the powers, consider $(-2:3)^4$. All of the products of the endpoints are: $16, -24, 36, -54, 81$. We therefore have the interval being $-54:81$ which is the product of $ab^3$ and $b^4$. 
     
    \item Negative powers are defined by combining reciprocity with natural powers. 
\end{enumerate}

We also want to point out that the above pointwise containment then extends so that if $a:b$ and $c:d$ are contained in $e:f$ and $g:h$, respectively, then $a:b \oplus c:d $ is contained in $e:f \oplus g:h$ and similarly for the various other operations, with the understanding of the appropriate restrictions on not containing 0 for the division and reciprocals.

\subsection{Verifying the Rules}\label{sec:rules}

The associative and commutative rules of arithmetic apply to intervals. The distributive rule somewhat applies. Each of these is a distinct computation, but very straightforward. We will do the distributive property separately. 

\begin{enumerate}
    \item Interval addition is closed, namely, the sum of two intervals is another interval by definition. 
    \item Addition is Commutative. $a:b \oplus c:d= a+c : b+d = c+a:d+b = c:d \oplus a:b$. We used the commutativity of rational addition in the middle step. 
    \item Addition is Associative. $(a:b \oplus c:d) \oplus e:f = (a+c:b+d) \oplus e:f = ((a+c)+e):((b+d)+f) = (a+c+e):(b+d+f)$ where the last step is the associative property of rationals.  On the other hand, $a:b \oplus (c:d+e:f) = a:b \oplus (c+e:d+f) = (a+(c+e)):(b+(d+f)) = (a+c+e):(b+d+f)$ again by associativity of addition of rationals. Since they are equal to the same quantity, we have the associative rule of addition applying to intervals and we can comfortably write $a:b \oplus c:d \oplus e:f$ without requiring parentheses. 
    \item The singleton $0:0$ is the additive identity as $a:b \oplus 0:0 = a+0:b+0 = a:b$. 
    \item There is no additive inverse for neighborly intervals since subtraction leads to the length of the new interval being the sum of the lengths of the two given intervals. The singleton intervals do have additive inverses, namely $a:a \oplus -a:-a = 0:0$. 
    \item Interval multiplication is closed, namely, the product of two intervals is another interval by definition. 
    \item Multiplication is Commutative. This follows from the product interval being defined in terms of the products of the boundaries and those individual products commute. 
    \item Multiplication is Associative. If we are multiplying $a:b$, $c:d$, and $e:f$, then the product interval of the three is formed from the max and min of the set $\{ace, acf, ade, adf, bce, bcf, bde, bdf\}$. Since those products are unchanged by reordering, we have multiplication is associative. In terms of the square approach to multiplication, we can imagine extending that to a cube of 27 entries and seeing that the ordering is nothing but an irrelevant reorientation of the cube. 
    \item $1:1$ is the multiplicative identity as $a:b \otimes 1:1$ has the form $\max(1*a, 1*b):\min(1*a, 1*b) = a:b$. 
    \item There is no multiplicative inverse as multiplication has a non-zero length for neighborly intervals. The non-zero singleton intervals do have multiplicative inverses, namely $c:c  \otimes  \frac{1}{c} : \frac{1}{c} = 1:1$.
\end{enumerate}

For the distributive property, we do not have equality of the intervals. But we do have that one contains the other, which will be sufficient for our purposes to have the distributive rule apply to oracles. 

\begin{proposition}
We have the subdistributive property: $I = a:b\otimes(c:d \oplus e:f)$ is contained in $J = (a:b \otimes c:d) \oplus (a:b \otimes e:f)$. 
\end{proposition}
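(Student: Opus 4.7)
The plan is to verify the containment $I \subseteq J$ by directly comparing the endpoints of each interval. First I would observe that $c:d \oplus e:f = (c+e):(d+f)$, so by the multiplication rule the endpoints of $I = a:b \otimes (c+e):(d+f)$ are the minimum and maximum of the four products $a(c+e)$, $a(d+f)$, $b(c+e)$, $b(d+f)$. Applying ordinary distributivity of rational multiplication to each, these four values expand to $ac+ae$, $ad+af$, $bc+be$, and $bd+bf$; that is, each has the form $\alpha\beta + \alpha\gamma$ with a common factor $\alpha \in \{a,b\}$ appearing in both summands.

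The key observation is that each such sum pairs a term drawn from the set $\{ac, ad, bc, bd\}$, which defines the endpoints of $a:b \otimes c:d$, with a term drawn from $\{ae, af, be, bf\}$, which defines the endpoints of $a:b \otimes e:f$. The endpoints of $J$ are therefore
\[
J_{\min} = \min\{ac,ad,bc,bd\} + \min\{ae,af,be,bf\}, \quad J_{\max} = \max\{ac,ad,bc,bd\} + \max\{ae,af,be,bf\}.
\]
Applying the elementary inequalities $\min_i(x_i+y_i) \geq \min_i x_i + \min_i y_i$ and $\max_i(x_i+y_i) \leq \max_i x_i + \max_i y_i$ to the four paired sums, I would conclude that the lower endpoint of $I$ is at least $J_{\min}$ and the upper endpoint of $I$ is at most $J_{\max}$. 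Since $J$ is itself a rational interval, this is exactly the statement that $I$ is contained in $J$.

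There is no substantive obstacle here; the work is a short calculation once the endpoints are written out. The one point worth flagging is why the containment is generally strict: the endpoints of $J$ are free to combine a term with factor $\alpha$ and a term with factor $\alpha'$ where $\alpha \neq \alpha'$, whereas the endpoints of $I$ force the same factor to be used in both summands. A quick illustration of this slack is $I = (-1:1) \otimes (1:1 \oplus -1:-1) = (-1:1) \otimes 0:0 = 0:0$ while $J = \bigl((-1:1) \otimes 1:1\bigr) \oplus \bigl((-1:1) \otimes -1:-1\bigr) = -1:1 \oplus -1:1 = -2:2$, so $I$ sits strictly inside $J$.
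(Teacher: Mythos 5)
Your proof is correct and takes essentially the same approach as the paper: both compute the candidate endpoints of $I$ as $\{ac+ae,\ ad+af,\ bc+be,\ bd+bf\}$ and then observe that each such paired sum is bounded between $J_{\min}$ and $J_{\max}$. Your version is slightly more explicit in naming the superadditivity of $\min$ and subadditivity of $\max$ over paired indices, and the concluding example of strict containment is a nice addition, but the substance is identical.
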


\begin{proof}
We can compute the interval limits. The interval $I$ has boundaries from the max and min of the set 
\[
\{a(c+e), a(d+f), b(c+e), b(d+f)\} = \{ac+ae, ad+af, bc+be, bd+bf\}
\] 
The interval $J$ is the interval 
\[
(\min(ac, ad, bc, bd) + \min(ae, af, be, bf) ) \lte (\max(ac, ad, bc, bd) + \max(ae, af, be, bf) )
\]
Since the boundaries of $I$ are contained in the possibilities of $J$, we do have $J$ containing $I$.
\end{proof}

Let us do an example demonstrating this. Consider $I = 2:3 \otimes ( 4:7 \oplus -6:-3)$ versus $J = (2:3 \otimes 4:7) \oplus (2:3 \otimes -6:-3)$. Computing out $I$, we have $2:3 \otimes -2:4 = -6:12$. For $J$, we have $8:21 \oplus -18:-6 = -10: 15$. This is one example of $I$ being strictly contained in $J$. 

As we have seen, the arithmetic of intervals has some common properties with normal number arithmetic, but it is not the same.

\section{Oracle Arithmetic}

We can now define oracle arithmetic. The basic idea is that if an interval contains the result of combining Yes intervals of the oracles being combined, then it is a Yes interval itself. 

We will prove a general statement about creating an oracle out of other oracles based on maps that have the property that shrinking intervals of the inputs lead to shrinking the interval of the output. We will then apply it to the various forms of arithmetic operators. The maps under consideration take in rationals and yield rationals. 

This shrinking is what allows us to go from the arithmetic of intervals, which does not have a mechanism to use this property, to an arithmetic of oracles, which can use it since we can shrink the intervals of interest for oracles. 

A good example to keep in mind is that of multiplication. Let's assume, to avoid cases in this example, that  $0 < a < b < c < d \leq M$ and we are computing $a:b \otimes c:d$. Then we have that the length of that product interval is $bd-ac = bd-ad+ad - ac= (d(b-a) + a(d-c))< M ( ( b-a) + (d-c) )$. We can replace this with $2ML$ where $L = \max(b-a, d-c)$. If $e\lt f$ and $g\lt h$ are contained in the intervals $a:b$ and $c:d$ respectively, then the length of the product of the two intervals is bounded above by $2M\max(f-e, h-g)$. We can therefore see that the length of the product interval goes to 0 as the lengths of the subintervals go to 0. Notice that the bound for this does involve the original intervals, but once that initial choice is made, then we can scale the product length down to 0. This is the key property we are abstracting out in what follows. 

\subsection{Narrowing of Intervals}\label{sec:nrwint}

A \textbf{rational interval operator} $f$ is a mapping that operates on a finite number of rational intervals to produce a rational interval. An \textbf{oracle operator} $F$ is a mapping that operates on a finite number of oracles to produce an oracle.

We will use $|I|$ to denote the length of the intervals which for the rational interval $a\lte b$ is $b-a$. We shall consider $n$-tuples of intervals by which we mean an ordered collection of $n$ rational intervals. The $n$-tuple $\vec{J}$ is contained in the $n$-tuple $\vec{I}$ if the $i$-th interval in $\vec{J}$ is contained in the $i$-th interval of $\vec{I}$ for each $1 \leq i \leq n$. For our purposes, we will define the length of the $n$-tuple $\vec{I}$ as $|\vec{I}| = \max_{i=1}^n (|I_i|)$.

For an $n$-tuple $\vec{\alpha}$ of oracles, we say that the $n$-tuple $\vec{I}$ is an $\vec{\alpha}$-Yes tuple if for each of the $I_i$ intervals, we have $I_i$ is an $\alpha_i$-Yes interval. We say it is a neighborly $\vec{\alpha}$-Yes tuple if none of the endpoints are the roots of their respective $\alpha_i$ oracle. In particular, a neighborly tuple has no singletons. It has the property that there exists another $n$-tuple $\vec{J}$ that is an $\vec{\alpha}$-Yes tuple such that each $J_i$ is strictly contained in $I_i$. 

A $\vec{\alpha}$ of oracles is neighborly if all of its elements are neighborly oracles. It is rooted if they are all rooted oracles. It is partially rooted if someone of them are rooted oracles and others are neighborly. 

We use the terminology that $\vec{I}$ is in the domain of $f$ if $f(\vec{I})$ is defined and, specifically, it is not the empty set. We say that $\vec{\alpha}$ is $f$-compatible if there exists an $\vec{\alpha}$-Yes $n$-tuple of rational intervals $\vec{I}$ in the domain of $f$. 

An interval operator has the \textbf{narrowing property} if the following holds for all $n$ tuples $\vec{I}$ in the domain of $f$:
\begin{itemize}
    \item Containment. If $\vec{J}$ is contained in $\vec{I}$, then $f(\vec{J})$ is contained in $f(\vec{I})$. In particular, $f$ is defined on all tuples of rational intervals contained in $\vec{I}$. By the containment, we have $|f(\vec{J})| \leq |f(\vec{I})|$
    \item Notionally Shrinking. Given an $f$-compatible $\vec{\alpha}$, we can find, for any given $N>0$, an $\vec{\alpha}$-Yes neighborly $n$-tuple $\vec{J}$  such that $|f(\vec{J})| \leq \frac{1}{N}$. 
\end{itemize}

The Notionally Shrinking property requires the existence of neighborly $n$-tuples to help ensure appropriate continuity in various applications of this notion. This is a restriction on the rooted and partially rooted oracle tuples as the neighborly oracles automatically only have neighborly tuples. 

If $f$ satisfies the narrowing property and $\vec{I}$ is not in its domain, then all $n$-tuples that contain $\vec{I}$ are not in the domain. 

One common scenario where this holds is when the length of the image scales down with the length of the input. We will say that an interval operator $f$ has the \textbf{Lipschitz property} if it satisfies the Containment condition of the narrowing property and the additional fact that given an $\vec{I}$ such that $f(\vec{I})$ is defined, then there exists a constant $M_{\vec{I}}$ that satisfies for every $\vec{J}$ contained in $\vec{I}$, we have $|f(\vec{J})| \leq M_{\vec{I}} |\vec{J}|$. We additionally require that any $f$-compatible $n$-tuple that contains a singleton is contained in an $f$  compatible neighborly $n$-tuple. 

We do allow singletons in the $n$-tuples and, if they are all singletons, then the Lipschitz condition implies the image of a singleton tuple is a singleton as the length of a singleton is 0 and the length of the image is bounded by a multiple of the length. 

\begin{proposition}
    Let the rational interval operator $f$ be Lipschitz. Then it also satisfies the narrowing property. 
\end{proposition}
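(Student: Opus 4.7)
The plan is to verify each clause of the narrowing property directly from what the Lipschitz property supplies. The Containment clause is immediate: it is part of the definition of the Lipschitz property, so no work is required.

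For the Notionally Shrinking clause, I would start from an $f$-compatible $\vec{\alpha}$, which by definition gives some $\vec{\alpha}$-Yes tuple $\vec{I}$ in the domain of $f$. The first subtlety is that $\vec{I}$ might contain singleton components (when some $\alpha_i$ are rational oracles), and the Notionally Shrinking clause specifically asks for a \emph{neighborly} Yes tuple. This is precisely where the extra hypothesis in the Lipschitz definition — that any $f$-compatible tuple containing a singleton is itself contained in a compatible neighborly $n$-tuple — gets used: it lets me replace $\vec{I}$ by a neighborly $\vec{\alpha}$-Yes tuple $\vec{I}'$ in the domain of $f$, with associated Lipschitz constant $M = M_{\vec{I}'}$.

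Now I would invoke Proposition \ref{pr:short} componentwise. For each coordinate $i$, the oracle $\alpha_i$ admits arbitrarily short $\alpha_i$-Yes subintervals of $I'_i$; by Proposition \ref{pr:subinter} (applied coordinatewise for neighborly $\alpha_i$, and trivially for any rational $\alpha_i$ by choosing a small neighborly Yes interval around its root inside $I'_i$) I can in fact pick them to be strictly contained, hence neighborly. Given $N > 0$, I choose each $J_i \subseteq I'_i$ to be an $\alpha_i$-Yes neighborly interval with $|J_i| \leq \frac{1}{M N}$. The resulting $\vec{J}$ is a neighborly $\vec{\alpha}$-Yes tuple contained in $\vec{I}'$, and the Lipschitz bound gives $|f(\vec{J})| \leq M |\vec{J}| = M \max_i |J_i| \leq M \cdot \frac{1}{M N} = \frac{1}{N}$, as required.

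The main obstacle, and the reason the Lipschitz definition was formulated with that extra singleton clause, is the partially rooted case: without it, an $f$-compatible $\vec{\alpha}$ whose only Yes tuples contain singleton coordinates might have no neighborly Yes tuple in the domain at all, and the conclusion would fail. Once that clause is invoked to land us on a neighborly compatible tuple $\vec{I}'$, the rest is just bisection followed by applying the Lipschitz constant $M_{\vec{I}'}$ with the right choice of target length. I do not expect any technical difficulty beyond checking that the chosen $J_i$ can simultaneously be made neighborly, $\alpha_i$-Yes, and short — all three are independently supplied by the earlier propositions in the paper.
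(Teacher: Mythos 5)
Your proof follows the paper's own argument almost exactly: containment is immediate from the definition, and the notionally shrinking clause is established by first passing to a neighborly $f$-compatible tuple $\vec{I}'$ (using the singleton clause in the Lipschitz definition), then shrinking each component via Proposition \ref{pr:short} to length at most $\frac{1}{MN}$, and finally applying the Lipschitz bound $|f(\vec{J})| \leq M|\vec{J}| \leq \frac{1}{N}$. The one place you go beyond the paper's proof is in explicitly checking that the subintervals $J_i$ can be chosen so that $\vec{J}$ is itself a \emph{neighborly} $\vec{\alpha}$-Yes tuple. The paper's proof simply says the bisection of Proposition \ref{pr:short} gives $J_i \subseteq I_i$ of the required length, but for a rooted $\alpha_i$ the bisection can terminate on the singleton $q_i:q_i$ or land with the root as an endpoint, which would violate the neighborly requirement in the notionally shrinking clause. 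You close that small gap by, for rooted $\alpha_i$, directly constructing a short interval strictly around the root, and for neighborly $\alpha_i$, noting (via Proposition \ref{pr:subinter}, or simply because such oracles have no root) that the shrunk intervals automatically qualify. That extra care is a genuine improvement in rigor, though the overall route — same lemmas, same order, same estimate — is identical to the paper's.
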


The arithmetic operators are Lipschitz and this is how we will establish that they have the narrowing property and, therefore, form a fonsi and an oracle. 

\begin{proof}
    By assumption, it satisfies containment. 

    To establish the notionally shrinking condition, let a rational $N >0$ and an $f$-compatible  $n$-tuple $\vec{\alpha}$ be given. Being $f$-compatible and Lipschitz, there exists a neighborly $\vec{\alpha}$-Yes $n$-tuple of intervals $\vec{I}$ for which $f(\vec{I})$ is defined. By the Lipschitz condition, we have a constant $M_{\vec{I}}$, which we will just call $M$. Because each $\alpha_i$ is an oracle, we can find, by the bisection algorithm of Proposition \ref{pr:short}, $\alpha_i$-Yes intervals $J_i$ whose length is less than $\frac{1}{MN}$ and $J_i \subseteq I_i$. Let $\vec{J}$ be the $n$-the tuple whose $i$-th component is $J_i$. This is an $\vec{\alpha}$-Yes interval by choice of $J_i$. By Lipschitz, $|f(\vec{J})| \leq M |\vec{J}| \leq M \frac{1}{MN} = \frac{1}{N}$ as was to be shown. 
\end{proof}

The intersection of two $n$-tuple intervals is the $n$-tuple formed by intersecting their respective components. They are disjoint if at least one of the intersecting components is disjoint. 

We now prove an essential property of narrowing operators. 

\begin{proposition} \label{pr:op-nrw}
Let $f$ be an operator on $n$-tuples with the narrowing property. If $\vec{I}$ and $\vec{J}$ intersect, then $f(\vec{I})=K$ and $f(\vec{J}) = L$ intersect.
\end{proposition}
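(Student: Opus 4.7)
The plan is to exploit the Containment clause of the narrowing property by constructing a common refinement of $\vec{I}$ and $\vec{J}$ and sending it through $f$ to produce an interval sitting inside both $K$ and $L$.

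First I would define $\vec{M}$ to be the componentwise intersection of $\vec{I}$ and $\vec{J}$: since we are assuming these two $n$-tuples intersect, each $M_i = I_i \cap J_i$ is a non-empty rational interval (the intersection of two overlapping closed rational intervals is again a closed rational interval, possibly a singleton). By construction, $\vec{M}$ is contained in $\vec{I}$, so the ``In particular'' clause of Containment tells us that $f(\vec{M})$ is defined. The same tuple $\vec{M}$ is also contained in $\vec{J}$, so $f(\vec{M})$ is defined from that side as well (and of course yields the same rational interval, since $f$ is a function of its input tuple).

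Next I would apply Containment twice: from $\vec{M} \subseteq \vec{I}$ we get $f(\vec{M}) \subseteq f(\vec{I}) = K$, and from $\vec{M} \subseteq \vec{J}$ we get $f(\vec{M}) \subseteq f(\vec{J}) = L$. Since $f(\vec{M})$ is a rational interval and therefore non-empty (it contains its endpoints), it witnesses a common rational number lying in both $K$ and $L$. Thus $K$ and $L$ intersect, which is the desired conclusion.

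The argument does not need the Notionally Shrinking half of the narrowing property at all; Containment alone carries the whole proof. There is essentially no obstacle here provided one is careful that the intersection $\vec{M}$ stays in the domain of $f$, which is exactly what the parenthetical remark inside the Containment condition was put there to guarantee. The only subtlety worth flagging is the trivial verification that the componentwise intersection of intersecting $n$-tuples is itself a legitimate $n$-tuple of rational intervals (singletons included), which follows immediately from the one-dimensional interval intersection discussion at the start of the paper.
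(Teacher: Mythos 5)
Your proof is correct and matches the paper's own argument essentially verbatim: both take the componentwise intersection tuple (the paper calls it $\vec{A}$, you call it $\vec{M}$), observe it is contained in both $\vec{I}$ and $\vec{J}$, and apply the Containment clause twice to place $f$ of the intersection inside both $K$ and $L$. Your extra remarks about not needing Notionally Shrinking and about the domain issue are accurate but not essential.
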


This is equivalent to if $f(\vec{I})=K$ and $f(\vec{J}) = L$ are disjoint, then $\vec{I}$ and $\vec{J}$ are disjoint.

\begin{proof}
Assume $\vec{I}$ and $\vec{J}$ intersect. Let $\vec{A}$ be the intersection. Since $\vec{A}$ is contained in both, we have $f(\vec A)= B$ is contained in both $K$ and $L$ by the narrowing property. Therefore the intersection of $K$ and $L$ contains $B$ and is not empty.  
\end{proof}

\begin{theorem}\label{th:intop}
If $f$ is an interval operator with the narrowing property, then there is an associated oracle operator $F$. For any $\vec{\alpha}$ such that there is an $\vec{\alpha}$-Yes interval tuple $\vec{I}$ that is in the domain of 
$f$, then $\vec{\alpha}$ is considered to be in the domain of $F$ and the family of intervals of the form $f(\vec{I})$ for all $\vec{\alpha}$-Yes intervals $\vec{I}$ in the domain of $f$ is a fonsi. We write $F(\vec{\alpha})$ to denote the unique oracle associated with that fonsi. 
\end{theorem}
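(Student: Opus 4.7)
The plan is to show directly that the family
\[ \mathcal{F} = \{ f(\vec{I}) : \vec{I} \text{ is an } \vec{\alpha}\text{-Yes } n\text{-tuple in the domain of } f \} \]
satisfies the two defining conditions of a fonsi, and then invoke Proposition \ref{pr:fon-oracle-exists} to obtain the unique oracle associated to that fonsi, which I will declare to be $F(\vec{\alpha})$. The hypothesis that there exists an $\vec{\alpha}$-Yes tuple $\vec{I}$ in the domain of $f$ is exactly $f$-compatibility, so $\mathcal{F}$ is non-empty.

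For pairwise overlap, I would take arbitrary $f(\vec{I}), f(\vec{J}) \in \mathcal{F}$. Applying Corollary \ref{cor:pair-inter} coordinatewise to the pairs $I_i, J_i$ of $\alpha_i$-Yes intervals yields a non-empty $\alpha_i$-Yes interval $A_i = I_i \cap J_i$ for each $i$; assembling these gives an $\vec{\alpha}$-Yes tuple $\vec{A}$ contained in both $\vec{I}$ and $\vec{J}$. The Containment clause of the narrowing property (specifically its ``in particular'' consequence that $f$ is defined on every subtuple of a tuple in its domain) guarantees that $\vec{A}$ lies in the domain of $f$, even if some $A_i$ happen to be singletons. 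Proposition \ref{pr:op-nrw} then yields that $f(\vec{I})$ and $f(\vec{J})$ overlap, since both contain $f(\vec{A})$.

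For the notionally shrinking condition, I would appeal directly to the Notionally Shrinking clause of the narrowing property: given any positive rational $\frac{1}{N}$, since $\vec{\alpha}$ is $f$-compatible, there exists an $\vec{\alpha}$-Yes neighborly tuple $\vec{J}$ with $|f(\vec{J})| \leq \frac{1}{N}$, and $f(\vec{J}) \in \mathcal{F}$ is the required short interval. With both fonsi axioms verified, Proposition \ref{pr:fon-oracle-exists} produces a unique oracle whose Yes intervals include every element of $\mathcal{F}$, and this is the oracle $F(\vec{\alpha})$.

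There is no real obstacle here; the theorem is essentially a bookkeeping translation that bundles Proposition \ref{pr:op-nrw} (which promotes intersection of input tuples to intersection of output intervals) and the shrinking clause of the narrowing definition into the fonsi framework developed in Section \ref{sec:ni}. The one place warranting care is recognizing that the coordinatewise intersection of two $\vec{\alpha}$-Yes tuples can introduce singleton coordinates, but the Containment clause was written precisely to cover this case, so $f$ remains defined on $\vec{A}$ and the argument goes through uniformly for neighborly, rooted, and partially rooted $\vec{\alpha}$.
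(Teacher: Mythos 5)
Your proof is correct and follows essentially the same route as the paper's: establish the two fonsi conditions (overlap via Proposition \ref{pr:op-nrw}, shrinking directly from the narrowing property), then invoke the fonsi-to-oracle machinery. You spell out a bit more detail — explicitly invoking Corollary \ref{cor:pair-inter} coordinatewise and flagging the singleton-intersection subtlety — but this is all implicit in the paper's appeal to Proposition \ref{pr:op-nrw}, which already handles the intersection tuple internally.
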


We could define an oracle directly, but the proof would mimic that which we did in establishing the oracle of a fonsi. Arithmetic operators inherently produce fonsis and only secondarily produce oracles. This seems to be an essential part of the difficulty of doing arithmetic with real numbers. 

For the relevance of domain, the operator to keep in mind is the reciprocal operator. It is undefined on the Oracle of 0 as any 0-Yes interval will contain 0 and thus the reciprocal interval operator is undefined on it. 

Also note that because of the narrowing property, once there is an $\vec{\alpha}$-Yes tuple for which $f$ is defined on, then all contained ones are also defined. Again, the reciprocal is a good one to keep in mind as say the $2$-Yes interval $-2:5$ is not defined for the reciprocal, but $1:3$ is defined (and it is $1:\frac{1}{3}$) and all subintervals of $1:3$ are defined as well, leading to the Oracle of $\frac{1}{2}$. One should also note that, say, the interval $-1:1$ is also a $\frac{1}{2}$-Yes interval, but not produce through reciprocation. That is, while the interval operator is exclusionary as one gets more expansive $n$-tuples, the oracle version is inclusionary of the larger intervals, being a maximalized fonsi. 

\begin{proof}
To prove that the family of intervals is a fonsi, we need to demonstrate that any two such intervals intersect and that given a rational $\varepsilon > 0$, we can find an interval in the family smaller than that. 

By Proposition \ref{pr:op-nrw}, since any pair of $\vec{\alpha}$-Yes tuples intersect, we must have that their images intersect. Thus, we do have that the family is an overlapping family. 

For the other part, the narrowing property's notionally shrinking condition is exactly what we need. Note that the $\varepsilon > 0$ is equivalent to requiring being smaller than $\frac{1}{N}$ for a given $N$.

For a given $\vec{\alpha}$ in the domain of $F$, we have a fonsi and, therefore, an associated oracle.
\end{proof}

\subsection{Arithmetic Operators}

We can now establish the arithmetic of oracles by defining the arithmetic operators via interval arithmetic and establishing that they have the narrowing property.

The interval arithmetic operators all have the property that subintervals of inputs lead to subintervals of the output as we were motivated to make sure all rationals in an interval were mapped into the output interval by the given operation. 

In what follows, $\alpha$ and $\beta$ are two oracles and we will use $A=a\lte b$ for a generic $\alpha$-Yes Interval and $B=c\lte d$ for  a $\beta$-Yes interval. For binary operators, we define $L = |A,B| = \max(b-a, d-c)$ while for a unary operator, we define $L = |A|$.

\begin{enumerate}
    \item $\alpha+\beta$ is based on the interval addition operator, namely $A \oplus B = a:b \oplus c:d = (a+c):(b+d)$ which has length $(b+d) - (a+c) = (b-a) + (d-c) \leq 2L$, establishing the narrowing property. 
    \item $\alpha * \beta$ is based on interval multiplication. If the interval endpoints are all the same sign, $a:b \otimes c:d$ is $|bd-ac| = |bd -bc + bc -ac| =  |b(d-c) + c(b-a)|$ though it is also equal to $|bd - ad + ad - ac| = |d(b-a) + a(d-c)|$. For mixed signs, we have the maximum of $|a(d-c)|$, $|b(d-c)|$, $|c(b-a)|$, $|d(b-a)|$, $|d(b-a)+a(d-c)| = |b(d-c) + c(b-a)|$. 
    
    For a simple bounding estimate on the multiplicative length, we can take the maximum $M$ of $|a|, |b|, |c|, |d|$ and multiply that by the maximum length $L$ of $b-a$ and $d-c$ and then double that. So $2*M*L$. This satisfies the narrowing property since this $M$ can bound all sub-interval length computations.
    \item $-\alpha$ is based on interval negation. Negation does not change the interval length so that the bound is $1*L$, establishing the narrowing property. 
    \item $\alpha - \beta$ is based on interval subtraction. Subtraction has length $b-c - (a-d) = (d-c) + (b-a)$, the same as addition. Thus, $2L$ is the bound to establish the narrowing property. 
    \item $\frac{1}{\alpha}$ is based on the reciprocity of intervals and $\alpha$ cannot be $0$. The length of a reciprocated interval is $\frac{1}{b} - \frac{1}{a} = \tfrac{b-a}{ab}$. Note $a$ and $b$ must be the same sign to avoid having 0 in there, which was part of the definition. Let $m = \min(|a|, |b|)$. Then $\tfrac{b-a}{ab} \leq \tfrac{L}{m^2}$. Letting $M = \tfrac{1}{m^2}$, this is a bound that holds for all subintervals since a given $a < p < q < b$ will have the property that $|p| > m$ and $|q| > m$. Hence, $\frac{q-p}{qp} \leq \frac{L}{m^2}$. This very much depends on $0$ not being in that interval. 
    \item $\tfrac{\alpha}{\beta}$ for $\beta \neq 0$ is defined based on multiplication and reciprocity. Using the multiplication bound on the reciprocated $\beta$, we have the bound as $M = \max(|a|, |b|, |\frac{1}{c}|, |\frac{1}{d}|)$ and $K= \max(b-a, \frac{1}{c} - \frac{1}{d}= \tfrac{d-c}{cd} )$ so that the length of the operated interval is bound by $2MK \leq 2*M^3*L$.
    \item Raising to a power is repeated multiplication, but we can be explicit here. For $\alpha^n$ for a given natural number $n>1$, and $\alpha$-Yes interval $a:b$, define $f = \max(|a|, |b|)$ and $e=\min(|a|,|b|)$.  We have two cases. If $a$ and $b$ are the same sign, then $a^n:b^n$ is the power interval and the length is $b^n - a^n = (b-a)\sum_{i=0}^{n-1} a^i b^{n-1-i} < (b-a)n f^{n-1}$. So a bound of $M= nf^{n-1}$ suffices for the narrowing property of same sign intervals. For differing signs, we have the power interval is $sf^n:-sef^{n-1}$ where $s$ is 1 or $-1$ depending on which one is the maximum and the parity of $n$. The difference is $f^{n-1} (f+e)$. Note that $f+e$ is $b-a$ as they have opposite signs. Therefore, our bound for the opposite signs is $f^{n-1} L$. For a single bounding constant, we can use $M = n f^{n-1}$.
    \item For negative powers, this is positive powers combined with reciprocity. Note that there is no 0 allowed which allows us to focus on the same sign cases. We start with $0 < a < b$ as the negative version changes only by a sign which vanishes in the computation of lengths.  Taking $a:b$ to be our enclosing $\alpha$ interval, then $\frac{1}{a^n} : \frac{1}{b^n}$ is our power interval and the difference is $\frac{b^n-a^n}{a^n b^n} \leq (b-a) \frac{n b^{n-1}}{a^n b^n} = \frac{n}{a^n b} L$. So taking $\frac{n}{a^n b}$ as our bounding constant $M$ gives us the narrowing property.  For raising an interval of negative numbers, we use absolute values noting that  $|a|>|b|$. The bound then becomes $\frac{nL}{|b|^n |a|}$.
\end{enumerate}

\subsubsection{\texorpdfstring{$\sqrt{2} \sqrt{3} = \sqrt{6}$}{sqrt2sqrt3eqsqrt6}}

We can prove arithmetic properties of square roots, for example. 

Let's prove that $\sqrt[n]{u} \sqrt[n]{v} = \sqrt[n]{uv}$  for rational $u$ and $v$. 

To do this, we need to multiply intervals representing the $n$-th roots and show that they become intervals of the $n$-th root of the product. This is almost self-evident. 

Indeed, let $a\lte b$ be a Yes-interval of $\sqrt[n]{u}$. We will assume the boundaries are all positive which is fine as we are narrowing on the oracle in question; Consistency does the rest of the work. That is, we have $a^n:u:b^n$.  Similar, let $c \lte d$ satisfy $c^n : v : d^n$. When we multiply these intervals, since they are all positive, we have $a^n c^n : uv : b^n d^n$ and we also have $(ac:bd)^n$ is $a^n c^n : b^n d^n$. Thus, $ac:bd$, the product of the $n$-th root Yes-intervals, is a Yes-interval of the $n$-th root of $uv$. The family of such product intervals form a fonsi as we can use arbitrarily narrow $a:b$ and $c:d$ intervals and multiplication is a Lipschitz operator. Since those intervals are Yes-intervals of the $n$-th root of the product, Corollary \ref{cor:sub-fonsi} establishes that the fonsi is associated with $\sqrt[n]{uv}$, as we were to show. 

For some mathematical history related to this, see \cite{fowler}.

\subsubsection{\texorpdfstring{$\sqrt{2}*(e + \pi)$}{2epi}}

Let us demonstrate this arithmetic by computing $x = \sqrt{2}(e + \pi)$. 

What we are asserting that this computation yields is an oracle that says yes or no as to whether the number is contained in a given interval. Let's show how we would compute that out given a few intervals:  $8:9$, $8.1:8.2$, and $8.286:8.288$.

We start off with some rather large interval computation: $(1 \lte 2) \otimes ( (2\lte  3) \oplus (3 \lte 4))$. We do the indicated operations to get the $x$-Yes interval $1*(2+3) \lte 2*(3+4) = 5 \lte 14$. This fully includes our intervals and thus provides no answer about them. But it would tell us, for example, that $20 \lte  21$ is a $x$-No interval. 

We increase our intervals by a decimal place to compute $1.4 \lte  1.5 \otimes ( 2.7\lte 2.8 \oplus 3.1 \lte 3.2)$. This leads us to the $x$-Yes interval $8.12 \lte 9$ which tells us $8:9$ is an $x$-Yes interval, but does not give us enough clarity for the next one. 

For that one, we will continue onto the next decimal place: $1.41 \lte 1.42 \otimes (2.71\lte 2.72 \oplus 3.14 \lte   3.15)$ leading to $8.2485 \lte  8.354$. This rules out $8.1:8.2$, so that is an $x$-No interval. But we still need to see about $8.286:8.288$.

For that, we can use error formulas. For the square root of 2, we use Newton's method which will roughly have the error quadratically decrease.  For $e$, we have that the sum error is $s(n) = \frac{1}{n!n}$. And for $\pi$, we have $t(n) = \frac{1}{15 * 16^n}$. We also have for addition, that the error is bounded by twice the largest error. For multiplication, we have that but also times the maximum of the endpoints of the intervals. We want the interval length to be less than $\frac{1}{1,000}$. 

Doing some computations, we have $s(7) < \frac{1}{35,000}$, and $t(3) < \frac{1}{60,000}$. For the square root of 2, we start with $\frac{141}{100}$, and compute out $\frac{56400}{39881} \lte \frac{39881}{28200}$ whose difference is about $\frac{1}{130,000}$.

For multiplication, the bound $M$ can be taken as 6 since that is an upper bound on adding $e$ and $\pi$ and that addition is what dominates the endpoint estimates. So our error estimate is $2*6*\max(\frac{1}{130,000}, 2*\max(\frac{1}{35,000}\frac{1}{60,000})) \leq \frac{12}{17,000} \leq \frac{1}{1000}$. We now compute the intervals.  
\begin{itemize}
    \item $\pi$-Yes interval $3.141592:3.141609$
    \item $e$-Yes interval $2.71827:2.718294$
    \item $e + \pi$-Yes interval $5.859862:5.859903$
    \item $\sqrt{2}$-Yes interval $1.41420:1.41422$
    \item $\sqrt{2}(e + \pi)$-Yes interval  $8.2870:8.28719$
\end{itemize}

Since the interval is contained in $8.286:8.288$, we conclude from consistency that $8.286:8.288$ is a $x$-Yes interval. 

Admittedly, the exercise of asking if something is a Yes interval is less useful when answering that question involves computing other intervals. We typically would just compute the intervals directly in that case, looking for a given error tolerance. But this is an exercise in showing what the oracle definition does involve.

\subsection{Fielding the Oracles}

We have already established that arithmetic combinations of oracles make sense and leads to oracles.

Now we can establish that this arithmetic of oracles is a field. We will mostly rely on the established rules of interval arithmetic. To do that, we need to make the link between interval rules and oracle rules.

\begin{proposition}
Let $f$ and $g$ be interval functions with the narrowing property such that $f(\vec{I}) \supseteq g(\vec{I})$ for all interval $n$-tuples $\vec{I}$ in the common domain, which we assume to be the same. Then the corresponding oracle functions, $F$ and $G$, are equal.  
\end{proposition}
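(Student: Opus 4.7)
The plan is to fix an arbitrary $\vec{\alpha}$ in the shared domain and apply Proposition \ref{pr:fonsi-inter} to the two generating fonsis $\mathcal{F}_{\vec{\alpha}} = \{ f(\vec{I}) : \vec{I} \text{ an } \vec{\alpha}\text{-Yes tuple in the domain}\}$ and $\mathcal{G}_{\vec{\alpha}} = \{ g(\vec{I}) : \vec{I} \text{ an } \vec{\alpha}\text{-Yes tuple in the domain}\}$, which by Theorem \ref{th:intop} are precisely the fonsis whose associated oracles are $F(\vec{\alpha})$ and $G(\vec{\alpha})$, respectively. Since $f$ and $g$ share a domain, $f$-compatibility and $g$-compatibility of $\vec{\alpha}$ are the same condition, so the comparison is well-posed whenever either side is defined.

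The pointwise inclusion $g(\vec{I}) \subseteq f(\vec{I})$ does essentially all of the work. First, for every $\vec{\alpha}$-Yes tuple $\vec{I}$, the intervals $f(\vec{I})$ and $g(\vec{I})$ automatically share every rational in $g(\vec{I})$, so they intersect. Second, to satisfy the ``simultaneously short'' clause of Proposition \ref{pr:fonsi-inter}, I would invoke the notionally shrinking half of the narrowing property of $f$: given a rational $q > 0$, choose an $\vec{\alpha}$-Yes tuple $\vec{I}$ with $|f(\vec{I})| < q$, and then $|g(\vec{I})| \leq |f(\vec{I})| < q$ as well, since $g(\vec{I})$ is a rational interval contained in $f(\vec{I})$. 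Thus for every $q > 0$ the two fonsis contain paired intervals of length less than $q$ that intersect, so Proposition \ref{pr:fonsi-inter} delivers $F(\vec{\alpha}) = G(\vec{\alpha})$. As $\vec{\alpha}$ was arbitrary, $F = G$ as oracle operators.

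There is not really a hard step here; the statement is essentially the assertion that the oracle induced by a fonsi is insensitive to replacing each generator by a sub-interval, provided the sub-generators still shrink, and both ingredients are already available. The only mildly fiddly point to watch is that one uses the narrowing property of $f$ (not $g$) to drive the shrinking, since $g(\vec{I})$ is a priori only bounded in length by $|f(\vec{I})|$; this is exactly why the containment hypothesis is stated with $f$ on the outside. One could alternatively package the argument through the Corollary following Proposition \ref{pr:overlap} — observing that every $F$-Yes interval contains a finite intersection of $f(\vec{I})$'s and hence contains the corresponding $G$-Yes finite intersection of $g(\vec{I})$'s — but the fonsi route above is cleaner and handles the rooted and neighborly cases uniformly without a case split.
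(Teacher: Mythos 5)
Your proof is correct but takes a genuinely different, more modular route than the paper. The paper works directly with arbitrary Yes intervals $A$ of $F(\vec{\alpha})$ and $B$ of $G(\vec{\alpha})$: it pulls back to $\vec{\alpha}$-Yes tuples $\vec{I}$ and $\vec{J}$ with $f(\vec{I}) \subseteq A$ and $g(\vec{J}) \subseteq B$, intersects the input tuples to get $\vec{K}$, observes $g(\vec{K}) \subseteq f(\vec{K}) \subseteq f(\vec{I}) \subseteq A$ while also $g(\vec{K}) \subseteq g(\vec{J}) \subseteq B$, concludes $A \cap B \neq \emptyset$, and closes with Proposition \ref{pr:overlap}. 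You instead compare the two generating fonsis and invoke Proposition \ref{pr:fonsi-inter} (which is itself proved via Proposition \ref{pr:overlap}). Your version exploits the one-sided containment hypothesis more directly: by pairing $f(\vec{I})$ with $g(\vec{I})$ for the \emph{same} tuple, nestedness gives intersection for free with no intersection-of-input-tuples step, and a single invocation of $f$'s notional shrinking bounds both $|f(\vec{I})|$ and $|g(\vec{I})|$ simultaneously. The paper's argument stays one level lower (no detour through the fonsi-intersection criterion) but needs the containment half of both $f$'s and $g$'s narrowing property explicitly and has to intersect two distinct input tuples. Both are sound; yours is arguably the cleaner high-level argument, while the paper's makes the interval bookkeeping more visible and uses one fewer intermediate proposition.
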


\begin{proof}
Let $\vec{\alpha}$ be given in the domain of $F$ and $G$; they have the same domain as they are defined on the same intervals by assumption. We need to show that $\beta = F(\vec{\alpha})$ is equal to $\gamma =  G(\vec{\alpha})$. Let $A$ and $B$ be $\beta$-Yes and $\gamma$-Yes intervals, respectively. Then there exists intervals $I \subseteq A$ and  $J \subseteq B$  such that $f(\vec{I}) = I$ and $g(\vec{J}) = J$ with both  $\vec{I}$ and $\vec{J}$ being $\vec{\alpha}$-Yes $n$-tuple intervals. As we have noted before, $\vec{\alpha}$ $n$-tuples intersect with each other leading to the existence of an $\vec{\alpha}$-Yes $n$-tuple $\vec{K}$ contained in both $\vec{I}$ and $\vec{J}$ with the property that $f(\vec{K})$ and $g(\vec{K})$ are defined and contained in $I$ and $J$, respectively, by the narrowing property. By assumption, $I \supseteq f(\vec{K}) \supseteq g(\vec{K})$. Thus, the intersection of $I$ and $J$ contains the non-empty $g(\vec{K})$. Thus $A$ and $B$ have a non-empty intersection. Since $A$ and $B$ were arbitrary Yes intervals of the two oracles, Proposition \ref{pr:overlap} tells us that $\beta = \gamma$. Since $\vec{\alpha}$ was arbitrary, $F = G$, as was to be shown.
\end{proof}

We apply this proposition for five of the field properties and the conclusions from Section \ref{sec:rules} as follows: 

\begin{itemize}
    \item Addition is Commutative:  Use the interval equation $I_1 \oplus I_2 = I_2 \oplus I_1$. As one written out example, we can define $f$ as $f(I_1, I_2) = I_1 \oplus I_2$ and $g(I_1, I_2) = I_2 \oplus I_1$. Then the equality tells us that $f(\vec{I}) \supseteq g(\vec{I})$ for all $I$. Both $g$ and $f$ have the Narrowing Property as established before. Thus the proposition above tells us that they are equal. The other cases below flow the same way. 
    \item Addition is Associative: Use the interval equation $I_1 \oplus (I_2 \oplus I_3) = (I_1 \oplus I_2) \oplus I_3$.
    \item Multiplication is Commutative: Use the interval equation $I_1 \otimes I_2 = I_2 \otimes I_1$.
    \item Multiplication is Associative: Use the interval equation $I_1 \otimes (I_2 \otimes I_3) = (I_1 \otimes I_2) \otimes I_3$.
    \item Distributive Property: Use the interval containment $I_1 \otimes (I_2 \oplus I_3) \subseteq (I_1 \otimes I_2) \oplus (I_1 \otimes I_3)$.
\end{itemize}

We need to now establish the identity and inverse properties. 

\begin{itemize}
    \item The additive identity is the Oracle of the singleton $0:0$. This follows from the interval equation $I \oplus 0:0 = I$.
    \item The multiplicative identity is the Oracle of the singleton $1:1$. This follows from the interval equation $I \otimes 1:1 = I$.
    \item The additive inverse for an oracle $\alpha$, denoted $-\alpha$, is defined by the rule that $a:b$ is a $-\alpha$-Yes interval exactly when $-a:-b$ is an $\alpha$-Yes interval.  It follows that $-(-\alpha) = \alpha$ since $-(-a:-b) = a:b$. We need to establish that this is an oracle and that it does indeed satisfy $\alpha + -\alpha = 0$. 
    
    Let $R$ be the rule for $-\alpha$ and $S$ be the rule for $\alpha$. Therefore, $R(a:b)=S(-a:-b)$ and $S(a:b) = R(-a:-b)$ for all intervals $a:b$.
    
    \begin{enumerate}
        \item Consistency. Let $c:d \supseteq a:b$ and $R(a:b)=1$ be given. Therefore $S(-a:-b) = 1$ and $-c:-d \supseteq -a:-b$ so $S(-c:-d)=1$ by consistency of $S$ implying $R(c:d) = 1$
        \item Existence. By the existence property for the oracle $\alpha$, there exists $a:b$ such that $S(a:b)=1$. Thus, $R(-a:-b)=1$ and we have existence. 
        \item Separating. Given $R(a:b)=1$ and $a:c:b$, then we have $S(-a:-b)=1$ and $-a:-c:-b$. If $S(-c:-c)=1$, then $R(c:c)=1$ and we are done. If not, then by the separating property for $\alpha$, $S(-a:-c) \neq S(-c:-b)$ which then implies $R(a:c)\neq R(c:b)$ as was to be shown. 
        \item Rooted. Assume $c$ and $d$ satisfy $R(c:c)=R(d:d)=1$. Then $S(-c:-c)=S(-d:-d)=1$. By the rooted property for $\alpha$, $-c = -d$ and thus $c=d$ as was to be shown. 
        \item Closed. Assume $c$ is contained in all $R$-Yes intervals. Then $-c$ is contained in all $S$-Yes intervals. As $\alpha$ satisfies being closed, $S(-c:-c)=1$ which implies $R(c:c)=1$.
    \end{enumerate}
    
    To see this is the inverse, we start by considering the general $\alpha$-Yes interval $a:b$ and $-\alpha$-Yes interval $c:d$. We need to add them together. The interval $a:b$ generates the $-\alpha$-Yes interval $-a:-b$. We can then take the intersection of $c:d$ and $-a:-b$ which exists and is a $-\alpha$-Yes interval; let's call it $e:f$. We also have $-e:-f$ is an $\alpha$-Yes interval with $-e:-f$ being an interval in $a:b$. When we add them, we get $f-e:e-f$ and $0$ is clearly contained in that interval. Since $e:f$ is contained in $c:d$ and $-e:-f$ is contained in $a:b$, then $0$ is also in the sum of $a:b$ with $-c:-d$, i.e.,  $0 \in (e:f) \oplus (-e:-f) \subseteq c:d \oplus a:b$. Since $a:b$ and $c:d$ were arbitrary Yes intervals of their respective oracles, we have that 0 is contained in every summed interval. As the sum is an oracle, this must be the Oracle of $0$.
    
    \item The multiplicative inverse for an oracle $\alpha$ is similar, but has the complication that we can only consider reciprocating intervals that do not include 0. 
    
    The multiplicative inverse for an oracle $\alpha$, denoted $\frac{1}{\alpha} = \alpha^{-1}$, is defined, for $\alpha \neq 0$, by the rule that $a:b$ is a $\frac{1}{\alpha}$-Yes interval if either 
    
    \begin{enumerate}
        \item  $a*b > 0$  and $\frac{1}{a}:\frac{1}{b}$ is an $\alpha$-Yes interval; we call this a same-sign $\alpha$-Yes interval.
        \item If $a*b<0$, then it is only a $\frac{1}{\alpha}$-Yes interval if it contains a same-sign $\frac{1}{\alpha}$-Yes interval. 
    \end{enumerate}
    
    We do have $\frac{1}{\frac{1}{\alpha}} = \alpha$ as $1\oslash (\frac{1}{a}: \frac{1}{b}) = a:b$ We need to establish that this is an oracle and that it does indeed satisfy $\alpha * \frac{1}{\alpha} = 1$.
    
     Let $R$ be the rule for $\frac{1}{\alpha}$ and $S$ be the rule for $\alpha$. Therefore, $R(a:b)=S(\frac{1}{a}:\frac{1}{b})$ and $S(a:b) = R(\frac{1}{a}:\frac{1}{b})$ for all same-sign intervals $a:b$.
    
    In what follows, we will assume the intervals are the same-sign except for the few places where we need to remark on the alternative strategy. One key fact is that if $a<p<b$ and all three are the same sign, then $\frac{1}{a} > \frac{1}{p} > \frac{1}{b}$.\footnote{Since we are using rationals, a quick proof starts with the rational form $\frac{p}{q} < \frac{r}{s}$, $q, s > 0$, multiplying by $qs$ to get $ps < qr$, and then dividing by $pr$ to get the reciprocated and flipped inequality $\frac{s}{r} < \frac{q}{p}$, assuming that $pr$ is positive, i.e., the rationals are of the same sign. If they are of opposite sign, then for the points in between, we have $a < p < 0< q< b$ leading to $\frac{1}{p} < \frac{1}{a} < 0 < \frac{1}{b} < \frac{1}{q}$ which means an interval gets mapped to two infinite intervals instead of one finite one.} That is, $a\lte p\lte b$ if and only if $\frac{1}{b} \lte \frac{1}{p}\lte \frac{1}{a}$.
    
    \begin{enumerate}
        \item Consistency. Assume $c: a: b: d $ with $R(a:b)=1$. Then $S(\frac{1}{a}:\frac{1}{b}) = 1$ and $\frac{1}{c}:\frac{1}{a}:\frac{1}{b}\frac{1}{d}$ so $S(\frac{1}{c}:\frac{1}{d})=1$ by consistency of $S$. Thus, $R(c:d) = 1$ by definition. If $a:b$ was an opposite signed interval which is an $\alpha$-Yes interval, then there is a same-sign interval $e:f$ contained in $a:b$ which is $\alpha$-Yes. Thus any interval $c:d$ that contains $a:b$ will also contain $e:f$ and and thus will also be $\alpha$-Yes by definition. 
        \item Existence. Since $\alpha$ is an oracle and not zero, there exists an interval $a:b$ such that $S(a:b)=1$ and 0 is not between $a$ and $b$; they are therefore the same sign. Thus, $R(\frac{1}{a}:\frac{1}{b})=1$ and we have existence. 
        \item Separating. Given $R(a:b)=1$ and $a:c:b$, then we have $S(\frac{1}{a}:\frac{1}{b})=1$ and $\frac{1}{a}:\frac{1}{c}:\frac{1}{b}$. If $S(\frac{1}{c}:\frac{1}{c})=1$, then $R(c:c)=1$ and we are done. If not, then, by the separating property for $\alpha$,  $S(\frac{1}{a}:\frac{1}{c}) \neq S(\frac{1}{c}:\frac{1}{b})$ which then implies $R(a:c)\neq R(c:b)$ as was to be shown. 
        
        The above was in the case of $a:b$ being of the same sign. If $a:b$ was not a same sign interval, then it contains a same sign Yes-interval, say, $a:e:f:b$. If $c$ is outside $e:f$, then it divides $a:b$ into $a:c$ and $c:b$ and whichever one contains $e:f$ is a Yes interval while the other is a No interval. The other case is that $e:c:f$. In the same-sign case that we established above, we have that $c$ separates $e:f$ implying that we can then extend that up to $a:e:c$ and $c:f:b$.
        \item Rooted. Assume $c$ and $d$ satisfy $R(c:c)=R(d:d)=1$. Then $S(\frac{1}{c}:\frac{1}{c})=S(\frac{1}{d}:\frac{1}{d})=1$. By the rooted property for $\alpha$, $\frac{1}{c} = \frac{1}{d}$ and thus $c=d$ as was to be shown. 
        \item Closed. Assume $c$ is contained in all $R$-Yes intervals. Then $\frac{1}{c}$ is contained in all $S$-Yes intervals. As $\alpha$ satisfies being closed, $S(\frac{1}{c}:\frac{1}{c})=1$ which implies $R(c:c)=1$.
    \end{enumerate}
    
    To see this is the multiplicative inverse, we start by considering the general $\alpha$-Yes interval $a:b$ and $\frac{1}{\alpha}$-Yes interval $c:d$ and assume both are same-signed since we can restrict our attention to subintervals and $\alpha$ is not the 0 oracle. We need to multiply them together. The interval $a:b$ generates the $\frac{1}{\alpha}$-Yes interval $\frac{1}{a}:\frac{1}{b}$. We can then take the intersection of $c:d$ and $\frac{1}{a}:\frac{1}{b}$ which exists and is a $\frac{1}{\alpha}$-Yes interval; let's call it $e\lte f$. We also have $\frac{1}{f} \lte \frac{1}{e}$ is an $\alpha$-Yes interval which is contained in $a:b$. When we multiply them, we get $\frac{e}{f}\lte \frac{f}{e}$ and $1$ is  contained in that interval.\footnote{If $\frac{e}{f} <1$, then multiplying by $\frac{f}{e}$ leads to $1 = \frac{fe}{ef} < \frac{f}{e}$. Similarly for $\frac{e}{f} > 1$. Note $e$ and $f$ are the same sign so that $\frac{f}{e}$ and $\frac{e}{f}$ are greater than 0. } Since $e:f$ is contained in $c:d$ and $\frac{1}{e}:\frac{1}{f}$ is contained in $a:b$, then $1$ is also in the product of $a:b$ with $\frac{1}{c}:\frac{1}{d}$. Since $a:b$ and $c:d$ were arbitrary same-signed Yes intervals of their respective oracles, we have that $1$ is contained in every multiplied interval; the cross-signed intervals will contain the same-signed ones which implies their products will also contain $1$. As the product is an oracle, this must be the Oracle of $1$. 
    
\end{itemize}

We have proven that

\begin{theorem}
The set of all oracles, with the above defined arithmetic, is a field. 
\end{theorem}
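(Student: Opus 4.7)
The plan is to verify each of the nine field axioms in turn, treating the theorem as an assembly statement for facts either already established in the bullet points preceding it or immediate from the interval-arithmetic work in Section~\ref{sec:rules} combined with Theorem~\ref{th:intop}. The strategy is to lean on the principle that interval identities translate to oracle identities via the proposition stating that two narrowing interval operators whose images always contain one another induce equal oracle operators.

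First I would establish closure: addition, multiplication, negation, subtraction, reciprocation (of nonzero oracles), and division (by nonzero oracles) are all built from interval operators that were shown to be Lipschitz and therefore narrowing, so Theorem~\ref{th:intop} produces a well-defined oracle in each case. Then commutativity and associativity of $+$ and $*$, as well as the distributive law, follow by applying the containment proposition to the interval identities $I_1 \oplus I_2 = I_2 \oplus I_1$, $I_1 \oplus (I_2 \oplus I_3) = (I_1 \oplus I_2) \oplus I_3$, $I_1 \otimes I_2 = I_2 \otimes I_1$, $I_1 \otimes (I_2 \otimes I_3) = (I_1 \otimes I_2) \otimes I_3$, and the subdistributive containment $I_1 \otimes (I_2 \oplus I_3) \subseteq (I_1 \otimes I_2) \oplus (I_1 \otimes I_3)$. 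Note that although the distributive law is only a containment at the interval level, after passing to oracles it becomes genuine equality because the narrowing operator output is unique up to shrinking.

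Next I would handle identities and additive inverses. The singleton oracles $0:0$ and $1:1$ are additive and multiplicative identities via $I \oplus 0:0 = I$ and $I \otimes 1:1 = I$. For the additive inverse, define $-\alpha$ by the rule that $a:b$ is a Yes interval of $-\alpha$ exactly when $-a:-b$ is a Yes interval of $\alpha$; one verifies the five oracle axioms by transporting them through negation, then checks $\alpha + (-\alpha) = 0$ by showing $0$ lies in every interval of the form $(a:b) \oplus (-b:-a)$ obtained from a common sub-interval refinement of $\alpha$-Yes and $(-\alpha)$-Yes intervals.

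The main obstacle is the multiplicative inverse, where we must be careful because the reciprocal interval operator is undefined on intervals containing $0$. For $\alpha \neq 0$, we exploit that $\alpha \neq 0$ guarantees the existence of an $\alpha$-Yes interval strictly avoiding $0$, so we can define $\frac{1}{\alpha}$ on same-sign intervals via the reciprocal and then extend to mixed-sign intervals by declaring those Yes exactly when they contain a same-sign Yes interval. Verifying the Separating property in the mixed-sign case, and checking $\alpha \cdot \frac{1}{\alpha} = 1$ by showing $1$ lies in every product interval $\left(\frac{1}{f}:\frac{1}{e}\right) \otimes (e:f)$ arising from a common refinement, are the technical steps; both have essentially been carried out in the bullet on the multiplicative inverse above, so the proof amounts to citing that construction. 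Once all nine axioms are in hand, the theorem follows.
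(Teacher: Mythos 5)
Your proposal is correct and takes essentially the same route as the paper: establish closure via the Lipschitz/narrowing machinery and Theorem~\ref{th:intop}, transport commutativity, associativity, and (sub)distributivity from the interval level to the oracle level using the proposition about narrowing operators with nested images, identify $0:0$ and $1:1$ as identities, and construct $-\alpha$ and $\tfrac{1}{\alpha}$ directly with the same definitions and verifications the paper gives. No gaps.
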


\subsection{Ordering the Field of Oracles}

We now turn to how the ordering fits with the field. We have already defined the ordering, with the ordering largely coming down to $R < S$ if there are $R$ and $S$ Yes intervals $a\lte b$ and $c\lte d$, respectively, such that $b<c$. 

Is this a total ordering? As we discussed, there are oracles, such as the Collatz Oracle, that we cannot distinguish from 0 and yet also not establish that it is 0 at the current time. Further, there are oracles, such as the coin toss ones we described, that, in principle, we could only distinguish from all other oracles by engaging in an infinite process. This is true of all definitions of real numbers and it should be kept in mind at least as a practical consequence. We will, however, ignore that and generally settle on that if we had sufficient abilities, then this would be a total ordering. We did establish that at most one of $<$, $>$, $=$ can hold true. 

We have that the inequality of oracles is transitive from Proposition \ref{pr:transitive} and we have already stated that equality of oracles is reflexive, symmetric, and transitive from Proposition \ref{pr:reflexive}. 

We now need to establish that the ordering plays correctly with the field operations. 

\begin{proposition}\label{pr:addinq}
 $\beta + \alpha< \gamma + \alpha$ for all oracles $\alpha$, $\beta$, and $\gamma$ such that $\beta < \gamma$.
\end{proposition}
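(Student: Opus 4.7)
The plan is to exhibit explicit disjoint Yes intervals witnessing $\beta + \alpha < \gamma + \alpha$. Since $\beta < \gamma$, unwinding the definition gives us rationals with $a\lte b$ a $\beta$-Yes interval, $c\lte d$ a $\gamma$-Yes interval, and $b < c$. Let $L = c - b > 0$; this is the gap we need to preserve after adding $\alpha$.

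Next I would use Proposition \ref{pr:short} on $\alpha$ to produce an $\alpha$-Yes interval $e\lte f$ whose length satisfies $f - e < L$. By Theorem \ref{th:intop} (applied to the addition operator, which we already verified has the narrowing property), sums of Yes intervals for the summand oracles are Yes intervals of the sum oracle. Consequently $a+e \lte b+f$ is a $(\beta + \alpha)$-Yes interval and $c+e \lte d+f$ is a $(\gamma + \alpha)$-Yes interval.

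It remains to check these two intervals are strictly ordered in the sense of $a:b < c:d$. The upper endpoint of the first is $b+f$ and the lower endpoint of the second is $c+e$, and
\[
(c+e) - (b+f) = (c-b) - (f-e) = L - (f-e) > 0
\]
by our choice of $e\lte f$. Hence $b+f < c+e$, which is precisely the condition $(a+e)\lte (b+f) < (c+e)\lte (d+f)$. Applying the definition of inequality for oracles to these two witnessing Yes intervals yields $\beta + \alpha < \gamma + \alpha$.

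There is no real obstacle here; the only subtle point is that we are relying on Theorem \ref{th:intop} to know that sums of Yes intervals are Yes intervals of the sum oracle, rather than merely being contained in some Yes interval. Everything else is a direct gap-preservation argument using the bisection approximation to make $\alpha$'s interval narrower than the $\beta$--$\gamma$ gap.
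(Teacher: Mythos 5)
Your proof is correct and follows essentially the same approach as the paper's: extract witnessing intervals from $\beta < \gamma$, use the bisection method (Proposition~\ref{pr:short}) to shrink an $\alpha$-Yes interval below the gap $c-b$, add it to both, and check the shifted intervals remain disjoint. The only cosmetic difference is that you explicitly cite Theorem~\ref{th:intop} for the fact that $a{:}b \oplus e{:}f$ and $c{:}d \oplus e{:}f$ are Yes intervals of the sum oracles (a reasonable thing to flag, since it rests on Proposition~\ref{pr:fon-oracle-exists} showing fonsi elements are Yes intervals of the associated oracle), whereas the paper leaves that step implicit.
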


\begin{proof}
Since $\beta < \gamma$, there exists $a\lte b< c\lte d$ such that $a:b$ is a $\beta$-Yes interval and $c:d$ is a $\gamma$-Yes interval. By the bisection method, we can choose an $\alpha$-Yes interval $e\lte f$ such that $ (f-e) <  (c-b)$. Then we assert that $a:b \oplus e:f  < c:d \oplus e:f$. Indeed, $a \lte b \oplus e \lte f = [(a+e) \lte (b+f)]$ and $c \lte d \oplus e \lte f = [(c+e)\lte (d+f)]$. So we need to show that $b+f < c+e$ but this is just $f-e < c-b$ rewritten. We have therefore produced an interval of $\beta+\alpha$ that is strictly less than $\gamma+\alpha$. 
\end{proof}

\begin{proposition}
$0 < \alpha*\beta$ for all oracles $0 < \alpha$ and $0< \beta$ 
\end{proposition}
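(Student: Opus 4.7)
The plan is to extract strictly positive Yes intervals from $\alpha$ and $\beta$, multiply them with interval arithmetic to obtain an $\alpha*\beta$-Yes interval whose lower endpoint is a positive rational, and then read off $0 < \alpha*\beta$ directly from the definition of the ordering on oracles.

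First, I would unpack the hypothesis $0 < \alpha$. By the definition of inequality of oracles, there exist a $0$-Yes interval and an $\alpha$-Yes interval with the former strictly below the latter. Since $0:0$ is a Yes singleton of the oracle $0$ and any interval containing $0:0$ is a $0$-Yes interval, I may in particular take the $0$-Yes witness to be $0:0$ itself. This produces an $\alpha$-Yes interval $c \lte d$ with $0 < c \leq d$. The identical argument applied to $\beta$ yields a $\beta$-Yes interval $e \lte f$ with $0 < e \leq f$.

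Next I invoke the construction of $\alpha*\beta$ via Theorem \ref{th:intop} applied to the interval multiplication operator. Because $c\lte d$ and $e\lte f$ are entirely positive, the interval-arithmetic product collapses to $c\lte d \otimes e\lte f = ce\lte df$, which sits in the fonsi defining $\alpha*\beta$; hence $ce\lte df$ is an $\alpha*\beta$-Yes interval. Since $c$ and $e$ are positive rationals, $ce > 0$, so the $0$-Yes interval $0:0$ lies strictly below the $\alpha*\beta$-Yes interval $ce\lte df$. By the definition of $<$ on oracles, this gives $0 < \alpha*\beta$.

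The argument is quite direct, so I do not anticipate a real obstacle; the two points that demand a brief word of justification are (i) that we are allowed to replace the generic $0$-Yes witness interval by the singleton $0:0$, which is immediate from the characterization of the oracle $0$, and (ii) that the product interval $ce\lte df$ is automatically an $\alpha*\beta$-Yes interval rather than needing separate verification, which is exactly what Theorem \ref{th:intop} supplies.
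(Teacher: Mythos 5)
Your proof is correct and follows essentially the same route as the paper: extract strictly positive Yes intervals from $\alpha$ and $\beta$, form their product $ce\lte df$ via interval arithmetic, and compare it against $0:0$ using the definition of the ordering. The paper simply starts from "Let $0 < a \leq b$ ... be an $\alpha$-Yes interval" without comment, whereas you explicitly justify that such a strictly positive Yes interval exists by unwinding the definition of $0 < \alpha$; that is a small but worthwhile piece of bookkeeping and does not change the argument.
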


\begin{proof}
Let $0 < a \leq b$ and $0<c \leq d$ such that $a\lte b$ is an $\alpha$-Yes interval and $c\lte d$ is a $\beta$-Yes interval. Then the product interval $a \lte b \otimes c \lte d = ac \lte bd$ is an $\alpha*\beta$-Yes interval and we have $0 < ac \leq bd$. Thus, $0:0 < ac \lte bd$ which implies $0 < \alpha*\beta$. 
\end{proof}

We have proven that

\begin{theorem}
The field of oracles is an ordered field.
\end{theorem}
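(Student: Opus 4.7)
The plan is to assemble the ingredients already proved and then check the small gap that remains, namely that the ordering interacts correctly with multiplication in full generality (not just for strictly positive factors). Since we have already shown that oracles form a field, and that $<$ is transitive (Proposition \ref{pr:transitive}), exclusive of $=$ and $>$ (Proposition \ref{pr:wd}), compatible with addition (Proposition \ref{pr:addinq}), and that products of positive oracles are positive (the proposition immediately preceding the theorem), the remaining verifications are mechanical. I would first state explicitly what ``ordered field'' requires: a strict total order compatible with the field operations in the two standard senses (translation invariance under $+$, closure of positives under $\cdot$), and then address each in turn.

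First I would handle the totality issue up front by explicitly invoking the caveat the paper already raises: the Known Trichotomy proposition gives us $<$, $>$, or $?$ for any pair of oracles, and under the assumption that sufficient knowledge is available (and so compatibility resolves to equality or strict inequality), this yields trichotomy in the classical sense. I would cite Proposition \ref{pr:transitive} for transitivity of $<$ and Proposition \ref{pr:reflexive} for the equivalence relation behavior of $=$, completing the order-theoretic part.

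Next I would verify the two field-compatibility axioms. For addition, Proposition \ref{pr:addinq} states that $\beta<\gamma$ implies $\beta+\alpha<\gamma+\alpha$; translation invariance in the other direction follows by adding $-\alpha$. For multiplication, the preceding proposition gives $\alpha,\beta>0 \implies \alpha\beta>0$. From these two facts the general sign-preservation rule (if $\alpha<\beta$ and $\gamma>0$ then $\alpha\gamma<\beta\gamma$) follows in the usual way: set $\delta=\beta-\alpha$, observe $\delta>0$ using Proposition \ref{pr:addinq} applied with $-\alpha$, conclude $\delta\gamma>0$ from the multiplicative fact, and then rewrite $\delta\gamma=\beta\gamma-\alpha\gamma$ via the distributive law to recover $\alpha\gamma<\beta\gamma$ by translation invariance.

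The main obstacle, to the extent there is one, is purely presentational rather than mathematical: being careful about the trichotomy caveat and being explicit that $\beta-\alpha$ is a well-defined oracle (which it is, by the field theorem), so that we can freely convert between $\beta<\gamma$ and $\gamma-\beta>0$. Once that equivalence is in hand, all the ordered-field axioms reduce to the already-proved propositions and a one-line distributive-law computation, so the theorem follows immediately from collecting these results.
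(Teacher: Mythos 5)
Your proposal is correct and follows the same approach as the paper: the paper establishes translation-invariance of the order under addition (Proposition \ref{pr:addinq}) and closure of positives under multiplication, then asserts the ordered-field property as an immediate consequence of these together with the earlier trichotomy and transitivity results. Your additional derivation of the general sign-preservation rule from these two axioms is a correct and standard consequence, though not required since the two compatibility conditions you cite are themselves the usual ordered-field axioms.
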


\subsection{The Reals}

The rationals are an embedded field inside of the field of oracles. The embedding map is $q \mapsto q:q$. It is obviously a bijection from the rationals to the singleton oracles. It is also clear that $p<q$ if and only if $p:p < q:q$. For addition, we have $(p:p) \oplus (q:q) = (p+q):(p+q)$ which establishes that the bijection respects addition. For multiplication, we also have $(p:p) \otimes (q:q) = (pq):(pq)$ which establishes that the bijection respects multiplication.  Finally, $p>0$ if and only if $p:p > 0:0$ establishing that the positive rational are positive oracles. 

\begin{theorem}
The ordered field of oracles contains the rationals as a subfield. 
\end{theorem}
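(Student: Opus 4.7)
The plan is to exhibit the embedding map $\phi : \mathbb{Q} \to \text{Oracles}$ defined by $\phi(q) = $ the Oracle of $q$ from Section \ref{sec:rat-ora}, whose only Yes singleton is $q:q$, and then check each subfield axiom in turn. The paragraph immediately preceding the theorem sketches exactly the computations needed, so the proof is mostly a matter of assembling them carefully.

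First I would establish that $\phi$ is a well-defined injection onto the singleton oracles. Well-definedness is immediate from Section \ref{sec:rat-ora}, where the rational oracle was constructed and shown to be the unique oracle with $q:q$ as a Yes singleton. For injectivity, if $p \neq q$ then $\phi(p)$ has $p:p$ as a Yes singleton and, by the Rooted property, cannot also have $q:q$ as a Yes singleton, so $\phi(p) \neq \phi(q)$.

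Next I would verify that $\phi$ is a ring homomorphism. The identities are handled by noting that $\phi(0) = 0{:}0$ and $\phi(1) = 1{:}1$ are precisely the additive and multiplicative identities already identified for the oracle field. For addition, the oracle $\phi(p) + \phi(q)$ is associated with the fonsi generated by interval sums $(p{:}p) \oplus (q{:}q) = (p+q){:}(p+q)$; this fonsi consists of a single singleton, which by the uniqueness part of Section \ref{sec:rat-ora} forces the resulting oracle to be $\phi(p+q)$. The multiplicative identity $\phi(p)\phi(q) = \phi(pq)$ follows by the same argument applied to $(p{:}p) \otimes (q{:}q) = (pq){:}(pq)$.

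For the order part, I would argue in both directions. If $p < q$ in $\mathbb{Q}$ then the singleton intervals $p{:}p$ and $q{:}q$ are disjoint Yes intervals of $\phi(p)$ and $\phi(q)$ respectively with $p{:}p < q{:}q$, which is the definition of $\phi(p) < \phi(q)$. Conversely, if $\phi(p) < \phi(q)$ there exist disjoint Yes intervals $a{:}b$ and $c{:}d$ with $b < c$; since $\phi(p)$ has $p$ contained in every Yes interval and likewise for $\phi(q)$, Proposition \ref{pr:yes-trap} yields $p \leq b < c \leq q$, so $p < q$. Positivity then follows as the special case $p > 0$ iff $\phi(p) > \phi(0)$.

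The main obstacle, such as it is, is purely bookkeeping: one must be careful to invoke the uniqueness of the singleton oracle (so that a fonsi consisting of one singleton is recognized as the rational oracle of that point) rather than reconstruct it from the arithmetic definitions. Once that observation is in hand the theorem is essentially assembled from results already proved in Sections \ref{sec:rat-ora} and the arithmetic development.
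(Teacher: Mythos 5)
Your proof takes essentially the same route as the paper: the embedding $q \mapsto q{:}q$, preservation of order and of addition and multiplication via the singleton interval computations $(p{:}p)\oplus(q{:}q)=(p+q){:}(p+q)$ and $(p{:}p)\otimes(q{:}q)=(pq){:}(pq)$, and positivity as the special case $p>0 \Leftrightarrow \phi(p)>\phi(0)$. You add useful detail the paper only gestures at, particularly the appeal to the uniqueness of the singleton oracle to convert the singleton interval sums and products into equalities of oracles; one small imprecision is the phrase "this fonsi consists of a single singleton," since the fonsi from Theorem \ref{th:intop} contains all sums of Yes intervals, not just that one, but because it contains the root $(p+q){:}(p+q)$ the conclusion you draw is correct.
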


We previously established the completeness properties:

\begin{itemize}
    \item  Theorem \ref{th:lub}: All sets of oracles with an upper bound have a least upper bound. 
    \item Theorem \ref{th:cauchy}: All Cauchy sequences of oracles converge to an oracle. 
\end{itemize}

\begin{theorem}
The ordered field of oracles satisfies the axioms of the real numbers and can be considered the real number field. 
\end{theorem}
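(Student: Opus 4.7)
The plan is to assemble the previously established theorems and then invoke the standard categoricity characterization of the real numbers as the unique complete ordered field containing the rationals. There is essentially no new mathematical content to prove; the work has already been distributed across the preceding sections, so the proof is a bookkeeping argument that checks off each axiom of a complete ordered field.

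First, I would restate the ingredients already in place: the oracles form a field, they form an ordered field under the $<$ defined earlier, and the map $q \mapsto q{:}q$ realizes the rationals as an ordered subfield. Then I would note that the Least Upper Bound property (Theorem \ref{th:lub}) establishes Dedekind completeness of the oracle field, and that the Cauchy convergence theorem (Theorem \ref{th:cauchy}) gives an independent verification of metric completeness.

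The remaining axiomatic ingredient typically required is the Archimedean property. I would derive it from what is in hand: given oracles $0 < \alpha$ and $\beta$, use Existence to obtain an $\alpha$-Yes interval $a\lte b$ with $0 < a$ (shrinking if necessary by Proposition \ref{pr:short} to stay away from $0$), and a $\beta$-Yes interval $c\lte d$. Choose a natural number $n$ with $na > d$; then the interval $na\lte nb$ (which is a Yes-interval of $n\alpha$ via repeated addition) lies strictly to the right of $c\lte d$, so $n\alpha > \beta$. Alternatively, one can derive Archimedeanness directly from LUB in the standard way: if the naturals were bounded above by some oracle, they would have a least upper bound $\sigma$, but $\sigma - 1$ would then already be an upper bound, a contradiction.

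Finally I would invoke categoricity: any two Dedekind-complete ordered fields containing the rationals as a dense subfield are isomorphic via the unique order-preserving field isomorphism extending the identity on $\mathbb{Q}$; in particular, the oracle field is isomorphic to $\mathbb{R}$. The density of the rationals in the oracles follows immediately from Proposition \ref{pr:short} together with Proposition \ref{pr:yes-trap}: any Yes interval $a\lte b$ traps the oracle between two rationals, and these rational bounds can be made arbitrarily close. The only mild obstacle is that the theorem's statement is philosophically ambiguous, since the paper is \emph{defining} real numbers as oracles; so I would close by observing that one may either take the oracle field as a definition of $\mathbb{R}$, or, if one already has another model in hand, the isomorphism above identifies the two.
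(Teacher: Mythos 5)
Your proposal is correct and takes essentially the same route as the paper: the paper offers no separate proof, treating the theorem as a summary of what has just been established (field structure, ordered field, rational subfield, least upper bound, Cauchy completeness). Your explicit derivations of the Archimedean property and the density of the rationals coincide with what the paper proves immediately afterward in the ``Exploring with Oracles'' subsection, and your appeal to categoricity is a reasonable way to give precise content to the phrase ``can be considered the real number field,'' which the paper leaves at the informal level.
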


\subsection{Exploring with Oracles}

Here we do a few explorations using oracles directly. Since we have established that the set of oracles with the defined addition, multiplication, and ordering satisfies the axiomatic definition of the reals, we could simply import all of the content of real analysis at this point. 

But that would seem to undercut the point of caring about this approach. We contend that by embracing oracles as fundamental, we can often get a more constructive approach than when we rely on the axiomatic approach. 

\subsubsection{Density of Rationals in the Oracles}

Let us first establish the density of the rationals in the oracles. This is essentially immediate from the definition of being less than. Indeed, let two oracles be given such that $\alpha < \beta$. Then there exist intervals $a\lte b$ and $c\lte d$ which are $\alpha$ and $\beta$-Yes intervals, respectively, and that satisfies $b < c$; this is the meaning of the inequality of oracles. Since $b < \frac{b+c}{2} < c$, the rational singleton $\gamma = \frac{b+c}{2}:\frac{b+c}{2}$ satisfies $\alpha < \gamma < \beta$. Thus, the rational oracles are dense in the real field of oracles. 

The irrationals are also dense in the rationals. Indeed, let $\frac{p}{q} <\frac{r}{s}$ be given. In the upcoming Section \ref{sec:mediant}, we define a process using mediants to choose subintervals to explore an oracle. Here, we use that process to define an irrational oracle. At each step, alternate which of the two mediant subintervals one chooses. Because we are using mediants and do not have a fixed endpoint on the intervals, this process will produce an irrational which is strictly between the two given rationals. 

\subsubsection{Archimedean Property}

Let us now establish the Archimedean property, namely, that for any oracle $\alpha > 0$ and for any oracle $\beta$, we have the existence of an integer $n$ such that $n * \alpha > \beta $. If $\alpha > \beta$, we are done. If $\alpha = \beta$, then $2 \alpha > \beta$. Otherwise, $\alpha < \beta$ and we can take $a\lte b$ to be an $\alpha$-Yes interval and $c \lte d$ to be a $\beta$-Yes interval such that $b < c$. Let $a = \frac{p}{q}$ and $d = \frac{r}{q}$, where we can take the same denominator by scaling to common denominators if necessary. Since we have $a<d$, we must have $p < r$. By the division algorithm, we know that there exists integers $r = n*p + t$ where $0 \leq  t<p$ and thus $(n+1)p > r$. So $(n+1):(n+1) \otimes a\lte b > c\lte d$ establishing that $(n+1)*\alpha > \beta$.

\subsubsection{Uncountability}

A famous part of the lore of real numbers is that they are uncountable unlike the rationals. We will first establish a result on finite collections of reals, namely, that we can always find an oracle distinct from any given finite collection of them. With that established, one can then use the machinery of infinity and contradiction to establish the uncountability of the reals. 

\begin{proposition}\label{pr:notlist}
Given a finite indexed collection of oracles, say $\{a_i\}_{i=0}^n$, we can produce an indexed collection of nested intervals, $\{I_i\}_{i=0}^n$ such that each $a_j$ is not an element of of $I_i$ whenever $j \leq i$.
\end{proposition}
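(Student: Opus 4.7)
The plan is to build the $I_i$ recursively, maintaining the twin invariants that $I_{i+1}$ sits inside $I_i$ and that $I_i$ is an $a_i$-No interval. Consistency (No intervals propagate downwards to subintervals) then automatically upgrades the second invariant to: $I_i$ is a No interval for every $a_j$ with $j \leq i$, which by Proposition \ref{pr:yes-trap} together with Proposition \ref{pr:no-is-disjoint} is equivalent to $a_j$ not being an element of $I_i$ in the oracle-ordering sense.

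For the base case $i=0$, the Existence property gives an $a_0$-Yes interval $c:d$; I pick $I_0$ to be any rational interval disjoint from $c:d$, for instance $d+1:d+2$. Proposition \ref{pr:disjoint} then forces $I_0$ to be an $a_0$-No interval.

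For the inductive step, given $I_i = u:v$ with $u < v$, partition it into two disjoint rational subintervals $L = u:(u+\tfrac{v-u}{3})$ and $R = (u+\tfrac{2(v-u)}{3}):v$. Both are contained in $I_i$, and they are disjoint because the rationals $u+\tfrac{v-u}{3}$ and $u+\tfrac{2(v-u)}{3}$ are distinct. By Proposition \ref{pr:disjoint}, $L$ and $R$ cannot both be $a_{i+1}$-Yes intervals, so at least one of them is $a_{i+1}$-No; let $I_{i+1}$ be any such choice. The invariants are preserved: $I_{i+1}\subseteq I_i$ by construction, and since the length only shrinks by a factor of at most $\tfrac{1}{3}$, $I_{i+1}$ remains a genuine neighborly interval so the recursion can continue.

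There is no substantial obstacle; the argument is a direct bisection-style splitting combined with the disjointness principle of Proposition \ref{pr:disjoint}. The one thing worth being explicit about is the translation between ``$a_j$ is not in $I_i$'' and ``$I_i$ is an $a_j$-No interval,'' which is why I named both the Yes-Trap and No-is-Disjoint propositions at the outset; everything else is essentially just the observation that a single oracle cannot sit inside two disjoint rational intervals.
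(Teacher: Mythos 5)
Your proof is correct and takes a genuinely different route from the paper's. The paper starts with the fixed interval $0:1$, bisects at each step, and invokes the Separation property; when the bisection point happens to be a Yes singleton for $a_i$, the paper has to handle that special case by shifting to a quarter-length subinterval. You instead begin with any interval disjoint from some $a_0$-Yes interval (forced to be $a_0$-No by Proposition \ref{pr:disjoint}), and at each inductive step you take the outer two thirds of $I_i$. Because those two thirds are themselves disjoint, Proposition \ref{pr:disjoint} immediately guarantees that at least one of them is an $a_{i+1}$-No interval, with no case split on whether the partition point is the oracle's root. This is a cleaner inductive step: you trade the paper's use of Separation for the use of Disjointness (which is itself a consequence of Separation, so nothing deeper is needed), and you avoid the singleton bookkeeping entirely. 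The paper's version has the minor advantage of keeping everything inside $0:1$, which is cosmetically convenient for the uncountability argument that follows, but your construction works just as well for that purpose since the lengths contract by a factor of $\tfrac{1}{3}$ at each step and the intervals remain nested, so they still generate a fonsi. One small wording nit: the lengths shrink \emph{to} a factor of $\tfrac{1}{3}$ (i.e.\ by a factor of $\tfrac{2}{3}$), not ``by a factor of at most $\tfrac{1}{3}$,'' but this does not affect correctness.
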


\begin{proof}
Start with $0:1$. If $0:1$ is an $a_0$-No interval, then $a_0$ is not in the interval and we define $I_0 = 0:1$. If $0:1$ is an $a_0$-Yes interval, then consider the intervals $L= 0:\tfrac{1}{2}$ and $R = \tfrac{1}{2}:1$. By separation, either $L$ is $a_0$-No or $R$ is $a_0$-No or $a_0 = \tfrac{1}{2}$. In the first two cases, we take $I_0=L$ or $I_0=R$, respectively. In the third case, we take $I_0 = 0:\tfrac{1}{4}$ which does not contain $\tfrac{1}{2}$ and thus is a No interval for $a_0$.

Now we repeat this, starting with $I_i$ to produce $I_{i+1}$. Let $L_i$ be the left bisection of $I_i$ and let $R_i$ be the right bisection of $I_i$. Let $b_i$ be the bisection point. The one difference is that if $I_i$ is an $a_{i+1}$ No interval, then we take $I_{i+1} = L_i$ as we do want to make sure the intervals narrow. If $I_i$ is an $a_{i+1}$ Yes interval, then we use separation and either $L_i$ or $R_i$ is a No interval, in which case we set $I_{i+1} = L_i $ or $I_{i+1} = R_{i+1}$, respectively. If both are yes, then $a_{i+1} = b_i$ and we then bisect $L_i$ and take the left interval to be $I_{i+1}$ which will be an $a_{i+1}$ No interval. 

By construction, every prior number is excluded from the interval. We do this up through $a_n$.
\end{proof} 

With that proposition established, let us move on to the infinite case. Given an infinite sequence of oracles, we construct a fonsi as above. Indeed, let $\{a_i\}_{i=0}^{\infty}$ and then define $\{I_i\}_{i=0}^{\infty}$ to be the $i$-th interval of $\{I_i\}_{i=0}^{n}$ as provided by Proposition \ref{pr:notlist}. As long as $i \leq n$, this will be defined and consistent across $n$. They all intersect and they get arbitrarily small. They thus define a fonsi which leads to an oracle as explained in Section \ref{sec:ni}. Since, by construction, each of the intervals is a No interval for its matching $a_i$, we have that the oracle cannot be any real number on the list. 

The key distinguishment from the rationals is that we are constructing a valid oracle. If we attempted something similar with rational numbers, we would end up with something which is not a rational number. One could apply this construction to the countable list of rationals provided by Cantor's argument. By necessity, this will turn out to not be a rational number. 

\section{Approximations}\label{sec:mediant}

This section is an exploration of what this framing of real numbers suggests for the computation of a real number. We will explore the use of mediants, continued fractions, Newton's method, a geometric computation of $\sqrt{2}$, and finish by computing mediant approximations of $\pi$ with the oracle  rule associated with solving $\sin(x)=0$ on the interval $[3,4]$ per Section \ref{sec:ivt}.

An alternative to the bisection method is the mediant method. Given a rational interval $\frac{a}{b} : \frac{c}{d}$, we divide the interval into two pieces by using the mediant $\frac{a+c}{b+d}$ as the ``middle''. This does depend on the particular fraction representative of the rational endpoints. Indeed, by scaling the two rational numbers, we can use the mediant operation to produce any rational number in the interval. If we repeatedly scale the fractions to have common denominators, then the mediant operation is the same as bisecting. 

The iterative procedure is to start with an interval\footnote{In what might be considered a canonical process, we would start with deciding on the interval $[\frac{-1}{0}, \frac{0}{1}]$ vs $[\frac{0}{1}, \frac{1}{0}]$ to determine the sign and then repeated application of the process gets the integral portion. In practice, we can just try to get to the intergral interval containing the real number as fast as we can and then start the process.}, compute a mediant, ask the oracle which interval contains it, and then repeat with that new interval. This produces the best rational approximations to the real number in the sense that there is no closer rational number to the target number whose denominator is no bigger than the mediant's denominator produced by this method.

\subsection{Square Roots}

Let's apply this to computing square roots. We will start with the square root of 2. The way we compute the oracle's answer is by computing the squares of the endpoints and seeing if the lower end is less than 2 and the upper one is greater than 2.\footnote{Given $\frac{p}{q}$, we would test it as $p^2 \lge 2q^2$. For example, $\frac{3}{2} > \sqrt{2}$ because $9 > 4*2 = 8$ which can also be a comparison to 0 by computing $p^2 - q^2 \lge 0$ which in the example is $9-8=1 > 0$.}

We start our procedure with the formal interval of $\tfrac{0}{1}$ and $\tfrac{1}{0}$. For each item, we record whether we choose the left subinterval [L] or the right subinterval [R].

\begin{itemize}
    \item[R] $\tfrac{0+1}{1+0} = \tfrac{1}{1}$, squared: $1 < 2$,  $[\tfrac{1}{1},\tfrac{1}{0}] $
    \item[L] $\tfrac{1+1}{1+0} = \tfrac{2}{1}$, squared: $4 > 2$, $[\tfrac{1}{1},\tfrac{2}{1}]$
    \item[L] $\tfrac{1+2}{1+1} = \tfrac{3}{2}$, squared: $\tfrac{9}{4} > 2$, $[\tfrac{1}{1},\tfrac{3}{2}]$
    \item[R] $\tfrac{1+3}{2+1} = \tfrac{4}{3}$, squared: $\tfrac{16}{9} < 2$, $[\tfrac{4}{3},\tfrac{3}{2}]$
    \item[R] $\tfrac{4+3}{3+2} = \tfrac{7}{5}$, squared: $\tfrac{49}{25} < 2$, 
    $[\tfrac{7}{5},\tfrac{3}{2}]$
    \item[L] $\tfrac{7+3}{5+2} = \tfrac{10}{7}$, squared: $\tfrac{100}{49} > 2$, 
    $[\tfrac{7}{5},\tfrac{10}{7}]$
    \item[L] $\tfrac{7+10}{5+7} = \tfrac{17}{12}$, squared: $\tfrac{289}{144} > 2$, 
    $[\tfrac{7}{5},\tfrac{17}{12}]$
    \item[R] $\tfrac{7+17}{5+12} = \tfrac{24}{17}$, squared: $\tfrac{576}{289} < 2$, 
    $[\tfrac{24}{17},\tfrac{17}{12}]$
    \item[R] $\tfrac{24+17}{17+12} = \tfrac{41}{29}$, squared: $\tfrac{1681}{841} < 2$, 
    $[\tfrac{41}{29},\tfrac{17}{12}]$
    \item[L] $\ldots$
\end{itemize}

There is a pattern of R,2L,2R,2L,2R,$\ldots$ which continues with alternating the directional selection every two times. We will explain this in just a moment, but for now, this means that we can skip checking the squaring as well as skip the intermediate steps. Indeed, for the next two steps, we are effectively doing $\tfrac{2*41 + 17}{2*29 + 12} = \tfrac{99}{70}$ as we will be replacing the right endpoint twice, implying the left one gets reused. That is, we have $[\tfrac{41}{29}, \tfrac{99}{70}]$ for our containment of the square root of 2. And then we can do the same thing to figure out our next interval after two steps is $[\tfrac{41+2*99}{29+2*70}=\tfrac{239}{169}, \tfrac{99}{70}]$. We will record the pattern as $[1; \bar{2}]$.

For the square root of 11, we would find the pattern to be dictated by $[3;\overline{3,6}]$. So we would start with $[\tfrac{3}{1}, \tfrac{1}{0}]$ after choosing R 3 times from our starting 0 and $\infty$ representatives. So we then could do $[\tfrac{3}{1}, \tfrac{3*3 + 1}{3*1 + 0} = \tfrac{10}{3}]$. Then we use the 6 to compute a new endpoint:  $[\tfrac{3+6*10}{1+6*3} = \tfrac{63}{19}, \tfrac{10}{3}]$. Next we do 3 for the right:  $[\tfrac{63}{19}, \tfrac{3*63+10}{3*19+3} = \tfrac{199}{60}]$. The decimal value of $\tfrac{199}{60}$ is about $3.31667$ while the square root of 11 is about $3.31662$. If we know the pattern, this becomes quite easy to compute. 

We should also add that one nice feature of the mediant approximation is that if the number being approximated is a rational number, then this procedure will produce it. The details may be found in Appendix \ref{app:med}. This is not generally true of other methods. For example, the bisection method has the range of denominators being produced to be a multiple of the starting denominators and powers of 2. 

As an example, let's consider the square root of $\frac{4}{9}$. The solution is obviously $\frac{2}{3}$. With our mediant method, the intervals becomes $\frac{0}{1}:\frac{1}{1}$, then $\frac{1}{2}:\frac{1}{1}$, $\frac{2}{3}:\frac{2}{3}$ and we are done. In contrast, the bisection method gives $0:1$, $\frac{1}{2}:1$, $\frac{1}{2}:\frac{3}{4}$, $\frac{5}{8}:\frac{3}{4}$, and so on down the powers of 2 approximations to $\frac{2}{3}$. 

\subsection{Continued Fractions and Best Approximations}

The pattern is that of continued fractions. For square roots, the continued fraction is periodic which is quite nice to handle. For other computations, such as cube roots, the continued fraction is not periodic, but the alternation in computing the intervals still applies. For example, the cube root of 11 has continued fraction [2; 4, 2, 6, 1, 1, 2, 1, 2, 9, 88, ...]. We compute the intervals as:  
\begin{enumerate}
\item Default start: $[\frac{0}{1}, \frac{1}{0}]$, 
\item 2R: $[\frac{0+1*2}{1+0*2}, \frac{1}{0}]$, 
\item 4L:$[\frac{2}{1}, \frac{4*2+1}{4*1+0} =\frac{9}{4}]$, 
\item 2R:$[\frac{2+9*2}{1+4*2} = \frac{20}{9}, \frac{9}{4}]$
\item 6L:$[\frac{20}{9}, \frac{6*20+9}{6*9 + 4} = \frac{129}{58}]$
\item 1R: $[\frac{149}{67}, \frac{129}{58}]$
\item 1L: $[\frac{149}{67}, \frac{278}{125}]$
\item 2R: $[\frac{705}{317}, \frac{278}{125}]$
\item 1L: $[\frac{705}{317}, \frac{983}{442}]$
\item 2R: $[\frac{2671}{1201}, \frac{983}{442}]$
\item 9L: $[\frac{2671}{1201}, \frac{2671*9+983}{1201*9+442} = \frac{25022}{11251}]$
\item 88R: $[\frac{2671+88*25022}{1201+88*11251} = \frac{2204607}{991289}, \frac{25022}{11251}]$
\end{enumerate}

The last mediant computed already has exhausted a typical calculator precision for the cube root of 11, being about $3.7\times 10^{-13}$. It is tempting with this method to think that we could add 1 to the numerator to the more precise endpoint and get a better interval. That is not the case. For example, the quantity $\frac{25022}{11251} - 11^{1/3} \approx 8.9\times 10^{-11}$ while $\frac{2204608}{991289} - 11^{1/3} \approx 1\times10^{-6}$. The length of the interval in the last step is about $1\times10^{-11}$.\footnote{This process preserves $ad -bc$ and this is the numerator of the difference for any of the descendant intervals. Here, that difference numerator is 1. The denominator for the difference is then grown roughly by the product of $b$ and $d$. Here the denominators are about a million and a hundred thousand leading to a difference on the order of a hundred billionth.} 

If we have the continued fraction representation of a number, then we can produce as short of an interval as we please. If we have the oracle, but not the continued fraction, we can compute the continued fraction by doing the mediant process and keeping track of when we alternate from one side to the other. 

The switching from replacing one side with the other corresponds to a switch of how good of an approximation it is. The next three paragraphs are based on the excellent ``Continued Fractions without Tears'' \cite{richards}, exact statements being in quotes.  

We have the following theorem: ``Take any irrational number $\alpha$, with $0 < \alpha < 1$. The slow continued fraction algorithm (the Farey process, zeroed in on $\alpha$) gives a sequence of best left and right approximations to $\alpha$. Every best left/right approximation arises in this way.'' The slow process is taking the mediant at each step, using the oracle to decide which interval applies. The best approximation up to a given denominator $n$ is defined as the rational number $\frac{p}{q}$, $q  \leq n$, that satisfies $|\frac{p}{q} - \alpha | < |\frac{r}{s} -\alpha|$ for all rational numbers $\frac{r}{s}$ such that $s \leq n$. While this is stated for $0 < \alpha < 1$, the entire setup can be shifted to $n < \alpha < n+1 $ for any integer $n$. 

The fast process is what we did in computing the cube root of 11. It corresponds to minimizing the ultra-distance which is $q|(\frac{p}{q})-\alpha|$. It scales the normal distance by the denominator and is an attempt to counterbalance the basic advantage of larger denominated rationals for being able to get close to $\alpha$.  ``We call $\frac{p}{q}$ an \textbf{ultra-close approximation} to $\alpha$ if, among all fractions $\frac{x}{y}$ with denominators $y < q$, $\frac{p}{q}$ has the least ultra-distance to $\alpha$.''

The paper then establishes the following theorem: ``Take any irrational number $\alpha$, $0 < \alpha < 1$. The fast continued fraction algorithm gives precisely the set of all ultra-close approximations to $\alpha$.''  The slow algorithm is therefore creeping along as a best approximation while not improving the ultra-closeness. Once we hit an ultra-close approximation, we then start creeping along the other side until we get another ultra-close approximation. 

The construction of numbers using these mediants is referred to as the Farey process. The tree of descendants from this process is called the Stern-Brocot Tree. Matt Baker wrote a nice reference for viewing real numbers as paths in the Stern-Brocot tree.\footnote{{\href{https://mattbaker.blog/2019/01/28/the-stern-brocot-tree-hurwitzs-theorem-and-the-markoff-uniqueness-conjecture/}{https://mattbaker.blog/2019/01/28/the-stern-brocot-tree-hurwitzs-theorem-and-the-markoff} \\ \hspace*{10px}  \href{https://mattbaker.blog/2019/01/28/the-stern-brocot-tree-hurwitzs-theorem-and-the-markoff-uniqueness-conjecture/}{-uniqueness-conjecture/} }} The tree, restricted inclusively to the interval $0:1$, is a convenient way of constructing a list, without redundancies, of rationals between $0$ and $1$. 

We can create an analogous process to that of Proposition \ref{pr:notlist} where instead of bisection, we use the mediant. If we start with a Farey pair\footnote{$\frac{a}{b} < \frac{c}{d}$ are Farey pairs if $bc-ad=1$ which implies $\frac{c}{d} - \frac{a}{b} = \frac{1}{ab}$. The interval of a Farey pair is a Farey interval. The two intervals created by the mediant process are also Farey intervals. We can also reverse the mediant process to get the Farey partner. For any integer $n$, the interval $n:n+1$ is a Farey interval and is an excellent place to start the process.}, then we can apply the above theorems. In particular, if we are given an interval $\frac{a}{b} : \frac{c}{d}$ for the oracle to evaluate and do the mediant process starting with a Farey interval, then we can compute Farey intervals until we reach one whose endpoint denominators are greater than that of $b$ and $d$ at which point we have generate an answer to the given interval. This, of course, only works if we can also determine the result on any given singleton. We have no guarantee of when other methods, such as the bisection method, will have decided on a given interval. Even if we have one mediant for which we cannot decide between two intervals meeting at that mediant, we can still do a double mediant process, to shrink the other endpoints, by presumably finding the rejection of the intervals that do not contain that mediant. An example of this is doing the mediant process on the squaring of $\sqrt{2}$. The answer is $2$, but a naive approach to this will not be able to decide that $2$ is a Yes singleton. So instead we end up doing a mediant process with intervals whose endpoint is $2$, rejecting the interval that does not include $2$, i.e.,  if we were looking at $\frac{3}{2}:\frac{2}{1}$ then we would be checking $\frac{3}{2}:\frac{5}{3}$ and $\frac{5}{3}:\frac{2}{1}$. The first is a No while the second is a Yes. The process would then narrow down to $2$. 

Note that the pattern of selecting the intervals also corresponds to selecting which descendant of the mediant we will be selecting. Given an interval, we compute the mediant without choice. But then after that computation, the oracle selects whether to use the left subinterval or right subinterval, corresponding to taking the left or right path in the Stern-Brocot tree.

\subsubsection{Mediant Interval Representatives, an Aside}

If we wanted to, we could view a given rational number $m$ as the result of this process whose parents in the process are $p < q$. Then, if we have an interval $a:b$ such that $p : a : m : b : q$, we could say that $m$ represents $a:b$. For a given $a:b$, there will be a unique representative. For uniqueness, this follows because if there was $n$ such that $r : a: n: b : s$ with $n$ being the mediant of the Farey interval $r:s$, then let's assume that $p:m:q$ is earlier in the tree then $r:n:s$. Because of it being further down the tree, we would need to have $r:n:s$ either in $p:m$ or $m:q$ which would imply that $a:b$ could not be contained in it. If neither were descendants of the other, then, one of them would not be able to contain either $a$ or $b$. 

As for existence, both $a$ and $b$ appear somewhere in the Stern-Brocot tree. They will have a common ancestor $m$ in the tree as all numbers do. That would be the mediant. The mediant's immediate parents then would contain the interval $a:b$ with $m$ in $a:b$.

For mixed sign intervals, this process would lead to $0$ being the mediant coming from the pseudo-rationals $\frac{-1}{0}$ and $\frac{1}{0}$. For an interval spanning an integer, say $1.5:4.3$, the mediant would be the integer just above the lower one,  here being $2$ with the interval $\frac{1}{1}: \frac{1}{0}$.

From this perspective, we could have a canonical Cauchy sequence for any real number being the mediants from this process which is, of course, just the partial continued fraction representatives. 

This is an aside because while this seems interesting and related, it is not clear what the actual use of this observation is. 

\subsection{Newton's Method}

While Newton's Method has nothing to do with mediants, it is interesting to compare and contrast the method above with the standard root finding of Newton's method. Essentially, mediants are much easier to compute with a goal of getting simple rational approximations while Newton's method requires a bit more computational complexity but it yields a rapid convergence. This section is assuming that the standard view of functions works correctly in our context. See \cite{taylor23funora} for an alternative approach to functions.

As a quick review, Newton's method takes in a differentiable function $f$ and attempts to solve $f(x)= 0$ when given an initial guess of $x_0$. The method is based on the first order approximation $f(x) \approx f(a) + f'(a) (x-a) $ where we view $f(x) =0$ and solve for $x$, leading to $x = a - \tfrac{f(a)}{f'(a)}$. We can therefore define an iterative method of $x_{n+1} = x_n - \tfrac{f(x_n)}{f'(x_n)}$.

Taylor's theorem implies that if $f''(x)$ exists on an interval $(a,b)$ with $\alpha, x_n \in (a,b)$, then we have $f(\alpha) = f(x_n) +f'(x_n)(\alpha - x_n) + \frac{1}{2} f''(u_n) (\alpha - x_n)^2$ where $u_n$ is between $x_n$ and $\alpha$. If we take $f(\alpha) = 0$ and rearrange terms, we have $$\alpha - x_{n+1} = \alpha - \bigg(x_n - \frac{f(x_n)}{f'(x_n)}\bigg) = \frac{-f''(u_n)}{2 f'(x_n) } (\alpha - x_n)^2$$

Let's take an interval $I$ which represents where we expect to be close to the root $\alpha$. Let $M$ be the largest that $|f''|$ will be on $I$ and let $N$ be the smallest that $|f'|$ is on $I$. Then we have 
\begin{flalign*}
|\alpha - x_{n+1}| & \leq \frac{M}{2N} |\alpha - x_n|^2 & \\
  & \leq \frac{M}{2N} (\frac{M}{2N}(|\alpha-x_{n-1}|)^2)^2 & \\
  & = (\frac{M}{2N})^3 |\alpha-x_{n-1}|^4 & \\
  & \leq (\frac{M}{2N})^7 |\alpha-x_{n-2}|^8  \\ 
  & \leq ... \\
  &  \leq \bigg(\frac{M}{2N}\bigg)^{2n-1} |\alpha - x_1|^{2n} \\
  & \leq \bigg(\frac{M}{2N}\bigg)^{2n-1} (b-a) ^{2n}
\end{flalign*}
The error gets quadratically smaller and smaller assuming $\frac{M}{2N} (b-a) < 1$. To avoid assuming knowledge of the root, one can use Kantorovich's Theorem. 

Since we did the cube root of 11 above in detail with the mediants, let's try it with Newton's Method. We will start with an initial guess of $x=2$ and we know that the cube root of 11 is between 2 and 2.25. For Newton's method, we need a function which we will take to be $f(x) = x^3 - 11$. If $f(\alpha)= 0$, then $\alpha$ is a cube root of 11. $f'(x) = 3x^2$ and $f''(x) = 6x$. To keep our bounds simple, we will bound the derivatives on the interval $[2,3]$ which yields $M = 18$ and $N = 12$. So the factor in front can be bounded by $\frac{18}{24}=\frac{3}{4}$. The initial interval has length less than $1/4$ and we know the $0$ is in $(2, 2.25)$ by the Intermediate Value Theorem. We therefore have error estimates for the first few guesses of $\frac{3}{4}(\frac{1}{4})^2 < .047$, $(\frac{3}{4})^3(\frac{1}{4})^4 < .00165$, $(\frac{3}{4})^7(\frac{1}{4})^8 < 2.04\times 10^{-6}$, and $(\frac{3}{4})^{15}(\frac{1}{4})^{16} < 3.12 \times 10^{-12}$. That is, we will have exhausted a typical calculator precision after four iterations. 

Let's do it. Our iteration formula is $x_{n+1} = x_n - \frac{x_n^3 -11}{3 x_n^2}$ 
\begin{enumerate}
\item $x_0 = 2$, $|\alpha - x_0| < \frac{1}{4}$
\item $x_1 = 2 - \frac{8- 11}{12} = \frac{9}{4}$, $|\alpha - x_1| < .047$
\item $x_2 = \frac{9}{4} - \frac{ 729/64 - 11 }{243/16} = \frac{2162}{972}\approx 2.2243 $, $|\alpha - x_2| < .00165$
\item $x_3 = \frac{1894566349}{851880969} \approx 2.2239801 $, $|\alpha - x_3| < 2.04\times 10^{-6}$
\item $x_4 = \frac{20400964697239818757748038397}{9173177756288897151620391507} \approx 2.22398009056931625 $, $|\alpha - x_4| < 3.12 \times 10^{-12}$
\end{enumerate}

The last two computations were done with WolframAlpha.  WolframAlpha also reports the cube root of 11 as approximately 2.22398009056931552. 

As one can see, Newton's Method converges quickly, but the fractions are quite complex. With the mediant method, the fraction $25022/11251$ is a closer approximation than $x_3$ above while using far fewer digits. One should also keep in mind that the computations for the mediant method requires nothing more than multiplication and addition with the added bonus that the fractions are always in reduced form. It is also worth mentioning that this iteration did start with an interval in common with the mediant method that we did previously. 

\subsection{Geometrically Computing \texorpdfstring{$\sqrt{2}$}{sqrt2}}

In this section, we are going to explore computing the square root of 2 with right triangles whose sides are Pythagorean triples and whose legs are almost identical. This will allow us to get rational intervals that bound the square root of 2. 

As a simple and not great example, consider the classic $3-4-5$ right triangle. By dividing the sides by 3, we get a $1-\frac{4}{3}-\frac{5}{3}$ right triangle whose hypotenuse has length $\frac{5}{3} = 1 \frac{2}{3}$. Dividing by 4 leads us to a $\frac{3}{4}-1-\frac{5}{4}$ with hypotenuse  length $\frac{5}{4} = 1.25$. Notice that $1.25 < \sqrt{2} < 1.\bar{6}$.  The ordering occurs in this way because we can imagine lining up the $1-1-\sqrt{2}$ triangle with the other two that have a side whose length is 1. One of the triangles is taller and the other is shorter than a height of 1. The hypotenuse lengths correspond to that height ordering. 

How do we find good Pythagorean triples for this? It just so happens that a well known rational parametrization of the unit circle is  $r(t) = (\frac{1-t^2}{1+t^2}, \frac{2t}{1+t^2})$.  If we have a rational $t$ such that those two coordinates are nearly equal, then we will end up with triangles that will be good approximations for the square root of 2. The $\sqrt{2}$-Yes interval generated by a given $t$ is $\frac{1+t^2}{1-t^2}:\frac{1+t^2}{2t}$. The ordering of that interval will change based on whether $t$ is greater than or less than the $t$ which provides equality of the two coordinates. 

Let's call the $t$ that does that $T$. If $T$ were rational, the mediant process would then provide a singleton solution to it. It is, of course, not rational. We can use Newton's method to produce good rational approximations of $T$ by using the function $f(t) = 1-t^2 - 2t$, but we will demonstrate using mediants and the intermediate value theorem procedure. To start off with, $f(0) = 1 > -2 = f(1)$. The mediant between them is $\frac{1}{2}$ and $f(\frac{1}{2}) = -\frac{1}{4}$. So we select $\frac{0}{1}:\frac{1}{2}$ for our next interval. By the way, $r(\frac{1}{2}) = (\frac{3}{4}, 1) / \frac{5}{4} = (\frac{3}{5}, \frac{4}{5})$ which leads to the $3-4-5$ right triangle we discussed above.  The next interval has mediant $\frac{1}{3}$ and $f(\frac{1}{3}) = \frac{2}{9}$ leading to selecting $\frac{1}{3}:\frac{1}{2}$. We also have $r(\frac{1}{3}) = ( \frac{8}{9}, \frac{2}{3}) / \frac{10}{9} = (\frac{8}{10}, \frac{3}{5})$ which is the same triangle again. Notice how we shifted the direction of the interval selection and the legs flipped the ordering as well.

Our next interval's mediant is $\frac{2}{5}$ with $f(\frac{2}{5}) = \frac{1}{25}$ which tells us to select the interval $\frac{2}{5}:\frac{1}{2}$. And then $r(\frac{2}{5}) = (\frac{21}{25}, \frac{4}{5})/\frac{29}{25} = (\frac{21}{29}, \frac{20}{29})$ leading to $\frac{29}{21}:\frac{29}{20} \approx 1.38: 1.45$ as the interval approximation for the square root of 2 using the right triangle $20-21-29$. The next mediant is $\frac{3}{7}$ and we would select the interval $\frac{2}{5}:\frac{3}{7}$ leading to a mediant of $\frac{5}{12}$.  Note that because we switched directions, we end up with the a scaled version of the same right triangle; here it is $40-42-58$. 

We can continue in this fashion, alternating the interval selection every two times as the positive solution to $1 - t^2 - 2t = 0$ is $\sqrt{2}-1$ with continued fraction $[0;\bar{2}]$ which does tell us to flip the selection direction every two selections. It also tells us that adding $1$ to the $t$ values gives us direct approximations to the square root of 2. 

If we did want to use Newton's method on this to quicken the pace, we would be computing the formula $x_{n+1} = x_n - \frac{ t^2  + 2t -1}{2t + 2}$. If we started with $\frac{2}{5}$, then the next one would be $\frac{29}{70}$. Computing out its right triangle, we get $4059-4060-5741$ with estimates $\frac{5741}{4059}\approx 1.4143: \frac{5741}{4060} \approx 1.41403$. 

\subsection{The Continued Fraction of \texorpdfstring{$\pi$}{pi}}

We can do the mediant method using the intermediate value process for computing out the continued fraction of $\pi$. Let us assume that we can compute the sign of $\sin(x)$ for any rational $x$ and also assume that we know that $\sin(x)$ has exactly one zero in the interval $3:4$. This zero is, of course, $\pi$. Then the oracle of $\pi$ restricted to intervals in $3:4$ is the set of intervals $a:b$ such that $\sin(a):0:\sin(b)$ is true. That is, we can compute the Yes or No of intervals by testing out the sign of sine. 

If we start out with the interval  $3:4$ and use the mediant process, we will be working with a Farey pair. The first mediant is $\frac{7}{2}$. Note the signs of sine are  $\sin(3) \col +$, $\sin(4) \col -$, $\sin(7/2) \col -$. So our next interval is $3:\frac{7}{2}$ with mediant $\frac{10}{3}$ whose sine value is negative. As we proceed, we generate the mediants $\frac{13}{4}$, $\frac{16}{5}$, $\frac{19}{6}$, and finally $\frac{22}{7}$. All of these had negative sine values and thus we were choosing the left sub-intervals. We did this 7 times so the continued fraction before the switch is $[3;7]$. The mediant where we are making the switch is $\frac{25}{8}$. It is represented by both $[3; 7, 1]$ and $[3, 8]$.  Because $\sin(\frac{25}{8}) > 0$, we choose the right subinterval represented by $[3; 7, 1]$. We will now select the right subintervals, namely those with $\frac{22}{7}$ as the endpoint. As we proceed, we will find we do this 15 times before switching to left subintervals. This leads to $[3; 7, 15] = \frac{333}{106}$ and $\frac{333}{106}:\frac{22}{7}$ is the interval of containment before we start selecting the left subintervals again.

This is a method, perhaps a slow one, which allows one to compute the continued fraction of any solution to an equation of the form of $f(x) =0$ where the sign of $f$ changes near the solution. For an alternative direct method of computation using a variant of Newton's Method, see \cite{shiu95}.\footnote{I found this reference at an excellent MathStackExchange overview of computing out continued fractions: \url{https://math.stackexchange.com/questions/716944/}.  It also gives some references for doing arithmetic with continued fractions which we demonstrate in Section \ref{sec:con-frac}.} 

We could do something similar for $e$, where we look at whether the function $\ln(x)$ is greater or less than 1 to determine if we switch the intervals. The basic starting point will be $2:3$ leading to a sequence of mediants being $\frac{5}{2}, \frac{8}{3}, \frac{11}{4}, \frac{19}{7}$ where the last one is accurate to within $0.004$ with the bounding interval giving a bound of $0.036$. 

The viewpoint of oracles seems naturally to lead into mediants and continued fractions.

\section{Alternative Definitions}\label{sec:others}

It is useful to compare our oracle approach with other common approaches and some nearby alternatives to this. 

What are some good properties of a definition of real numbers? This is subjective, of course, but I was guided by the following: 

\begin{itemize}
    \item Uniqueness. Given a target real number, there should be only one version of that in the real number system and its form should be indicative of what the number is. 
    \item Reactive. This is a key feature. Real numbers generally have an infinite flavor to them. It was important to me to not pretend we could present the infinite version of that, but rather to present a method of answering queries. 
    \item Rational-friendly. Ideally, rationals would be easily spotted, treated reasonably, and arithmetic with them would be easy to do. 
    \item Supportive. The definition should be in line with and supportive of standard practice of numbers. In particular, it should conform to how numbers are used in science, applied mathematics, and numerical analysis. 
    \item Delayed Computability. The definition should not require computing out approximations just to  state the number, but the definition should facilitate such computations. 
    \item Arithmeticizable. It should feel like the arithmetic laws are approachable and computable. That is, one can compute out the operations to any given level of precision and be confident in the result.
    \item Resolvability. We have concrete examples of real numbers whose fundamental nature is unknown. Does the approach give language or a setup that can respect that? 
\end{itemize}

The Oracle approach fits the first five of these quite easily. The last two are pretty subjective and perhaps the best way forward on those is to contrast them with the other definitions. 

Much of what follows was heavily inspired by NJ Wildberger's excellent videos, such as ``Real numbers as Cauchy sequences don't work!''\footnote{\url{https://www.youtube.com/watch?v=3cI7sFr707s}}

\subsection{Infinite Decimals}

This is a natural and old attempt at defining real numbers and is the approach of early mathematics education. It originated in the 1500s with Simon Stevin. 

The idea is to write the decimal expansion of irrational numbers as we do with rationals, but we can never complete the decimal form and can only produce a finite amount of digits. 

Ordering is very easy with this presentation as long as we can go as far as needed for numbers that are different. Establishing equality by comparing digits is not directly possible since we cannot write out infinitely many digits. 

We can view this as describing intervals whose width is a power of 10. For example, writing $1.41$ can be taken as $1.41:1.42$. See Section \ref{sec:decimals} for the subtleties and feasibility of this approach. 

Let us run through our criteria. 

\begin{itemize}
    \item Uniqueness. It has the issue of trailing 9's. Otherwise, there is only one representative. 
    \item Reactive. This can be viewed as answering the question of ``What is a decimal approximation up to the $n$-th place?'' where $n$ is some given natural number. It is even possible for some approximations to just produce the desired $n$-th place decimal value. The standard presentation, however, is misleading. As an example, $\sqrt{2} = 1.4142...$ gets presented as if the whole decimal expansion is there but is hidden due to space limitations. 
    \item Rational-friendly. Sort of. Rationals are the ones with repeating decimal expansions and can therefore be spotted. Those that terminate, however, should technically have 0's appended which is just odd. The arithmetic is not tenable. Compare multiplying $\tfrac{1}{9}*\tfrac{1}{9} = 0.\bar{1} * 0.\bar{1}$. Actually try that multiplication in decimal form. As one continues on, one has to carry (modify) digits many places away. Even multiplying 1 by itself in the form of $.\bar{9}*.\bar{9}$ takes a bit of effort. 
    \item Supportive. We certainly use numbers in decimal form to do computations. But we need to have error bars added, that is, we need to write these as decimal intervals. 
    \item Delayed Computability. This is a definition that requires computing it out, impossibly so. 
    \item Arithmeticizable. Not that easy, as indicated even in the simple case of the rationals multiplying. The most straightforward way is probably an interval or limiting kind of application of the arithmetic operators on the partial decimal approximations. Note that without explicit error bars being given, the decimal shorthand of an interval approximation being read off by the number of decimal digits ceases to apply as we do the arithmetic operations since the intervals expand in size. This is presumably the origin of the significant figure rules. 
    \item Resolvability. We can state how much we know of a number with the decimals and leave it as an error bar. But it would not be particularly convenient to write out $2^{68}$ 0's in the number associated with the Collatz conjecture. 
\end{itemize}

Infinite decimals are a natural attempt and it is a very common presentation of what we know of a number. But it is often not a convenient form and strongly suggests intervals instead. 

The lack of error control is problematic with the decimal expansion. For example, assume we want to multiply the real numbers $\sqrt{2} \approx 1.414$ and $\sqrt[3]{48}\approx 3.634$. Let's take the truncated versions of 1.4 and 3.6 with the implicit understanding of having the interval $1.4:1.5$ and $3.6:3.7$ as the intervals containing these numbers, respectively. When we multiply the representatives, we get $1.4*3.6 = 5.04$, but when we multiply $1.5*3.7$ we get $5.55$. The spread has increased and the best we can say is that $\sqrt{2} \sqrt[3]{48}$ is in in the interval $5.04:5.55$ leading to $5$ as the truncated answer if we follow the truncation convention. If instead of viewing the decimal expansion as representing the story so far and instead viewing it as having been rounded, then the numbers represent $1.35:1.45$ and $3.55 :3.65$ which leads to the interval $4.7925: 5.2925$ and thus for a rounded version, we again find $5$ as the representative answer with the increased uncertainty of $4.5:5.5$ as the implied interval. We can, of course, repeat this with more digits. For example, doing the rounded version with three digit approximation, we get $1.4135:1.4145\otimes 3.6335:3.6345 = 5.13595225:5.14100025$ which leads to a rounded version of $5.14$ representing the interval $5.135:5.145$. To track precision properly, we need to have intervals as part of the decimal arithmetic.

\subsection{Nested Intervals}

One can think of expanding the concept of infinite decimals as being a sequence of nested intervals where the length goes down by a tenth at each level. We could generalize this to be a more arbitrary sequence of nested and shrinking intervals. From the Oracle point of view, we could use the mediant method to define such a sequence of intervals. A sequence of such intervals is a fonsi that gives rise to an Oracle. These are closely related concepts. 

Let us run through our criteria. 

\begin{itemize}
    \item Uniqueness. This clearly fails. We can have two entirely distinct nested interval sequences describing the same real number in addition to arbitrarily changing a given sequence (cut out half of them, double their lengths, ...)
    \item Reactive. Not at all. The sequence of intervals is given. We could recast this as a function that, given an $n$, we generate the $n$-th nested interval based on what came before. We would probably want a function of length that gives us a shorter, nested interval from what came before. A choice external to the number must be made to decide which is the $n$-th interval. 
    \item Rational-friendly. The rationals are those whose nested intervals converge to a rational number. There does not seem a particularly clear property that establishes this. Depending on the definition, we could have a finite nested interval sequence that ends in the singleton $q:q$ if it is allowed. 
    \item Supportive. On a practical level, we do like shrinking intervals. But it is not generally predefined intervals. Mostly, it is intervals that are generated when working and we would want to know that the given interval has a non-zero intersection with all the nested intervals. 
    \item Delayed Computability. The sequence needs to be specified. Which sequence to provide is not intrinsic to the number and thus this does require the work of computation.  
    \item Arithmeticizable. Interval arithmetic works. If we tried to build in a constrained size, such as the $n$-th interval has to be no longer than $1/n$ in length, then the arithmetic would become a chore to manage. 
    \item Resolvability. The smallest intervals we can inspect in the sequence will tell us the resolution we have on a number. 
\end{itemize}

To address the uniqueness issue, one could pursue an equivalence class approach as one does with Cauchy sequences, as discussed below, with the same difficulties. But one could also take the nested intervals as inspiration and expand to have all the intervals that contain one of the nested intervals. This is a maximal fonsi which is equivalent to the set of all Yes-intervals for an oracle. 

There is also the issue, similar to the Cauchy sequence, though less severe, that the nesting intervals can be quite wide for a very large portion of the sequence.  For example, we could have a nested interval sequence which has an initial trillion intervals that are all wider than the known universe.

\subsection{Cauchy Sequences}

A Cauchy sequence is a set indexed by natural numbers such that for any given rational $\varepsilon > 0$, we can find an index such that all later elements of the sequence will be within $\varepsilon$ of each other. We can consider these as the real numbers. 

We can apply one of our algorithms to a given Oracle to generate a Cauchy sequence that represents that real number. Given a Cauchy sequence, we can generate an Oracle by the rule that an interval is a Yes if it contains all of the tail of the Cauchy sequence. See Section \ref{sec:cauchy}.

Let us run through our criteria. 

\begin{itemize}
    \item Uniqueness. This fails utterly here. We can say that a Cauchy sequence represents a real number. But there are infinitely many such sequences. So then we can consider an equivalence class of them, but this then becomes a very different kind of object. In addition, we have the problem that the initial part of the sequence can be anything. Given an equivalence class, we can expect that the finite portion of all Cauchy sequences will look the same. We could require that each term must be within the latter ones by a given precision based on the index, such as $|a_n - a_m| < \tfrac{1}{n}$ whenever $n < m$, but this makes the arithmetic portion of this more difficult in addition to actually computing such a sequence in practice may require more work for no practical gain. 
    \item Reactive. Given a desired precision, we can ask the Cauchy sequence for the $n$ and then for an element of the sequence. The given element is chosen by a mechanism not directly essential to the real number. 
    \item Rational-friendly. Rationals can have a constant sequence which is different. But one can also have a sequence for an irrational which is constant for a trillion to the trillion terms and then starts changing. The arithmetic between two constant sequences is easy, but the representative of the rational may or may not be constant in which case there is no difference from the irrationals.   
    \item Supportive. This is used by analysts in many theoretical arguments. For numerical work, we may be generating a sequence of approximate values that get closer and closer. But we need to pair the sequence value with an $\varepsilon$ to have it be actually useful. That is, to be useful, we need an interval lurking around. 
    \item Delayed Computability. If working with one sequence, then it needs to be specified. Which sequence to provide is not intrinsic to the number and thus this does require the work of computation.  For the equivalence class, one needs to somehow specify some criteria for being in the equivalence class. It is often done by having a specific sequence and then including all the ones that are equivalent to that sequence. 
    \item Arithmeticizable. This is the arithmetic of the individual sequence elements. This gets a little messy with the equivalence classes. If we try to weed out the initial garbage by specifying some specific sequence of $\varepsilon$'s to satisfy, then the arithmetic operations become more difficult to handle since we need to play around with ensuring that constraint.  
    \item Resolvability. This is difficult. One can produce a Cauchy sequence for the unknown numbers up to a point, but a finite segment of the Cauchy sequence is not useful without the $N$, $\varepsilon$ information which is really specifying an interval. That is to say, we are producing an interval in which all future sequence members need to be within.  
\end{itemize}

Cauchy sequences are very appealing as a next step past the decimal expansions, which themselves can be viewed as a very nice Cauchy sequence where each step along $n$ leads to a $\frac{1}{10}$th shrinking of the future variability. But they are a collection of somewhat arbitrary choices in the representation of a number. 

\subsection{Fonsis}

A natural progression from the nested interval sequences and Cauchy sequences is the Family of Overlapping, Notionally Shrinking Intervals (fonsi) which we have used throughout this paper. It is a reasonable generalization of these approaches and has some reasonable generalizations. This is a favored version of some constructivists as in \cite{bridger} and \cite{bridges}. 

But it suffers from many of the same problems as these other approaches. Namely, it requires non-canonical choices which then complicates the discussion. The constructivists, not particularly enjoying equivalence classes, define a real number as a fonsi and then define an equality operation which is the equivalence relation. They do not suggest that two fonsis are equivalent, but rather that they are equal as real numbers. The main difference is that they want to avoid talking about the set of all fonsis representing a given number and simply handle it on a case-by-case basis as it arises. 

\begin{itemize}
    \item Uniqueness. The fonsi approach does not give a unique representation at all. The maximal fonsi is unique and taking that to be the real number would make it unique, but from the perspective of constructivists, that would require typically doing an infinite amount of work granted by fiat. 
    \item Reactive. This is not very reactive. If we wanted to test an interval, it is insufficient to ask if it is in the fonsi and insufficient to ask if it intersects a finite number of elements in the fonsi. One needs to know if it intersects infinitely many elements of the fonsi and that the collection must include arbitrarily small intervals. 
    \item Rational-friendly. There is the option of having a rational singleton as the fonsi or as part of a fonsi. But it is not required to be there. This is part of the advantage of a fonsi as a computational tool, but seems lacking as a definition. 
    \item Supportive. It allows for more flexible uses than having a nested sequence and is in line with notions of measurements. 
    \item Delayed Computability. One has to specify the fonsi. Since there are many fonsis per real number, something additional to the number needs to be specified suggesting we need to do some computation.
    \item Arithmeticizable. This is what fonsis seem really good at doing. This is what our oracles rely on in the arithmetic of oracles. 
    \item Resolvability. This is in line with the resolvability discussion of oracles. Essentially, we find the smallest intervals in the fonsis that we can produce and that is the resolvability of the two. If we have two disjoint elements of the two fonsis, then we know they are distinct. 
\end{itemize}

Fonsis are very close to our oracle approach and are excellent tools for dealing with real numbers. But they seem to be much closer to a tool than a full definition.
  
\subsection{Dedekind Cuts}

The idea of a Dedekind cut is to divide the rational numbers into two pieces, one of which is below the real number and one which is above. The cut is where the real number is. If the cut happens to be a rational number, then one has to decide which set it ought to be in. The recasting below puts the rational in its own space. 

The approach of Dedekind cuts is a common construction of real numbers in beginning analysis courses. They have a very nice convincing example of the square root of 2 and the ordering, based on subsets, is quite nice. The arithmetic gets messy in detail, but conceptually it is not problematic. 

To align it more with our approach of Oracles, we would recast the set aspect into having a rule $T$ which decides whether a given rational number is less than, equal to, or greater than, the target real number. It can be codified by giving a -1, 0, or 1,  respectively, something in line with how one might code these kind of inequality tests in a programming environment. 

To convert to intervals, we see that below the cut is the set of lower endpoints of intervals that contain the real number while above the cut is set of the upper endpoints. Given a lower and upper bound on the cut, we can proceed with the algorithms we have already discussed and use $T$ to decide whether the new middle point is a lower bound, the number itself, or an upper bound. 

We can also view the Separation Property as being very similar to the $T$ function. If we had a $T$ function, then we could use it to answer whether to accept the left interval ($T(c) = -1$), the right interval ($T(c)=1)$, or accept the singleton $(T(c) = 0$). We could also use the existence of an interval and the Separation property to construct the $T$ function.

The arithmetic for our version is fairly straightforward. The new rule for the sum  $z = x+y$ is that a given rational $s$ is less than $z$ if $s$ can be written as $p+q$ for two rationals that are less than $x$ and $y$, respectively. It is greater if we can find two rationals summing to it that are respectively greater. Equality is a little tricky unless we are actually adding two known rationals. Otherwise, we need to work to find a gap one way or the other. 

Let us run through our criteria. 

\begin{itemize}
    \item Uniqueness. For each cut, we have a unique representative. This is based on deciding to exclude the rational number if the Dedekind cut represents a rational number. In essence, we are taking the representatives of $0.\bar{9}$ and excluding $1$ as representing itself. 
    \item Reactive. The standard presentation of the cut sets is not reactive. Technically, one would need to specify the set entirely. This works for the square root of 2, but is more difficult for something like $\pi$. Reformulated as above, one can ideally compute it out for any given rational that we wish to ask about the relation to the real number. 
    \item Rational-friendly. The standard presentation is awkward with rationals and does not highlight them. If we were take the viewpoint of the ternary function $T$ above, then rationals are exactly the $T$ functions where $0$ is in the range. 
    \item Supportive. Somewhat. Producing the Dedekind cut is not something typically needed or done, but figuring out whether one is below or above a given target is certainly useful and not an entirely wasted effort. 
    \item Delayed Computability. The standard definition does not seemingly require computation to define a number but neither does it particularly help computation. This is certainly true of computing roots where the inequality can be informatively stated. For something like $\pi$, there is no clean definition of the inequality other than having to compute lower bound approximations and being under those approximations to be in the set. One can use the IVT style setup to define $\pi$ as the set of $x$ that are less than 3 or, if $x$ is between 3 and 4, then add in the $x$ such that $\sin(x) < 0$. 
    \item Arithmeticizable. This gets a little messy for negatives and multiplication since directions get reversed. The actual computable action in our presentation is roughly equivalent to the Oracle arithmetic, but the standard presentation demands the whole set be produced which is not possible.
    \item Resolvability. It is obscured, particularly in the standard formulation. For our formulation, it basically suggests there is a gap between the less thans and the greater thans. This more or less suggests using the interval approach of our paper to get into this. 
\end{itemize}

The idea of Dedekind cuts, properly formulated, is a solid candidate for constructing the reals, but it feels slightly tangential from the main desire of what we want to know with a real number. It feels like the remains after someone tore apart the intervals of interest. 

The $T$ function certainly comes very close to our rule $R$, but, as detailed in \cite{taylor23metric}, we can generalize oracles easier than an order-based definition. In particular, if we use the Two Point Separation property, then oracles generalize to any metric space. 

It is also useful to point out that while the Separation property can be resolved with the help of the $T$ function, we do have instances of the Separation property being answered without one. Namely, the process behind the Intermediate Value Theorem is very much one of considering Yes or No on intervals and not a question of whether the given point is less than or greater than the target. Indeed, if we have multiple possible answers in a given interval, then a $T$ would not exist a priori. But given any interval to test on and an internal separation point, the oracle would yield an answer. 

As an example, we can modify the Thomae's function $t$ to be, for the rationals in reduced form $\frac{p}{q}$, $\frac{(-1)^p}{q}$. We still would have $0$ as the value on the irrationals and we would have continuity just the same on the irrationals. Given $1:2$ as a starting interval for the IVT, we would have $0$ is between $t(2) = -1$ and $t(1) = 1$. If we take the mediant, we are looking at $t(\frac{3}{2}) = \frac{-1}{2}$ and so we choose the interval $1:\frac{3}{2}$. That mediant's value is $t(\frac{4}{3}) = \frac{1}{3}$ and so we choose $\frac{4}{3}:\frac{3}{2}$ leading to a mediant of $\frac{7}{5}$. And so on. This will converge to some neighborly oracle (irrational) whose value is $0$. But it is hard to see how we could construct a $T$ function directly.

The oracle framework handles this with ease in contrast to the Dedekind cuts.

\subsection{Minimal Cauchy Filters}

I obtained this from \cite{weiss2015reals} which does not seem to be officially published but can be found on the arXiv under the title ``The reals as rational Cauchy filters''.\footnote{\url{https://arxiv.org/abs/1503.04348v3}} A filter is very similar to what we have used here. It is a collection of sets with pairwise intersections being a part of the collection and that any set that contains a set in the filter is also in the filter. A minimal filter is one which does not contain any other filter. A Cauchy filter is one which has arbitrarily small intervals in it. The paper goes through constructing the real numbers as the collection of all minimal Cauchy rational filters. 

A useful case to focus on is that of a rational number in this viewpoint. The maximal filter of $q$ is the one consisting of all sets that contain $q$. The minimal Cauchy filter is generated by the base of $q_{\varepsilon}$ intervals, namely, the intervals centered at $q$ with a length of rational $\varepsilon>0$.

One could liken the approach of the oracles as taking the minimal Cauchy filter and restricting that to only sets which are intervals. We do use intervals more general than the $q_{\varepsilon}$ intervals in part to be able to use the Separating property which generally would not result in a $q_\varepsilon$ interval. 

In any event, it should be clear how to map the two. Given an Oracle, the Yes intervals are the base for the filter which generates the real number (excluding the singleton if it is rational). One would need to verify that it is a minimal Cauchy filter. Given a minimal Cauchy rational filter, we generate an Oracle on intervals by a Yes being given if the interval is in the filter. We add in the singleton if it happens to be a rational real. 

Let us run through our criteria. 

\begin{itemize}
    \item Uniqueness. The filter is unique with no need for equivalence classes. But for every interval containing the real number, we have infinitely many sets that are added to that interval to generate the various sets. This is analogous to the arbitrary changing of the head of a Cauchy sequence. In particular, it would be very difficult to take a random set from a filter and figure out anything useful to say about which real number we were talking about. The ``garbage'' portion is away from the real number in question, but it would be hard to determine from a randomly chosen member of the filter which real number was under consideration. 
    \item Reactive. We can recast this as a query setup, namely given a set, we can say Yes or No depending on if it is in the filter or not. Unfortunately, given the almost random nature of the sets, it can be difficult to even present the set to be asked about. In contrast, rational intervals require just two fractions to ask about it. 
    \item Rational-friendly. Rationals are singled out by being the filters with a non-free core, that is, all the sets in the filter have $q$ as an element. The arithmetic, however, is not particularly improved. In particular, singleton sets $q$ are not actually part of the filter as that would generate the filter of all sets containing $q$. This means that it is slightly unnatural to focus on the singleton arithmetic though one can always do that and then generate the $\varepsilon$ intervals from that which form the base. 
    \item Delayed Computability. It does not require explicit computations, but it also does not seem to help much with computing out approximations. In particular, the bisection and mediant methods do not seem to be easily used on an arbitrary element of the filter. 
    \item Supportive. This has a bit of the core idea of wanting to say ``the number is in there'', but similar to the Cauchy sequences, it gets derailed by the large variety of useless set baggage that gets brought in with the filters. 
    \item Arithmeticizable. The arithmetic laws are easy to state, basically, being nothing more than the set generated by applying the operator to all of the elements. Unfortunately, this simplicity of statement does not translate into something easy to compute since we have to deal with essentially arbitrary sets. If one focuses on the base of the filter, namely, $\varepsilon$ intervals, then that is essentially the same as the arithmetic in our approach. 
    \item Resolvability. It does not seem to be particularly useful in dealing with uncertainty in what the real number is. It would be presumably appealing to $0_{\varepsilon}$ intervals and stopping when we got to the level of our current knowledge. We could talk about the smallest sets that the two filters have in common. 
\end{itemize}

Filters are generally concerned with arbitrary sets. It is not clear what advantage this has over just being concerned with intervals. If we restrict our attention to only sets that are intervals, then the minimal Cauchy filter approach and our approach would largely coincide, with the exception of the singletons.

\subsection{Continued Fractions}

An interesting and quite useful construct with real numbers is that of continued fractions. We have already touched upon them in terms of the mediants. It is possible to view real numbers as continued fractions. 

\begin{itemize}
    \item Uniqueness. Continued fractions are unique for irrationals. For rationals, there are two of them, corresponding to the two descendent trees rooted at that rational in the Stern-Brocot tree. 
    \item Reactive. This can be seen as a process generating approximations to the real number. One can ask for a representative at a given level.  
    \item Rational-friendly. Rationals are the ones with finite expansions though they do have two representatives. 
    \item Supportive. Continued fractions are certainly used in computational settings. The expansions are not, however, intuitively useful by themselves. The first part being the integer is quite useful. Measurements would generally have little reason to be converted into continued fractions. 
    \item Delayed Computability. This needs to be computed out as its definition. Having the continued fraction can facilitate other representations such as decimals. 
    \item Arithmeticizable. This is hard with continued fractions. Ordering is very easy as it is just comparing on the expansions. But doing the arithmetic operations  using continued fractions requires some effort. See Section \ref{sec:con-frac}.
    \item Resolvability. Comparing real numbers can be done by comparing the expansions.
\end{itemize}

Continued fractions are a great tool and have some nice properties as a definition, but fundamentally it seems specialized to certain needs. They do not seem to generalize to metric spaces. 

\subsection{Other Constructs}

The survey paper \cite{ittay-2015} informs the rest of our analysis of other constructions. 

Many of them are sequences of sums or products, which are rather interesting different representations, but they can be viewed as specialized cases of Cauchy sequences. They bear significant resemblances to the infinite decimal approach as they generally involve stand-ins for digits in the construction of each term, but they generally avoid the carry problem. They all seem to suffer from the idea that as a general scheme for representing real numbers, they are not generally useful except in particular cases. They may complicate some arithmetic operation or, more commonly, the ordering relation. They also may just take a great deal more of computation to accomplish without necessarily having any advantage. Some of them also create non-unique representatives and thus require an equivalence class. Most of them avoid the garbage header associated with Cauchy sequences. 

They generally seem quite amenable to producing intervals in which the number can be seen.

The survey paper is an enjoyable and enlightening read for those interested in other constructions of the reals. 

\subsection{Contrasting an Actual Computation}

Earlier, we looked at $\sqrt{2} (e + \pi) $ as done out with oracles. It can be instructive to see how we might approach that computation in each of the different options. We will be brief with most of the options as there is not much to say on some of them. We will go in-depth into the decimal version and continued fraction version. 

\begin{enumerate}
\item Decimals. This is the standard arithmetic. It suffers from the carry problem if thinking about decimals exactly. For approximations, it suffers from the expanding nature of interval arithmetic. See below for some of the issues. 
\item Nested Intervals. One has to choose the nested interval representations for $\sqrt{2}$, $e$, and $\pi$. Then we can just do the indicated operations. This can look similar to what was done with oracles, except there is no ending point. It really implies that we need to have formulas for each element of the sequence and then combine those formulas. It would then need to be shown that they are nested and the lengths are shrinking to 0. 
\item Cauchy Sequences. In some ways, this is similar to the nested intervals in that we need to pick the Cauchy sequences for these numbers and then combine these sequences. This is assuming that we are looking at particular Cauchy sequences. If we are trying to do this with whole equivalence classes of Cauchy sequences, then it is unclear what one could really compute. We listed explicit summations for $\pi$ and $e$. For $\sqrt{2}$, we can use the double factorial sum $\sqrt{2} = \sum_{k=0}^\infty (-1)^{k+1} \frac{(2k-3)!!}{(2k)!!} =
1 + \frac{1}{2} - \frac{1}{2\cdot4} + \frac{1\cdot3}{2\cdot4\cdot6} - \frac{1\cdot3\cdot5}{2\cdot4\cdot6\cdot8} + \cdots = 1 + \frac{1}{2} - \frac{1}{8} + \frac{1}{16} - \frac{5}{128} + \frac{7}{256} + \cdots$. One would then add the sums of $\pi$ and $e$ and multiply that sum by the expression above, doing partial sums to get concrete representations. The multiplication adds some complications though less than if one was doing division or higher powers. 
\item Fonsis. This is similar to the nested intervals and the oracles. It is more flexible spanning the possibilities of a singleton to the maximal family of oracles. It is the primary ingredient to doing oracle arithmetic. 
\item Dedekind Cuts. It is unclear what would be useful. One could take representatives from each of the sets and do the arithmetic, but to be useful one ought to do it for those outside the set to create upper bounds. This is basically creating bounding intervals. 
\item Minimal Cauchy Filters. This is doing arbitrary set arithmetic and rather unclear what would be sufficient. One could focus on fonsis and get the results, but something which is fundamentally of this kind is unclear. 
\item Continued Fractions. This is where continued fractions have severe impediments to being a good definition of reals from a conventional point of view. But this difficulty was overcome in the 1970s by Bill Gosper. See the dedicated subsection below for references and how to do a computation with it. 
\item Other Constructs. Use the given representatives and do the operations as was indicated with the Cauchy sequences. 
\end{enumerate}

\subsubsection{Decimals}\label{sec:decimals}

For decimals, we have a few difficulties. For example, if we multiply the decimal version of $\frac{1}{9} = 0.\bar{1}$ with itself, we should end up with $\frac{1}{81} = 0.\overline{012345679}$. For oracles, we can just multiply the fractional representatives with no difficulties as it is a singleton. For decimals, to do it with decimals, we multiply these two infinite strings of 1's which leads to adding up arbitrarily large numbers of 1's. This creates carrying problems. Mysteriously, it will come out to the pattern described above, but it is not clear how. Norm Wildberger has a nice video exploring this.\footnote{\url{https://youtu.be/YYnYatWV-pU?t=2198}}

Another difficulty is what exactly a decimal represents. If one writes $1.41\ldots$, then what interval is one implying by that and is it maintained by the arithmetic of these shorter representatives? For the rest of this section, the symbol $=$ is more of an approximately equal. We will explore four different interpretations of what this could imply:

\begin{itemize}
\item Short Rounding. The interval for $x$ is $a.d_1d_2d_3\ldots d_n \pm 0.5*10^{-n}$. $\sqrt{2} = 1.41$ represents saying that $\sqrt{2}$ is in the interval $1.405:1.415$.
\item Truncation. The interval for $x$ is $a.d_1d_2d_3\ldots d_n: a.d_1d_2d_3\ldots (d_n+1)$. $\sqrt{2} = 1.41$ represents saying that $\sqrt{2}$ is in the interval $1.41:1.42$. 
\item Long Rounding. The interval for $x$ is $a.d_1d_2d_3\ldots d_n \pm .1*10^{-n}$. $\sqrt{2} = 1.41$ represents saying that $\sqrt{2}$ is in the interval $1.40:1.42$.
\item Big Uncertainty in Last Digit. The interval for $x$ is $a.d_1d_2d_3\ldots d_n \pm 5*10^{-n}$. $\sqrt{2} = 1.41$ represents saying that $\sqrt{2}$ is in the interval $1.36:1.46$. 
\end{itemize}

We will now redo the previous computation\footnote{Another computation done along these lines is in the overview paper \cite{taylor23over} which explores $x = \frac{e-\sqrt{2}}{\pi}$. Being division with a divisor greater than 1, it makes the interval computation more interesting though a little more contractive.} of $x = \sqrt{2}(e+\pi)$  using $\sqrt{2} = 1.41$,  $e=2.72$, $\pi = 3.14$, the single value computation yields $u = 8.2626$. To about that precision, $x$ is $8.2871$. This does suggest that the rough computation needs to be rounded or truncated to avoid giving a false precision. In what follows, two intervals for each method will be produced. The first interval is the interval level that the default interpretation would contain both the short computation result as well as the longer one. The second interval will be the interval that gets produced by doing interval arithmetic and then the appropriate representative to produce something that contains that interval will also be given. 

\begin{itemize}
    \item Short Rounding. $8.26$ represents the interval $8.255:8.265$ which contains $u$, but not $x$. $8.3$ represents $8.25:8.35$ and does contain both $x$ and $u$. Computing the implied interval, the result is $1.405(2.715 + 3.135):1.415(2.725 + 3.145)  = 8.21925:8.30605$. This does include $x$, but to translate it into a single number with the implied spread covering this entire interval would yield $8$ because $8.30605$ is not contained in $8.2$'s interval while $8.21925$ is not contained in $8.3$'s interval under short rounding. The implied interval for $8$ is $7.5:8.5$.
    \item Truncation. $8.2$ would work as the interval of $8.2:8.3$ contains both $x$ and $u$. Truncating $u$ to $8.26$ would yield the interval $8.26:8.27$ which does not contain $x$. The interval computation is $1.41(2.72+3.14):1.42(2.73+3.15)  = 8.2626:8.3496$. We do have $x$ in this interval and $8.2$ also covers $x$ and $u$, what we do not have is covering half the computed interval. For that, we need to use $8$ as the representative. Thus, as before, $8$ is the single number representative; it implies $8:9$ as the interval. 
    \item Long Rounding. Since $x-u > 0.02$, we need to use $8.3$ to cover it with the implied interval $8.2:8.4$. We could also use $8.2$ covering $8.1:8.3$. So either version will contain both $x$ and $u$. As for the interval computation, $1.40(2.71+3.13): 1.42(2.73+3.15) = 8.176: 8.3496$. Neither $8.2$ nor $8.3$ will have that whole interval contained in it. So $8$ is to be chosen implying $7:9$. 
    \item Big Uncertainty in the Last Digit. There are several representatives that can cover both $x$ and $u$. $8.26$ leads to the interval $8.21:8.31$. The interval computation is $1.36(2.67+3.09): 1.46(2.77+3.19) = 7.8336:8.7016$. The representative $8.3$ leads to $7.8:8.8$ which just barely covers the computed interval.  
\end{itemize}

The problem with decimal arithmetic is that the imprecision grows making it difficult to use a single representative without sacrificing claims of precision. The last two options for interpretation also imply multiple reasonable representatives for a given decimal and level of precision. 

How about calculators? The calculator used here will be a TI-84. The calculator reports  $e = 2.718281828$, $\pi = 3.141592654$, and $\sqrt{2} = 1.414213562$. The calculator seems to do short rounding to produce decimals, for example, if expanding $\pi $ just a little further, the result is  $3.14159265358$ which the calculator rounds up. Computing out the decimals, the calculator reports $8.287113966$ with the implication that the result should be in the interval $8.2871139655:8.2871139665$.  WolframAlpha reports it as $8.287113966317$ so it is indeed in that interval. 

This success is because the calculator is computing more digits than is shown. If the directly displayed digits were used for the numbers, then the result is $8.287113964$ which implies the interval $8.2871139635:8.2871139645$ and this is not an interval that contains the quantity being computed. 

In practice, the extra hidden precision allows simple computations to not worry about the growing imprecision of the results from multiple arithmetic operations. It may become an issue for massive computations, but that is what numerical analysis handles, often with intervals. The proposal here is to bring that viewpoint to a more general audience, trying to make it natural to incorporate interval containment to have accurate answers with known precision for all of our real number needs.

\subsubsection{Continued Fractions}\label{sec:con-frac}

Arithmetic can be done for continued fractions in the continued fraction form. The two sources I found for this are slides for a talk\footnote{See \url{https://perl.plover.com/yak/cftalk/} as well as what seems to be an unpublished version of Bill Gosper's exploration of these ideas and more: \url{https://perl.plover.com/yak/cftalk/INFO/} including an interesting idea about continued logarithms.} and a more recent blog post\footnote{\url{https://srossd.com/posts/2020-09-18-gosper-1/}}. 

The basic idea is that the form $$z = \frac{a +bx + cy + dxy}{e + fx + gy + hxy}$$ is closed under substitutions of the form $x = q + \frac{1}{u}$ and $y = p + \frac{1}{v}$. Indeed, making those substitutions and multiplying by $uv$ , we get 
$$z = \frac{d + (c+d p) u + (b+dq)v + (a+bp + cq + dpq)uv}{h + (g+hp)u + (f+hq)v + (e+fp+ gq + hpq)uv}$$

Depending on the operation, we can start with the various forms (any non-specified variable is 0): addition [$b=c=e=1$], subtraction [$b=e=1, c=-1$], multiplication [$d=e=1$], and division [$a=1,h=1$]. Then we just expand, iterating through the continued fraction expansion of $x$ and $y$. It can be more prudent to do $x$ and $y$ as separate steps, using some conditions on the coefficients to decide which one to get an input from. See the above references for details. Ultimately, that choice is an optimization and one can easily choose multiple paths to the same outcome. 

We do this until the floored versions of $\frac{a}{e}, \frac{b}{f}, \frac{c}{g}, \frac{d}{h}$ all agree in which case that is the next number in the continued fraction expansion of $z$. We then reduce $z$ by $z = r + \frac{1}{w} = \frac{m}{n}$ where $m$ and $n$ are the numerator and denominator of our standard form above. Working towards getting $w$, we first have $\frac{1}{w} = \frac{m}{n} - r = \frac{m-nr}{n}$. We then obtain $w$ by flipping to get $w = \frac{n}{m-nr}$. The final form is 
$$w =  \frac{e + fx + gy + hxy}{(a-er) + (b-fr)x + (c-gr) y + (d-hr)xy}$$

This is how we extract the continued fraction of our result. We do the extraction whenever the ``corners'' agree after flooring. The corners are where we put in 0 and $\infty$ in for $x$ and $y$. The number must be between the corners and so if they are all floored to the same number, then the number must have that as the integer part.

Implementing this for $\sqrt{2}(e + \pi)$, we start with the addition.\footnote{Unfortunately, I ended up using $e$ in the formulas as a variable and here we use it for the special symbol $e$. There does not seem any real danger of confusion, however, so we will leave them both.} We should note here that $e = [2; 1, 2, 1, 1, 4, 1, 1, 6, ...]$, $\pi = [3; 7, 15, 1, 292, 1, 1, 1, 2, \ldots]$, and $\sqrt{2} = [1; \bar{2}]$. We use a notation of subscripts to indicate the remainder up to that term. For example, $\pi = 3 + \frac{1}{\pi_1} =  3 + \frac{1}{7 + \frac{1}{\pi_2}}$. 

Addition starts with $z = e + \pi =  (2 + \frac{1}{e_1} ) + (3 + \frac{1}{\pi_1}) = \frac{e_1 + \pi_1 + 5 e_1 \pi_1}{e_1 \pi_1}$. Switching to a more compact version for the coefficients with the third row being the corners, i.e., 
$$\bigg\langle\begin{smallmatrix}
  a & b & c & d\\
  e & f & g & h \\
  [\frac{a}{e}] & [\frac{b}{f}] & [\frac{c}{g}] & [\frac{h}{d}] 
\end{smallmatrix}\bigg\rangle$$ we can begin with the representation of $\frac{x+y}{1}$ and expand from there: 
\begin{multline*}
\big\langle\begin{smallmatrix}
  0 & 1 & 1 & 0\\
  1 & 0 & 0 & 0\\
  0 & \infty & \infty & 0
\end{smallmatrix}\big\rangle
\xrightarrow{2, 3}
\big\langle\begin{smallmatrix}
  0 & 1 & 1 & 5\\
  0 & 0 & 0 & 1\\
  0 & \infty & \infty & 5 
\end{smallmatrix}\big\rangle
\xrightarrow{1, 7}
\big\langle\begin{smallmatrix}
  5 & 36 & 6 & 43\\
  1 & 7 & 1 & 7\\
  5 & 5 & 6 & 6 
\end{smallmatrix}\big\rangle
\xrightarrow{2, 15}
\big\langle\begin{smallmatrix}
  5 & 36 & 6 & 43\\
  1 & 7 & 1 & 7 \\
  6 & 6 & 5 & 5 
\end{smallmatrix}\big\rangle
\xrightarrow{1, 1}
\big\langle\begin{smallmatrix}
  1847 & 1969 & 2498 & 2663\\
  318 & 339 & 424 & 452 \\
  5 & 5 & 5 & 5 
\end{smallmatrix}\big\rangle
\\
\xrightarrow[5]{}
\big\langle\begin{smallmatrix}
  318 &339 & 424  & 452\\
  257 & 274 & 378 & 403 \\
  1 & 1 & 1 & 1
\end{smallmatrix}\big\rangle
\xrightarrow[1]{}
\big\langle\begin{smallmatrix}
  257 &274 & 378  & 403\\
  61 & 65 & 46 & 49 \\
  4 & 4 & 8 & 8
\end{smallmatrix}\big\rangle
\xrightarrow{1, 292}
\big\langle\begin{smallmatrix}
  403 & 11805 & 677 & 198319 \\
  49 & 14354 & 114 & 33395 \\
  8 & 8 & 5 & 5 
\end{smallmatrix}\big\rangle
\xrightarrow{}\cdots
\end{multline*}
The numbers above the arrow are the next terms in the continued fraction of the inputs while the numbers underneath are the numbers being extracted to represent the output. Here we have concluded $[5; 1, \ldots]$ as the first two terms of the continued fraction. We know the next term is either 5, 6, 7, or 8. It is, in fact, 6. 

To continue with our computation, we would next do the multiplication. As we do the multiplication, we will be pulling in terms from the sum. If we need more terms, we can go back to the sum and continue that process. In that way, these chains of arithmetic can be done just-in-time. 

The computation for the multiplication is as follows, where to get the first output term, we use the sum's continued fraction of $[5; 1, 6, 7, \ldots]$. We start with the representation of $\frac{xy}{1}$ and expand from there: 
\begin{multline*}
 \big\langle\begin{smallmatrix}
  0 & 0 & 0 & 1\\
  1 & 0 & 0 & 0\\
  0 & 0 & 0 & \infty
\end{smallmatrix}\big\rangle
\xrightarrow{1, 5}
\big\langle\begin{smallmatrix}
  1 & 5 & 1 & 5\\
  0 & 0 & 0 & 1\\
  \infty & \infty & \infty & 5 
\end{smallmatrix}\big\rangle   
\xrightarrow{2, 1}
\big\langle\begin{smallmatrix}
  5 & 6 & 15 & 18\\
  1 & 1 & 12 & 2\\
  5 & 6 & 7 & 9 
\end{smallmatrix}\big\rangle  \xrightarrow{2, 6}
\big\langle\begin{smallmatrix}
  18 & 123 & 42 & 287\\
  2 & 14 & 5 & 35  \\
  9 & 8 & 8 & 8 
\end{smallmatrix}\big\rangle  
\\
\xrightarrow{2, 7}
\big\langle\begin{smallmatrix}
  287 & 2051 & 697 & 4981\\
  35 & 250 & 84 & 600  \\
  8 & 8 & 8 & 8 
\end{smallmatrix}\big\rangle 
\xrightarrow[8]{}
\big\langle\begin{smallmatrix}
  35 & 250 & 84 & 600  \\
  7  & 51 & 25 & 181  \\
  5 & 4 & 3 & 3 
\end{smallmatrix}\big\rangle 
\end{multline*}

Thus after some laborious computations, we have the first term is 8. 

This procedure can do the arithmetic. But just as decimals have a carry problem, there is an analogous problem that can arise. Consider the case of multiplying the square root of 2 by itself. It should produce 2 whose continued fraction is just [2]. That means in the multiplication, the first extraction should terminate the computation. In fact, we will never be able to extract anything for certain: 
\begin{multline*}
 \big\langle\begin{smallmatrix}
  0 & 0 & 0 & 1\\
  1 & 0 & 0 & 0\\
  0 & 0 & 0 & \infty
\end{smallmatrix}\big\rangle
\xrightarrow{1, 1}
 \big\langle\begin{smallmatrix}
  1 & 1 & 1 & 1\\
  0 & 0 & 0 & 1\\
  \infty & \infty & \infty & 1
\end{smallmatrix}\big\rangle
\xrightarrow{2, 2}
 \big\langle\begin{smallmatrix}
  1 & 3 & 3 & 9\\
  1 & 2 & 2 & 4\\
  1 & 1 & 1 & 2
\end{smallmatrix}\big\rangle
\xrightarrow{2, 2}
 \big\langle\begin{smallmatrix}
  9 & 21 & 21 & 49\\
  4 & 10 & 10 & 25\\
  2 & 2 & 2 & 1
\end{smallmatrix}\big\rangle
\xrightarrow{2, 2}
 \big\langle\begin{smallmatrix}
  49 & 119 & 119 & 289\\
  25 & 60 & 60 & 144\\
  1 & 1 & 1 & 2
\end{smallmatrix}\big\rangle
\end{multline*}

The corners will always have the 1's and 2's meaning we can never be confident of our extraction. If we do an extraction, the process will try to correct itself with negative numbers. 

The oracles avoid this problem of undecidability for any intervals whose endpoints do not include 2. One can always find smaller inputs to the arithmetic to narrow the interval. But if an interval endpoint is 2, then all one can do is shrink the other endpoints. This is how the mediant process would handle this situation in which the first value to try is the answer, but we cannot establish it as such. Indeed, if we started out squaring the $\sqrt{2}$-Yes interval of $\frac{41}{29}:\frac{17}{12}$ to produce the $[(\sqrt{2})^2=T]$-Yes interval of $\frac{1681}{841}:\frac{289}{144}$, it would not distinguish $1:2$ or $2:3$, both of which are Yes intervals, but not seen as such from this interval representative. But it does say that $1:3$ is a Yes interval. Using $\frac{2}{1}$ to generate mediants of $1:2$ and $2:3$, we get candidates $\frac{3}{2}$ and $\frac{5}{2}$ to test in some fashion. If we use the endpoint $2$, as we typically do, it would not be decidable. But we can look at $\frac{3}{2}:\frac{5}{2}$ and see that it is a Yes-interval while both $1:\frac{3}{2}$ and $\frac{5}{2}:3$ are No-intervals. We can continue, testing and getting Yes for intervals such as $\frac{5}{3}:\frac{7}{3}, \frac{7}{4}:\frac{9}{4}, \ldots, \frac{1 + 2k}{1+k}:\frac{3+2k}{1+k}$. These will be seen to be Yes-intervals as long as $\frac{1+2k}{1+k} < \frac{1681}{841} < \frac{289}{144} < \frac{3+2k}{1+k}$. The first lasts until $k=839$ while the second lasts until $k=142$. So we can confidently say that the square of the square root of 2 is in the interval $\frac{284}{143}:\frac{287}{143}$. To get more precision, we need to extract a shorter interval for the square root of 2. 

The main point of this is that with intervals, we can make progress and make definite statements. Both continued fractions and decimals do not have mechanisms on their own to precisely get past these points of indecision. The interval approach mitigates the difficulty of uncertainties and oracles give us the maximal flexibility in taking advantage of that mechanism.

\section{Reflections and Generalization}

In this section, we reflect on the oracles, starting with a critique, then we consider some extensions, and finish with brief descriptions of some generalizations.

\subsection{Critiquing Oracles}

Let us apply our subjective criteria to our oracle definition. 

\begin{itemize}
    \item Uniqueness. For each real number, there is only one oracle that represents it. We have chosen a maximal representation of the number in terms of intervals that contain it. While it contains many intervals of enormous size that we would never be interested in, it also provides a mechanism for starting from any initial Yes interval and going down to an interval of a length we do care about, at least in theory. 
    \item Reactive. The choice of presenting this as an oracle was specifically to have this flavor. An alternative perspective of having a set that contains all the intervals that ``contain'' the real number, feels less reactive and draws the attention to many intervals that we would not care about. The oracle approach highlights exactly the intervals we care about as those are the ones being asked about. 
    
    We also do not need to make arbitrary choices. In a Cauchy sequence, for example, each value is chosen from an interval consisting of other equally good choices. For the oracles, no choice was made. The choice is made by the questioner. 
    \item Rational-friendly. Rationals are exactly those oracles that contain a singleton. That is very distinguishable in theory. In practice, it can be hard to decide whether the oracle is rooted or not. Arithmetic with rational oracles is no different than arithmetic with rationals. If we have the singletons, we can operate entirely on those singletons. Even if we are operating with an irrational and a rational, we can still use the singleton version to minimize the computations involved. Finally, for rational oracles, the mediant method, strongly supported by the oracle point of view, will stop at the rational if we are able to get an answer from the oracle about a given singleton. If not, then the process will still lead to a shrinking interval converging on it.
    \item Supportive. Intervals are very much needed in working with measurements and this puts intervals as a primary focus. 
    \item Delayed Computability. This fits perfectly for an oracle. One can specify the rule for intervals and then iterate an algorithm based on that. There is a need to come up with at least one Yes interval to satisfy the existence property, but this is generally not difficult to do as the interval can be quite large if need be. 
    \item Arithmeticizable. We can easily do the interval arithmetic and we have bounds for forcing the narrowing of the intervals. Answering an oracle's question does require a bit of figuring out how to narrow the intervals and then try to produce an interval which is either contained in the one we care about or disjoint from it. That is, our arithmetic is a little indirect for answering the question, but it has mechanisms for producing an answer. 
    \item Resolvability. The interval nature of an oracle is giving exactly how well we have resolved a number. 
\end{itemize}

If we think of real numbers as something that we can never hold in our hands, but can only tell bits about based on their shadows, it feels that the oracle approach has the most complete and direct shadow.\footnote{This is reminiscent of the ancient parable trying to describe an elephant from localized touching. The oracle is roughly the equivalent of having a mental model of what an elephant is to guide the interpretation and integration of the localized touching.} Dedekind cuts can be thought of as taking the lower endpoints of Yes intervals. It feels as if they are minimizing the information needed.  Nested sequence of intervals can be thought of the result of taking a particular pathway through an oracle line of questioning. Cauchy sequences can be very roughly thought of as taking centers from the Yes-intervals along that pathway. Infinite decimals can be thought of in a similar way, but with a much more constrained pathway. Continued fractions are also a pathway, but they are a distinguished pathway generated by the mediant process. Other approaches with representing the numbers as sums can be seen in a  similar light. Filters can be thought of as dressing up the intervals with extra elements.

Oracles do provide easy ways of discussing the completion properties. For Dedekind cuts, the least upper bound is trivial to define while the Cauchy sequences having a limit is rather more difficult to specify with the cuts. For the Cauchy sequence approach, Cauchy sequences are trivially part of the definition while the least upper bound requires coming up with a sequence, probably through an in-between process such as bisection. For oracles, both end up being very easy to understand. The least upper bound Yes-interval is one with an upper endpoint on the interval which is also an upper bound of the given set while the lower endpoint of the interval is a number less than or equal to an element of the set. The rule for defining the limit of the Cauchy sequence is to say Yes to intervals that contain the tail of the sequence, possibly adding in a singleton.  

Oracles are a balanced approach to real numbers. 

\subsection{Oracle vs Fonsi}

The idea of using rational intervals to represent real numbers is not new. From Bachmann, a sequence of nested shrinking intervals was contemplated. More recently, the idea of a fonsi defining a real number became popular among constructivists though they do not use this name.

The concept of a family of overlapping notionally shrinking intervals seems to be more in line with how we typically think of a real number in contrast to the oracle style. A maximal fonsi is equivalent to the set of Yes intervals for an oracle.  So why did we choose our oracle definition rather than the fonsi definition as our primary definition? 

The basic answer is that the oracle definition, particularly the separating property, is exactly what we need to construct further approximations. It feels more in line with how we would actually explore a given real number as we explored with the bisection and mediant methods. The family of shrinking intervals feels as if we ought to already have this whole construct and that we should complete it given something like an infinite series. All of this seems distracting from how real numbers are generally used. We cannot list out the explicit details of a real number. What we can do is to get in the neighborhood of a real number and use that neighborhood in deducing further information. Oracles focus on exactly the information we want in our customary use of a number. It is a very nice fact that we can get oracles from a fonsi as that greatly aids the initial starting points for the real number inputs into a calculation, but that is just the beginning of our journey. 

Another reason to go with the oracles is that it suggests that we want some kind of rule or algorithm for determining whether an interval is a Yes interval. It does not demand all the intervals at once, but, rather, it demands that we be able to produce an answer when we are given an interval. This makes it more in line with what we can do as humans and highlights where the difficulties could be. There are oracles, as we have seen, for which we cannot give a definitive answer yet with our current knowledge. But the oracle approach gives us a tool, the resolvability, to make that limitation known. 

A fonsi, on the other hand, promotes the idea of having an actual infinite set of them.  It obscures what we actually know versus what we assume we know. It also obscures the role of how to choose the intervals. Fonsis are, however, a very convenient mental model when trying to visualize what we are talking about and very useful with creating new oracles from existing oracles. 

Fundamentally, it feels that it is theoretically possible to have a rule that does say Yes or No for any given interval. Furthermore, with such a rule, we could be able to use the number quite effectively, particularly with the narrowing down methods, such as bisection or the mediant method. With a fonsi, we could never really have an infinite collection of intervals. At best, it could be a mechanism that given an $\varepsilon$, we get a Yes interval of that size. But we can get that from an oracle without pretending that we actually have this full set. And there are infinitely many ways to produce an interval for a given $\varepsilon$; that choice goes beyond what is encoded by the real number. 

Oracles avoid much in the way of irrelevant claims and choices.

\subsection{Unique Representative of Equivalence Classes for Numbers}

In constructing number systems, numbers can often be thought of as an equivalence class often with a distinguished representative:

\begin{enumerate}
    \item For natural numbers, the equivalence class for a number is the class of sets with that number of objects in the set. The equivalence relation is that of bijection. The distinguished representative is ``inductively'' built as the set that contains the previous numbers. Thus $0$'s representative is the empty set, $1$ is $\{0\}$, $2$ is the $\{0,1\}$, and so forth. 
    \item For integers, the fundamental object is a pair of natural numbers with the first element representing a positive amount while the second represents a negative amount. The equivalence relation is that of being related by the addition of the same quantity to both elements, i.e., $(a,b) \sim (c,d)$ exactly when $a+d = b+c$. The unique representative is the one in which at least one of the elements is $0$. Thus, $0$'s representative  is $(0,0)$, $1$ is $(1,0)$ and $-1$ is $(0,1)$
    \item For rationals, these are also pairs in which the first element is an integer, called the numerator, while the second is a positive natural number, called the denominator. The equivalence is based on multiplication. In particular,  $(p,q) \sim (r,s)$ exactly when $p*s = r*p$. The unique representative is the one with the smallest denominator. This leads to $0$ being represented by $(0, 1)$, $-2$ as $(-2,1)$, and $\frac{9}{27}$ as $(1, 3)$. 
\end{enumerate}

The typical real number definitions lack this. Cauchy sequences are usually given as equivalence classes, but there is no canonical representative except for rational numbers for which the representative would be the constant Cauchy sequence of the rational. Perhaps the closest canonical Cauchy sequence for an irrational number would be the continued fraction representatives. For Dedekind cuts, there are no equivalence classes. For many of the other representations, we can roll them up into a fonsi as we can with Cauchy sequences. For a fonsi, the equivalence relation is that two fonsis are equivalent if given any pair of intervals from the two fonsis, the intervals intersect. Alternatively, the union of the two fonsis needs to be a fonsi. The canonical representative for every real number in the realm of the fonsis is the maximal fonsi which is, of course, the set of Yes intervals for the oracle associated with the fonsis.

Having equivalent representatives can be useful for flexibility in computations. Fonsis are very useful. But having a single well-defined representative also gives a clear way to communicate. The maximal fonsi serves this role, but it is most compactly described by the oracle presentation. 

\subsection{Other Dense Sets}

This setup also allows us to consider other foundational sets different from the rationals, but that are still dense in the real numbers. 

The easiest to handle are those that are fields themselves. This would be the algebraic extension fields such as adding $\sqrt{2}$ as a primitive. As long as the extension is closed under arithmetic, then the intervals defined by the elements of the field can properly participate in the arithmetic to define the oracle arithmetic. 

Other dense sets, such as rationals whose denominators are powers of 2, appropriate for binary computations, are not closed under arithmetic. We can still use them for representing intervals and numbers, but the arithmetic of those intervals no longer produces intervals drawn from that set. In particular, the reciprocal of, say, the interval $\frac{3}{4}:1$ is $\frac{4}{3}:1$ which does not have a power of 2 denominator.  

Other definitions, such as nested intervals, fonsis, and Cauchy sequences are compatible with these notions by specifying what the relevant elements are allowed to be. Specific representations are generally not compatible with this, generally needing arbitrary rationals to express themselves, such as with continued fractions. Dedekind cuts and minimal Cauchy filters are compatible, but it is less clear with those sets what the impact would be. 

There is something tangibly different with oracles versus the other definitions in that this informs which singletons are allowed, impacting both exact arithmetic and functions. As detailed elsewhere, functions can be modelled as oracles and in that context, different choices change where discontinuities are allowed. 

This foreshadows which definitions of real numbers can be extended easily and meaningfully to general metric spaces.

\subsection{Extended Reals}

We can also extend the Oracles to include an extended version of the real numbers that includes $\pm \infty$. We need to include infinite intervals in our definition and with an understood updating of our existence clause to include being 1 on an unbounded interval.

For unbounded intervals, we could write that as $a:$ or $a:\infty$ for all rationals greater than or equal to $a$ and write $:a$ or $-\infty:a$ for all rationals less than or equal to $a$. The special interval $-\infty:\infty$ consists of all rationals and is a Yes interval for all oracles in this extended framework. 

If we changed the Existence property to the assertion that $R(-\infty:\infty)=1$, we could then define the Oracle of $-\infty$ as the rule $R(:a) = 1$ for all rational $a$ and 0 otherwise. The Oracle of $\infty$ is similarly defined as $R(a:)=1$  for all rational $a$ and 0 otherwise. 

Arithmetic of these intervals should largely work out as one would expect. Certain combinations, such as $\infty-\infty$, do not lead to useful results. 

What this allows us to do is to then include infinite limits. Indeed, the definition of $\lim_{n\to \infty} f(n) = \infty$ would mean that the collection of intervals that contain the tail of $f$ would coincide with the collection of intervals $a:$. If we were considering $\lim_{x \to \infty} f(x) = L$, we would interpret this as saying that the collection of intervals that contain the image of $a:$ under $f$, would be the oracle of $L$, which could be infinite.  Finally, $\lim_{x \to \alpha} f(x) = \infty$ would be saying that the image of the intervals of $\alpha$, possibly excluding $f(\alpha)$, are all contained in the intervals of $a:$. 

As a final note, we can link this to our mediant method by noting that $\frac{1}{0}$ could be viewed as a stand-in for $\infty$ in the process while $\frac{-1}{0}$ is a stand-in for $-\infty$. Taking the mediant of those two numbers leads to $\frac{0}{0}$ which we swiftly push into the form $\frac{0}{1}$. We can then decide which of the two infinite intervals to choose from using our oracle functionality. If it is the left one, we know it is a negative number. If it is the right one, we know it is positive. We can then proceed to compute the integer portion and then the mediant expansion from there as detailed in Section \ref{sec:mediant}. If we are unable to figure out the sign, then we can shrink our intervals towards $0$ until we hit the limits of our computational abilities. 

What we have explored here is one compactification of the reals. Another would be to adjoin a single point at infinity. We could do this by considering the oracle that says yes to anything of the form $-\infty:b \cup a:\infty$. This is going beyond rational intervals just a bit. Unlike completion, compactification can be done in various ways. For further explorations of this, see \cite{taylor23metric}.

\subsection{Generalizations to Topological Spaces}

The idea of oracles can be generalized to other spaces. In \cite{taylor23metric}, I plan to explore how oracles interact with four different kinds of topological arenas: metric spaces, general topological spaces, topological groups, and topology based on general lines that give a notion of betweeness for points on the same line \cite{maudlin}. For each space, we have a different notion of what the relevant replacement for closed intervals is. The term container is used to replace the term interval.

For metric spaces, the containers are closed balls. The Separation Property fails to be relevant, but we can use the Two Point Separation Property. It can be shown that the set of oracles of a metric space form a completed metric space with the induced metric. 

There are more general topological spaces which oracles may be used in. For those, we may be able to use closed sets. We seem to lose any sense of convergence, but we can modify our approach to do Stone-\v{C}ech compactifications. This is inspired by our exploration of the extended real-line. 

For topological groups, if we use oracles to produce new spaces, does the group structure always go along for the ride? 

A more recent theory of topology is that based on linear structures; see \cite{maudlin} to learn how to view lines as the primitive structures to base topology on and straight lines as the basis for differentiable manifolds. We can use the closed neighborhoods which are essentially stand-ins for closed balls without the metric. Note that much of the commonly understood terminology is redefined so as to extend it usefully to finite and discrete spaces. 

All four of these different approaches are entwined in the completion of the rational line to the reals and it is an interesting project to see what does or does not carry over. 

\subsection{Function Oracles}

The oracles are equivalent to the axiomatic real numbers and we could, therefore, just do the usual story of pointwise defined functions being maps from oracles (reals) to oracles. But that not only discards the idea and usefulness of oracles, but it also gives up an opportunity to have the theory be based more in line with how we actually compute functions. When we compute $e^{\sqrt{2}}$, for example, we might use the Taylor polynomial for $e^x$ which is a partial approximation, but the input of $\sqrt{2}$ is also a partial approximation. While the easiest approach is to simply take an output number that hopefully represents something close enough, a more precise approach is to develop some notion of saying ``this interval maps into this other interval.'' This is what functional oracles will do as detailed in \cite{taylor23funora}. They allow one to make clear, precise statements about results that are inherently imprecise. As we have throughout, we do allow intervals to be rational singletons. 

With a function oracle, there is a fundamental difference between rationals and irrationals in this setup. Indeed, we will eventually derive that the function oracles represent pointwise functions which are continuous at the irrationals in their domain, but possibly discontinuous at the rationals.

A natural structure to look at as providing a basis for functions is that of rational rectangles. The base will represent the valid inputs and the wall will represent the possible outputs. The family of rectangles will have the property that we can narrow the range of the outputs with a possible narrowing of the input range. A basic requirement will be that if an oracle $\alpha$ is in the domain of the function with supposed value $\beta$ at that point, then, given an $\alpha$-Yes interval $a:b$ and a $\beta$-Yes interval $c:d$, there is a Yes rectangle contained in $a:b \times c:d$ such that the base is $\alpha$-Yes and the wall is $\beta$-Yes. 

We could imagine that the rectangle $a:b \times c:d$ just needs to be an $(\alpha, \beta)$-Yes rectangle for a given $\alpha$ and $\beta$. This would be essentially the way to recover the usual set of functions that we discuss in real analysis as we would have each collection of rectangles tied to a given pair. But we have in mind the desire that these Yes rectangles contain all the function values for the inputs contained in the base of the rectangle. This implies that over the base, the rectangle is a bounding rectangle for our function. 

For rationals, we do allow singleton bases which enables them to be independent of the surrounding regions. For irrationals, we are effectively requiring them to be in line with their neighbors. The model to have in mind is the difference between discrete probability (probability mass function) and a continuous probability (probability density function). Just as we can mix these in probability, we can have that here. The functions that get defined by this should be continuous on the irrationals, but can be discontinuous on the rationals. Thomae's function\footnote{\url{https://en.wikipedia.org/wiki/Thomae\%27s_function}} is an example of this. 

The paper is a sketch of the foundations for function oracles as a basis of analysis. We do establish the arithmetic of function oracles, including the exponential operator of oracles raised to an oracle power. After we explore continuity in the context of function oracles, we then also discuss composition which, even in its definition, is strongly linked to continuity questions in this framework. The main continuity result is that function oracles are equivalent to pointwise functions continuous everywhere on its domain except possibly at the rational numbers. We also give a few examples of how common functions are modeled with function oracles. We also explore using the generalizations to other topological spaces as a basis for function oracles and what structures we might need for some of the analysis results to generalize.

\section{Conclusion}

We have given a new definition of real numbers in terms of oracles. We established that these are, indeed, the real numbers with all of the necessary properties proven. We gave some examples of arithmetic with them. We discussed some methods for obtaining good rational representations for such objects.  We compared and contrasted oracles with other definitions of real numbers. 

The advantage of oracles is that it should be quite approachable to understanding the definition, the goal, and the use of these objects. The basic idea is that we want to know whether a given interval contains the real number. Interval representations then become very natural as are the bisection and mediant methods for obtaining new intervals. 

Oracles also push the idea of interval manipulation which is something that would be very useful in making numbers more understandable. Though I have no scientific evidence, my experiences in teaching over the past two decades is that a useful precision can help much in the confusion of students learning material. In the current state of education, it is highly problematical to talk about what the square root of 2 actually is. Students know what it should be used for and know a few of its first digits, but there is the mystical notion of it being an infinite string of digits. An interval approach, even if it is simply using the decimal intervals, can bring a more comfortable understanding. Specific suggestions for basing mathematics education on rationals will be presented in \cite{taylor23edu}. Also, I am working on the website RatMath\footnote{\url{https://ratmath.com}}, under current development, which will be an online calculator and exploration tool for rational mathematics. 

Oracles avoid the ambiguity and work of choosing which representative to use for the Cauchy sequence definition. In contrast to Dedekind cuts, oracles have a wide, immediate practical purpose. Some of the other approaches such as nested intervals, ultrafilters, and Cauchy sequences, feel as if one has to choose between a definition that is too thin or one which has been maximized to such an extent that the core identity of the number has been entirely lost in the noise. I feel that oracles are at just that correct level of maximality to avoid non-uniqueness and yet not so large that one loses sight of the number. 

The specific approach advocated here emphasizes a dynamic approach to the number. The person asking about the number ought to have a particular purpose that can then be met by the tools available. The other approaches often have a shadow of that in the definition, such as the $\varepsilon$ in a nested interval sequence, but the presentation pushes that aside. This approach, by centering the activity of asking, highlights the dynamism even while we acknowledge the equivalence to a more static view via the maximal fonsi.  

Finally, this approach highlights the core difficulty of a real number, namely, that we do not have a direct way of knowing an irrational number. We are seeking information about a real number, but we can never have it all precisely in our hands as we do with rational numbers. We just know what neighborhood it is in. It is as if we are looking for a person and we know enough details to rule out billions of people, but we cannot narrow it down to just a single person. It is a merit of this approach that it brings this out as the central issue of dealing with real numbers as well as giving us the tools to do the asking. 

Oracles facilitate computation without requiring it. It is a map to the real number, one which goes beyond asserting existence, allowing us to travel towards it, but not suggesting that we could ever reach it. 

\appendix

\section{\texorpdfstring{$n$}{n}-th Power Inequality Facts}\label{app:A}

This is a place to collect some technical facts that are commonly known about $n$-th powers, but that we would like to have collected proofs of for completeness.

\begin{lemma}
$x^n$ is monotonic for $x>0$. That is, if $0 < a < b$, then $0 < a^n<b^n$.
\end{lemma}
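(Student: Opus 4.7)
The plan is to proceed by induction on $n$, using as the only real content the basic fact from the footnote earlier: for positive $c$, the inequality $a < b$ implies $ac < bc$, together with the fact that products of positive rationals are positive.

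First I would handle the base case $n=1$, where the statement $0 < a < b$ is immediate from the hypothesis. For the inductive step, assume $0 < a^{n-1} < b^{n-1}$. I would then construct the chain $a^n = a \cdot a^{n-1} < a \cdot b^{n-1} < b \cdot b^{n-1} = b^n$. The first inequality comes from multiplying the inductive hypothesis $a^{n-1} < b^{n-1}$ by the positive quantity $a$; the second comes from multiplying the original hypothesis $a < b$ by the positive quantity $b^{n-1}$, which is positive by the inductive hypothesis (or by the product of positive rationals being positive). Transitivity of the rational inequality then yields $a^n < b^n$. Positivity of $a^n$ follows from $a > 0$ and the product of positive rationals being positive.

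There is no real obstacle here; the only thing that requires any care is making sure each multiplication is by a quantity already known to be positive, which is handled by carrying the positivity through the induction alongside the strict inequality. This is exactly the structure already sketched in the footnote of Section \ref{sec:roots}, so the proof is essentially the formal write-up of that sketch.
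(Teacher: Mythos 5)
Your proof is correct, but it follows a different route than the one the paper actually uses in Appendix~\ref{app:A}. The paper proves the lemma via the algebraic factorization $b^n - a^n = (b-a)\sum_{k=0}^{n-1} b^k a^{n-1-k}$ and observes that for $0<a<b$ every summand is positive, so the sign of $b^n - a^n$ is the sign of $b-a$. Your induction argument, by contrast, formalizes the sketch given in the footnote of Section~\ref{sec:roots}: chaining $a^n = a\cdot a^{n-1} < a\cdot b^{n-1} < b\cdot b^{n-1} = b^n$ using only the fact that multiplying a strict inequality by a positive quantity preserves it. Both are valid; the trade-off is that your induction is more elementary and self-contained, whereas the paper's factorization approach pays off immediately afterward, since the same identity yields the two-sided Lipschitz-style bounds $n\,m^{n-1}(b-a) \leq b^n - a^n \leq n\,M^{n-1}(b-a)$ for $0<m<a<b<M$, which the paper remarks on right after the lemma and reuses when establishing narrowing bounds for powers. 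With the pure induction you would need a separate argument to recover those quantitative estimates.
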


While this is very basic, we rely on monotonicity of $x^n$ repeatedly so it seems prudent to include it. The form here is also useful to highlight. 

\begin{proof}
$b^n-a^n= (b-a)\sum_{k=0}^{n-1} b^k a^{n-1-k}$. Since both $a$ and $b$ are positive, the sum is positive. The sign is therefore determined by $b-a$. If $b>a$, then $b^n-a^n > 0$ as was to be shown. 
\end{proof}

This also demonstrates that $ n m^{n-1} (b-a) \leq  b^n -a^n \leq n M^{n-1} (b-a)$ for $0 < m < a< b< M$ by replacing the placing the products in the sum with $m^{n-1}$ and $M^{n-1}$, respectively. 

If $ a< b< 0$, then $-a > -b > 0$ and $(-a)^n > (-b)^n$. If $n$ is even, then $(-a)^n = a^n$ and we have that it is monotonically decreasing. If $n$ is odd, then $(-a)^n = -a^n$, and therefore $a^n < b^n$ which tells us it is monotonically increasing. 

\begin{lemma}\label{app:lesser}
Let $r \geq 0 $ and $q > 0$ be rational numbers such that $r^n < q$. Then there exists a rational number $s$ such that $r < s$ and $s^n < q$.
\end{lemma}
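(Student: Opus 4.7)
The plan is to construct $s$ explicitly as $r + \varepsilon$ for a suitably small positive rational $\varepsilon$, using the algebraic factorization of $s^n - r^n$ to control how much $s^n$ can exceed $r^n$.

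First I would set $L = q - r^n$, which is a positive rational by hypothesis. The goal is then to choose $\varepsilon > 0$ rational so that $(r+\varepsilon)^n - r^n < L$, since this will give $(r+\varepsilon)^n < r^n + L = q$. The key identity is
\[
(r+\varepsilon)^n - r^n = \varepsilon \sum_{k=0}^{n-1} (r+\varepsilon)^k r^{n-1-k},
\]
which follows from the usual $b^n - a^n = (b-a)\sum_{k=0}^{n-1} b^k a^{n-1-k}$ already noted in the preceding monotonicity lemma. To get a clean bound on the sum, I would impose the preliminary restriction $\varepsilon \leq 1$, so that $r+\varepsilon \leq r+1$ and each term in the sum is at most $(r+1)^{n-1}$. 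Hence
\[
(r+\varepsilon)^n - r^n \;\leq\; \varepsilon \cdot n (r+1)^{n-1}.
\]

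With this bound in hand, I would take
\[
\varepsilon = \min\left(1,\ \tfrac{L}{2 n (r+1)^{n-1}}\right),
\]
which is a positive rational (note $r+1 \geq 1$, so the denominator is a positive rational, and $L>0$). Then $\varepsilon \leq 1$ so the previous bound applies, and $\varepsilon \cdot n(r+1)^{n-1} \leq L/2 < L$. Therefore $s = r + \varepsilon$ is rational, $r < s$, and $s^n < r^n + L = q$, as required.

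I do not expect any serious obstacle here; the proof is essentially a controlled rational perturbation. The one place to be careful is handling the edge case $r = 0$, but the argument above covers it uniformly: when $r = 0$ we simply have $s = \varepsilon$ with $\varepsilon \leq 1$, and the sum collapses to $\varepsilon^{n-1} \leq 1$, so $s^n \leq \varepsilon < L = q$ works directly. The companion Lemma \ref{app:greater} will then be proved by a symmetric argument, choosing $s$ slightly less than $r$ and using the analogous upper bound on $r^n - s^n$.
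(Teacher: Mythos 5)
Your proof is correct and follows the same overall strategy as the paper (perturb $r$ upward by a small explicit rational and bound the resulting growth of the $n$-th power), but it uses a cleaner algebraic decomposition. The paper expands $(r+\tfrac{1}{N})^n$ by the binomial theorem and then handles the first-order term $\tfrac{nr^{n-1}}{N}$ and the higher-order remainder separately, which is why its definition of $N$ carries a factor $\tfrac{3}{a}$ and two different constants, $nr^{n-1}$ and $(r+1)^n$. You instead invoke the factorization $s^n - r^n = (s-r)\sum_{k=0}^{n-1} s^k r^{n-1-k}$, the same identity already recorded in the preceding monotonicity lemma, and bound the whole sum at once by $n(r+1)^{n-1}$ under the restriction $\varepsilon \le 1$; this collapses the two constants into one and shortens the bookkeeping. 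Both arguments are essentially perturbation estimates with a preliminary cap ($N\ge 1$ in the paper, $\varepsilon\le 1$ for you) to make the bound uniform, so the ideas are the same; your version is shorter, reuses an identity already on the page, and your side remark about $r=0$ is harmless but unnecessary since the uniform bound $n(r+1)^{n-1}$ already covers it.
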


The basic idea is to find $N$ for $s = r + \tfrac{1}{N}$ such that $s^n < q$. We use the completely rational binomial theorem.  

\begin{proof}
Define $a = q - r^n$. Define $N =  \max(1,\tfrac{3}{a}n r^{n-1}, \tfrac{3}{a}(r+1)^n)$.  Take $s = r + \tfrac{1}{N}$. Then $s^n = (r+ \tfrac{1}{N})^n = r^n + \tfrac{n r^{n-1}}{N} + \sum_{k=2}^{n} \binom{n}{k} \tfrac{r^{n-k}}{N^k}$. We can factor out a $\tfrac{1}{N}$ in the sum\footnote{Actually, we could do a $\frac{1}{N^2}$, but that does not seem needed here.} and, since $N \geq 1$, we have $\tfrac{1}{N^i} \leq  1$ for any natural number $i$. Thus, $\sum_{k=2}^{n} \binom{n}{k} \tfrac{r^{n-k}}{N^k} < \tfrac{1}{N} \sum_{k=2}^{n} \binom{n}{k} r^{n-k}$  But that sum is part of the expansion of $(r+1)^n$ and is therefore bounded by it since those are all positive terms thanks to $r$ being positive. Thus, we have $s^n < r^n + n \tfrac{r^{n-1}}{N} + \tfrac{ (r+1)^n }{N}$.  By definition, we have $N > \tfrac{3}{a} n r^{n-1}$ implying $\tfrac{ n r^{n-1}}{N} < \tfrac{a}{3}$. We also have $N > \tfrac{3}{a} (r+1)^n$ implying $ \tfrac{(r+1)^n}{N} < \tfrac{a}{3}$. Therefore $s^n < r^n + \tfrac{2 a}{3} < q$. Since $r< r + \frac{1}{N} = s$, we have shown our result. 
\end{proof}

\begin{lemma}\label{app:greater}
Let $r \geq 0 $ and $q > 0$ rational numbers such that $r^n > q$. Then there exists a rational number $s$ such that $r > s$ and $s^n > q$.
\end{lemma}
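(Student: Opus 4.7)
The plan is to mirror the construction used in Lemma \ref{app:lesser}, but shifting downward: take $s = r - \tfrac{1}{N}$ for a sufficiently large positive rational $N$, and show $s^n > q$ while $0 \leq s < r$. Since $r^n > q > 0$ forces $r > 0$, I would set $a = r^n - q > 0$, so the goal reduces to choosing $N$ with $r^n - s^n < a$.

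Expanding $(r - \tfrac{1}{N})^n$ by the binomial theorem gives
\[
r^n - s^n \;=\; \frac{n r^{n-1}}{N} \;-\; \sum_{k=2}^{n} \binom{n}{k} \frac{(-1)^k r^{n-k}}{N^k}.
\]
Because the tail alternates in sign, I would bound it in absolute value and factor out $\tfrac{1}{N}$ under the assumption $N \geq 1$:
\[
r^n - s^n \;\leq\; \frac{n r^{n-1}}{N} \;+\; \frac{1}{N}\sum_{k=2}^{n} \binom{n}{k} r^{n-k} \;\leq\; \frac{n r^{n-1}}{N} \;+\; \frac{(r+1)^n}{N},
\]
where the last step bounds the partial sum by the full nonnegative expansion of $(r+1)^n$, exactly the device used in the companion lemma.

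Finally I would choose $N = \max\!\bigl(1,\; \tfrac{1}{r},\; \tfrac{3 n r^{n-1}}{a},\; \tfrac{3(r+1)^n}{a}\bigr)$. The first two terms ensure that $s = r - \tfrac{1}{N}$ is a nonnegative rational strictly less than $r$, while the latter two force each of $\tfrac{n r^{n-1}}{N}$ and $\tfrac{(r+1)^n}{N}$ to be at most $\tfrac{a}{3}$; combined, these yield $r^n - s^n \leq \tfrac{2a}{3} < a$, i.e., $s^n > q$, completing the argument.

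The only real obstacle compared to the increasing case of Lemma \ref{app:lesser} is that the tail of the binomial expansion alternates in sign rather than being uniformly positive, so one cannot simply drop it. Passing to absolute values and reusing the same $(r+1)^n$ dominating bound sidesteps this, keeping the proof closely parallel to the earlier lemma with no new machinery required.
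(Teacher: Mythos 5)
Your proof is correct and takes essentially the same approach as the paper: both mirror Lemma~\ref{app:lesser} by taking $s = r - \tfrac{1}{N}$, bounding the binomial tail by $(r+1)^n/N$, and choosing $N$ so that each error contribution is below $a/3$. The only difference is that you explicitly add $\tfrac{1}{r}$ to the max defining $N$ to guarantee $s \geq 0$; the paper omits this, but its choice already forces $\tfrac{1}{N} \leq \tfrac{a}{3(r+1)^n} < \tfrac{r^n}{3(r+1)^n} < r$, so $s > 0$ is implicit there — your addition is harmless but not needed.
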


The trick here is to consider $(r-\tfrac{1}{N})^n$ instead. 

\begin{proof}
Define $a = r^n - q$. Define $N =  \max(1,\tfrac{3}{a}n r^{n-1}, \tfrac{3}{a}(r+1)^n)$.  Take $s = r - \tfrac{1}{N}$. Then $s^n = (r- \tfrac{1}{N})^n = r^n - \tfrac{n r^{n-1}}{N} + \sum_{k=2}^{n} \binom{n}{k} \tfrac{ (-1)^{k} r^{n-k}}{N^{k}}$. We can factor out a $\tfrac{1}{N}$ in the sum and, since $N \geq 1$, we have $\tfrac{1}{N^i} \leq 1$ for any natural number $i$. Since we are looking to prove $s^n > q$, making the expression smaller is fine. If we replace any positive terms in the sum with negative terms, we will make it smaller. So $s^n > r^n - \tfrac{n r^{n-1}}{N} - \sum_{k=2}^{n} \binom{n}{k} \tfrac{r^{n-k}}{N^{k}}$. As before, $\sum_{k=2}^{n} \binom{n}{k} \tfrac{r^{n-k}}{N^{k}} < \tfrac{1}{N} \sum_{k=2}^{n} \binom{n}{k} r^{n-k}$.  But that sum is part of the expansion of $(r+1)^n$ and is therefore bounded by it since those are all positive terms thanks to $r$ being positive. Thus, we have $s^n > r^n - n \tfrac{r^{n-1}}{N} - \tfrac{ (r+1)^n }{N}$.  By definition, we have $N > \tfrac{3}{a} n r^{n-1}$ implying $ \tfrac{-n r^{n-1}}{N} > -\tfrac{a}{3}$. We also have $N > \tfrac{3}{a} (r+1)^n$ implying $ -\tfrac{(r+1)^n}{N} > -\tfrac{a}{3}$. Therefore $s^n > r^n - \tfrac{2 a}{3} > q$. Since $r> r - \frac{1}{N} = s$, we have shown our result. 
\end{proof}

\section{Detailed \texorpdfstring{$e$}{e} Computations}\label{app:e}

These are supporting computations for Section \ref{sec:e}. While not required in the sense that these computations exist elsewhere, it is nice to give the flavor of some of the work that can be involved in doing it in the more explicit fashion we favor here. 

For this section, let $S_n = \sum_{i=0}^n \frac{1}{i!}$, $a_n = (1+\frac{1}{n})^{n}  $, and $b_n = (1+\frac{1}{n})^{n+1}$

We will use the Arithmetic-Geometric Mean Inequality in the form that states that the $n$-power of the average of the sum of $n$ numbers is greater than the product of those $n$ numbers, assuming at least one of the numbers is different from the others. 

While this is a very standard result, for completeness, we include the proof of the Arithmetic-Geometric Mean Inequality. See Wikipedia for a variety of different proofs.\footnote{\url{https://en.wikipedia.org/wiki/Inequality_of_arithmetic_and_geometric_means}}

\begin{proposition}[Arithmetic-Geometric Mean Inequality]
    Let $\{x_i\}_{i=1}^n$ be a given set of numbers. Then $\big(\frac{\sum_{i=1}^n x_i}{n}\big)^n \geq  \prod_{i=1}^n x_i$ with equality only if all of the $x_i$ are equal to one another. 
\end{proposition}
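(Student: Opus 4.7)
The plan is to prove this by Cauchy's forward-backward induction, since it relies only on arithmetic manipulations of rationals (or oracles, extended by continuity of the operations) and avoids any appeal to calculus or logarithms. I will assume the $x_i$ are non-negative, which is the natural setting in which the product on the right makes sense as a comparison to an arithmetic mean.

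First I would handle the base case $n=2$. The inequality $\big(\tfrac{x_1+x_2}{2}\big)^2 \geq x_1 x_2$ is equivalent to $(x_1-x_2)^2 \geq 0$ after expanding and clearing denominators, and equality holds exactly when $x_1 = x_2$. Next I would carry out the doubling step: if the inequality holds for $n$, it holds for $2n$. Given $2n$ numbers, split them into two blocks of $n$ with arithmetic means $A$ and $B$ and geometric products $P$ and $Q$. The inductive hypothesis gives $A^n \geq P$ and $B^n \geq Q$, while the $n=2$ case applied to $A$ and $B$ gives $\big(\tfrac{A+B}{2}\big)^2 \geq AB$. Raising this to the $n$-th power and combining with the two hypotheses yields the inequality for $2n$. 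Iterating, the statement holds for every power of $2$.

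Then I would perform the backward step, which is the slick move: if the inequality holds for $n$, it holds for $n-1$. Given $x_1, \ldots, x_{n-1}$, let $M$ denote their arithmetic mean and apply the $n$-variable inequality to the augmented list $x_1, \ldots, x_{n-1}, M$. The arithmetic mean of the augmented list is still $M$, so the left-hand side is $M^n$, while the right-hand side is $M \prod_{i=1}^{n-1} x_i$. Dividing by $M$ (handling the trivial $M=0$ case separately, where all $x_i$ must vanish) yields the $(n-1)$-variable inequality. For an arbitrary $n$, we combine: pick any power of two $2^k \geq n$ and use the backward step $2^k - n$ times.

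The main obstacle will be tracking the equality condition carefully, especially in the backward step, where one must check that if the conclusion is an equality for some $(n-1)$-tuple, then running the argument in the $n$-variable case (with the added copy of $M$) forces all augmented entries, hence all original $x_i$, to be equal. I would show equality propagates by noting that in the base case $(x_1-x_2)^2 = 0$ forces $x_1 = x_2$, then use this to show that equality in the forward step forces $A = B$ and equality inside each block, and finally that equality in the backward step forces all $x_i = M$. Together, these imply the inequality is strict unless all $x_i$ coincide.
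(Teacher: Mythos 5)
Your proof is correct, but it takes a genuinely different route from the paper's. The paper uses a ``smoothing'' or replacement argument: if some $x_j$ differs from the arithmetic mean $A$, pick a partner $x_k$ on the other side of $A$, and replace the pair $(x_j, x_k)$ with $(A,\, x_j + x_k - A)$. The sum is unchanged, the product strictly increases because $(x_j - A)(A - x_k) > 0$, and after finitely many such replacements every entry equals $A$, so the original product was at most $A^n$ with equality only when no replacement was ever possible. Your Cauchy forward-backward induction is the other classical elementary proof: a two-variable base case, a doubling step to reach all powers of $2$, and a downward step from $n$ to $n-1$ via augmentation by the mean $M$. Both arguments are purely order- and field-theoretic, so both fit the paper's rational/oracle setting equally well. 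The paper's replacement argument has the advantage of a single loop that transparently ``equalizes'' the data and makes the equality case visible at every step; yours is more modular and is arguably easier to verify line by line, but requires more bookkeeping (tracking equality separately through the base case, the forward step, and the backward step, as you note). One small point in your favor: you state the non-negativity hypothesis on the $x_i$ explicitly, which the paper leaves implicit even though its product-increase argument also tacitly relies on the remaining factors being non-negative; in the paper's actual applications (Appendix~\ref{app:e}) all the quantities are positive, so nothing goes wrong there, but making the hypothesis explicit as you do is the cleaner statement.
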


\begin{proof}
    Let $A = \frac{\sum_{i=1}^n x_i}{n}$. If there exists $x_j \neq A$, then we must have a partner to $x_j$, say $x_k$, such that $A$ is between $x_j$ and $x_k$.\footnote{For example, if $x_1 > A$ and $x_k \geq A$ for all other $k$, then $\sum_{i=2}^n x_i \geq (n-1) A$ and the final addition of $x_1$ would lead to strict inequality: $x_1 + \sum_{i=2}^n x_i > n A$ but $n*A = \sum_{i=1}^n x_i$ by definition of $A$.} The trick is to replace $x_j$ with $A$ and $x_k$ with $x_j + x_k - A$. Since their sum is just $x_j + x_k$, then sum is unchanged. The product, however, is changed as $A(x_j + x_k - A) = Ax_j + A x_k - A^2 > x_j x_k$. The inequality follows from the fact that if $x_j > A > x_k$, then $(x_j - A)(A-x_k) > 0$ as both factors are positive, but expanding out, we get $A x_j + Ax_k - A^2 - x_j x_k > 0$. Moving the last term to the other side, we get our product inequality. 

    Iterating this process on our finite collection of numbers, we gradually replace all of the elements that are not equal to $A$ with elements that are with the final replacement also managing to replace the smaller element with $A$.\footnote{The last step has the form $x_j + x_k + (n-2)A = n A$ leading to $x_j +x_k = 2A$ which is also $x_j + x_k - A = A$.} Since $A$ never changes, but the product gets larger, the original product must be smaller. We also see that equality of the power of the average with the product can only be achieved when all the $x_i$ are equal.  
\end{proof}

The next two lemmas come from \cite{mend}.

\begin{lemma}
  $ a_n < a_{n+1}$ 
\end{lemma}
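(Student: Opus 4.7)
The plan is to apply the Arithmetic-Geometric Mean Inequality (just stated) to the collection of $n+1$ numbers $\{1, (1+\tfrac{1}{n})_{,n}\}$, that is, one copy of $1$ together with $n$ copies of $1+\tfrac{1}{n}$, exactly as flagged in the hint earlier in the excerpt.

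First I would compute the arithmetic mean of this collection: the sum is $1 + n(1+\tfrac{1}{n}) = n+2$, so dividing by $n+1$ gives $\tfrac{n+2}{n+1} = 1 + \tfrac{1}{n+1}$. Raised to the $(n+1)$-th power, this is precisely $a_{n+1}$. Next I would compute the product, which is the geometric mean raised to the $(n+1)$-th power: it is $1\cdot(1+\tfrac{1}{n})^n = a_n$.

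Since $1 \neq 1+\tfrac{1}{n}$ for every $n \geq 1$, the elements of the collection are not all equal, so the AM-GM inequality gives the strict version
\[
a_{n+1} = \left(1 + \frac{1}{n+1}\right)^{n+1} > 1\cdot\left(1+\frac{1}{n}\right)^n = a_n,
\]
which is exactly the claim. There is no genuine obstacle here; the only thing to be careful about is the bookkeeping, namely choosing the multiplicity so that the arithmetic mean comes out to the base $1 + \tfrac{1}{n+1}$ needed to recognize the left-hand side as $a_{n+1}$ and so that the product collapses to $a_n$. The companion inequality $b_{n+1} < b_n$ used in Section \ref{sec:e} would be handled by the analogous choice $\{1, (\tfrac{n+1}{n})_{,(n+1)}\}$ after a reciprocation-type rearrangement, but that is the next lemma, not this one.
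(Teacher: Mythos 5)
Your proof is correct and is essentially identical to the paper's: both apply AM-GM to the $(n+1)$-element collection consisting of one copy of $1$ and $n$ copies of $1+\tfrac{1}{n}$, identify the powered arithmetic mean as $a_{n+1}$ and the product as $a_n$, and invoke strictness because the elements are not all equal. No meaningful difference.
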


\begin{proof}
Consider the collection of $n+1$ numbers consisting of $1$ and $n$ copies of $1+1/n$. We need to compute the average and product to apply the AGMI. The sum is $1 + n* (1 + \frac{1}{n} ) = n+2$. If we take the average of the $n+1$ terms and raise it to the $n+1$ power, we have $(\frac{n+2}{n+1})^{n+1} = (1 + \frac{1}{n+1})^{n+1} = a_{n+1}$. The product of the numbers is $1*(1+\frac{1}{n})^n = a_n$. The AGMI then is exactly the statement of $a_n < a_{n+1}$.
\end{proof}

\begin{lemma}
 $b_{n+1} < b_n$
\end{lemma}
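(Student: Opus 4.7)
The plan is to mirror the previous lemma by applying the Arithmetic-Geometric Mean Inequality to a cleverly chosen collection of $n+2$ numbers. The obstacle is that a direct application to positive powers of $1+\frac{1}{n}$ does not cleanly produce $b_{n+1}$ on one side and $b_n$ on the other, because the exponents and base ratios go in opposite directions from what the previous lemma needed. The natural fix is to pass to reciprocals, since $b_{n+1}<b_n$ is equivalent to $\frac{1}{b_{n+1}}>\frac{1}{b_n}$, and the reciprocals have the tidy form $\frac{1}{b_n}=\bigl(\frac{n}{n+1}\bigr)^{n+1}=\bigl(1-\frac{1}{n+1}\bigr)^{n+1}$.

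With that reframing, I would apply AGMI to the collection of $n+2$ numbers consisting of one copy of $1$ together with $n+1$ copies of $\frac{n}{n+1}$. The sum is $1+(n+1)\cdot\frac{n}{n+1}=n+1$, so the average is $\frac{n+1}{n+2}$, and the $(n+2)$-th power of the average is $\bigl(\frac{n+1}{n+2}\bigr)^{n+2}=\bigl(1-\frac{1}{n+2}\bigr)^{n+2}=\frac{1}{b_{n+1}}$. The product of the chosen numbers is $1\cdot\bigl(\frac{n}{n+1}\bigr)^{n+1}=\frac{1}{b_n}$. Since the $n+2$ numbers are not all equal ($1\neq\frac{n}{n+1}$ for every $n\geq 1$), AGMI gives the strict inequality $\frac{1}{b_{n+1}}>\frac{1}{b_n}$, which on inverting yields $b_{n+1}<b_n$.

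The main obstacle is really just finding the right collection, and once one notices the reciprocal reformulation, the sum telescopes to the clean value $n+1$ and everything falls out. The strictness of the inequality is immediate from the AGMI strictness clause already proved, so no additional work is needed there. This mirrors the structure of the preceding lemma so closely that the write-up can essentially be a two-line calculation once the setup is stated.
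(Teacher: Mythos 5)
Your proof is correct and uses the same AGMI-on-reciprocals idea as the paper's; the only cosmetic difference is that the paper applies AGMI to the $n+1$ numbers $\{1,\,(1-\tfrac{1}{n})_{,n}\}$ to conclude $b_{n-1}>b_n$ and then relabels, whereas you shift the index up by one (using $n+2$ numbers) to land directly on $b_{n+1}<b_n$.
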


\begin{proof}
This is similar to the previous proof but we use the collection $1$ and $n$ copies of $1 - \frac{1}{n}$. The sum is $1 + n*(1-\frac{1}{n}) = n$. The average of the $n+1$ numbers raised to the $n+1$ power is then $(\frac{n}{n+1})^{n+1}$. Since the reciprocal of $\frac{n}{n+1}$ is $\frac{n+1}{n} = 1+\frac{1}{n}$, we have the powered average is the same as $\frac{1}{b_n}$. The product of the terms is $(1-\frac{1}{n})^n$. But $1 - \frac{1}{n} = \frac{n-1}{n}$ has reciprocal $\frac{n}{n-1} = \frac{ (n-1) + 1}{n-1} = 1 + \frac{1}{n-1}$. Therefore, the product is the same as $\frac{1}{b_{n-1}}$. The AGMI then tells us that $\frac{1}{b_n} > \frac{1}{b_{n-1}}$. Since both terms are positive, we can reciprocate, flipping the inequality leading to $b_{n-1} > b_n$ for all $n$ which is the same statement as $b_{n+1} < b_n$ for all $n$. 
\end{proof}

The reference for the next two lemmas is \cite{rudin}, page 64. The second lemma differs from that of Rudin's presentation in that Rudin uses a limiting infimum argument to avoid computing the explicit values necessary for achieving the desired inequality. 

\begin{lemma}\label{lem:ansn}
$a_n < S_n$
\end{lemma}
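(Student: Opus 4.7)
The plan is to expand $a_n = (1+\frac{1}{n})^n$ using the binomial theorem and compare term by term against $S_n = \sum_{i=0}^n \frac{1}{i!}$. Writing
\[
a_n \;=\; \sum_{k=0}^n \binom{n}{k}\frac{1}{n^k} \;=\; \sum_{k=0}^n \frac{1}{k!}\cdot\frac{n!}{(n-k)!\,n^k},
\]
the key algebraic step is to rewrite the trailing factor as a product:
\[
\frac{n!}{(n-k)!\,n^k} \;=\; \prod_{j=0}^{k-1}\frac{n-j}{n} \;=\; \prod_{j=0}^{k-1}\!\left(1-\frac{j}{n}\right).
\]
So each binomial term equals $\frac{1}{k!}\prod_{j=0}^{k-1}(1-\tfrac{j}{n})$, and is to be compared with the corresponding summand $\frac{1}{k!}$ of $S_n$.

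Next, I would observe that each factor $1-\tfrac{j}{n}$ is at most $1$, with strict inequality whenever $j\geq 1$. Hence $\binom{n}{k}\tfrac{1}{n^k}\leq \tfrac{1}{k!}$, and this inequality is strict for every $k\geq 2$ (since then the factor $1-\tfrac{1}{n}<1$ actually appears, assuming $n\geq 2$). Summing over $k=0,1,\dots,n$ and noting that $S_n$ contains exactly these same indices, we obtain $a_n \leq S_n$ with strict inequality provided $n\geq 2$. The degenerate case $n=1$ gives $a_1 = 2 = S_1$, so a tiny bookkeeping note (or a restriction $n\geq 2$, as suggested by the way the lemma is subsequently used) handles that edge.

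There is no real obstacle here; the argument is essentially the classical one. The only thing to be careful about is the base case $n=1$ and making the product notation unambiguous (the empty product for $k=0,1$ is $1$, so those terms contribute exactly $1$ and $1$ to both sides). Everything else is rational arithmetic on finite sums and so fits cleanly within our framework without invoking anything beyond the binomial theorem on rationals.
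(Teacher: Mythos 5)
Your proof is correct and follows essentially the same route as the paper: expand $a_n$ by the binomial theorem, rewrite $\binom{n}{k}n^{-k}$ as $\tfrac{1}{k!}\prod_{j}(1-\tfrac{j}{n})$, and drop the factors $\leq 1$ to compare termwise with $S_n$. Your extra care with the base case is actually a small improvement over the paper's statement, which asserts strict inequality for all $n$ but fails at $n=1$ (indeed $a_1 = 2 = S_1$); the restriction $n\geq 2$ (or weakening to $\leq$) that you flag is needed for the lemma to be literally true.
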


\begin{proof}
The binomial expansion of $a_n$ is $\sum_{i=0}^n \frac{1}{i!} \prod_{j=1}^{i-1} (1-\tfrac{j}{n})$ (for $i=0$ and $i=1$, we take the product to be 1). By replacing each factor in each product with 1, we are making the expression larger. When done to completion, we have turned the sum into $S_n$. So $a_n < S_n$.
\end{proof}

\begin{lemma}\label{lem:snam}
For a given $m$, there exists an $n$ such that $S_m < a_n$.   
\end{lemma}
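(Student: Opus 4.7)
My plan is to fix $m$, expand $a_n$ via the binomial theorem, split it into the ``head'' (first $m+1$ terms) and the ``tail'' (terms indexed from $m+1$ to $n$), and choose $n$ large so that the tail contributes enough to overcome how much the head falls short of $S_m$.

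For $n > m$, write $a_n = \sum_{i=0}^n \frac{1}{i!}\,c_{n,i}$ where $c_{n,i} = \prod_{j=1}^{i-1}(1-\tfrac{j}{n})$ (empty product equal to $1$ when $i \le 1$). Then
\[
a_n - S_m \;=\; \underbrace{\sum_{i=2}^{m} \frac{1}{i!}(c_{n,i} - 1)}_{\text{head deficit}} \;+\; \underbrace{\sum_{i=m+1}^{n} \frac{c_{n,i}}{i!}}_{\text{tail bonus}},
\]
so it suffices to pick $n$ making the tail bonus strictly larger than the absolute value of the head deficit.

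For the head deficit, I will use the elementary inequality $1 - \prod_{j=1}^{i-1}(1-\tfrac{j}{n}) \le \sum_{j=1}^{i-1}\tfrac{j}{n} = \tfrac{i(i-1)}{2n}$. This gives
\[
\left|\sum_{i=2}^{m} \frac{1}{i!}(c_{n,i}-1)\right| \;\le\; \frac{1}{2n}\sum_{i=2}^{m}\frac{i(i-1)}{i!} \;=\; \frac{1}{2n}\sum_{k=0}^{m-2}\frac{1}{k!} \;<\; \frac{S_m}{2n}.
\]
For the tail bonus, I will keep just the $i=m+1$ term, which is at least $\tfrac{1}{(m+1)!}\prod_{j=1}^{m}(1-\tfrac{j}{n})$. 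Once $n \ge 2m$, every factor is at least $\tfrac{1}{2}$, so this term is at least $\tfrac{1}{(m+1)!\,2^m}$.

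Putting it together, I just need $n$ large enough that $\tfrac{S_m}{2n} < \tfrac{1}{(m+1)!\,2^m}$ and $n \ge 2m$; explicitly, any $n > \max\bigl(2m,\,S_m\,(m+1)!\,2^{m-1}\bigr)$ works. For such $n$, the tail bonus strictly exceeds the head deficit, so $a_n > S_m$. The argument is almost entirely routine once the head/tail split is made; the only mildly awkward step is the crude but sufficient lower bound on the $(m+1)$-st product, which is where I expect to write out the inequality most carefully.
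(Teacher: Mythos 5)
Your proof is correct. It takes a genuinely different route from the paper's, though both start from the same binomial expansion $a_n = \sum_{i=0}^n \frac{1}{i!}\prod_{j=1}^{i-1}(1-\tfrac{j}{n})$. The paper truncates the sum at $i=m+1$, chooses $\varepsilon < \frac{1}{(m+1)!\,S_{m+1}}$, and then forces every product with $i \le m+1$ to exceed $1-\varepsilon$ by picking $n > \frac{m}{1-\sqrt[m]{1-\varepsilon}}$; this yields $a_n > (1-\varepsilon)S_{m+1} > S_m$, at the cost of an $m$-th root in the choice of $n$. You instead write out $a_n - S_m$ exactly, bound the ``head deficit'' via the Weierstrass product inequality $1-\prod(1-x_j)\le\sum x_j$ to get a clean $O(1/n)$ estimate with constant $S_m/2$, and lower-bound the ``tail bonus'' by the single $(m+1)$-st term, which for $n\ge 2m$ is at least $\frac{1}{(m+1)!\,2^m}$. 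Your version buys a fully explicit, root-free threshold $n > \max\!\bigl(2m,\; S_m\,(m+1)!\,2^{m-1}\bigr)$, which is arguably more in the spirit of the paper's emphasis on computable bounds than the paper's own proof; the paper's version has the modest advantage of not needing the product inequality as a separate lemma. Both are valid, and yours is a clean alternative.
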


\begin{proof}
 We want to find $n$ such that $\sum_{i=0}^m \frac{1}{i!} < (1+\frac{1}{n})^n$. Our $n$ will be larger than $m$ so we can truncate the binomial expansion at $m+1$: $\sum_{i=0}^{m+1} \frac{1}{i!} \prod_{j=1}^{i-1} (1-\tfrac{j}{n})$, making a smaller quantity which is okay since we want this to be an upper bound. By choosing $n$ large enough, the products should be near enough to 1 that the $\frac{1}{(m+1)!}$ term should be large enough to make the expanded sum larger than the sum to $m$. 

 Specifically, we choose $\varepsilon < \frac{1}{(m+1)! S_{m+1}  }$ and from this we choose\footnote{The  following inequality is equivalent to, and generated from, $(1-\tfrac{m}{n})^m > 1 - \varepsilon$. Also, $\ln(x) \approx m (\sqrt[m]{x} -1)$. This suggests the inequality can be viewed as $n > \frac{m}{1 - \sqrt[m]{1-\epsilon}} \approx \frac{m}{-\ln(1 - \epsilon)/m} \approx \frac{m^2}{\varepsilon} > m^2 (m+1)! S_m$. So $n$ is quite large for large $m$.}  $n > \frac{m}{ 1 - \sqrt[m]{1-\varepsilon}}$. With these choices, let us prove that $S_m < a_n$. 
 
 With our choices, we claim that $\prod_{j=1}^{i-1} (1-\tfrac{j}{n}) > 1-\varepsilon$ for all $i \leq m+1$. Observe that $1-\tfrac{j}{n} > 1 -\tfrac{m}{n}$ for all $j < m$. Then rearranging the inequality we have for choosing $n$, we have $(1 - \tfrac{m}{n})^m > 1 - \varepsilon $. By possibly replacing factors of 1 with $1-\tfrac{m}{n}$, we can view all of the products as being greater than $(1 - \tfrac{m}{n})^n > 1-\varepsilon$.  Thus, we have 
 $a_n >  \sum_{i=0}^{m+1} \frac{1}{i!} \prod_{j=1}^{i-1} (1-\tfrac{j}{n}) > (1-\varepsilon) S_{m+1} = S_m + \frac{1}{(m+1)!} - \varepsilon S_{m+1}$.  Now, $\varepsilon$ was chosen so that $\varepsilon < \frac{1}{(m+1)! S_{m+1}}$ which is equivalent to $- \varepsilon S_{m+1} > -\frac{1}{(m+1)!}$. We therefore have $a_n > S_m + \frac{1}{(m+1)!} - \varepsilon S_{m+1} > S_m + \frac{1}{(m+1)!} - \frac{1}{(m+1)!} = S_m$. We have established our result. 
\end{proof}

\section{Direct Oracle Construction of the Supremum}\label{app:sup}

We are given a non-empty set of oracles $E$ with an upper bound $M$, meaning that if $x \in E$ then $x$ has the property $x < M$. We define the set $U$ to be the set of oracles that are upper bounds of $E$, namely, $y \in U$ if $y \geq x$ for every $x$ in $E$.  Thus, $M \in U$.

We want to create the oracle for the least upper bound of $E$. We define the rule $R$ such that $a\lt b$ is to be a Yes interval exactly when $a$ is a lower endpoint for a Yes-interval of an oracle in $E$ and $b$ is an upper endpoint for a Yes-interval of an oracle which is an upper bound of $E$. We also include $c:c$ as a Yes singleton if $c \geq x$ for all $x \in E$ and $c \leq y$ for all $y\in U$.

We claim that this satisfies the properties of an oracle and we call this the Oracle of $\mathrm{sup} E$, the supremum of $E$ or least upper bound. This oracle is also the greatest lower bound of $U$.

Let us establish the properties: 

\begin{enumerate}
    \item Consistency. Assume $R(a \lte b)=1$ and $c\lte d$ contains $a\lte b$. Then there exist oracles $x \in E$ and $y \in U$ with $x$-Yes interval $a\lte A$ and $y$-Yes interval $B\lte b$. By consistency of $x$ and $y$ being oracles, $c\lte A$ is an $x$-Yes interval and $B\lte d$ is a $y$-Yes interval. Thus, $R(c\lte d) = 1$.
    \item Existence. By assumption, there is an $x \in E$ and an upper bound $M$ in $U$. Let $a\lte b$ be an $x$-Yes interval and $c\lte d$ be an $M$-Yes interval. Then $R(a\lte d) = 1$ by definition.  
    \item Separating. Let $R(a\lt b)=1$ and $a < c< b$. We need to show that either $R(c:c)=1$ or $R(a:c) \neq R(c:b)$. 

    We proceed by cases.
    \begin{enumerate}
    \item There exists an oracle $y$ in $U$ such that $c > y$. Namely, there exists an $m \lte n$ $y$-Yes interval such that $c > n$.  This implies $c$ is an upper endpoint of a Yes-interval of $y$, specifically $m \lte c$. Thus, $R(a \lte c) = 1$ by definition. Also, $c$ cannot be a lower endpoint of a Yes interval for an  $x \in E$ since $y \geq x$ for all $x \in E$ implying all the lower endpoints of such $x$ are less than or equal to $y$'s upper endpoints and $c$ is strictly greater than them. Thus, $R(c \lte b) = 0$. 

    \item There exists an oracle $x$ in $E$ such that $c < x$. Namely, there exists an $m \lte n$ $y$-Yes interval such that $c < m$. This implies $c$ is a lower endpoint of a Yes-interval of $x$, specifically $c \lte n$. Thus, $R(c \lte b) = 1$ by definition. Also, $c$ cannot be an upper endpoint of a Yes interval for a $y \in U$ since $x \leq y$ for all $y \in U$ implying all the upper endpoints of such $y$ are greater than or equal to $x$'s lower endpoints and $c$ is strictly greater than them. Thus, $R(a \lte c) = 0$.

    \item $c \geq x$ for all $x \in E$ and $c \leq y$ for all $y \in E$. Then $R(c:c) = 1$ by the definition. 

     \end{enumerate}
    
    \item Rooted. For $R(c:c)=1$, we would need to have that $c \leq y$ for all $y$ in $U$ and $c \geq x$ for all $x$ in $E$. Assume we had another rational such that $R(d:d) = 1$. Then we also have $d \geq x$ for all $x$ in $E$ and $d \leq y$ for all $y$ in $E$. Since both $c$ and $d$ are upper bounds of $E$, we have that each are in $U$. So $c \leq d$ and $d \leq c$. This implies that $c = d$. 
    \item Closed. Assume $c$ is in all Yes intervals. By the definition, $R(c:c) = 1$ unless either there exists an $x \in E$ such that $x> c$ or there exists a $y \in U$ such that $y < c$. If there is such an $x$, then let $a\lte b$ be an $x$ Yes-interval such that $c < a$; this exists by the definition of oracle inequality. Therefore, $a:B$ is a Yes-interval of $\mathrm{sup} E$, where $B$ is an upper endpoint of any $M$-Yes interval. This is a Yes-interval which does not contain $c$. So we have ruled out this case. The other case goes similarly.
\end{enumerate}

We also have to establish that for every $x \in E$ and $y \in U$, we have $x \leq \mathrm{sup} E \leq y$. Let us assume this is not true. Then either there exists $x > \mathrm{sup} E$ or $y < \mathrm{sup} E$. Both cases proceed similarly. Let us 
assume $ x > \sup E$. Take $a \lte b$ to be an $x$-Yes interval such that there exists $c \lte d$, a $\sup E$-Yes interval with $b > c$; this is the meaning of being greater than for oracles. But then $b$, when paired with an upper endpoint from an upper bound, forms a $\sup E$ Yes-interval which is disjoint from $c:d$. This is not allowed for an oracle. Hence, there is no such $x$. Similarly for $y < \sup E$. 

\section{Obtaining a Rational in the Mediant Process}\label{app:med}

We claim that if we are looking for $\frac{e}{f}$ in the interval $\frac{a}{b} \lt \frac{c}{d}$, then the mediant process will produce $\frac{re}{rf}$ where $r$ is some non-negative integer and we take $e$, $f$ to be coprime. 

We shall establish this in two main steps. The first step is to apply the solution to some linear equations to establish the existence of certain integers that form a weighted mediant. The second step is to argue that a integrally weighted mediant can also be realized.  We will rely on the specialized Farey process applied to this more general setting. 

But first, let us explore with some examples. 

If we have two rationals $\frac{a}{b} < \frac{c}{d}$ such that $bc - ad = 1$, then they are a Farey pair and the mediant process preserves this condition. In this case, the mediant process leads to the rational number being achieved in this process and it is in reduced form. This is what Theorem 1 in \cite{richards} states explicitly. The argument is that such Farey pairs always produce a mediant which is the closest rational to any number in the interval with the smallest denominator. Given that, if a rational is the target of the process, then it must be achieved as it will be its own best approximation at that level. 

This theorem does not directly apply to non-Farey pairs. If $bc-ad > 1$,\footnote{This is the only other case since $\frac{a}{b} < \frac{c}{d}$ implies $bc - ad > 0$ and this is an integral quantity.} every rational in the interval will be obtained by a mediant process, but it will not be in reduced form. In general, one will have a scaling factor of $bc-ad$ for the form though it can be less as the example of $\frac{7}{8}< \frac{11}{12}$ with a target of $\frac{9}{10}$ demonstrates; $bc-ad = 4$, but the process obtains $\frac{18}{20}$ instead of $\frac{36}{40}$. Note that there are some common factors involved here. In contrast, if we used the same interval but wanted to converge to $\frac{8}{9}$, we would end up with the fraction $\frac{32}{36}$ as the sequence of mediants $\frac{18}{20}, \frac{25}{28}, \frac{32}{36}$ establishes. Another random example is  $\frac{1}{4} : \frac{19}{9}$ with target $\frac{2}{1}$. The scaling factor is $67$ and the mediants do indeed become $\frac{2 \cdot 67}{1 \cdot 67} = \frac{134}{67}$  ($\frac{20}{13}, \frac{39}{22}, \frac{58}{31}, \frac{77}{40}, \frac{96}{49}, \frac{115}{58}, \frac{134}{67} = 2$).

To establish our claim, we want to find non-negative integers $m$, $n$, and $r$, with $m$ and $n$ coprime. When we form our first mediant, we get $\frac{a+c}{b+d}$. As we proceed with the processes, we will in general have the form $\frac{ma + nc}{mb + nd}$ for integers $m$ and $n$. One can prove this by seeing that taking the mediant of two numbers of this form will produce another number of that form. Since we start off with two numbers in this form, specifically, $m=1, n=0$ for $\frac{a}{b}$ and $m=0, n=1$ for $\frac{c}{d}$, we will stay with this form. 

Therefore, we need to find non-negative integers $m$ and $n$ such that $\frac{ma+nc}{mb+nd} = \frac{e}{f}$. This is equivalent to finding non-negative integers $m$ and $n$
satisfying the equations
\begin{align*}
    am + cn &= re & \\
    bm + dn &= rf & 
\end{align*}
This has solution, $m = \frac{r}{bc-ad} (cf-de)$ and $n= \frac{r}{bc-ad} (be-af)$. When $\frac{r}{bc-ad}$ is an integer, then $m$ and $n$ will be integers. We also need these to be non-negative. Since $\frac{a}{b} < \frac{e}{f} < \frac{c}{d}$, we have that $cf>de$ or $cf - de > 0$ and we have $be>af$ or $be - af > 0$. Thus, all the quantities are positive assuming we take $r$ to be as well. 

The next question is whether we can actually achieve any combination of $m$ and $n$ from the mediant process. The answer is no, we cannot. The final condition we need is that $m$ and $n$ are coprime. This is what prevents us from claiming that we can achieve multiple forms of the rational number from this process, something we cannot do. In fact, the requirement of it being coprime is what changes the conclusion of the example above with the target of $\frac{9}{10}$. While we will establish why we need the coprimeness, let us first show that we can ensure that there do exist $m$ and $n$ satisfying these equations that are coprime. 

Let $cf-de = st$ and $be-af = su$ where $t$ and $u$ are coprime and $s, t, u$ are integers.  Then mutliplying the first equation by $a$, the second equation by $c$, and then combining, we end up with $e (bc -ad) = s (at  + cu)$. Multiplying the first equation by $b$ and the second by $d$ leads to $f (bc - ad) = s(bt + du)$. This means $s$ divides both $e(bc-ad)$ and $f(bc-ad)$. Since $e$ and $f$ are coprime, this means that $s$ must divide into $bc-ad$. If we take $r = \frac{(bc-ad)}{s}$, then we have that $m=t$ and $n=u$ implying they are corpime. 

The final part of our proof is to establish that we can construct a mediant process given coprime $m$ and $n$. This is where we use Theorem 1 from \cite{richards}. The form $\frac{ma + nc}{mb+nd}$ for coprime $m$ and $n$ is obtained in the exact same way as $\frac{m}{n}$ is in the Farey process where $\frac{a}{b}= \frac{0}{1}$ and $\frac{c}{d}=\frac{1}{0}$ are the starting points. Since the Farey process always results in a fraction without common factors, we have to have $m$ and $n$ coprime for this to work.  

We need to establish comparative ordering for fractions of the form $\frac{ma + nc}{mb + nd}$. So let's consider such a fraction with $m,n$ as one pair and $p,q$ representing another pair. The numerator of their difference is 
\begin{flalign*}
(ma+nc)(pb+qd) & - (pa+qc)(mb+nd)  & \\
 & = pmab + pnbc + qmad + qncd -  mbpa - mqbc -  ndpa -ndqc & \\
& =  pnbc -  ndpa + qmad  - mqbc  + (pmab -  mbpa) + ( qncd  -ndqc) & \\
 & = pnbc -pnad +qmad- mqbc & \\
 & = pn(bc-ad) + qm(ad-bc) & \\ 
 &= (pn-qm)(bc-ad) & 
\end{flalign*}
The denominator is $(mb+nd)(pb+qd)$.  Since we assume $m,n,p,q$ are non-negative, and we can take the denominators $b$ and $d$ to be positive, and we have $bc-ad > 0$ due to $\frac{a}{b} < \frac{c}{d}$, we can see that the difference of these two forms is entirely based on the computation of $pn - qm$. This implies that the sequence of choices about which subinterval to choose will always be decided in the same way. Specifically, $pn - qm > 0$ if and only if $\frac{p}{q} > \frac{m}{n}$.

Furthermore, we always start with $m=0, n=1$ and $m=1, n=0$. The next mediant will always be $m=1, n=1$. After that, we need to start comparing with our target. Let's say it is $\frac{Ma + Nc}{Mb + Nd}$. Then if $\frac{M}{N} > \frac{1}{1} = 1$, then we will go right and be looking at the interval $\frac{a+c}{b+d} \lte  \frac{c}{d}$ leading to the next mediant being $m=1, n=2$. If $\frac{M}{N} < 1$, then we go left and look at the interval $\frac{a}{b} \lte \frac{a+c}{b+d}$ leading to the next mediant being $m=2, n=1$. We proceed in a similar fashion, comparing the target $\frac{M}{N}$ to the current $\frac{m}{n}$ and deciding which interval to select from that. When we achieve equality, we stop the process.

What we have just established is that we can start with any interval $\frac{a}{b} \lte \frac{c}{d}$ and use the Stern-Brocot tree to obtain any rational in that interval by mapping $\frac{m}{n}$ to $\frac{ma + nc}{mb + nd}$. This does not require the original endpoints to be in reduced form. Having them in different scaled versions leads to different mappings of the intervals, but the structure of the tree is always present. 

Let us consider the simple interval $[1, 2]$ with target $\frac{3}{2}$. For this, the target is $m=1, n=1$ and happens after the first step. But now let's change the endpoint $2$ to $\frac{6}{3}$. This is the same rational, but it changes the $m$ and $n$ for the target. In particular, since $6*1-3*1 = 3$, we expect the target to be $\frac{9}{6}$ with $m=3, n=1$, generating the intervals $\frac{1}{1} :\frac{6}{3}$,  $\frac{1}{1} :\frac{7}{4}$, $\frac{1}{1} :\frac{8}{5}$, $\frac{1}{1} :\frac{9}{6}$. We can view this as we initially weighted the right endpoint more and so we start farther to the right and thus we need to go left a few times to compensate. 

In summary, the mediant process of approximating a real number given a starting interval will always terminate when the real number is a rational in the rational interval. There is no condition on what the interval is or the form of the endpoints. All we require is that the target rational number is in the initial interval and then the process will terminate.\footnote{This is assuming that we have also tested the endpoints. If we did not and start with the mediant and the endpoint is the target, then the process will converge towards it, never switching. For example, $[0; n]$ is $\frac{1}{n}$ and is the sequence of approximations to $0$ if we start with $0:1$ and compare only the mediant intervals of $0:\frac{1}{2}$ and $\frac{1}{2}:1$. As $n$ goes to infinity, we get ever better approximations of $0$, but we never reach $0$.}

\medskip

\normalem 

\printbibliography

\end{document}